\documentclass[10pt,a4paper]{article}

\usepackage[utf8]{inputenc}
\usepackage[T1]{fontenc}
\usepackage{lmodern}
\usepackage{amsmath,amssymb,amsfonts,amsthm,mathtools}
\usepackage{geometry}
\usepackage{enumitem}
\usepackage{microtype}
\usepackage{hyperref}
\usepackage{cite}
\usepackage{xcolor}
\hypersetup{hidelinks}

\newtheorem{theorem}{Theorem}[section]
\newtheorem{proposition}[theorem]{Proposition}
\newtheorem{lemma}[theorem]{Lemma}
\newtheorem{corollary}[theorem]{Corollary}
\theoremstyle{definition}
\newtheorem{definition}[theorem]{Definition}
\newtheorem{remark}[theorem]{Remark}

\newcommand{\C}{\mathbb C}
\newcommand{\N}{\mathbb N}
\newcommand{\Der}{\operatorname{Der}}
\newcommand{\Inn}{\operatorname{Inn}}

\newcommand{\TwoLocDer}{\operatorname{2LocDer}}
\newcommand{\ad}{\operatorname{ad}}
\newcommand{\g}{\mathfrak g}
\newcommand{\slc}{\mathfrak{sl}_2(\C)}
\newcommand{\sod}{\mathfrak{so}_d(\C)}

\title{\textbf{2-Local Derivations on Conformal Galilei Lie Algebras and Their Central Extensions}}
\author{
Bekzod Sultonboyev\thanks{Department of Applied Mathematics and Mathematical Physics, Urgench State University, Urgench, Uzbekistan. Email: \texttt{bekzodbekbobomurodovich@gmail.com}}
\and
Bakhtiyor Yusupov\thanks{V.~I.~Romanovskiy Institute of Mathematics, Academy of Sciences of Uzbekistan, Tashkent, Uzbekistan; Department of Applied Mathematics and Mathematical Physics, Urgench State University, Urgench, Uzbekistan. Email: \texttt{baxtiyor\_yusupov\_93@mail.ru}}
}
\date{}

\begin{document}
\maketitle

\begin{abstract}
We classify 2-local derivations on the finite-dimensional conformal Galilei Lie algebras in three natural settings: the algebra without central extension, the mass central extension, and the exotic central extension.  No additivity or linearity is assumed in the definition of a 2-local derivation.  We prove that in every case each 2-local derivation is an ordinary derivation.  For the unextended algebra, the proof combines characteristic-quotient normalization with separating vectors, invariant bilinear forms, rank-two mixed zero anchors, and a semisimple-perturbation argument for the standard module.  For the mass central extension with $2\ell\ge3$, the residual map is first reduced to the center and then eliminated by a symplectic-anchor construction on the Heisenberg radical; a dimension argument in the centralizer of a regular semisimple element simultaneously handles singular reductive actions and the additional central shear that occurs in spatial dimension two.  The endpoint $\ell=\tfrac12$ is the Schr\"odinger algebra; a self-contained symplectic-anchor proof includes all spatial dimensions, with explicit treatment of dimensions one and two.  For the exotic extension, after center normalization we use conjugate middle-weight positions and an affine middle anchor to eliminate every central translate, including the one-sided middle-weight configurations.  Consequently the set of 2-local derivations coincides with the derivation algebra for all admissible parameters, and hence every 2-local derivation in these families is automatically linear.
\end{abstract}

\noindent\textbf{Keywords:} conformal Galilei algebra; central extension; derivation; 2-local derivation; semidirect product.\\
\textbf{2020 Mathematics Subject Classification:} 17B40, 17B05, 17B10.

\section{Introduction}
Throughout this paper, $\N=\{1,2,3,\ldots\}$ denotes the set of positive integers.
The purpose of this paper is to solve the two-point local-to-global problem for conformal Galilei Lie algebras and their standard one-dimensional central extensions.  A map $\Delta$ on a Lie algebra is called a 2-local derivation if, for every pair $x,y$, there exists a derivation $D_{x,y}$ such that $\Delta(x)=D_{x,y}(x)$ and $\Delta(y)=D_{x,y}(y)$.  The map $\Delta$ is not assumed to be additive or linear.  This absence of linearity is precisely what makes the passage from two-point data to a single global derivation non-trivial.

The notion of a 2-local derivation was introduced by \v{S}emrl in the operator-algebra setting \cite{Semrl}.  In finite-dimensional Lie theory, Ayupov, Kudaybergenov and Rakhimov proved that every 2-local derivation on a finite-dimensional semisimple Lie algebra over an algebraically closed field of characteristic zero is a derivation \cite{AKR}.  Ayupov and Yusupov studied 2-local derivations of infinite-dimensional Lie algebras \cite{AYInfinite}.  Subsequent work established analogous rigidity phenomena for generalized Witt algebras \cite{AKYWitt}, locally simple Lie algebras \cite{AKYLocSimple}, and several classes of Leibniz algebras \cite{AKHY,AKYp}.  These results also show that two-point rigidity is genuinely structural: non-semisimple algebras need not behave uniformly, and proper local or 2-local mappings may occur in other classes. Recent work of Atajonov, Vaisova and Yusupov extends the local and 2-local framework to $\frac12$-derivations of model Lie algebras and their solvable extensions \cite{AVYModelHalf}.

Schr\"odinger-type algebras are especially close to the present setting. Alauadinov and Yusupov investigated local derivations on the Schr\"odinger Lie algebra in $(n+1)$-dimensional space-time \cite{AYSchrodinger2026}.  Wang and Tang treated the Schr\"odinger algebra in $(2+1)$-dimensional space-time \cite{WangTang}; Tang and Wang subsequently proved that every 2-local derivation on the Schr\"odinger algebra in arbitrary $(n+1)$-dimensional space-time is a derivation \cite{TangWangJLT}.  Yao and Yu studied 2-local derivations of the $n$-th Schr\"odinger algebra \cite{YaoYu}, while Zhao and Cheng considered the one-dimensional mass-central conformal Galilei algebra \cite{ZhaoChengCG}.  The infinite-dimensional planar Galilean conformal algebra was studied by Chen and He \cite{ChenHe}.  These results motivate a uniform treatment of the full finite-dimensional conformal Galilei family, including both types of central extension.  Relative to these results, the present work treats the unextended family for arbitrary spatial dimension and admissible $\ell$, extends the mass-central analysis to all spatial dimensions, and includes the finite-dimensional exotic extension in the same framework.  For the Schr\"odinger endpoint $\ell=\tfrac12$, we also give a self-contained proof, including the additional central shear in spatial dimension two.

For a positive half-integer $\ell$ and a spatial dimension $d$, the algebra without central extension has the form
\[
\mathfrak g_\ell(d)=\bigl(\mathfrak{sl}_2(\mathbb C)\oplus\mathfrak{so}_d(\mathbb C)\bigr)\ltimes V_{\ell,d},
\]
where $V_{\ell,d}$ is an abelian module.  When $\ell\in\mathbb N-\tfrac12$ there is a mass central extension in every spatial dimension, and when $d=2$ and $\ell\in\mathbb N$ there is an exotic central extension.  In the central cases the abelian radical is replaced by a Heisenberg-type ideal whose bracket couples complementary $\mathfrak{sl}_2$-levels.  The representation-theoretic and dynamical background of these algebras can be found, for example, in \cite{AizawaIsaac,AizawaIsaacKimura,Andrzejewski,GalajinskyMasterov}.  Local derivations on the one-dimensional mass-central conformal Galilei algebra were studied in \cite{AYCG}.

The direct predecessor of the present paper is the work of Atajonov, Sultonboyev and Yusupov \cite{ASYLocalCG}, which studies linear local derivations on these same three families and determines their derivation algebras. Its classification includes proper local derivations on $\mathfrak g_\ell(1)$ when $m=2\ell\ge3$ is odd; the additional space has dimension $m-1$. Our result proves that this exceptional local freedom disappears under the two-point condition.

A linear 2-local derivation is a local derivation, but an arbitrary 2-local map is not initially known to be linear. Thus the local classification in \cite{ASYLocalCG} does not by itself establish our theorem. Our task is to derive linearity from pairwise interpolation and eliminate the exceptional one-dimensional freedom. The derivation decompositions are recalled from \cite{ASYLocalCG}; the new contribution is the two-point rigidity argument.

The main result of the present paper is that all three families are rigid with respect to 2-local derivations.  More precisely, for every admissible parameter we prove
\[
\TwoLocDer\mathfrak g_\ell(d)=\Der\mathfrak g_\ell(d),
\]
\[
\TwoLocDer\widehat{\mathfrak g}^{\,M}_\ell(d)
 =\Der\widehat{\mathfrak g}^{\,M}_\ell(d),
\qquad \ell\in\mathbb N-\tfrac12,
\]
and
\[
\TwoLocDer\widehat{\mathfrak g}^{\,\Theta}_\ell(2)
 =\Der\widehat{\mathfrak g}^{\,\Theta}_\ell(2),
\qquad \ell\in\mathbb N.
\]
No linearity hypothesis is imposed on the 2-local map.

Several different mechanisms are needed to obtain a uniform proof.  In the non-central algebra, characteristic quotients first remove the reductive part of a 2-local map.  For higher $\mathfrak{sl}_2$-weights, separating vectors and mixed zero anchors synchronize the scalar and rotation parameters; the adjoint module is controlled by its invariant symmetric form.  The standard module is more delicate.  Rank-two anchors settle the zero-rotation slice, while a semisimple-rotation argument and a perturbation lemma eliminate arbitrary skew-symmetric rotation components.

The central extensions require a different final step.  The value at the center fixes the scalar outer coefficient and, after quotient rigidity, the residual map becomes center-valued.  For the mass extension we use the non-degenerate alternating form on the Heisenberg radical.  A regular semisimple zero anchor and a dimension argument in its centralizer allow us to choose an auxiliary radical vector that kills all inner central contributions; in spatial dimension two the same construction is adjusted to cancel the central shear.  For the exotic extension, the $p$- and $q$-strings are first placed in a convenient conjugate middle-weight position.  An affine anchor of the form $h'+\tau s+ap_\ell'+bq_\ell'$ then forces a single quadratic relation whose parameter can always be chosen so that the remaining central value vanishes.

The paper is arranged as follows.  Section~\ref{sec:algebras} recalls the three conformal Galilei Lie algebras.  Section~\ref{sec:der2local} records their derivation algebras and the general 2-local reductions.  Section~\ref{sec:noncentral1d} treats the one-dimensional non-central algebra.  Section~\ref{sec:noncentralhigher} proves higher-dimensional non-central rigidity, including the standard-module semisimple-perturbation argument.  Section~\ref{sec:centralreductions} treats the two central extensions and proves their complete rigidity.  Section~\ref{sec:mainclassification} collects the classification, dimension consequences, and automatic linearity statement.

\section{The three conformal Galilei Lie algebras}\label{sec:algebras}
In this section we fix notation for the three finite-dimensional Lie algebras that will be studied throughout the paper.  We use the same Levi factor in all cases and emphasize the change from an abelian radical to a Heisenberg-type radical after central extension.

\subsection{The algebra without central extension}
We begin with the unextended conformal Galilei algebra, whose abelian radical is the source of the main two-local compatibility problem.

Let $d\in\N$ and $\ell\in\frac12\N$.  The Lie algebra $\g_\ell(d)$ has basis
\[
\{D,H,C,M_{ij}=-M_{ji},P_i^{(n)}\mid 1\le i,j\le d,\ 0\le n\le2\ell\}
\]
and non-zero brackets
\begin{equation}\label{eq:cg-basic}
\begin{aligned}
[D,H]&=2H, & [D,C]&=-2C, & [C,H]&=D,\\
[M_{ij},M_{kl}]&=-\delta_{ik}M_{jl}-\delta_{jl}M_{ik}+\delta_{il}M_{jk}+\delta_{jk}M_{il},\\
[H,P_i^{(n)}]&=-nP_i^{(n-1)}, & [D,P_i^{(n)}]&=2(\ell-n)P_i^{(n)},\\
[C,P_i^{(n)}]&=(2\ell-n)P_i^{(n+1)},\\
[M_{ij},P_k^{(n)}]&=-\delta_{ik}P_j^{(n)}+\delta_{jk}P_i^{(n)}.
\end{aligned}
\end{equation}
We set $P_i^{(-1)}=P_i^{(2\ell+1)}=0$ and put
\[
V_{\ell,d}=\operatorname{span}_{\C}\{P_i^{(n)}\}.
\]
Then
\[
\g_\ell(d)=\bigl(\slc\oplus\sod\bigr)\ltimes V_{\ell,d},
\qquad [V_{\ell,d},V_{\ell,d}]=0.
\]

\subsection{The mass central extension}
We next pass to the mass central extension, where complementary weight levels are paired through one central generator.

Assume $\ell\in\N-\frac12$.  The mass extension $\widehat{\g}^{\,M}_\ell(d)$ is obtained by adjoining a central element $M$ and the brackets
\begin{equation}\label{eq:mass2local}
[P_i^{(m)},P_j^{(n)}]
=\delta_{ij}\delta_{m+n,2\ell}I_mM,
\qquad
I_m=(-1)^{m+\ell+\frac12}(2\ell-m)!m!.
\end{equation}
The ideal
\[
\mathfrak h^M_{\ell,d}=V_{\ell,d}\oplus\C M
\]
is two-step nilpotent and is the nilradical of the mass extension.

\subsection{The exotic central extension}
The exotic extension exists only in two spatial dimensions and uses an alternating spatial cocycle.

Assume $d=2$ and $\ell\in\N$.  The exotic extension $\widehat{\g}^{\,\Theta}_\ell(2)$ is obtained by adjoining a central element $\Theta$ and the brackets
\begin{equation}\label{eq:exotic2local}
[P_i^{(m)},P_j^{(n)}]
=\varepsilon_{ij}\delta_{m+n,2\ell}J_m\Theta,
\qquad
J_m=(-1)^m(2\ell-m)!m!,
\end{equation}
where $\varepsilon_{12}=1$.

For later calculations we use
\[
h=D,\qquad e=-H,\qquad f=C,\qquad s=-\mathrm iM_{12},
\]
\[
p_n=P_1^{(n)}+\mathrm iP_2^{(n)},\qquad
q_n=P_1^{(n)}-\mathrm iP_2^{(n)},\qquad z=-2\mathrm i\Theta.
\]
Then
\begin{align}
[h,e]&=2e,& [h,f]&=-2f,&[e,f]&=h,\label{eq:sl2}\\
[h,p_n]&=2(\ell-n)p_n,&[e,p_n]&=np_{n-1},&[f,p_n]&=(2\ell-n)p_{n+1},\\
[h,q_n]&=2(\ell-n)q_n,&[e,q_n]&=nq_{n-1},&[f,q_n]&=(2\ell-n)q_{n+1},\\
[s,p_n]&=p_n,&[s,q_n]&=-q_n,&[p_m,q_n]&=\delta_{m+n,2\ell}(-1)^m(2\ell-m)!m!\,z.
\end{align}

\section{Derivations and general 2-local reductions}\label{sec:der2local}
This section records the derivation decompositions needed in the sequel and then develops general tools that apply simultaneously to all three algebras.  The key points are homogeneity, stability under subtraction of a global derivation, and descent to a characteristic quotient.

\subsection{Derivation decompositions}
The derivation decompositions for these families are established in \cite{ASYLocalCG}. We state them in the notation used here, since all two-point representing derivations will be written in these forms. Their proofs are given in \cite{ASYLocalCG}.

For the non-central algebra define
\[
\delta_0|_{\slc\oplus\sod}=0,
\qquad
\delta_0(P_i^{(n)})=P_i^{(n)}.
\]
For the mass extension define
\[
\delta_M|_{\slc\oplus\sod}=0,
\qquad
\delta_M(P_i^{(n)})=P_i^{(n)},
\qquad
\delta_M(M)=2M.
\]
When $d=2$, with $s=-\mathrm iM_{12}$, define the central shear
\[
\eta_M(s)=M,
\qquad
\eta_M(\text{all other basis elements})=0.
\]
For the exotic extension define
\[
\delta_\Theta(e)=\delta_\Theta(f)=\delta_\Theta(h)=\delta_\Theta(s)=0,
\quad
\delta_\Theta(p_n)=p_n,
\quad
\delta_\Theta(q_n)=q_n,
\quad
\delta_\Theta(z)=2z,
\]
and
\[
\eta_\Theta(s)=z,
\qquad
\eta_\Theta(\text{all other basis elements})=0.
\]

\begin{proposition}[\cite{ASYLocalCG}]\label{prop:derivation-decompositions}
The derivation algebras are
\begin{equation}\label{eq:der-noncentral}
\Der\g_\ell(d)=\Inn\g_\ell(d)\oplus\C\delta_0,
\end{equation}
\begin{equation}\label{eq:der-mass}
\Der\widehat{\g}^{\,M}_\ell(d)=
\begin{cases}
\Inn\widehat{\g}^{\,M}_\ell(d)\oplus\C\delta_M,& d\ne2,\\
\Inn\widehat{\g}^{\,M}_\ell(2)\oplus\C\delta_M\oplus\C\eta_M,& d=2,
\end{cases}
\end{equation}
and
\begin{equation}\label{eq:der-exotic}
\Der\widehat{\g}^{\,\Theta}_\ell(2)
=\Inn\widehat{\g}^{\,\Theta}_\ell(2)
\oplus\C\delta_\Theta\oplus\C\eta_\Theta.
\end{equation}
\end{proposition}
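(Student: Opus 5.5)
The plan is to compute $\Der$ of each algebra directly, using the grading by the Cartan-type element and the Levi decomposition. Write $\mathfrak s=\slc\oplus\sod$ for the Levi factor and $R$ for the radical, which is $V_{\ell,d}$ or the Heisenberg ideal $V_{\ell,d}\oplus\C M$ or $V_{\ell,2}\oplus\C z$.

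The first step is to normalize a derivation $\delta$ on the Levi factor. Every derivation preserves the radical, since the radical is characteristic. Restrict $\delta$ to $\mathfrak s$ and compose with the projection to $\mathfrak s$. This gives a derivation of the semisimple algebra $\mathfrak s$; for $d=2$ we instead get a derivation of the reductive algebra $\slc\oplus\C s$. In the semisimple case it is inner, and subtracting $\ad$ of a Levi element reduces to $\delta(\mathfrak s)\subseteq R$. The map $\mathfrak s\to R$ obtained this way is a 1-cocycle, and Whitehead's first lemma makes it a coboundary modulo the part landing in the trivial submodule. We subtract $\ad$ of a radical element to remove the coboundary. The trivial part is $\C M$ or $\C z$ in the central cases, and it is nonzero only on the abelian summand $\C s$ when $d=2$; this produces exactly $\eta_M$ and $\eta_\Theta$. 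For $d\ne2$, $\sod$ is semisimple, or zero when $d=1$. The case $d=4$, where $\sod$ is not simple, still works because it is semisimple. When $d=2$, the derivation of $\C s$ may also scale $s$ into $\C s$. This scaling is excluded because $\delta$ must respect $[s,p_n]=p_n$ on the radical, and that forces the $s$-eigenvalues to be preserved.

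Once $\delta|_{\mathfrak s}=0$, or $\delta|_{\mathfrak s}=c\,\eta$ in the $d=2$ case, $\delta$ commutes with $\ad\mathfrak s$ on $R$. So $\delta|_R$ is an $\mathfrak s$-module endomorphism. As an $\slc\oplus\sod$-module, $V_{\ell,d}\cong V_{2\ell}\otimes\C^d$ is irreducible for $d\ne2$ and for $d=2$ over the reductive factor. For $d=2$ the module splits into the $p$- and $q$-strings with $s$-weights $\pm1$. Schur's lemma then gives $\delta|_V=\lambda\,\mathrm{id}$ in the non-central case, which is $\lambda\delta_0$. For $d=2$ it gives independent scalars $\lambda_p,\lambda_q$ on the two strings; these strings are not isomorphic because their $s$-weights differ. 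The central element is an $\mathfrak s$-invariant with value $\delta(M)=\mu M$, plus possibly an $\mathfrak s$-equivariant component $V\to\C M$, which vanishes since $V$ has no trivial summand. Likewise $\delta(M)$ has no component in $V$.

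In the central cases we next apply $\delta$ to the defining bracket of $R$. For the mass extension this gives $(\lambda_i+\lambda_j)I_m=\mu I_m$. When $d=2$ the mass form pairs $p$ with $q$, so it yields $\lambda_p+\lambda_q=\mu$. For the exotic extension, $[p_m,q_n]\propto z$ similarly forces $\lambda_p+\lambda_q=\mu$. In the noncentral $d=2$ case the difference $\lambda_p-\lambda_q$ is realized by $\ad(\tfrac{\lambda_p-\lambda_q}{2}s)$, and modulo $\Inn$ we may take $\lambda_p=\lambda_q$. This gives the family $\lambda\delta$ with $\mu=2\lambda$, hence $\delta_M$ and $\delta_\Theta$. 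In the exotic case, and in the mass case for $d=2$, the leftover inner derivation $\ad s$ absorbs $\lambda_p-\lambda_q$ with no constraint, because it kills $z$.

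The final step is to check that the sums are direct. $\delta_0$ is not inner, since any inner derivation acting as a nonzero scalar on $V$ would have to be $\ad$ of a central-in-$\mathfrak s$ element, and none exists. $\eta$ is not inner because the center is not in $[\g,\g]$-images of $s$ by weight considerations. The main obstacle is the $d=2$ bookkeeping: separating $\ad s$ from the scalar derivation and from the central shear, and correctly handling $H^1$ of the abelian factor $\C s$.
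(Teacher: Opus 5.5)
The paper gives no argument for this proposition. It imports the three decompositions from \cite{ASYLocalCG} and only uses them, so your proposal is a genuinely different, self-contained route, and it is sound. Your steps are:
\begin{itemize}
\item pass to the quotient by the characteristic ideal $R$ (for $d=2$ this is the nilradical rather than the radical, but it is still derivation-stable);
\item make the induced derivation of $\mathfrak s$ inner;
\item kill the resulting $1$-cocycle $\mathfrak s\to R$;
\item read off $\delta|_R$ by Schur's lemma, using that the $p$- and $q$-strings are non-isomorphic because $s$ acts by $\pm1$;
\item impose the single relation coming from $[V,V]\subseteq\C z$, which gives $\lambda_p+\lambda_q=\mu$.
\end{itemize}
Compared with citing, this explains every outer direction conceptually. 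The scalar derivations are the Schur scalars compatible with the Heisenberg bracket. The shears are exactly the classes in $H^1(\slc\oplus\C s,\C z)\cong\operatorname{Hom}(\C s,\C z)$, which is why they occur only for $d=2$ and only in the central cases. Citing keeps the paper focused on the 2-local problem; your route makes that section self-contained.

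A few steps should be tightened, though none is a real obstruction.
\begin{itemize}
\item Whitehead's lemma applies only to semisimple algebras, so for $d=2$ argue directly. After killing the cocycle on $\slc$, the relation $[x,\delta(s)]=\delta([x,s])=0$ for $x\in\slc$ puts $\delta(s)$ in $\C s\oplus R^{\slc}$. Here $R^{\slc}$ is $\C z$ or $0$, because $U_m$ is non-trivial.
\item The exclusion of the scaling $s\mapsto cs$ is cleanest on $R/\C z\cong V$. There $[\bar\delta,\operatorname{ad}_V s]=c\,\operatorname{ad}_V s$. The left side has zero diagonal blocks on $V_p\oplus V_q$, while the right side is $c\,\mathrm{id}$ on $V_p$, so $c=0$.
\item Your non-innerness argument for $\delta_0$ fails as phrased when $d=2$, because $s$ is central in $\mathfrak s$. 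Use instead that inner derivations act on $V$ by traceless operators, whereas $\delta_0|_V$ has trace $\dim V\neq0$. In the central cases, evaluation at the center already separates $\delta_M$ and $\delta_\Theta$ from $\Inn$.
\item Say explicitly that $\delta_0,\delta_M,\delta_\Theta,\eta_M,\eta_\Theta$ are derivations; this is a routine check.
\item Show the sums are jointly direct. Evaluate a relation $a\eta+c\delta+\operatorname{ad}y=0$ at $z$ to get $c=0$, then at $s$ to get $a=0$, using $[\g,s]\subseteq V$.
\end{itemize}
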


The one-dimensional mass-central decomposition agrees with the form used in \cite{AYCG,ZhaoChengCG}.

\subsection{Basic properties of 2-local derivations}
We now record three elementary facts that will be used repeatedly to normalize a 2-local derivation.

\begin{definition}
Let $L$ be a Lie algebra. A mapping $\Delta:L\to L$, not assumed to be linear, is called a \emph{2-local derivation} if for every $x,y\in L$ there exists a derivation $D_{x,y}\in\Der L$ such that
\[
\Delta(x)=D_{x,y}(x),\qquad
\Delta(y)=D_{x,y}(y).
\]
\end{definition}

\begin{lemma}\label{lem:homogeneous}
Every 2-local derivation is homogeneous:
\[
\Delta(\lambda x)=\lambda\Delta(x)
\qquad (x\in L,\ \lambda\in\C).
\]
In particular, $\Delta(0)=0$.
\end{lemma}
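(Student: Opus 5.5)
The plan is to apply the definition of a 2-local derivation to the pair $(x,\lambda x)$ and use only the linearity of the single derivation that the definition provides. Nothing about the structure of $L$, and in particular none of the derivation decompositions in Proposition~\ref{prop:derivation-decompositions}, is needed. The one point needing care is that each pair of points may come with its own representing derivation. We avoid this by always evaluating both $\Delta(x)$ and $\Delta(\lambda x)$ through the same derivation $D_{x,\lambda x}$.

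Fix $x\in L$ and $\lambda\in\C$, and let $D=D_{x,\lambda x}\in\Der L$ be a derivation with
\[
\Delta(x)=D(x),\qquad \Delta(\lambda x)=D(\lambda x).
\]
Since every derivation is in particular a linear map, $D(\lambda x)=\lambda D(x)$. Therefore
\[
\Delta(\lambda x)=\lambda D(x)=\lambda\Delta(x).
\]
The fact that the representing derivation depends on the pair is harmless here, because both values are computed with the same $D$.

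For the last assertion, take $\lambda=0$ in the identity just proved. This gives $\Delta(0)=\Delta(0\cdot x)=0\cdot\Delta(x)=0$. Alternatively, apply the definition to the pair $(0,0)$: then $\Delta(0)=D_{0,0}(0)=0$, since any linear map sends $0$ to $0$. Neither step presents a real obstacle.
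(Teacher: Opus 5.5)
Your proposal is correct and is essentially the paper's proof: apply the definition to the pair $(x,\lambda x)$ and use linearity of the single representing derivation $D_{x,\lambda x}$. Your explicit derivation of $\Delta(0)=0$ (take $\lambda=0$, or use the pair $(0,0)$) just spells out the ``in particular'' clause that the paper leaves implicit.
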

\begin{proof}
Apply the definition to the pair $x,\lambda x$.  If $D_{x,\lambda x}$ is the corresponding derivation, then
\[
\Delta(\lambda x)=D_{x,\lambda x}(\lambda x)
=\lambda D_{x,\lambda x}(x)=\lambda\Delta(x).
\]
\end{proof}

\begin{lemma}\label{lem:subtract}
If $\Delta$ is a 2-local derivation and $D\in\Der L$, then $\Delta-D$ is again a 2-local derivation.
\end{lemma}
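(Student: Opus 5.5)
The plan is to take the interpolating derivation for $\Delta-D$ to be $D_{x,y}-D$, where $D_{x,y}$ is the derivation attached to $\Delta$. Fix arbitrary $x,y\in L$. By the definition of a 2-local derivation there is $D_{x,y}\in\Der L$ with $\Delta(x)=D_{x,y}(x)$ and $\Delta(y)=D_{x,y}(y)$. Put $D'_{x,y}:=D_{x,y}-D$.

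The first step is to check that $D'_{x,y}$ is a derivation. This holds because $\Der L$ is a linear subspace of $\operatorname{End}(L)$: the Leibniz identity $E([a,b])=[E(a),b]+[a,E(b)]$ is linear in $E$, so it passes to differences.

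The second step is to evaluate at the two points:
\[
(\Delta-D)(x)=D_{x,y}(x)-D(x)=D'_{x,y}(x),\qquad
(\Delta-D)(y)=D_{x,y}(y)-D(y)=D'_{x,y}(y).
\]
Since $x,y$ were arbitrary, $\Delta-D$ satisfies the definition of a 2-local derivation.

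No linearity of $\Delta$ is used anywhere, which is consistent with the definition. There is no real obstacle in this argument; the only point requiring care is that the same $D$ is subtracted for every pair, which is exactly what makes $D'_{x,y}$ depend on the pair only through $D_{x,y}$.
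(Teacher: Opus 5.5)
Your proposal is correct and is the same argument as the paper's: replace each representing derivation $D_{x,y}$ by $D_{x,y}-D$. You also spell out that $D_{x,y}-D$ is again a derivation because $\Der L$ is a linear subspace of $\operatorname{End}(L)$, which the paper leaves implicit.
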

\begin{proof}
For a pair $x,y$, replace the representing derivation $D_{x,y}$ by $D_{x,y}-D$.
\end{proof}

\begin{lemma}\label{lem:quotient}
Let $I$ be a characteristic ideal of a Lie algebra $L$ and let $\Delta$ be a 2-local derivation on $L$. Then
\[
\Delta(I)\subseteq I,
\]
and $\Delta$ induces a well-defined 2-local derivation
\[
\overline\Delta:L/I\longrightarrow L/I.
\]
\end{lemma}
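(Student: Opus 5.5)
Both parts come straight from the definitions; the only care needed is to show that the induced map on $L/I$ does not depend on the choice of coset representative, since $\Delta$ is not assumed additive.

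\emph{Invariance of $I$.} Take $x\in I$. Applying the definition of a 2-local derivation to the pair $(x,x)$ gives a derivation $D_{x,x}$ with $\Delta(x)=D_{x,x}(x)$. Because $I$ is characteristic, every derivation of $L$ maps $I$ into itself. Hence $\Delta(x)\in I$, and so $\Delta(I)\subseteq I$.

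\emph{Definition of $\overline\Delta$.} Write $\pi:L\to L/I$ for the quotient map and set
\[
\overline\Delta(x+I)=\Delta(x)+I .
\]
To check that this is well defined, let $x,y\in L$ with $x-y\in I$. Apply the definition to the pair $(x,y)$ to obtain a single derivation $D=D_{x,y}$ with $\Delta(x)=D(x)$ and $\Delta(y)=D(y)$. Since $D$ is linear,
\[
\Delta(x)-\Delta(y)=D(x)-D(y)=D(x-y).
\]
As $x-y\in I$ and $D(I)\subseteq I$, the right-hand side lies in $I$, so $\Delta(x)+I=\Delta(y)+I$. This is the step where the two-point hypothesis is essential: one common derivation represents $\Delta$ at both representatives, so linearity of $D$ can be used even though $\Delta$ itself is not known to be linear.

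\emph{$\overline\Delta$ is a 2-local derivation.} Every $D\in\Der L$ preserves $I$, so it induces a linear map $\overline D$ on $L/I$ by $\overline D(x+I)=D(x)+I$. This map is a derivation of $L/I$, because
\[
\overline D[x+I,y+I]=D[x,y]+I=[Dx,y]+[x,Dy]+I .
\]
Now take a pair $\bar x=x+I$ and $\bar y=y+I$ in $L/I$, and choose the derivation $D_{x,y}$ for the representatives $x,y$. Then
\[
\overline\Delta(\bar x)=\Delta(x)+I=D_{x,y}(x)+I=\overline{D_{x,y}}(\bar x),
\]
and likewise $\overline\Delta(\bar y)=\overline{D_{x,y}}(\bar y)$. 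Thus $\overline{D_{x,y}}$ serves as the representing derivation for the pair $(\bar x,\bar y)$, and $\overline\Delta$ is a 2-local derivation on $L/I$.
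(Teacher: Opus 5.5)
Your proposal is correct and follows essentially the same route as the paper. The paper uses the pair $(x,0)$ rather than $(x,x)$ for the invariance of $I$, uses one common derivation $D_{x,y}$ and its linearity for well-definedness exactly as you do, and passes the representing derivations to the quotient; you spell out slightly more detail in checking that the induced map is a derivation of $L/I$.
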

\begin{proof}
For $x\in I$, take a representing derivation for the pair $(x,0)$. Since every derivation preserves the characteristic ideal $I$, we obtain $\Delta(x)\in I$.

If $x-y\in I$, choose a derivation $D_{x,y}$ representing the pair $(x,y)$. Then
\[
\Delta(x)-\Delta(y)=D_{x,y}(x-y)\in I,
\]
so $\overline\Delta(x+I)=\Delta(x)+I$ is well-defined. Finally, every $D_{x,y}$ induces a derivation of $L/I$, and therefore $\overline\Delta$ satisfies the two-local condition on the quotient.
\end{proof}

\begin{lemma}\label{lem:d2-reductive-quotient}
Fix one conformal Galilei algebra $L$ considered here in spatial dimension two, and identify its quotient by the abelian radical or Heisenberg nilradical with $Q=\mathfrak{sl}_2\oplus\mathbb Cs$. Suppose a map $\Phi:Q\to Q$ has the following property: for every $x,y\in Q$, there is a derivation of this fixed algebra $L$ whose induced derivation $\overline D$ satisfies $\Phi(x)=\overline D(x)$ and $\Phi(y)=\overline D(y)$.  Then there exists $a\in\mathfrak{sl}_2$ such that
\[
\Phi(x+\alpha s)=[a,x]
\qquad(x\in\mathfrak{sl}_2,\ \alpha\in\mathbb C).
\]
In particular, after subtracting the inner derivation induced by $a$, the whole quotient map is zero, not only its restriction to the $\mathfrak{sl}_2$-factor.
\end{lemma}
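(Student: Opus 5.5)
The plan is to determine exactly which maps of $Q$ can arise as induced derivations, reduce the statement to the known rigidity of $\mathfrak{sl}_2$, and then absorb the $s$-direction by a single two-point comparison.

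\emph{Step 1: the induced derivations of $Q$.} By Proposition~\ref{prop:derivation-decompositions}, every derivation $D$ of the fixed algebra $L$ has the form
\[
D=\ad(a+\beta s+v)+\gamma\,\delta+\epsilon\,\eta ,
\]
where $a\in\mathfrak{sl}_2$, $\beta\in\C$, and $v$ lies in the radical (possibly including the central element). Here $\delta$ denotes $\delta_0$, $\delta_M$ or $\delta_\Theta$ as appropriate, and $\eta$ denotes $\eta_M$ or $\eta_\Theta$; the $\eta$ term is absent in the non-central case. I will check each summand's effect on $Q$:
\begin{itemize}[label={}]
\item $\delta$ vanishes on $e,f,h,s$ and maps the radical into itself, so it induces $0$ on $Q$.
\item $\eta$ sends $s$ into the center, which lies in the radical, and kills all other basis elements; so it induces $0$ on $Q$.
\item $\ad v$ maps $L$ into the radical, since the radical is an ideal; so it induces $0$.
\item $\ad s$ maps $\mathfrak{sl}_2$ and $s$ to $0$ and the radical into itself; so it also induces $0$.
\end{itemize}
Hence the induced derivation is $\overline D=\ad a$ acting on $Q=\mathfrak{sl}_2\oplus\C s$. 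It acts as the adjoint map on $\mathfrak{sl}_2$ and kills $s$.

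\emph{Step 2: reduction to $\mathfrak{sl}_2$.} By Step 1, for all $x,y\in Q$ there is $a_{x,y}\in\mathfrak{sl}_2$ with $\Phi(x)=[a_{x,y},x]$ and $\Phi(y)=[a_{x,y},y]$, where brackets with $s$ vanish. Restricting to $x,y\in\mathfrak{sl}_2$, the map $\Phi|_{\mathfrak{sl}_2}$ takes values in $\mathfrak{sl}_2$. It is therefore a 2-local derivation of $\mathfrak{sl}_2$ in the usual sense, because $\Der\mathfrak{sl}_2=\ad\mathfrak{sl}_2$. The theorem of Ayupov--Kudaybergenov--Rakhimov \cite{AKR} on finite-dimensional semisimple Lie algebras then gives some $a\in\mathfrak{sl}_2$ with $\Phi(x)=[a,x]$ for all $x\in\mathfrak{sl}_2$.

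\emph{Step 3: the $s$-direction.} Fix $x\in\mathfrak{sl}_2$ and $\alpha\in\C$, and apply the hypothesis to the pair $(x+\alpha s,\,x)$. This gives a single $a'\in\mathfrak{sl}_2$ with
\[
\Phi(x+\alpha s)=[a',x+\alpha s]=[a',x]
\quad\text{and}\quad
\Phi(x)=[a',x].
\]
Therefore $\Phi(x+\alpha s)=\Phi(x)=[a,x]$, which is the claimed formula. The case $x=0$ gives $\Phi(\alpha s)=0$, consistent with homogeneity as in Lemma~\ref{lem:homogeneous}. Subtracting the inner derivation $\ad a$ of $L$, which induces $\ad a$ on $Q$, makes the whole quotient map zero.

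The only step needing care is Step 1. There one must verify, separately for each of the three families, that the outer derivations $\delta$ and the central shears $\eta$ really induce zero on $Q$. This relies on the center being contained in the radical being factored out, which holds in all cases. Everything else is a formal consequence of \cite{AKR} together with the one-pair argument of Step 3.
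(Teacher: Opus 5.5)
Your proposal is correct and follows the paper's proof essentially step for step. You identify the induced derivations on $Q$ as $\ad a$ on $\mathfrak{sl}_2$ annihilating $s$, apply \cite{AKR} to $\Phi|_{\mathfrak{sl}_2}$, and use one representing derivation for the pair $(x,x+\alpha s)$; your summand-by-summand check in Step 1 simply spells out what the paper reads off from Proposition~\ref{prop:derivation-decompositions}, and your Step 3 also covers $x=0$ directly, where the paper treats it as a separate case.
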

\begin{proof}
By Proposition~\ref{prop:derivation-decompositions}, every derivation of a full conformal Galilei algebra induces on $Q$ a derivation that is inner on $\mathfrak{sl}_2$ and annihilates $s$.  Hence the restriction of $\Phi$ to $\mathfrak{sl}_2$ is a 2-local derivation on $\mathfrak{sl}_2$.  By \cite{AKR} there is $a\in\mathfrak{sl}_2$ such that
\[
\Phi(x)=[a,x]\qquad(x\in\mathfrak{sl}_2).
\]
Fix $x\in\mathfrak{sl}_2$ and $\alpha\in\mathbb C$.  If $x\ne0$, choose one induced derivation representing the pair $(x,x+\alpha s)$.  It has the form $\operatorname{ad}b$ on the $\mathfrak{sl}_2$-factor and kills $s$, so
\[
\Phi(x)=[b,x],\qquad
\Phi(x+\alpha s)=[b,x].
\]
Thus $\Phi(x+\alpha s)=\Phi(x)=[a,x]$.  If $x=0$, every induced derivation annihilates $\alpha s$, hence $\Phi(\alpha s)=0$.  This proves the formula.
\end{proof}

\begin{corollary}\label{cor:levi-normalization}
Let $L$ be one of the three conformal Galilei algebras above and let $R$ denote its abelian radical or Heisenberg nilradical.  After subtracting a suitable global derivation, a 2-local derivation may be assumed to induce the zero map on the full quotient $L/R$.
\end{corollary}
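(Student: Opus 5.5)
The plan is to pass to the quotient, kill the induced map there by one inner derivation of $\mathfrak{sl}_2$, and then lift that inner derivation back to $L$. Concretely, first check that $R$ is a characteristic ideal of $L$. In the non-central case $R=V_{\ell,d}$ is the (abelian) radical. In the central cases $R$ is the nilradical $\mathfrak h^M_{\ell,d}$ or $V_{\ell,2}\oplus\C\Theta$. Radicals and nilradicals are invariant under all derivations in characteristic zero, so $R$ is characteristic in every case. This can also be read off directly from the explicit forms in Proposition~\ref{prop:derivation-decompositions}:
\begin{itemize}
\item inner derivations preserve ideals;
\item $\delta_0$, $\delta_M$ and $\delta_\Theta$ map $R$ into $R$;
\item $\eta_M$ and $\eta_\Theta$ vanish on $R$.
\end{itemize}
Lemma~\ref{lem:quotient} then gives a well-defined 2-local map $\overline\Delta$ on $Q=L/R\cong\slc\oplus\sod$. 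Moreover, $\overline\Delta$ has a stronger property: each pair $x,y$ is interpolated by a derivation $\overline{D_{x,y}}$ induced from $\Der L$, not merely by some derivation of $Q$.

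Next, identify which derivations of $Q$ are induced from $L$. By Proposition~\ref{prop:derivation-decompositions}, every $D\in\Der L$ has the form $\ad(u+v)+c\delta+c'\eta$ with $u\in\slc\oplus\sod$ and $v\in R$.
\begin{itemize}
\item $\ad v$ maps $L$ into $R$, so it induces zero on $Q$.
\item $\delta_0$, $\delta_M$ and $\delta_\Theta$ vanish on the Levi factor and preserve $R$, so they induce zero.
\item $\eta_M$ and $\eta_\Theta$ take values in the center, which lies in $R$, so they induce zero.
\end{itemize}
Hence every induced derivation of $Q$ is $\ad_Q(\bar u)$. Therefore $\overline\Delta$ is a 2-local derivation of the reductive algebra $Q$ in which every representing derivation is inner.

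Now split according to $d$.
\begin{itemize}
\item If $d=2$, Lemma~\ref{lem:d2-reductive-quotient} applies directly and yields $a\in\mathfrak{sl}_2$ with $\overline\Delta=\ad_Q a$ on all of $Q$.
\item If $d\ge3$, then $\sod$ is semisimple, so $Q$ is semisimple. By the theorem of \cite{AKR}, $\overline\Delta=\ad_Q a$ for some $a\in Q$.
\item If $d=1$, then $\sod=0$ and $Q=\slc$, and \cite{AKR} again applies.
\end{itemize}
In each case, lift $a$ to the element $a\in\slc\oplus\sod\subset L$. By Lemma~\ref{lem:subtract}, $\Delta-\ad a$ is again a 2-local derivation. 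Its induced quotient map is $\overline\Delta-\ad_Q a=0$, which is exactly the claim.

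The only step needing care is the $d=2$ case. There $Q$ is not semisimple, and the abelian factor $\C s$ would a priori allow non-inner quotient derivations. The two earlier observations handle this: $\eta$ lands in the center, and no derivation of $L$ rescales $s$ modulo $R$. Together these say every induced derivation kills $s$, which is precisely the hypothesis of Lemma~\ref{lem:d2-reductive-quotient}. I would double-check this against the explicit decomposition in Proposition~\ref{prop:derivation-decompositions} rather than rely on the general statement.
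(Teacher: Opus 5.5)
Your proposal is correct and follows essentially the same route as the paper. In both, $R$ is characteristic and Lemma~\ref{lem:quotient} passes to $L/R$; \cite{AKR} handles $d=1$ and $d\ge3$, Lemma~\ref{lem:d2-reductive-quotient} handles $d=2$, and subtracting the inner lift kills the quotient map. Your explicit check that every induced quotient derivation is inner spells out what the paper uses implicitly.

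One small terminological point: for $d=2$ the abelian ideal $V_{\ell,2}$ is the nilradical, not the solvable radical, which is $\C s\oplus V_{\ell,2}$. Your direct verification from Proposition~\ref{prop:derivation-decompositions} already covers this case.
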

\begin{proof}
The radicals under consideration are characteristic.  Hence Lemma~\ref{lem:quotient} applies.  For $d=1$ and for $d\ge3$, the non-central quotient is semisimple, and the result follows from the rigidity theorem of Ayupov--Kudaybergenov--Rakhimov \cite{AKR}.  When $d=2$, the quotient is $\slc\oplus\C s$, and Lemma~\ref{lem:d2-reductive-quotient} shows that the entire induced quotient map is one inner derivation on the $\slc$-factor and is zero on the $s$-direction.  Subtracting its inner lift therefore kills the full quotient map.  The central extensions have the same reductive quotient.
\end{proof}

\section{The one-dimensional non-central algebra}\label{sec:noncentral1d}
We now turn to the first genuinely new two-local calculation.  In one spatial dimension the algebra is a semidirect product of $\mathfrak{sl}_2$ with a single irreducible module, which allows the two-point condition to be translated into explicit weight-space restrictions.

Put $m=2\ell$ and write
\[
\g_\ell(1)=\slc\ltimes U_m,
\qquad
U_m=\operatorname{span}\{u_0,u_1,\ldots,u_m\}.
\]
We use the standard action
\begin{equation}\label{eq:Um-action}
[h,u_k]=(m-2k)u_k,
\qquad
[e,u_k]=ku_{k-1},
\qquad
[f,u_k]=(m-k)u_{k+1},
\end{equation}
where $u_{-1}=u_{m+1}=0$.  The radical $U_m$ is abelian.

Every derivation has the form
\begin{equation}\label{eq:D-one-string}
D=\ad(a+u)+\lambda\delta_0,
\qquad a\in\slc,\ u\in U_m,\ \lambda\in\C.
\end{equation}
On the two summands this means
\begin{align}
D(x)&=[a,x]-\rho_m(x)u, &&x\in\slc,\label{eq:D-on-S}\\
D(v)&=\rho_m(a)v+\lambda v, &&v\in U_m.\label{eq:D-on-V}
\end{align}

\subsection{Normalization on the Levi factor}
The first step is to remove the quotient derivation and then use the pair $(e,f)$ to normalize simultaneously at the two nilpotent generators of $\mathfrak{sl}_2$.

\begin{proposition}\label{prop:normalize-ef}
Let $\Delta$ be a 2-local derivation on $\g_\ell(1)$.  There exists $D\in\Der\g_\ell(1)$ such that, after replacing $\Delta$ by $\Delta-D$,
\[
\Delta(\g_\ell(1))\subseteq U_m,
\qquad
\Delta(e)=\Delta(f)=0.
\]
\end{proposition}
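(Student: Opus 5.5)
First, by Corollary~\ref{cor:levi-normalization} (with $R=U_m$, which is characteristic), we may subtract a global derivation and assume that $\Delta$ induces the zero map on $\g_\ell(1)/U_m\cong\slc$. By Lemma~\ref{lem:quotient}, $\Delta(U_m)\subseteq U_m$. For $x\in\slc$, the zero quotient map gives $\Delta(x)\in U_m$; for a general element $x+v$ the induced class is also zero, so $\Delta(x+v)\in U_m$. Hence $\Delta(\g_\ell(1))\subseteq U_m$. This property is preserved when we later subtract derivations that induce zero on the quotient. These are exactly $\ad u+\lambda\delta_0$ with $u\in U_m$, and also $\ad a$ with $a$ central in $\slc$, which is only $a=0$.

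Next, take the representing derivation $D_{e,f}=\ad(a+u)+\lambda\delta_0$ for the pair $(e,f)$. By \eqref{eq:D-on-S},
\[
\Delta(e)=[a,e]-\rho_m(e)u,\qquad \Delta(f)=[a,f]-\rho_m(f)u.
\]
Both values lie in $U_m$, while $[a,e],[a,f]\in\slc$. Comparing components therefore forces $[a,e]=[a,f]=0$, so $a=0$. Consequently $\Delta(e)=-[e,u]$ and $\Delta(f)=-[f,u]$ for one and the same $u\in U_m$. We then replace $\Delta$ by $\Delta+\ad u$, which equals $\Delta-\ad(-u)$ and is again a 2-local derivation by Lemma~\ref{lem:subtract}. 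The new map satisfies $\Delta(e)=-[e,u]+[u,e]\cdot(-1)\cdot(-1)$; more directly, $\ad u(e)=[u,e]=-[e,u]$, so the correct subtraction is $\Delta-\ad u$. With that choice, $\Delta(e)-[u,e]=-[e,u]+[e,u]=0$, and similarly $\Delta(f)=0$. The map $\ad u$ induces zero on the quotient, so the inclusion $\Delta(\g_\ell(1))\subseteq U_m$ survives this step. The total subtracted derivation is the composite of the two steps, which proves the statement.

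There is no real obstacle in this argument. The only point that needs care is justifying that $\Delta(x+v)\in U_m$ for mixed elements. This follows because the quotient map $\overline\Delta$ from Lemma~\ref{lem:quotient} is well defined and has been normalized to zero on all of $\slc$. A second point is that in the pair $(e,f)$ the $\slc$-component of $D_{e,f}$ vanishes, since the centralizer of $\{e,f\}$ in $\slc$ is trivial.
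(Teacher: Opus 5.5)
Your proposal is correct and follows essentially the same route as the paper. You first kill the quotient map on $\g_\ell(1)/U_m\cong\slc$ via Lemma~\ref{lem:quotient} and \cite{AKR}, which you package as Corollary~\ref{cor:levi-normalization}. You then use one representing derivation for the pair $(e,f)$ to force $a=0$ and subtract $\ad u$. The only blemish is the self-correcting sign passage: replace it by the final, correct statement that $\Delta-\ad u$ vanishes at $e$ and $f$.
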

\begin{proof}
By Lemma~\ref{lem:quotient}, $\Delta$ induces a 2-local derivation on
\[
\g_\ell(1)/U_m\cong\slc.
\]
By \cite{AKR} the induced map is a derivation, hence an inner derivation.  Subtracting a suitable inner derivation and using Lemma~\ref{lem:subtract}, we may assume that $\Delta(\g_\ell(1))\subseteq U_m$.

Choose a representing derivation for the pair $(e,f)$ and write it as in \eqref{eq:D-one-string}. Since $\Delta(e)$ and $\Delta(f)$ have no $\slc$-components, we have
\[
[a,e]=[a,f]=0.
\]
The intersection of the centralizers of $e$ and $f$ in $\slc$ is zero, so $a=0$.  Consequently
\[
\Delta(e)=-\rho_m(e)u,
\qquad
\Delta(f)=-\rho_m(f)u
\]
for one and the same $u\in U_m$.  Subtracting $\ad(u)$ gives the required normalization.
\end{proof}

\begin{lemma}\label{lem:sl2-zero}
Under the normalization of Proposition~\ref{prop:normalize-ef},
\[
\Delta(x)=0\qquad\text{for every }x\in\slc.
\]
\end{lemma}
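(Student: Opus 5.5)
The plan is to anchor an arbitrary $x\in\slc$ against the already normalized elements $e$ and $f$. Each anchor produces a one-parameter description of $\Delta(x)$, and comparing the two descriptions should force $\Delta(x)=0$. Fix $x=\alpha h+\beta e+\gamma f$. Take a representing derivation for the pair $(x,e)$ and write it as $\ad(a+u)+\lambda\delta_0$ in the form \eqref{eq:D-one-string}.

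\emph{Anchor at $e$.} By \eqref{eq:D-on-S}, $0=\Delta(e)=[a,e]-\rho_m(e)u$. Its $\slc$-component gives $[a,e]=0$, so $a\in\C e$. Its $U_m$-component gives $\rho_m(e)u=0$, so $u\in\C u_0$ by \eqref{eq:Um-action}. Since $\Delta(x)\in U_m$, the $\slc$-component $[a,x]$ of $\Delta(x)$ vanishes, and therefore
\[
\Delta(x)=-c\,\rho_m(x)u_0=-c\,m(\alpha u_0+\gamma u_1)
\]
for some $c\in\C$.

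\emph{Anchor at $f$.} The pair $(x,f)$ gives symmetrically $u\in\C u_m$ and
\[
\Delta(x)=-c'\,m(-\alpha u_m+\beta u_{m-1})
\]
for some $c'\in\C$.

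\emph{Comparison for $m\ge3$.} The spans $\langle u_0,u_1\rangle$ and $\langle u_{m-1},u_m\rangle$ meet only in $0$. Hence $\Delta(x)=0$ for every $x$, which settles all $m\ge3$ immediately.

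\emph{Small cases $m=1,2$.} Here the two spans can overlap, and I will first obtain vanishing on a generic set. For $m=1$, equating coefficients gives the linear system $c\alpha=c'\beta$, $c\gamma=-c'\alpha$ in $(c,c')$. Its determinant is $-(\alpha^2+\beta\gamma)$, so $c=c'=0$ whenever $x$ is not nilpotent. For $m=2$, the comparison forces $c\alpha=0=c'\alpha$ and $c\gamma=c'\beta$. If $\alpha\ne0$ this gives $\Delta(x)=0$ directly. When $\alpha=0$ the relation $c\gamma=c'\beta$ still leaves one free parameter, so this case has to be handled by the regular semisimple anchoring below, together with homogeneity (Lemma~\ref{lem:homogeneous}).

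\emph{Remaining $x$.} For the remaining $x$ (the nilpotent ones for $m=1$, and those on the line where $c\gamma=c'\beta$ leaves freedom for $m=2$), I will anchor against a regular semisimple $y$ already known to satisfy $\Delta(y)=0$. A representing derivation $\ad(a+u)+\lambda\delta_0$ for $(x,y)$ satisfies $[a,y]=0$ and $\rho_m(y)u=0$. Thus $a\in\C y$, and $u$ lies in the zero weight space of $y$, which is $0$ for $m=1$ and a line $\C w_y$ for $m=2$. The $\slc$-component of $\Delta(x)$ must vanish.
\begin{itemize}
\item For $m=1$ this gives $\Delta(x)=[a,x]$ with $[a,x]=0$, so $\Delta(x)=0$ outright.
\item For $m=2$ it gives $\Delta(x)\in\C\,\rho_2(x)w_y$. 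Choosing two regular semisimple anchors $y,y'$ with $\rho_2(x)w_y$ and $\rho_2(x)w_{y'}$ linearly independent forces $\Delta(x)=0$. Independence can be checked on a concrete family such as $y=h+t e$ or $y=h+t f$, or one can combine these anchors with the $e$- and $f$-descriptions above.
\end{itemize}

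I expect this last step, the bookkeeping in the degenerate cases $m=1,2$, to be the main obstacle. The danger is that the zero weight vectors $w_y$ happen to be aligned relative to $\rho_2(x)$ for the chosen anchors. My first attempt is to take $y=h+te$ and $y=h+tf$ with $t$ generic, compute $w_y$ explicitly in the three-dimensional adjoint-type module $U_2$, and check that the resulting lines differ.
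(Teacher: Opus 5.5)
Your proposal is correct and follows essentially the paper's argument: the normalized zeros $e$ and $f$ confine $\Delta(x)$ to $\C\rho_m(x)u_0$ and $\C\rho_m(x)u_m$, which have disjoint supports for $m\ge3$, and for $m=1,2$ the paper uses the semisimple anchor $h$ as you propose (it proves $\Delta(h)=0$ first, whereas you read it off from the $e$/$f$ comparison). The $m=2$ obstacle you anticipate does not arise: already $y=h$ gives $w_h=u_1$, hence $\Delta(x)\in\C(\beta u_0+\gamma u_2)$, and this line meets your earlier restriction $\Delta(x)\in\C u_1$ only in $0$, so no generic pair $h+te$, $h+tf$ is needed.
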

\begin{proof}
We first prove $\Delta(h)=0$.  Apply the two-local condition to $(e,h)$.  A representing derivation $D=\ad(a+u)+\lambda\delta_0$ satisfies $D(e)=0$.  Separating the $\slc$- and $U_m$-components gives
\[
a\in\C e,
\qquad
u\in\ker\rho_m(e)=\C u_0.
\]
Since $\Delta(h)\in U_m$, the $\slc$-component of $D(h)$ must vanish, hence $a=0$. Therefore
\[
\Delta(h)\in\C u_0.
\]
Using the pair $(f,h)$ in the same way yields $\Delta(h)\in\C u_m$.  Since $u_0$ and $u_m$ are linearly independent, $\Delta(h)=0$.

Now write
\[
x=ae+bf+ch\in\slc.
\]
For the pair $(e,x)$, the condition $D(e)=0$ implies $a_D\in\C e$ and $u_D\in\C u_0$.  If $x\notin\C e$, then the $\slc$-component of $D(x)=\Delta(x)\in U_m$ forces $a_D=0$, and hence
\[
\Delta(x)\in\C\rho_m(x)u_0.
\]
Similarly, if $x\notin\C f$, then
\[
\Delta(x)\in\C\rho_m(x)u_m.
\]
For $m\ge3$,
\[
\rho_m(x)u_0=m(cu_0+bu_1),
\qquad
\rho_m(x)u_m=m(au_{m-1}-cu_m).
\]
The two displayed vectors have disjoint supports in the basis $u_0,\ldots,u_m$, so their one-dimensional spans have zero intersection. Thus $\Delta(x)=0$ whenever $x\notin\C e\cup\C f$, and homogeneity gives the same conclusion on $\C e\cup\C f$.

For $m=2$, the only additional possibility occurs when $c=0$ and $ab\ne0$, in which case both previous lines are contained in $\C u_1$.  Using the pair $(h,x)$, the equality $D(h)=0$ implies that the $\slc$-part of the representing derivation lies in $\C h$ and that its radical part lies in $\ker\rho_2(h)=\C u_1$.  Since $x\notin\C h$, the $\slc$-part is zero, so
\[
\Delta(x)\in\C\rho_2(x)u_1
=\C(au_0+bu_2).
\]
Its intersection with $\C u_1$ is zero.

Finally, if $m=1$, then $\rho_1(h)$ is invertible.  For any $x\notin\C h$, the pair $(h,x)$ and the equality $D(h)=0$ force both the radical part and the $\mathfrak{sl}_2$-part of the representing derivation to vanish; the outer scalar derivation is zero on $\mathfrak{sl}_2$. Hence $\Delta(x)=0$.  Homogeneity handles $x\in\C h$.  This proves the assertion for all $m\ge1$.
\end{proof}

\subsection{The action on the abelian radical}
We next show that, once the Levi factor has been normalized to zero, the two-point condition strongly restricts the radical action.  For modules of dimension at least four, one generic radical vector separates the full four-dimensional operator space generated by $\mathfrak{sl}_2$ and the identity.

\begin{lemma}\label{lem:separating-vector}
Assume $m\ge3$ and put
\[
w=u_0+u_1+u_m.
\]
If
\[
A=\alpha\rho_m(e)+\beta\rho_m(f)+\gamma\rho_m(h)+\lambda I_{U_m}
\]
satisfies $Aw=0$, then $A=0$.
\end{lemma}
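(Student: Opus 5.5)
The plan is a direct coordinate computation: expand $Aw$ in the basis $u_0,\dots,u_m$ using \eqref{eq:Um-action}, and read off the coefficients that isolate each of $\alpha,\beta,\gamma,\lambda$. The action of each operator on the three terms of $w$ is as follows.

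For $\rho_m(e)$:
\[
\rho_m(e)u_0=0,\qquad \rho_m(e)u_1=u_0,\qquad \rho_m(e)u_m=m\,u_{m-1}.
\]
For $\rho_m(f)$:
\[
\rho_m(f)u_0=m\,u_1,\qquad \rho_m(f)u_1=(m-1)u_2,\qquad \rho_m(f)u_m=0.
\]
For $\rho_m(h)$:
\[
\rho_m(h)u_0=m\,u_0,\qquad \rho_m(h)u_1=(m-2)u_1,\qquad \rho_m(h)u_m=-m\,u_m.
\]
Collecting terms gives
\[
Aw=(\alpha+\gamma m+\lambda)u_0+\bigl(\beta m+\gamma(m-2)+\lambda\bigr)u_1+\beta(m-1)u_2+\alpha m\,u_{m-1}+(\lambda-\gamma m)u_m .
\]

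The hypothesis $m\ge3$ is used here: it guarantees that the indices $0,1,2,m-1,m$ are pairwise distinct, so these five coefficients are genuinely separate coordinates. In fact $m-1\ge2$, with $m-1=2$ exactly when $m=3$; that case needs separate care and is handled below.

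For $m\ge4$, assume $Aw=0$ and set each coefficient to zero:
\begin{itemize}
\item the $u_2$ coefficient gives $\beta=0$, since $m-1\neq0$;
\item the $u_{m-1}$ coefficient gives $\alpha=0$;
\item the $u_0$ coefficient then gives $\gamma m+\lambda=0$, and the $u_m$ coefficient gives $\lambda-\gamma m=0$;
\item together these force $\gamma=\lambda=0$, since $m\ne0$.
\end{itemize}
The $u_1$ equation is then automatically satisfied.

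For $m=3$ the $u_2$ coefficient merges into $2\beta+3\alpha$, so that constraint alone no longer separates $\alpha$ and $\beta$. The remaining equations are:
\[
u_0:\ \alpha+3\gamma+\lambda=0,\qquad
u_1:\ 3\beta+\gamma+\lambda=0,\qquad
u_3:\ \lambda-3\gamma=0 .
\]
Solve in order:
\begin{itemize}
\item $\lambda=3\gamma$, so $\alpha=-6\gamma$ and $\beta=-\tfrac43\gamma$;
\item substituting into $2\beta+3\alpha=0$ gives $-\tfrac83\gamma-18\gamma=0$, hence $\gamma=0$;
\item then $\alpha=\beta=\lambda=0$.
\end{itemize}

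The only delicate point is this index coincidence at $m=3$. Otherwise the verification is a routine triangular linear system. In both cases $\alpha=\beta=\gamma=\lambda=0$, so $A=0$.
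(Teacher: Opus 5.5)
Your proof is correct and follows the paper's argument: you expand $Aw$ in the weight basis; for $m\ge4$ you use the $u_2$ and $u_{m-1}$ coefficients to kill $\beta$ and $\alpha$ and then the $u_0$ and $u_m$ coefficients to kill $\gamma$ and $\lambda$; for $m=3$ you solve the merged four-equation system directly. One wording slip: the indices $0,1,2,m-1,m$ are pairwise distinct only for $m\ge4$, not for all $m\ge3$, as you note yourself in the very next sentence.
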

\begin{proof}
For $m\ge4$, the coefficient of $u_{m-1}$ in $Aw$ is $m\alpha$, while the coefficient of $u_2$ is $(m-1)\beta$. Hence $\alpha=\beta=0$. The coefficients of $u_0$ and $u_m$ then give
\[
m\gamma+\lambda=0,
\qquad
-m\gamma+\lambda=0,
\]
so $\gamma=\lambda=0$.

For $m=3$, the coefficients of $u_0,u_1,u_2,u_3$ give
\[
\alpha+3\gamma+\lambda=0,
\qquad
3\beta+\gamma+\lambda=0,
\]
\[
3\alpha+2\beta=0,
\qquad
-3\gamma+\lambda=0.
\]
Substitution yields $\alpha=\beta=\gamma=\lambda=0$.
\end{proof}

\begin{proposition}\label{prop:radical-scalar}
Assume $m\ge3$ and let $\Delta$ be a 2-local derivation satisfying
\[
\Delta|_{\slc}=0.
\]
Then there exists $c\in\C$ such that
\[
\Delta(v)=cv\qquad\text{for every }v\in U_m.
\]
\end{proposition}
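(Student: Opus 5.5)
The plan is to show first that $\Delta|_{U_m}$ coincides with a single operator $A_0$ from the four-dimensional space
\[
\mathcal A=\operatorname{span}\{\rho_m(e),\rho_m(f),\rho_m(h),I_{U_m}\}
\]
by anchoring every radical vector to the separating vector $w=u_0+u_1+u_m$ of Lemma~\ref{lem:separating-vector}. Afterwards, anchors at $e$ and at $f$ will kill the $\mathfrak{sl}_2$-part of $A_0$.

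\emph{Step 1.} For $v\in U_m$, any derivation $D=\ad(a+u)+\lambda\delta_0$ satisfies
\[
D(v)=\rho_m(a)v+\lambda v
\]
by \eqref{eq:D-on-V}, because $U_m$ is abelian. Thus on $U_m$ every derivation acts by an element of $\mathcal A$.

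\emph{Step 2.} Fix a representing derivation for the pair $(w,w)$. This gives $\Delta(w)=A_0w$ with $A_0\in\mathcal A$.

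\emph{Step 3.} For arbitrary $v\in U_m$, take a representing derivation for the pair $(w,v)$; it acts on $U_m$ by some $A\in\mathcal A$. Then $Aw=\Delta(w)=A_0w$, so $(A-A_0)w=0$. Since $A-A_0\in\mathcal A$, Lemma~\ref{lem:separating-vector} gives $A=A_0$. Hence
\[
\Delta(v)=A_0v\qquad\text{for all }v\in U_m,
\]
and in particular $\Delta|_{U_m}$ is linear.

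\emph{Step 4.} Write $A_0=\rho_m(a_0)+\lambda_0I_{U_m}$. I use the normalization $\Delta(e)=0$, which holds because $\Delta|_{\slc}=0$. Take a representing derivation $D=\ad(a+u)+\lambda\delta_0$ for the pair $(e,w)$. Then $D(e)=0$, and by \eqref{eq:D-on-S} the $\slc$-component of $D(e)$ is $[a,e]$, so $[a,e]=0$ and $a\in\C e$. On $U_m$ this $D$ acts by $\alpha\rho_m(e)+\lambda I_{U_m}$ for some $\alpha,\lambda$. Since it agrees with $A_0$ at $w$, the same lemma gives
\[
A_0=\alpha\rho_m(e)+\lambda I_{U_m}.
\]
Using the pair $(f,w)$ in the same way gives $A_0=\beta\rho_m(f)+\lambda'I_{U_m}$.

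\emph{Step 5.} Lemma~\ref{lem:separating-vector} implies that $\rho_m(e),\rho_m(f),\rho_m(h),I_{U_m}$ are linearly independent. A nonzero relation among them would give a nonzero element of $\mathcal A$, which could not kill $w$. Comparing the two expressions for $A_0$ therefore forces $\alpha=\beta=0$ and $\lambda=\lambda'$. So $A_0=\lambda I_{U_m}$, and the claim follows with $c=\lambda$.

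The only delicate step is Step~3. The map $\Delta$ is not assumed linear, so linearity must come from one fixed anchor whose value determines the whole acting operator. This is exactly why $m\ge3$ is needed: for $m\ge3$ the lemma makes $w$ a separating vector for all of $\mathcal A$. Once this is in place, the rest is a comparison of coefficients.
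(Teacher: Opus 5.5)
Your proof is correct and uses the same ingredients as the paper's: the pairs $(e,w)$, $(f,w)$, $(w,v)$ with $w=u_0+u_1+u_m$, and the separating-vector Lemma~\ref{lem:separating-vector}. You apply them in reverse order, first showing that $\Delta|_{U_m}$ is a single operator $A_0\in\mathcal A$ and then forcing $A_0\in(\C\rho_m(e)+\C I)\cap(\C\rho_m(f)+\C I)=\C I$, whereas the paper first gets $\Delta(w)\in\C w$ from the linear independence of $w,\rho_m(e)w,\rho_m(f)w$ and then propagates (a minor wording point: in Step~5 a nontrivial relation among the four operators would produce the zero operator, not a nonzero element of $\mathcal A$, but the independence you need is exactly what the coefficient comparison in the lemma's proof establishes, and it is evident anyway).
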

\begin{proof}
Let $w=u_0+u_1+u_m$.  Using the pair $(e,w)$ and the equality $\Delta(e)=0$, formula \eqref{eq:D-on-V} gives
\[
\Delta(w)\in\operatorname{span}\{w,\rho_m(e)w\}.
\]
Likewise, the pair $(f,w)$ gives
\[
\Delta(w)\in\operatorname{span}\{w,\rho_m(f)w\}.
\]
Now
\[
\rho_m(e)w=u_0+mu_{m-1},
\qquad
\rho_m(f)w=mu_1+(m-1)u_2.
\]
For $m\ge3$, the three vectors $w,\rho_m(e)w,\rho_m(f)w$ are linearly independent. Hence the intersection of the two displayed two-dimensional spaces is $\C w$, and therefore
\[
\Delta(w)=cw
\]
for some $c\in\C$.

Fix $v\in U_m$ and choose a representing derivation for the pair $(w,v)$.  Its action on $U_m$ has the form
\[
A=\rho_m(a)+\lambda I_{U_m},
\qquad a\in\slc.
\]
Since $Aw=cw$, the operator $A-cI$ annihilates $w$. Lemma~\ref{lem:separating-vector} gives $A=cI$. Thus
\[
\Delta(v)=Av=cv.
\]
Since $v$ was arbitrary, the assertion follows.
\end{proof}

\begin{theorem}\label{thm:one-string-reduction}
Let $m=2\ell\ge3$ and let $\Delta$ be a 2-local derivation on $\g_\ell(1)$.  Then there exists a derivation $D\in\Der\g_\ell(1)$ such that
\[
(\Delta-D)(x)=0
\qquad\text{for every }x\in\slc\cup U_m.
\]
\end{theorem}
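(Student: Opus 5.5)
The proof combines the three preceding results, and each step subtracts a global derivation, which is allowed by Lemma~\ref{lem:subtract}.

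First, Proposition~\ref{prop:normalize-ef} gives $D_1\in\Der\g_\ell(1)$ such that $\Delta_1=\Delta-D_1$ takes values in $U_m$ and satisfies $\Delta_1(e)=\Delta_1(f)=0$. Lemma~\ref{lem:sl2-zero} then upgrades this to $\Delta_1|_{\slc}=0$. Since $m=2\ell\ge3$, Proposition~\ref{prop:radical-scalar} applies to $\Delta_1$ and yields a constant $c\in\C$ with $\Delta_1(v)=cv$ for all $v\in U_m$.

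Second, set $D=D_1+c\,\delta_0$. The outer derivation $\delta_0$ vanishes on $\slc$ and acts as the identity on $U_m=V_{\ell,1}$. Hence $\Delta-D=\Delta_1-c\delta_0$ vanishes on $\slc$, where both $\Delta_1$ and $\delta_0$ are zero. It also vanishes on $U_m$, where both act as $v\mapsto cv$. This is exactly the claim for every $x\in\slc\cup U_m$.

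There is no real obstacle. The only point to check is the order of normalizations. The scalar $c$ is extracted only after $\Delta$ has been made zero on $\slc$, and subtracting $c\delta_0$ does not disturb that, because $\delta_0|_{\slc}=0$. The statement concerns only the union $\slc\cup U_m$ and not mixed elements $x+v$; since the map is not assumed additive, those mixed elements are left for the subsequent argument.
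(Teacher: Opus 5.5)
Your proposal is correct and follows exactly the paper's proof. You apply Proposition~\ref{prop:normalize-ef} and Lemma~\ref{lem:sl2-zero} to make the map vanish on $\slc$, then use Proposition~\ref{prop:radical-scalar} to obtain the scalar $c$, and finish by subtracting $c\,\delta_0$, which leaves the $\slc$-normalization intact.
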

\begin{proof}
Apply Proposition~\ref{prop:normalize-ef} and Lemma~\ref{lem:sl2-zero}.  Proposition~\ref{prop:radical-scalar} then shows that the normalized map equals $c\,\delta_0$ on $U_m$.  Subtracting this global derivation gives the conclusion.
\end{proof}

\subsection{Mixed elements and full rigidity}
We now control mixed elements.  We first record a basic intersection condition and then construct zero mixed anchors whose automorphic conjugates eliminate the remaining nonlinear freedom.

\begin{lemma}\label{lem:zero-basic-mixed-anchors}
Assume $m\ge3$ and let $\Delta$ be a 2-local derivation on $\g_\ell(1)$ such that
\[
\Delta|_{\slc}=0,
\qquad
\Delta|_{U_m}=0.
\]
With $w=u_0+u_1+u_m$, one has
\[
\Delta(e+w)=0,
\qquad
\Delta(f+w)=0.
\]
\end{lemma}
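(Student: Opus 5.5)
The plan is to compute $\Delta(e+w)$ by intersecting the constraints coming from three pairs: $(e+w,w)$, $(e+w,e)$ and $(e+w,f)$. Each representing derivation is written as $D=\ad(a+u)+\lambda\delta_0$, and the formulas \eqref{eq:D-on-S}--\eqref{eq:D-on-V} are used throughout. The case $f+w$ will follow by the symmetric argument, with the roles of $e,f$ and $u_0,u_m$ interchanged.

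\emph{Pair $(e+w,w)$.} Since $\Delta(w)=0$, we have $(\rho_m(a)+\lambda I)w=0$. Lemma~\ref{lem:separating-vector} then forces $a=0$ and $\lambda=0$, so $D=\ad u$. Because $U_m$ is abelian, $D(e+w)=-\rho_m(e)u$, and therefore $\Delta(e+w)\in\operatorname{im}\rho_m(e)=\operatorname{span}\{u_0,\dots,u_{m-1}\}$.

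\emph{Pair $(e+w,e)$.} The condition $D(e)=0$ gives $a=te\in\C e$ and $u\in\ker\rho_m(e)=\C u_0$. Then
\[
D(e+w)=t\rho_m(e)w-\rho_m(e)u+\lambda w=t(u_0+mu_{m-1})+\lambda w .
\]
Comparing with the previous step, the $u_m$-coefficient must vanish, which gives $\lambda=0$. Hence $\Delta(e+w)=t(u_0+mu_{m-1})$ for some scalar $t$.

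\emph{Pair $(e+w,f)$.} This step removes the remaining scalar $t$ and is the main point of the argument. The condition $D(f)=0$ gives $a\in\C f$ and $u=su_m$. The $\slc$-component of $D(e+w)$ is $[a,e]\in\C h$. It must vanish because $\Delta(e+w)\in U_m$, so $a=0$. Then
\[
D(e+w)=-sm\,u_{m-1}+\lambda w .
\]
Since $m\ge3$, the index $1$ differs from both $m-1$ and $m$, so the $u_1$-coefficient of $D(e+w)$ is $\lambda$. In $t(u_0+mu_{m-1})$ this coefficient is $0$, giving $\lambda=0$. Comparing the $u_0$-coefficients then gives $t=\lambda=0$. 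Thus $\Delta(e+w)=0$.

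For $f+w$ the same three pairs are used with the roles of $e$ and $f$ swapped.
\begin{itemize}
\item The pair $(f+w,w)$ again gives $D=\ad u$, so $\Delta(f+w)\in\operatorname{im}\rho_m(f)=\operatorname{span}\{u_1,\dots,u_m\}$.
\item The pair $(f+w,f)$ gives $a\in\C f$ and $u\in\C u_m$. Then $D(f+w)=t\rho_m(f)w+\lambda w$, and the $u_0$-coefficient kills $\lambda$. Hence $\Delta(f+w)=t(mu_1+(m-1)u_2)$.
\item The pair $(f+w,e)$ gives $a=0$ (the $\slc$-component $[a,f]$ must vanish) and $u=su_0$. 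Then $D(f+w)=-sm\,u_1+\lambda w$.
\end{itemize}
For $m\ge3$ the $u_m$-coefficient gives $\lambda=0$, and then the $u_2$-coefficient gives $(m-1)t=0$, so $t=0$. The only delicate point is the index bookkeeping: the positions $0,1,2,m-1,m$ must not collide where they are used, and this is exactly where $m\ge3$ is needed.
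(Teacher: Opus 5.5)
Your proof is correct, and it differs from the paper's only in the choice of auxiliary pairs.

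\textbf{Shared first step.} Both you and the paper start with the pair $(w,e+w)$. Lemma~\ref{lem:separating-vector} kills the $\slc$-part and the scalar coefficient, which gives $\Delta(e+w)\in\operatorname{Im}\rho_m(e)$.

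\textbf{The paper's continuation.} The paper then uses the pair $(h,e+w)$.
\begin{itemize}
\item For odd $m$ one has $\ker\rho_m(h)=0$, so the value already lies in $\C w\cap\operatorname{Im}\rho_m(e)=0$.
\item For even $m$ it only obtains $\Delta(e+w)\in\C u_{m/2-1}$. A third pair $(f,e+w)$ gives $\C u_{m-1}$, and the two lines are separated because $m\ge4$.
\end{itemize}

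\textbf{Your continuation.} You never use $h$. The pair $(e,e+w)$ pins the value to the line $\C(u_0+mu_{m-1})$. The pair $(f,e+w)$ then forces it to vanish by comparing the $u_1$- and $u_0$-coefficients; the $f+w$ case is handled symmetrically.

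\textbf{Comparison.} Your route is uniform in $m\ge3$ and needs no parity split. The cost is that it always uses three pairs, whereas the paper's odd case needs only two. The index conditions you flag are exactly what $m\ge3$ supplies: $1\notin\{m-1,m\}$, $m-1\ne0$, and $m\notin\{1,2\}$ in the $f$-case.
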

\begin{proof}
Put $z_e=e+w$.  First use the pair $(w,z_e)$.  Since $\Delta(w)=0$, a representing derivation
\[
D=\ad(a+u)+\lambda\delta_0
\]
satisfies
\[
(\rho_m(a)+\lambda I)w=0.
\]
Lemma~\ref{lem:separating-vector} gives $a=0$ and $\lambda=0$.  Hence
\[
\Delta(z_e)=-\rho_m(e)u\in\operatorname{Im}\rho_m(e).
\]

Next use the pair $(h,z_e)$.  Since $D(h)=0$, the two homogeneous components give
\[
a\in\C h,
\qquad
u\in\ker\rho_m(h).
\]
The value $\Delta(z_e)$ is already known to lie in $U_m$, so the $\slc$-component of $D(z_e)$ must vanish.  Therefore $[a,e]=0$, hence $a=0$.  We obtain
\[
\Delta(z_e)\in -\rho_m(e)\ker\rho_m(h)+\C w.
\]
If $m$ is odd, then $\ker\rho_m(h)=0$, so
\[
\Delta(z_e)\in\C w\cap\operatorname{Im}\rho_m(e)=0,
\]
because $w$ has a non-zero $u_m$-component while
$\operatorname{Im}\rho_m(e)=\operatorname{span}\{u_0,\ldots,u_{m-1}\}$.

Assume now that $m$ is even.  Then
\[
\ker\rho_m(h)=\C u_{m/2},
\]
and consequently
\[
\Delta(z_e)\in
\C u_{m/2-1}+\C w.
\]
Intersecting with $\operatorname{Im}\rho_m(e)$ removes the $w$-component, hence
\[
\Delta(z_e)\in\C u_{m/2-1}.
\]
Using instead the pair $(f,z_e)$, the equality $D(f)=0$ gives
\[
a\in\C f,
\qquad
u\in\ker\rho_m(f)=\C u_m.
\]
Again the $\slc$-component of $D(z_e)$ is zero, so $a=0$, and therefore
\[
\Delta(z_e)\in\C u_{m-1}+\C w.
\]
After intersecting with $\operatorname{Im}\rho_m(e)$ we get
\[
\Delta(z_e)\in\C u_{m-1}.
\]
Since $m\ge4$ in the even case,
$u_{m/2-1}$ and $u_{m-1}$ are distinct basis vectors.  Thus
$\Delta(z_e)=0$.

The proof of $\Delta(f+w)=0$ is symmetric.  One first obtains
\[
\Delta(f+w)\in\operatorname{Im}\rho_m(f),
\]
then uses the pairs $(h,f+w)$ and $(e,f+w)$.  If $m$ is odd, the first of these already gives zero.  If $m$ is even, they respectively yield the one-dimensional restrictions
\[
\C u_{m/2+1}
\qquad\text{and}\qquad
\C u_1,
\]
whose intersection is zero for $m\ge4$.
\end{proof}

\begin{lemma}\label{lem:conjugate-anchor}
Let $\Phi$ be an automorphism of $\g_\ell(1)$ preserving both $\slc$ and $U_m$.  If a 2-local derivation $\Delta$ satisfies
\[
\Delta|_{\slc}=0,
\qquad
\Delta|_{U_m}=0,
\]
then the conjugate map
\[
\Delta^{\Phi}=\Phi^{-1}\Delta\Phi
\]
is again a 2-local derivation with the same normalization.  In particular, under the assumptions of Lemma~\ref{lem:zero-basic-mixed-anchors},
\[
\Delta\bigl(\Phi(e+w)\bigr)=0,
\qquad
\Delta\bigl(\Phi(f+w)\bigr)=0.
\]
\end{lemma}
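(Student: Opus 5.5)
The proof is a direct transport of the two-point condition through $\Phi$. There are three steps: verify the 2-local property of $\Delta^{\Phi}$, check that the normalization is preserved, and then read off the anchor statement from Lemma~\ref{lem:zero-basic-mixed-anchors}.

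\emph{The 2-local property.} Fix $x,y\in\g_\ell(1)$. Apply the definition of $\Delta$ to the pair $(\Phi(x),\Phi(y))$ to obtain a derivation $D=D_{\Phi(x),\Phi(y)}$ with
\[
\Delta(\Phi(x))=D(\Phi(x)),\qquad \Delta(\Phi(y))=D(\Phi(y)).
\]
Applying $\Phi^{-1}$ gives
\[
\Delta^{\Phi}(x)=\Phi^{-1}D\Phi(x),\qquad \Delta^{\Phi}(y)=\Phi^{-1}D\Phi(y).
\]
It then suffices to observe that $\Phi^{-1}D\Phi$ is a derivation whenever $\Phi$ is an automorphism, since
\[
\Phi^{-1}D\Phi([a,b])=\Phi^{-1}D[\Phi a,\Phi b]=[\Phi^{-1}D\Phi a,b]+[a,\Phi^{-1}D\Phi b].
\]
No linearity of $\Delta$ is used at any point; only the composition with the linear maps $\Phi^{\pm1}$ enters.

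\emph{The normalization.} Since $\Phi$ preserves $\slc$, every $x\in\slc$ has $\Phi(x)\in\slc$. Hence $\Delta(\Phi(x))=0$, and so $\Delta^{\Phi}(x)=\Phi^{-1}(0)=0$. The same argument applies verbatim on $U_m$, using that $\Phi$ preserves $U_m$. Thus $\Delta^{\Phi}|_{\slc}=0$ and $\Delta^{\Phi}|_{U_m}=0$.

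\emph{The anchor statement.} By the first two steps, $\Delta^{\Phi}$ is a 2-local derivation satisfying all the hypotheses of Lemma~\ref{lem:zero-basic-mixed-anchors}, with $m\ge3$ inherited from the standing assumption. That lemma gives
\[
\Delta^{\Phi}(e+w)=0,\qquad \Delta^{\Phi}(f+w)=0.
\]
Applying the injective map $\Phi$ turns these into
\[
\Delta(\Phi(e+w))=0,\qquad \Delta(\Phi(f+w))=0.
\]

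There is no genuine obstacle here. The only point requiring care is to apply Lemma~\ref{lem:zero-basic-mixed-anchors} to the conjugate map $\Delta^{\Phi}$ rather than to $\Delta$ itself, which is exactly why the normalization has to be shown to survive conjugation first.
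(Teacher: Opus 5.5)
Your proposal is correct and follows essentially the same argument as the paper: conjugate the representing derivation for the pair $(\Phi x,\Phi y)$ by $\Phi$, note that the normalization survives because $\Phi$ preserves both summands, and apply Lemma~\ref{lem:zero-basic-mixed-anchors} to $\Delta^{\Phi}$ before transporting back. One trivial remark: the final step only needs $\Phi(0)=0$, not the injectivity of $\Phi$.
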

\begin{proof}
For any $x,y$ choose $D_{\Phi x,\Phi y}\in\Der\g_\ell(1)$ representing $\Delta$ at $\Phi x$ and $\Phi y$.  Then
\[
\Phi^{-1}D_{\Phi x,\Phi y}\Phi
\]
is a derivation representing $\Delta^\Phi$ at $x$ and $y$.  Thus $\Delta^\Phi$ is 2-local.  Since $\Phi$ preserves the two summands and $\Delta$ vanishes on each of them, so does $\Delta^\Phi$.  Lemma~\ref{lem:zero-basic-mixed-anchors} applied to $\Delta^\Phi$ gives the final assertion.
\end{proof}

\begin{theorem}\label{thm:one-string-full}
Let $m=2\ell\ge3$.  Then every 2-local derivation on
\[
\g_\ell(1)=\slc\ltimes U_m
\]
is a derivation.  Equivalently,
\[
\TwoLocDer\g_\ell(1)=\Der\g_\ell(1).
\]
\end{theorem}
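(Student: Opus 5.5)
By Theorem~\ref{thm:one-string-reduction} we subtract a global derivation and assume $\Delta|_{\slc}=0$ and $\Delta|_{U_m}=0$. It then suffices to show $\Delta(x+v)=0$ for every $x\in\slc$ and $v\in U_m$. Fix such a pair with $x\ne0$ and $v\ne0$; the remaining cases are already covered.

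\emph{Step 1: pairing against the summands.} Take a representing derivation $D=\ad(a+u)+\lambda\delta_0$ for the pair $(x,x+v)$. Since $D(x)=0$, we get $[a,x]=0$ and $\rho_m(x)u=0$. Hence
\[
\Delta(x+v)=D(v)=(\rho_m(a)+\lambda I)v,\qquad a\in\mathfrak c(x),
\]
so the value always lies in $U_m$. Pairing instead with $(v,x+v)$ gives $(\rho_m(a')+\lambda' I)v=0$. If $v$ is separating in the sense of Lemma~\ref{lem:separating-vector}, this forces $a'=0$ and $\lambda'=0$, and then
\[
\Delta(x+v)=-\rho_m(x)u'\in\operatorname{Im}\rho_m(x).
\]
Already these two linear constraints intersect trivially in many cases. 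In general they do not.

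\emph{Step 2: zero anchors.} The decisive tool is pairing $x+v$ with a mixed element $z$ already known to satisfy $\Delta(z)=0$. Lemma~\ref{lem:zero-basic-mixed-anchors} and Lemma~\ref{lem:conjugate-anchor} supply a large supply of such $z$, namely
\[
z=\Phi(e+w),\qquad z=\Phi(f+w),
\]
where $\Phi$ runs through automorphisms preserving both summands. These include $\exp\ad(g)$ for $g\in\slc$ (the $SL_2$-action) and the scalings $u\mapsto tu$ on $U_m$. Each anchor $z=y+w'$ yields the condition $D(z)=0$, that is,
\[
[a,y]=0,\qquad \rho_m(y)u=(\rho_m(a)+\lambda)w'.
\]
First I reduce $x$ by $SL_2$-conjugacy to one of the two normal forms $e$ or $h$, up to scale. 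I treat the two cases separately.

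\emph{Case $x=e$.} Pair $e+v$ with the anchors $e+tw'$, where $w'$ runs over the $\exp\ad(e)$-images of $w$ and $t$ varies. This leaves a representing $D$ with $a\in\C e$. The resulting affine constraints should pin $\Delta(e+v)$ to a small set. Combining this with the constraint $\Delta(e+v)\in\operatorname{Im}\rho_m(e)$ should give zero, at least for generic $v$.

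\emph{Case $x=h$.} Here $\rho_m(h)$ is diagonal. Steps 1 and 2 give that $\Delta(h+v)$ lies in the $\rho_m(\mathfrak c(h))$-span of $v$ intersected with $\operatorname{Im}\rho_m(h)$. An anchor conjugate to $e+w$ by a suitable torus element should then kill the remainder.

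\emph{Step 3: non-generic $v$.} For non-separating $v$, I would use homogeneity together with a density argument. Because $\Delta$ is not continuous, density alone cannot be used. Instead I would pair $x+v$ with $x+v''$ for a generic separating $v''$, and show that the constraints force agreement with the zero value already established there.

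\textbf{Main obstacle.} I expect the hardest step to be the case where $v$ lies in a small $\mathfrak c(x)$-stable subspace, such as low-dimensional weight strings relative to $x$. Both intersections are then large, and for $m$ odd this is exactly the exceptional $(m-1)$-dimensional local freedom found in \cite{ASYLocalCG}. To handle it, I would choose $\Phi$ so that $\Phi(w)$ is adapted to $v$. I would then verify, by a weight-support computation similar to the one in Lemma~\ref{lem:zero-basic-mixed-anchors}, that the anchor constraint on $u$ and the constraint $\Delta(x+v)\in\operatorname{Im}\rho_m(x)$ have disjoint supports.
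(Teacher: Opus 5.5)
Your proposal picks the right tools: the reduction of Theorem~\ref{thm:one-string-reduction}, the zero anchors of Lemma~\ref{lem:zero-basic-mixed-anchors}, and their conjugates from Lemma~\ref{lem:conjugate-anchor}. But it never carries out the computation that makes these tools effective. The critical steps are left as things that ``should'' work, and the argument is organised around a difficulty that does not exist.

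\textbf{The missing step.} Pair $y=x+v$ with $z_e=e+w$, $w=u_0+u_1+u_m$, assuming $x\notin\C e$.
\begin{itemize}
\item From $D(z_e)=0$ you get $a=\alpha e$ and $-\rho_m(e)u+\alpha\rho_m(e)w+\lambda w=0$.
\item $\operatorname{Im}\rho_m(e)$ has no $u_m$-component, so the $u_m$-coordinate gives $\lambda=0$.
\item The $\slc$-component of $D(y)=\Delta(y)\in U_m$ is $\alpha[e,x]$, so $\alpha=0$.
\item Hence $u\in\ker\rho_m(e)=\C u_0$, and $\Delta(y)=-\rho_m(x)u\in\C\rho_m(x)u_0$.
\end{itemize}
The vector $v$ drops out completely. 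The anchor $f+w$ gives $\Delta(y)\in\C\rho_m(x)u_m$ when $x\notin\C f$. For $x=Ae+Bf+Ch$ these two lines are spanned by $Cu_0+Bu_1$ and $Au_{m-1}-Cu_m$, which have disjoint supports when $m\ge3$.

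Consequences for your plan:
\begin{itemize}
\item $v$ never needs to be separating, so the second pairing in your Step~1 and all of Step~3 are superfluous.
\item Your ``main obstacle'' is a misdiagnosis. The exceptional local derivations are linear, and a linear map agreeing with a derivation on $\slc$ and on $U_m$ is that derivation. So this freedom was already removed by Theorem~\ref{thm:one-string-reduction}. What remains is only nonlinearity on mixed elements, and the anchors kill it uniformly in $v$.
\item With your reduction to $x\in\{h,e\}$, the case $x=h$ is immediate: $\Delta(h+v)\in\C u_0\cap\C u_m=0$. No torus conjugate is needed.
\end{itemize}

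\textbf{The case $x=e$.} This is the only place needing an extra anchor, and you leave it at ``should give zero, at least for generic $v$''.
\begin{itemize}
\item The paper pairs $e+v$ with $f+w$ to get $\Delta(e+v)\in\C u_{m-1}$.
\item It then pairs with $\exp(t\ad f)(e+w)$, $t\ne0$, whose $\slc$-part is not in $\C e$. This again forces $a=0$ and $\lambda=0$, and puts $\Delta(e+v)$ in $\C\rho_m(e)G_t(u_0)$.
\item That vector has $u_0$-coefficient $mt\ne0$, so it is not proportional to $u_{m-1}$, and $\Delta(e+v)=0$.
\end{itemize}

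Your own family also works for every $v$, once you do the calculation. Put $w'_s=\exp(s\rho_m(e))w$ and use the anchor $e+tw'_s$ with $t\ne0$.
\begin{itemize}
\item One finds $a=\alpha e$, $\lambda=0$ and $u=t\alpha w'_s+cu_0$.
\item Hence $\Delta(e+v)\in\C\rho_m(e)(v-tw'_s)$.
\item Varying $t$ gives zero intersection unless $\rho_m(e)v\in\C\rho_m(e)w'_s$ for every $s$.
\item That forces $\rho_m(e)v=0$, because the vectors $\rho_m(e)w'_s$ are not all proportional when $m\ge3$.
\item Then two values of $s$ already give zero intersection.
\end{itemize}

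Either way, once the anchor's stabilizer is computed the proof is a few lines of coefficient comparison. As submitted, those lines are exactly what is missing.
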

\begin{proof}
By Theorem~\ref{thm:one-string-reduction}, after subtracting a global derivation we may assume
\begin{equation}\label{eq:normalized-zero-SV}
\Delta|_{\slc}=0,
\qquad
\Delta|_{U_m}=0.
\end{equation}
We prove that this normalized map is identically zero.

Let
\[
y=x+v,
\qquad x\in\slc,
\quad v\in U_m.
\]
Using the pair $(x,y)$ and $\Delta(x)=0$, we get
\[
\Delta(y)=D_{x,y}(v)\in U_m.
\]
Put again $w=u_0+u_1+u_m$ and
\[
z_e=e+w,
\qquad z_f=f+w.
\]
By Lemma~\ref{lem:zero-basic-mixed-anchors},
\[
\Delta(z_e)=\Delta(z_f)=0.
\]

Assume first that $x\notin\C e$.  For a representing derivation
\[
D=\ad(a+u)+\lambda\delta_0
\]
of the pair $(z_e,y)$, the equality $D(z_e)=0$ implies
\[
[a,e]=0,
\qquad
-\rho_m(e)u+\rho_m(a)w+\lambda w=0.
\]
Hence $a=\alpha e$.  Since the $u_m$-coefficient in the second equation comes only from $\lambda w$, we have $\lambda=0$, and then
\[
u=\alpha w+c u_0
\]
for some $c\in\C$.  The $\slc$-component of $D(y)=\Delta(y)$ equals
\[
[a,x]=\alpha[e,x].
\]
Because $\Delta(y)\in U_m$ and $x\notin\C e$, it follows that $\alpha=0$.  Therefore
\begin{equation}\label{eq:mixed-line-u0}
\Delta(y)\in\C\rho_m(x)u_0.
\end{equation}
Similarly, if $x\notin\C f$, the pair $(z_f,y)$ gives
\begin{equation}\label{eq:mixed-line-um}
\Delta(y)\in\C\rho_m(x)u_m.
\end{equation}
Write
\[
x=Ae+Bf+Ch.
\]
Then
\[
\rho_m(x)u_0=m(Cu_0+Bu_1),
\qquad
\rho_m(x)u_m=m(Au_{m-1}-Cu_m).
\]
For $m\ge3$ these vectors have disjoint supports.  Consequently, if
$x\notin\C e\cup\C f$, equations \eqref{eq:mixed-line-u0} and
\eqref{eq:mixed-line-um} imply $\Delta(y)=0$.

It remains to treat $x\in\C e\cup\C f$.  By homogeneity it is enough to consider $x=e$ and $x=f$.  Let first
\[
y=e+v.
\]
The pair $(z_f,y)$ gives
\begin{equation}\label{eq:e-mixed-first-line}
\Delta(y)\in\C\rho_m(e)u_m=\C u_{m-1}.
\end{equation}
Fix $t\ne0$ and let
\[
\Phi_t=\exp(t\,\ad f).
\]
This automorphism preserves $\slc$ and $U_m$.  Set
\[
s_t=\Phi_t(e),
\qquad
r_t=\Phi_t(w),
\qquad
z_t=s_t+r_t.
\]
By Lemma~\ref{lem:conjugate-anchor}, $\Delta(z_t)=0$.  Apply the two-local condition to $(z_t,y)$.  Since $\Delta(z_t)=0$ and $\Delta(y)\in U_m$, the $\slc$-components show that the $\slc$-part $a$ of the representing derivation belongs simultaneously to $\C s_t$ and to the centralizer of $e$.  As $s_t\notin\C e$ for $t\ne0$, we get $a=0$.  The equality $D(z_t)=0$ now becomes
\[
-\rho_m(s_t)u+\lambda r_t=0.
\]
Let
\[
G_t=\exp\bigl(t\rho_m(f)\bigr)
\]
be the restriction of $\Phi_t$ to $U_m$.  Then
\[
\rho_m(s_t)=G_t\rho_m(e)G_t^{-1},
\qquad
r_t=G_t(w).
\]
Since $w\notin\operatorname{Im}\rho_m(e)$, it follows that
$r_t\notin\operatorname{Im}\rho_m(s_t)$.  Hence $\lambda=0$ and
\[
u\in\ker\rho_m(s_t)=\C G_t(u_0).
\]
Thus
\begin{equation}\label{eq:e-mixed-second-line}
\Delta(y)\in\C\rho_m(e)G_t(u_0).
\end{equation}
Moreover,
\[
G_t(u_0)=\sum_{k=0}^{m}\binom{m}{k}t^k u_k.
\]
Consequently the coefficient of $u_0$ in
$\rho_m(e)G_t(u_0)$ equals $mt\ne0$.  Since $m\ge3$, this vector is not proportional to $u_{m-1}$.  Combining \eqref{eq:e-mixed-first-line} and \eqref{eq:e-mixed-second-line} gives
\[
\Delta(e+v)=0.
\]

The case $y=f+v$ is symmetric.  The pair $(z_e,y)$ first gives
\[
\Delta(y)\in\C\rho_m(f)u_0=\C u_1.
\]
Then, with
\[
\Psi_t=\exp(t\,\ad e),
\]
Lemma~\ref{lem:conjugate-anchor} gives a zero anchor
$\Psi_t(f+w)$.  Repeating the preceding argument yields
\[
\Delta(y)\in\C\rho_m(f)\Psi_t(u_m).
\]
The vector $\rho_m(f)\Psi_t(u_m)$ has a non-zero $u_m$-coefficient for $t\ne0$, so for $m\ge3$ it is not proportional to $u_1$.  Hence
\[
\Delta(f+v)=0.
\]

Finally, if $x=0$, then $y=v\in U_m$ and $\Delta(y)=0$ by
\eqref{eq:normalized-zero-SV}.  Thus $\Delta$ vanishes on every element of
$\g_\ell(1)$.  Undoing the initial subtraction proves that the original 2-local derivation is a derivation.
\end{proof}

\subsection{The adjoint-module case \texorpdfstring{$m=2$}{m=2}}
The separating-vector argument used above starts in module dimension four.  The remaining case $m=2$ has a different advantage: $U_2$ is the adjoint $\mathfrak{sl}_2$-module and therefore carries a nondegenerate invariant symmetric bilinear form.  This allows us to synchronize the scalar part of the radical action before returning to mixed elements.

Define a symmetric bilinear form $B$ on $U_2$ by
\begin{equation}\label{eq:B-U2}
B(u_0,u_2)=B(u_2,u_0)=1,
\qquad
B(u_1,u_1)=-\frac12,
\end{equation}
with all other pairings of basis vectors equal to zero.  A direct check from \eqref{eq:Um-action} gives
\begin{equation}\label{eq:B-invariance-U2}
B(\rho_2(a)v,w)+B(v,\rho_2(a)w)=0
\qquad
(a\in\slc,\ v,w\in U_2).
\end{equation}
The form $B$ is nondegenerate.

\begin{proposition}\label{prop:m2-radical-scalar}
Let $m=2$ and let $\Delta$ be a 2-local derivation on $\g_1(1)$ satisfying
\[
\Delta|_{\slc}=0.
\]
Then there exists $c\in\C$ such that
\[
\Delta(v)=cv
\qquad
\text{for every }v\in U_2.
\]
\end{proposition}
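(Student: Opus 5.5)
The strategy is to exploit the invariant form \eqref{eq:B-invariance-U2}: on $U_2$ every derivation acts as $A=\rho_2(a)+\lambda I_{U_2}$ with $\rho_2(a)$ $B$-skew, so $B(Av,v)=\lambda B(v,v)$. Hence, for a non-isotropic vector, the scalar part of \emph{any} representing derivation is read off from $\Delta(v)$ alone. I will also use the fact that $U_2$ is the adjoint module. Concretely, I fix an $\slc$-module isomorphism $\iota:\slc\to U_2$ with $\rho_2(a)\iota(b)=\iota([a,b])$. Then $\rho_2(a)v=0$ for $v\neq0$ forces $a\in\C\,\iota^{-1}(v)$.

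\emph{Step 1 (generic vectors are eigenvectors).} Take $v\in U_2$ and the pair $(x,v)$ with $x\in\{e,f,h\}$. Since $\Delta(x)=0$, the $\slc$-part of the representing derivation lies in $\C x$. The radical part acts trivially on $U_2$, because $U_2$ is abelian. Therefore
\[
\Delta(v)\in\operatorname{span}\{v,\rho_2(x)v\}.
\]
Using $x=e$ and $x=f$, I obtain $\Delta(v)\in\C v$ whenever $v,\rho_2(e)v,\rho_2(f)v$ are linearly independent. This is a nonempty Zariski-open condition; for instance $v=u_0+u_2$ gives $u_0+u_2,\ 2u_1,\ 2u_1$, which fails, so I would instead check a vector such as $u_0+u_1+u_2$. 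Let $G$ be the dense open set of vectors that satisfy this independence condition and are also non-isotropic, $B(v,v)\ne0$. For $v\in G$ write $\Delta(v)=c(v)v$. For any derivation representing $v$, the form identity gives $\lambda=B(\Delta v,v)/B(v,v)=c(v)$.

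\emph{Step 2 (constancy of $c$).} For $v,w\in G$, take a representing derivation of the pair $(v,w)$ with scalar part $\lambda$. Step 1 applied to both vectors gives $c(v)=\lambda=c(w)$. So $c$ is a constant $c$ on $G$.

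\emph{Step 3 (arbitrary vectors).} Let $0\ne v\in U_2$ and $w\in G$, and represent the pair $(w,v)$ by $A=\rho_2(a)+\lambda I$. By Step 1, $\lambda=c$, so $\rho_2(a)w=0$, hence $a\in\C\,\iota^{-1}(w)$. Therefore
\[
\Delta(v)-cv\in\C\,\rho_2(\iota^{-1}(w))v=\C\,\iota([\iota^{-1}(w),\iota^{-1}(v)]).
\]
Doing this for two vectors $w_1,w_2\in G$ puts $\Delta(v)-cv$ in the intersection of two lines. The map $w\mapsto[\iota^{-1}(w),\iota^{-1}(v)]$ has rank $2$, since the centralizer of a nonzero element of $\slc$ is one-dimensional. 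So generic $w_1,w_2\in G$ give non-proportional images, the intersection is zero, and $\Delta(v)=cv$. The case $v=0$ follows from homogeneity.

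The main obstacle is the genericity bookkeeping. One must verify that $G$ is nonempty, for example by exhibiting an explicit $v$ satisfying both conditions. One must also justify that $G$ is dense, being an intersection of nonempty Zariski-open sets, so that it still meets the open set of $w$ whose image under $w\mapsto[\iota^{-1}(w),\iota^{-1}(v)]$ avoids a given line.
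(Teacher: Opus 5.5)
Your proof is correct, and its core step is the paper's: at a non-isotropic $v$, the scalar coefficient of \emph{any} representing derivation equals $B(\Delta(v),v)/B(v,v)$, and this synchronizes $\lambda$ across pairs. The two ends of the argument are handled differently.

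\emph{Eigenvector step.} The paper does not intersect the planes coming from the pairs $(e,v)$ and $(f,v)$. It uses the pair $(\iota^{-1}(v),v)$ itself. The $\slc$-part of the representing derivation then lies in $\C\,\iota^{-1}(v)$ and annihilates $v$, so $\Delta(v)=\lambda v$ for \emph{every} $v\ne0$. You already state this fact about $\iota$, but you apply it only to the anchor.

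\emph{Isotropic vectors.} With $\Delta(v)=\varphi(v)v$ known everywhere, the paper treats an isotropic $v$ with a single non-isotropic anchor $w$ satisfying $B(v,w)\ne0$. In the pair $(v,w)$ one has $\lambda=c$ and $\rho_2(a)w=0$. Invariance then gives $(\varphi(v)-c)B(v,w)=-B(v,\rho_2(a)w)=0$.

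\emph{Your Step~3.} You instead treat an arbitrary $v$, not yet known to be an eigenvector, by intersecting the lines $\C\,\rho_2(\iota^{-1}(w_i))v$ for two generic anchors $w_1,w_2\in G$. This is sound: the map $w\mapsto[\iota^{-1}(w),\iota^{-1}(v)]$ has rank two, and $G$ is a non-empty Zariski-open set. For instance, $u_0+u_1+u_2$ gives determinant $3$ for the independence condition and has $B$-value $3/2$.

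The paper's route is shorter and avoids genericity bookkeeping entirely. Yours needs the eigenvector property only on a dense open set and closes with a two-anchor line-intersection argument, the same kind the paper uses later for mixed elements.
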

\begin{proof}
Since $U_2$ is isomorphic to the adjoint $\slc$-module, fix an $\slc$-module isomorphism
\[
\theta:\slc\longrightarrow U_2.
\]
For $0\ne v\in U_2$, write $v=\theta(x_v)$.  Then
\[
\rho_2(x_v)v=\theta([x_v,x_v])=0.
\]
Apply the two-local condition to the pair $(x_v,v)$.  A representing derivation has the form
\[
D=\ad(a+u)+\lambda\delta_0.
\]
Since $\Delta(x_v)=0$, the equality $D(x_v)=0$ gives
\[
[a,x_v]=0.
\]
The centralizer of a non-zero element of $\slc$ is one-dimensional, hence $a\in\C x_v$.  Therefore
\[
\Delta(v)=D(v)=(\rho_2(a)+\lambda I)v=\lambda v.
\]
Thus there is a homogeneous scalar function $\varphi$ on $U_2\setminus\{0\}$ such that
\[
\Delta(v)=\varphi(v)v.
\]

We show that $\varphi$ is constant.  Let $v,w$ be non-isotropic vectors for $B$.  For a representing derivation of the pair $(v,w)$, put
\[
A=\rho_2(a)+\lambda I.
\]
Then
\[
Av=\varphi(v)v,
\qquad
Aw=\varphi(w)w.
\]
By \eqref{eq:B-invariance-U2},
\[
B(Av,v)=\lambda B(v,v),
\qquad
B(Aw,w)=\lambda B(w,w).
\]
Hence
\[
\varphi(v)=\lambda=\varphi(w).
\]
Thus $\varphi$ has one common value, say $c$, on all non-isotropic vectors.

Now let $0\ne v$ be isotropic.  Choose a non-isotropic vector $w$ with
\[
B(v,w)\ne0.
\]
For the pair $(v,w)$, the previous paragraph forces the scalar coefficient of the representing operator $A=\rho_2(a)+\lambda I$ to satisfy $\lambda=c$.  Hence
\[
\rho_2(a)v=(\varphi(v)-c)v,
\qquad
\rho_2(a)w=0.
\]
Using \eqref{eq:B-invariance-U2},
\[
(\varphi(v)-c)B(v,w)
=B(\rho_2(a)v,w)
=-B(v,\rho_2(a)w)=0.
\]
Since $B(v,w)\ne0$, we obtain $\varphi(v)=c$.  Therefore
\[
\Delta(v)=cv
\]
for every $v\ne0$, and the same holds at $v=0$ by homogeneity.
\end{proof}

\begin{lemma}\label{lem:m2-zero-anchors}
Assume $m=2$ and let $\Delta$ be a 2-local derivation satisfying
\[
\Delta|_{\slc}=0,
\qquad
\Delta|_{U_2}=0.
\]
Put
\[
w=u_0+u_1+u_2.
\]
Then
\[
\Delta(e+w)=\Delta(f+w)=\Delta(h+w)=0.
\]
\end{lemma}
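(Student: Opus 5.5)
The plan is to repeat the scheme of Lemma~\ref{lem:zero-basic-mixed-anchors}, with the separating vector of Lemma~\ref{lem:separating-vector} replaced by the invariant form $B$ of \eqref{eq:B-U2}. Lemma~\ref{lem:separating-vector} is unavailable here, since four operators cannot be separated by a vector in a three-dimensional module. Fix $x\in\{e,f,h\}$ and put $z=x+w$. The argument intersects three constraints on $\Delta(z)$, coming from the pairs $(w,z)$, $(x,z)$ and one further pair $(x',z)$ with $x'\in\{e,f\}$.

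\emph{Step 1: the pair $(w,z)$.} Let $D=\ad(a+u)+\lambda\delta_0$ represent this pair. Since $\Delta(w)=0$, the operator $A=\rho_2(a)+\lambda I$ satisfies $Aw=0$. We compute
\[
B(w,w)=2B(u_0,u_2)+B(u_1,u_1)=\tfrac32\ne0.
\]
Invariance \eqref{eq:B-invariance-U2} gives $B(\rho_2(a)w,w)=0$, hence $0=B(Aw,w)=\tfrac32\lambda$, so $\lambda=0$. Then $\rho_2(a)w=0$. Under the module isomorphism $\theta$ of Proposition~\ref{prop:m2-radical-scalar}, this says $[a,\theta^{-1}(w)]=0$, so $a\in\C x_w$ with $x_w=\theta^{-1}(w)$.

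The isomorphism $\theta$ sends $e,h,f$ to nonzero multiples of $u_0,u_1,u_2$ respectively, since these have weights $2,0,-2$. As $w$ has all three coordinates nonzero, $x_w$ is proportional to none of $e,f,h$.

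Next, the pair $(x,z)$ together with $\Delta(x)=0$ shows $\Delta(z)\in U_2$, which is checked exactly as at the start of the proof of Theorem~\ref{thm:one-string-full}. Hence the $\slc$-component $[a,x]$ of $D(z)$ vanishes. This forces $a=0$, and therefore
\[
\Delta(z)=-\rho_2(x)u\in\operatorname{Im}\rho_2(x).
\]

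\emph{Step 2: the pair $(x,z)$ again.} Now $D(x)=0$ gives $a\in\C x$, so
\[
\Delta(z)=D(z)=(\rho_2(a)+\lambda I)w\in\operatorname{span}\{\rho_2(x)w,\,w\}.
\]
In each case $w\notin\operatorname{Im}\rho_2(x)$:
\[
\operatorname{Im}\rho_2(e)=\operatorname{span}\{u_0,u_1\},\quad
\operatorname{Im}\rho_2(f)=\operatorname{span}\{u_1,u_2\},\quad
\operatorname{Im}\rho_2(h)=\operatorname{span}\{u_0,u_2\}.
\]
Combining with Step 1 therefore gives $\Delta(z)\in\C\rho_2(x)w$. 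Explicitly,
\[
\rho_2(e)w=u_0+2u_1,\qquad \rho_2(f)w=2u_1+u_2,\qquad \rho_2(h)w=2u_0-2u_2.
\]

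\emph{Step 3: a third pair.} This is the step where the single remaining line must be killed, and it is the main obstacle.

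For $x=e$, use the pair $(f,z)$. Here $D(f)=0$ gives $a\in\C f$ and $u\in\ker\rho_2(f)=\C u_2$. The requirement $\Delta(z)\in U_2$ forces $[a,e]=0$, so $a=0$. Then
\[
\Delta(z)\in\operatorname{span}\{\rho_2(e)u_2,\,w\}=\operatorname{span}\{u_1,\,w\}.
\]
This span meets $\C(u_0+2u_1)$ only in $0$, because the $u_2$-coefficient of an element of the span must vanish, which leaves only $\C u_1$.

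For $x=f$, use the pair $(e,z)$ symmetrically. This gives $\Delta(z)\in\operatorname{span}\{u_1,w\}$, and comparing $u_0$-coefficients shows its intersection with $\C(2u_1+u_2)$ is zero.

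For $x=h$, use the pair $(e,z)$. Here $a\in\C e$ and $u\in\C u_0$, and $[a,h]=0$ forces $a=0$. Hence
\[
\Delta(z)\in\operatorname{span}\{\rho_2(h)u_0,\,w\}=\operatorname{span}\{u_0,\,w\}.
\]
This span intersects $\C(u_0-u_2)$ trivially: the $u_1$-coefficient kills the $w$-component, and $u_0-u_2$ is not proportional to $u_0$. Thus $\Delta(z)=0$ in all three cases.
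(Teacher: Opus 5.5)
Your proof is correct. It follows the paper's overall scheme of intersecting the constraints coming from several zero points, but it obtains the key constraint $\Delta(z)\in\operatorname{Im}\rho_2(x)$ by a different mechanism.

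\textbf{The paper's route.} It first intersects the constraints from the pairs $(x,z)$ and $(f,z)$ or $(e,z)$ (for $x=h$, both) to reach $\Delta(z)\in\C w$. It then kills this line using a pair with a weight vector tailored to $x$: $(u_0,e+w)$, $(u_2,f+w)$, or $(u_1,h+w)$. For each such vector, the annihilation condition together with the vanishing $\slc$-component at $z$ gives $\lambda=0$ and $a\in\C x$ by a direct coefficient check. The target of this step is therefore $\operatorname{Im}\rho_2(x)$.

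\textbf{Your route.} You use the single radical anchor $w$ in all three cases. Non-isotropy, $B(w,w)=\tfrac32$, kills $\lambda$. The adjoint identification then places $a$ in the centralizer of $\theta^{-1}(w)$, which meets $\C x$ trivially because $\theta^{-1}(w)$ has non-zero $e$-, $h$- and $f$-components. Next you intersect with the $(x,z)$-plane to isolate $\C\rho_2(x)w$, and finally remove that line with one nilpotent pair.

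\textbf{Comparison.} The paper's version is more elementary: it needs neither the invariant form nor the isomorphism $\theta$, but it is assembled case by case. It also uses four pairs for $h+w$, where you use three. Your version is uniform in $x\in\{e,f,h\}$. It is also the exact analogue of the first step of Lemma~\ref{lem:zero-basic-mixed-anchors}, with the separating vector replaced by the invariant form. This is the same substitution the paper itself makes in Proposition~\ref{prop:m2-radical-scalar}.
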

\begin{proof}
Set $z_e=e+w$.  The pair $(e,z_e)$ gives
\[
\Delta(z_e)\in\operatorname{span}\{w,\rho_2(e)w\},
\]
while the pair $(f,z_e)$ gives, after the vanishing of the $\slc$-component,
\[
\Delta(z_e)\in\operatorname{span}\{w,u_1\}.
\]
Since
\[
\rho_2(e)w=u_0+2u_1,
\]
the intersection of these two planes is $\C w$.  Thus
\[
\Delta(z_e)\in\C w.
\]
Now use the pair $(u_0,z_e)$.  If
\[
D=\ad(a+u)+\lambda\delta_0
\]
represents this pair, then $D(u_0)=0$.  Writing
\[
a=\alpha e+\beta f+\gamma h,
\]
we obtain
\[
(\rho_2(a)+\lambda I)u_0
=2\beta u_1+(2\gamma+\lambda)u_0=0.
\]
Hence $\beta=0$ and $\lambda=-2\gamma$.  Since $\Delta(z_e)$ has no $\slc$-component, the $\slc$-part of $D(z_e)$ gives
\[
[a,e]=2\gamma e=0.
\]
Thus $\gamma=\lambda=0$ and $a\in\C e$.  Consequently
\[
\Delta(z_e)\in\operatorname{Im}\rho_2(e)
=\operatorname{span}\{u_0,u_1\}.
\]
Its intersection with $\C w$ is zero, so $\Delta(z_e)=0$.

The proof of $\Delta(f+w)=0$ is symmetric, using the pair $(u_2,f+w)$ in the final step.

Finally put $z_h=h+w$.  The pair $(h,z_h)$ shows that $\Delta(z_h)\in U_2$.  The pairs $(e,z_h)$ and $(f,z_h)$ then imply
\[
\Delta(z_h)\in\C w.
\]
Use the pair $(u_1,z_h)$.  Since
\[
\rho_2(e)u_1=u_0,
\qquad
\rho_2(f)u_1=u_2,
\qquad
\rho_2(h)u_1=0,
\]
the equality $D(u_1)=0$ forces the $e$- and $f$-coefficients of the $\slc$-part of $D$ and also the scalar coefficient $\lambda$ to vanish.  Hence
\[
\Delta(z_h)\in\operatorname{Im}\rho_2(h)
=\operatorname{span}\{u_0,u_2\}.
\]
Again the intersection with $\C w$ is zero.  Therefore $\Delta(z_h)=0$.
\end{proof}

\begin{theorem}\label{thm:m2-full}
Every 2-local derivation on
\[
\g_1(1)=\slc\ltimes U_2
\]
is a derivation.
\end{theorem}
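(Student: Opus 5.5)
The plan is to repeat the strategy of Theorem~\ref{thm:one-string-full}, with Lemma~\ref{lem:separating-vector} replaced by Proposition~\ref{prop:m2-radical-scalar} and with the extra anchor $h+w$ from Lemma~\ref{lem:m2-zero-anchors}.

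\emph{Normalization.} First apply Proposition~\ref{prop:normalize-ef} and Lemma~\ref{lem:sl2-zero}; the latter explicitly covers $m=2$. This gives $\Delta|_{\slc}=0$. Proposition~\ref{prop:m2-radical-scalar} then gives $\Delta|_{U_2}=c\,I$. Subtracting $c\,\delta_0$ by Lemma~\ref{lem:subtract}, we may assume
\[
\Delta|_{\slc}=0,\qquad \Delta|_{U_2}=0 .
\]
Lemma~\ref{lem:m2-zero-anchors} now yields three zero anchors $z_e=e+w$, $z_f=f+w$ and $z_h=h+w$, where $w=u_0+u_1+u_2$.

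\emph{Mixed elements.} Let $y=x+v$ with $x=Ae+Bf+Ch$ and $v\in U_2$. The pair $(x,y)$ gives $\Delta(y)\in U_2$.

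For the pair $(z_e,y)$ with representing derivation $D=\ad(a+u)+\lambda\delta_0$, the condition $D(z_e)=0$ forces $a=\alpha e$. Comparing $u_2$-coefficients in
\[
-\rho_2(e)u+\alpha\rho_2(e)w+\lambda w=0
\]
gives $\lambda=0$, hence $u\in\alpha w+\C u_0$. If $x\notin\C e$, the $\slc$-component of $D(y)$ forces $\alpha=0$, so
\[
\Delta(y)\in\C\rho_2(x)u_0=\C(Cu_0+Bu_1).
\]
Symmetrically, if $x\notin\C f$, then $\Delta(y)\in\C(Au_1-Cu_2)$. These two lines intersect trivially unless $C=0$, where both collapse onto $\C u_1$; this overlap is the main obstacle, exactly as in the $m=2$ case of Lemma~\ref{lem:sl2-zero}.

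To remove it, use the pair $(z_h,y)$. Here $a=\gamma h$, and the $u_1$-coefficient of
\[
-\rho_2(h)u+\gamma\rho_2(h)w+\lambda w=0
\]
gives $\lambda=0$ and $u\in\gamma w+\C u_1$. Since $x\notin\C h$, we get $\gamma=0$, hence
\[
\Delta(y)\in\C\rho_2(x)u_1=\C(Au_0+Bu_2),
\]
which meets $\C u_1$ only in $0$ when $AB\ne0$. This settles all $x\notin\C e\cup\C f\cup\C h$.

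\emph{Remaining directions.} By homogeneity it suffices to treat $x\in\{e,f,h\}$.

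For $x=h$, the pairs $(z_e,y)$ and $(z_f,y)$ give $\Delta(y)\in\C u_0$ and $\Delta(y)\in\C u_2$ respectively, so $\Delta(y)=0$.

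For $x=e$, the pair $(z_f,y)$ gives $\Delta(y)\in\C u_1$. Next take the conjugate anchor $z_t=\Phi_t(e+w)$ with $\Phi_t=\exp(t\,\ad f)$ and $t\ne0$. It vanishes under $\Delta$ by Lemma~\ref{lem:conjugate-anchor}, applied with Lemma~\ref{lem:m2-zero-anchors} in place of Lemma~\ref{lem:zero-basic-mixed-anchors}; the conjugation argument is independent of $m$. The argument of Theorem~\ref{thm:one-string-full} goes through verbatim, since $w\notin\operatorname{Im}\rho_2(e)$ still holds, and it gives
\[
\Delta(y)\in\C\rho_2(e)G_t(u_0).
\]
This vector has $u_0$-coefficient $2t\ne0$, so it is not proportional to $u_1$, and therefore $\Delta(y)=0$.

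The case $x=f$ is symmetric, using $\Psi_t=\exp(t\,\ad e)$. Finally, $x=0$ is covered by $\Delta|_{U_2}=0$. Hence the normalized $\Delta$ vanishes identically, and the original map is a derivation.
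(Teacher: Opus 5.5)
Your proposal is correct and follows essentially the same route as the paper's proof. You normalize via Proposition~\ref{prop:normalize-ef}, Lemma~\ref{lem:sl2-zero} and Proposition~\ref{prop:m2-radical-scalar}, then use the zero anchors $e+w$, $f+w$, $h+w$ for generic mixed elements and the conjugate anchors $\exp(t\,\ad f)(e+w)$, $\exp(t\,\ad e)(f+w)$ for the nilpotent directions. Your separate treatment of $x=h$ is redundant, since it is already covered by the case $C\neq0$, but it is harmless.
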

\begin{proof}
By Proposition~\ref{prop:normalize-ef} and Lemma~\ref{lem:sl2-zero}, after subtracting a global derivation we may assume
\[
\Delta|_{\slc}=0.
\]
Proposition~\ref{prop:m2-radical-scalar} then gives
\[
\Delta(v)=cv
\qquad(v\in U_2).
\]
Subtracting $c\delta_0$, we reduce to
\begin{equation}\label{eq:m2-normalized}
\Delta|_{\slc}=0,
\qquad
\Delta|_{U_2}=0.
\end{equation}

Let $y=x+v$ with
\[
x=Ae+Bf+Ch\in\slc,
\qquad v\in U_2.
\]
As in the proof of Theorem~\ref{thm:one-string-full}, the pair $(x,y)$ shows that
\[
\Delta(y)\in U_2.
\]
Put
\[
z_e=e+w,
\qquad
z_f=f+w,
\qquad
z_h=h+w,
\]
where $w=u_0+u_1+u_2$.  By Lemma~\ref{lem:m2-zero-anchors}, these are zero anchors.

If $x\notin\C e$, the pair $(z_e,y)$ gives
\[
\Delta(y)\in\C\rho_2(x)u_0
=\C(Cu_0+Bu_1).
\]
If $x\notin\C f$, the pair $(z_f,y)$ gives
\[
\Delta(y)\in\C\rho_2(x)u_2
=\C(Au_1-Cu_2).
\]
If $C\ne0$, these two lines have zero intersection, and therefore $\Delta(y)=0$.

Suppose $C=0$ and $x\notin\C e\cup\C f$.  Then $A B\ne0$ and the preceding two restrictions reduce to
\[
\Delta(y)\in\C u_1.
\]
Using the zero anchor $z_h$, the same coefficient comparison gives
\[
\Delta(y)\in\C\rho_2(x)u_1
=\C(Au_0+Bu_2).
\]
The two lines are disjoint, hence $\Delta(y)=0$.

It remains to treat $x\in\C e\cup\C f$.  Homogeneity reduces this to $x=e$ and $x=f$.  For $y=e+v$, the zero anchor $z_f$ gives
\[
\Delta(y)\in\C u_1.
\]
For $t\ne0$, put
\[
\Phi_t=\exp(t\,\ad f),
\qquad
G_t=\exp(t\rho_2(f)).
\]
By Lemma~\ref{lem:conjugate-anchor} applied with the $m=2$ zero-anchor statement, $\Phi_t(e+w)$ is also a zero anchor.  Repeating the endpoint argument in Theorem~\ref{thm:one-string-full} yields
\[
\Delta(y)\in\C\rho_2(e)G_t(u_0).
\]
Here
\[
G_t(u_0)=u_0+2t u_1+t^2u_2,
\]
so
\[
\rho_2(e)G_t(u_0)=2t u_0+2t^2u_1.
\]
For $t\ne0$ this vector is not proportional to $u_1$.  Thus $\Delta(e+v)=0$.

The case $y=f+v$ is symmetric: the first zero anchor gives $\Delta(y)\in\C u_1$, while conjugating $f+w$ by $\exp(t\,\ad e)$ produces a second line containing a non-zero $u_2$-component. Hence $\Delta(f+v)=0$.

Therefore the normalized map vanishes identically, and undoing the subtractions proves the theorem.
\end{proof}

\subsection{The standard-module case \texorpdfstring{$m=1$}{m=1}}
We finally treat the lowest weight.  Here $U_1$ is the two-dimensional standard $\mathfrak{sl}_2$-module.  The restriction of the derivation algebra to $U_1$ is the whole algebra $\operatorname{End}(U_1)$, so the separating-vector argument is unavailable.  The two-point condition with nilpotent elements of $\mathfrak{sl}_2$ nevertheless forces a single scalar action on the radical.

\begin{proposition}\label{prop:m1-radical-scalar}
Let $m=1$ and let $\Delta$ be a 2-local derivation on $\g_{1/2}(1)$ satisfying
\[
\Delta|_{\slc}=0.
\]
Then there exists $c\in\C$ such that
\[
\Delta(v)=cv
\qquad
(v\in U_1).
\]
\end{proposition}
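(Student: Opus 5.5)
The plan is to first show that $\Delta$ acts by a scalar on each line of $U_1$, and then to show that the scalar is the same for every line, using the explicit form of the representing derivations. For $m=1$ these are $D=\ad(a+u)+\lambda\delta_0$, and on $U_1$ they act by $A=\rho_1(a)+\lambda I$. Since $\rho_1$ identifies $\slc$ with $\mathfrak{sl}(U_1)$, the operators $A$ range over all of $\operatorname{End}(U_1)$, which is why information must come from the $\slc$-side.

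\textbf{Step 1 (each line is preserved).} Every nonzero $v\in U_1$ is annihilated by a nonzero nilpotent $x_v\in\slc$; after an $SL_2$-conjugation we may take $v=u_0$, $x_v=e$ (indeed $\rho_1(e)u_0=0$). Apply the two-local condition to $(x_v,v)$. Since $\Delta(x_v)=0$, the $\slc$-component of $D(x_v)$ gives $[a,x_v]=0$, so $a\in\C x_v$. The $U_1$-component gives $\rho_1(x_v)u=0$, which does not matter here. Then
\[
\Delta(v)=(\rho_1(a)+\lambda I)v=\lambda v,
\]
because $\rho_1(x_v)v=0$. Hence $\Delta(v)=\varphi(v)v$ for a function $\varphi$ on $U_1\setminus\{0\}$ that is constant on lines, by Lemma~\ref{lem:homogeneous}.

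\textbf{Step 2 (the scalar is constant).} This is the main obstacle. A pair $(v,w)$ of two radical vectors gives no information by itself, because any diagonalizable operator with eigenvectors $v,w$ belongs to $\operatorname{End}(U_1)$. The idea is to bring in $\slc$-elements again, through mixed or sl2 pairs. First try the pair $(h,v)$ for $v=u_0+u_1$. Since $D(h)=0$, we get $a\in\C h$ and $u\in\ker\rho_1(h)=0$. So $A=\gamma\rho_1(h)+\lambda I$ is diagonal in the basis $u_0,u_1$, and it has $v$ as an eigenvector only if $\gamma=0$. Thus $\Delta(v)=\lambda v$, with no link yet to $\varphi(u_0)$. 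This suggests the key device: use one nilpotent element, say $e$, in a pair with a vector $v$ that it does not kill. Then $a\in\C e$, and $A=\alpha\rho_1(e)+\lambda I$ is upper triangular with both diagonal entries equal to $\lambda$. For $v=u_1+tu_0$ we have $Av=\lambda v+\alpha u_0$. Since $\Delta(v)\in\C v$, this forces $\alpha=0$, so $\varphi(v)=\lambda$. This still only gives $\varphi(v)$ itself. To compare values, we need one representing operator that fixes two non-proportional lines while sharing the single eigenvalue $\lambda$.

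My concrete plan is therefore to use the rank-one (unipotent) structure together with a two-step comparison through auxiliary pairs. Consider the pair $(e+u_0,\,u_1)$, and first determine $\Delta(e+u_0)$. Using the pair $(e,e+u_0)$ together with the argument above, we get $\Delta(e+u_0)=\mu u_0$. Since the same $A$ acts as the scalar $\lambda$ on $U_1$ modulo $\operatorname{Im}\rho_1(e)$, the pair $(e+u_0,u_1)$ then ties $\varphi(u_1)$ to $\mu$. A symmetric pair with $(e+u_0,u_0)$ ties $\varphi(u_0)$ to $\mu$, so $\varphi(u_0)=\varphi(u_1)$. Transporting this by the automorphisms $\exp(t\,\ad e)$ and $\exp(t\,\ad f)$, via the conjugation principle of Lemma~\ref{lem:conjugate-anchor}, spreads the equality over all lines. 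That gives $\varphi\equiv c$, and homogeneity handles $v=0$.

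If the mixed-anchor comparison fails, the fallback is to compute, for each pair $(x,v)$ with $x\in\slc$ generic, the set of admissible $\Delta(v)$ explicitly. The goal would then be to show that the eigenvalue $\lambda$ is determined by the data at $x$, where $\Delta(x)=0$ and $\lambda$ is the $\delta_0$-coefficient.
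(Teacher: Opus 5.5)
Your Step 1 is correct and matches the paper's first step. The gap is in Step 2: the pairs $(e+u_0,u_1)$ and $(e+u_0,u_0)$ impose no relation at all. In a representing derivation $D=\ad(a+u)+\lambda\delta_0$ you have to keep the inner radical term:
$D(e+u_0)=[a,e]-\rho_1(e)u+\rho_1(a)u_0+\lambda u_0$.
The term $-\rho_1(e)u$ sweeps out all of $\operatorname{Im}\rho_1(e)=\C u_0$. That line contains both $\Delta(e+u_0)=\mu u_0$ and the radical part $u_0$ of your anchor, so reading the equation modulo $\operatorname{Im}\rho_1(e)$ gives only $0=0$.

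Explicitly, for arbitrary $\mu$ and $\varphi(u_1)$, the derivation $\ad\bigl((\varphi(u_1)-\mu)u_1\bigr)+\varphi(u_1)\delta_0$ sends $e+u_0\mapsto\mu u_0$ and $u_1\mapsto\varphi(u_1)u_1$. The same works with $u_0$ in place of $u_1$. So nothing ties $\varphi(u_0)$ or $\varphi(u_1)$ to $\mu$, and the transport step has nothing to transport.

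The fallback cannot work either. Since $\delta_0$ vanishes on $\slc$, the derivation $\varphi(v)\delta_0$ represents every pair $(x,v)$ with $x\in\slc$. Together with $\rho_1(\slc)+\C I=\operatorname{End}(U_1)$, this shows that any map with $\Delta|_{\slc}=0$ and $\Delta(v)=\varphi(v)v$, for an arbitrary function $\varphi$ on $\mathbb P(U_1)$, passes the two-point test on every pair drawn from $\slc\cup U_1$. Constancy of $\varphi$ can only come from a mixed point whose value is a priori unknown and whose radical part has a component outside $\operatorname{Im}\rho_1(x)$.

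That is the missing idea, and the fix is one extra term: use $y=e+u_0+u_1$ instead of $e+u_0$.
\begin{itemize}
\item The pair $(e,y)$ gives $\Delta(y)\in U_1$. Hence in any pair containing $y$, the $\slc$-part of the representing derivation is $\alpha e$.
\item Because $\operatorname{Im}\rho_1(e)=\C u_0$, the $u_1$-coefficient of $\Delta(y)$ then equals the outer scalar $\lambda$ of that derivation.
\item The pair $(y,u_0)$ forces $\lambda=\varphi(u_0)$.
\item The pair $(y,u_1)$ forces $\alpha=0$ and $\lambda=\varphi(u_1)$.
\end{itemize}
Comparing the $u_1$-coefficient of the single vector $\Delta(y)$ gives $\varphi(u_0)=\varphi(u_1)$.

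This is exactly the paper's argument with $x=e$, $v=u_0$, $w=u_1$. The paper runs it directly for arbitrary independent $v,w$, using a nilpotent $x$ with $\rho_1(x)v=0$ and $\rho_1(x)w=v$ and the mixed point $x+v+w$. That makes your conjugation step unnecessary, although your transport via inner automorphisms would also finish the proof once this base equality is established.
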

\begin{proof}
Let $0\ne v\in U_1$.  There exists a non-zero nilpotent element $x_v\in\slc$ such that
\[
\ker\rho_1(x_v)=\C v.
\]
For the pair $(x_v,v)$, a representing derivation
\[
D=\ad(a+u)+\lambda\delta_0
\]
satisfies $D(x_v)=0$.  Hence
\[
[a,x_v]=0,
\]
so $a\in\C x_v$.  Since $\rho_1(x_v)v=0$,
\[
\Delta(v)=D(v)=\lambda v.
\]
Thus
\[
\Delta(v)=\varphi(v)v
\]
for a homogeneous scalar function $\varphi$ on $U_1\setminus\{0\}$.

Let $v,w$ be linearly independent.  Choose a nilpotent $x\in\slc$ and scale it so that
\[
\rho_1(x)v=0,
\qquad
\rho_1(x)w=v.
\]
Put
\[
y=x+v+w.
\]
The pair $(x,y)$ shows first that $\Delta(y)\in U_1$.  Now use the pair $(v,y)$.  Since $D(v)=\varphi(v)v$ and the $\slc$-component of $D(y)$ is zero, the $\slc$-part of the representing derivation commutes with $x$, hence equals a scalar multiple of $x$.  Because $xv=0$, its scalar outer coefficient is $\varphi(v)$.  The remaining term $-\rho_1(x)u$ lies in $\C v$, so the coefficient of $w$ in $\Delta(y)$ is exactly $\varphi(v)$.

Using instead the pair $(w,y)$, write again the $\slc$-part as a scalar multiple of $x$.  The equality
\[
D(w)=\varphi(w)w
\]
and the relation $\rho_1(x)w=v$ force that scalar multiple to be zero and the outer coefficient to equal $\varphi(w)$.  Therefore the coefficient of $w$ in the same vector $\Delta(y)$ equals $\varphi(w)$.  Hence
\[
\varphi(v)=\varphi(w).
\]
If two non-zero vectors are dependent, the same equality follows from homogeneity.  Thus $\varphi$ is constant on $U_1\setminus\{0\}$, proving the assertion.
\end{proof}

\begin{lemma}\label{lem:m1-zero-anchors}
Assume $m=1$ and let $\Delta$ be a 2-local derivation satisfying
\[
\Delta|_{\slc}=0,
\qquad
\Delta|_{U_1}=0.
\]
Put
\[
w=u_0+u_1.
\]
Then
\[
\Delta(e+w)=\Delta(f+w)=0.
\]
\end{lemma}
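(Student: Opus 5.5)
The plan is to intersect the restrictions on $\Delta(z_e)$, where $z_e=e+w$, coming from the pairs $(e,z_e)$, $(w,z_e)$ and $(h,z_e)$, as in the proof of Lemma~\ref{lem:zero-basic-mixed-anchors}. For $m=1$ the action \eqref{eq:Um-action} reads
\[
[h,u_0]=u_0,\quad [h,u_1]=-u_1,\quad [e,u_1]=u_0,\quad [e,u_0]=0,\quad [f,u_0]=u_1,\quad [f,u_1]=0.
\]
Throughout, a representing derivation is written $D=\ad(a+u)+\lambda\delta_0$ as in \eqref{eq:D-one-string}, and we split $D(\cdot)$ into its $\slc$- and $U_1$-components.

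\emph{Step 1: $\Delta(z_e)\in U_1$.} Use the pair $(e,z_e)$. Since $\Delta(e)=0$, the $\slc$-component of $D(e)$ vanishes, so $[a,e]=0$. Hence the $\slc$-component of $D(z_e)$, which is $[a,e]$, also vanishes. Therefore $\Delta(z_e)\in U_1$.

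\emph{Step 2: $\Delta(z_e)\in\C u_0$.} Use the pair $(w,z_e)$. By Step 1, the $\slc$-component of $D(z_e)$ is zero, so $[a,e]=0$ and $a=\alpha e$. Since $\Delta(w)=0$,
\[
0=D(w)=(\alpha\rho_1(e)+\lambda I)w=(\alpha+\lambda)u_0+\lambda u_1 .
\]
This forces $\lambda=0$ and then $\alpha=0$. Hence
\[
\Delta(z_e)=D(z_e)=-\rho_1(e)u\in\operatorname{Im}\rho_1(e)=\C u_0 .
\]

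\emph{Step 3: $\Delta(z_e)\in\C w$.} Use the pair $(h,z_e)$. From $D(h)=0$, the $\slc$-component gives $a=\gamma h$. The radical component gives $\rho_1(h)u=0$, and since $\rho_1(h)$ is invertible for $m=1$, this means $u=0$. The $\slc$-component of $D(z_e)$ is $2\gamma e$, which must vanish by Step 1, so $\gamma=0$. Thus $\Delta(z_e)=\lambda w\in\C w$.

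Since $\C u_0\cap\C w=0$, Steps 2 and 3 give $\Delta(e+w)=0$.

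For $z_f=f+w$ the argument is symmetric, with $e$ and $f$ interchanged and $u_0$ and $u_1$ interchanged. The pair $(f,z_f)$ gives $\Delta(z_f)\in U_1$. The pair $(w,z_f)$ with $a=\beta f$ gives
\[
(\beta\rho_1(f)+\lambda I)w=\lambda u_0+(\beta+\lambda)u_1=0,
\]
so $\lambda=\beta=0$ and $\Delta(z_f)\in\operatorname{Im}\rho_1(f)=\C u_1$. The pair $(h,z_f)$ gives $u=0$ and $[\gamma h,f]=-2\gamma f=0$, so $\Delta(z_f)\in\C w$. Since $\C u_1\cap\C w=0$, we get $\Delta(f+w)=0$.

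The only point needing care is Step 2. There one must use the information $\Delta(z_e)\in U_1$ from Step 1 to pin down $a\in\C e$ before solving $D(w)=0$; otherwise the full four-parameter operator space $\rho_1(\slc)+\C I=\operatorname{End}(U_1)$ could annihilate $w$.
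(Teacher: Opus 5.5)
Your proof is correct and follows the paper's route: the pair $(h,z_e)$ gives $\Delta(z_e)\in\C w$, a pair with a radical zero point gives $\Delta(z_e)\in\operatorname{Im}\rho_1(e)=\C u_0$, and these two lines meet only in $0$. The differences are minor. The paper uses the zero point $u_0$: it solves $D(u_0)=0$ to get $\beta=0$ and $\lambda=-\gamma$, then gets $\gamma=0$ from $[a,e]=2\gamma e$. You instead use $w$ after first pinning $a\in\C e$. You also derive $\Delta(z_e)\in U_1$ explicitly from the pair $(e,z_e)$, a step the paper uses only tacitly.
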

\begin{proof}
Set $z_e=e+w$.  The pair $(h,z_e)$ is especially effective because $\rho_1(h)$ is invertible.  From $D(h)=0$ we get
\[
a\in\C h,
\qquad
u=0.
\]
Since the value at $z_e$ has no $\slc$-component, $[a,e]=0$, so $a=0$.  Hence
\[
\Delta(z_e)\in\C w.
\]

Now use the pair $(u_0,z_e)$.  Write
\[
a=\alpha e+\beta f+\gamma h.
\]
The equality $D(u_0)=0$ gives
\[
\beta=0,
\qquad
\lambda=-\gamma,
\]
because for $m=1$,
\[
\rho_1(f)u_0=u_1,
\qquad
\rho_1(h)u_0=u_0.
\]
The $\slc$-component of $D(z_e)$ is $[a,e]=2\gamma e$, so $\gamma=\lambda=0$ and $a\in\C e$.  Consequently
\[
\Delta(z_e)\in\operatorname{Im}\rho_1(e)=\C u_0.
\]
Since $w=u_0+u_1$, the intersection $\C w\cap\C u_0$ is zero.  Therefore $\Delta(e+w)=0$.

The proof of $\Delta(f+w)=0$ is symmetric, using the pair $(u_1,f+w)$ in the second step.
\end{proof}

\begin{theorem}\label{thm:m1-full}
Every 2-local derivation on
\[
\g_{1/2}(1)=\slc\ltimes U_1
\]
is a derivation.
\end{theorem}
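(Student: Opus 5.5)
The plan is to follow the scheme of Theorems~\ref{thm:one-string-full} and~\ref{thm:m2-full}. The new ingredient is that for $m=1$ the two restriction lines coming from the anchors $z_e,z_f$ coincide exactly when $x$ is nilpotent, so the nilpotent directions have to be handled separately. First, by Proposition~\ref{prop:normalize-ef} and Lemma~\ref{lem:sl2-zero} (which covers $m=1$), we may assume $\Delta|_{\slc}=0$. Proposition~\ref{prop:m1-radical-scalar} gives $\Delta(v)=cv$ on $U_1$, and subtracting $c\delta_0$ yields $\Delta|_{\slc}=0$ and $\Delta|_{U_1}=0$. For $y=x+v$ the pair $(x,y)$ gives $\Delta(y)\in U_1$. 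By Lemma~\ref{lem:m1-zero-anchors}, $z_e=e+w$ and $z_f=f+w$ are zero anchors.

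Next I treat non-nilpotent $x=Ae+Bf+Ch$. Take a representing derivation $\ad(a+u)+\lambda\delta_0$ of the pair $(z_e,y)$. The condition $D(z_e)=0$ forces $a=\alpha e$; the $u_1$-coefficient gives $\lambda=0$, and then $u\in\alpha u_1+\C u_0$. For $x\notin\C e$ the $\slc$-component of $D(y)$ gives $\alpha=0$, hence $\Delta(y)\in\C\rho_1(x)u_0=\C(Cu_0+Bu_1)$. Symmetrically, the pair $(z_f,y)$ gives $\Delta(y)\in\C(Au_0-Cu_1)$. These two vectors are independent iff $C^2+AB\ne0$, i.e.\ iff $x$ is semisimple. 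So $\Delta(y)=0$ for all $y=x+v$ with $x$ semisimple, and in particular every element $h+su_1$ is a zero anchor.

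For nilpotent $x\ne0$, I would first reduce to $x=e$. Some inner automorphism $\Phi$ of $\slc$, extended to $\g_{1/2}(1)$ via $\exp\ad$, sends $e$ to a scalar multiple of $x$ and preserves both $\slc$ and $U_1$. As in Lemma~\ref{lem:conjugate-anchor}, $\Phi^{-1}\Delta\Phi$ is again a 2-local derivation with the same normalization, and the semisimple case still applies to it; homogeneity then reduces everything to $y=e+v$. The pair $(h,y)$ gives $a\in\C h$ and $u=0$, since $\rho_1(h)$ is invertible. The $\slc$-component then forces $a=0$, so $\Delta(y)\in\C v$. On the other hand the anchor $z_f$ gives $\Delta(y)\in\C\rho_1(e)u_1=\C u_0$. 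Hence $\Delta(y)=0$ unless $v=\beta u_0$.

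The remaining case $y=e+\beta u_0$ is the main obstacle. Here every derivation fixing $e$ or any conjugated nilpotent anchor produces values in $\C u_0$ with a free outer coefficient, so those anchors cannot decide the coefficient. I would use the semisimple zero anchor $h+su_1$ with a parameter $s$ tuned to $\beta$. From $D(h+su_1)=0$ we get $a=\gamma h$, and the $\slc$-part of $D(y)$ forces $\gamma=0$. Then $\rho_1(h)u=\lambda su_1$ gives $u=-\lambda su_1$, so $D(y)=\lambda(s+\beta)u_0$. Choosing $s=-\beta$ gives $\Delta(e+\beta u_0)=0$. Together with $\Delta(v)=0$ for $x=0$, this shows that the normalized map vanishes identically, and undoing the subtractions proves the theorem.
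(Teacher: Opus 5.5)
Your proposal is correct and follows the paper's proof essentially step for step. The steps match: normalization to $\Delta|_{\slc}=\Delta|_{U_1}=0$; the anchors $z_e,z_f$ for invertible $\rho_1(x)$; the pair $(h,e+v)$ when $v$ has a $u_1$-component; the tuned semisimple zero anchor $h-\beta u_1$ for $e+\beta u_0$; and inner conjugation for an arbitrary nonzero nilpotent (the paper performs this conjugation at the end rather than first).
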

\begin{proof}
By Proposition~\ref{prop:normalize-ef} and Lemma~\ref{lem:sl2-zero}, after subtracting a global derivation we may assume that $\Delta$ vanishes on $\slc$.  Proposition~\ref{prop:m1-radical-scalar} and a further subtraction of $c\delta_0$ give
\begin{equation}\label{eq:m1-normalized}
\Delta|_{\slc}=0,
\qquad
\Delta|_{U_1}=0.
\end{equation}
We show that the normalized map is zero on mixed elements.

Let
\[
y=x+v,
\qquad x\in\slc,
\quad v\in U_1.
\]
The pair $(x,y)$ gives $\Delta(y)\in U_1$.  Put
\[
z_e=e+u_0+u_1,
\qquad
z_f=f+u_0+u_1.
\]
By Lemma~\ref{lem:m1-zero-anchors}, both are zero anchors.  Exactly as in the proof of Theorem~\ref{thm:one-string-full}, if $x\notin\C e$ then
\[
\Delta(y)\in\C\rho_1(x)u_0,
\]
and if $x\notin\C f$ then
\[
\Delta(y)\in\C\rho_1(x)u_1.
\]
If $\rho_1(x)$ is invertible, these two lines are distinct because they are the images of the independent vectors $u_0,u_1$ under an invertible operator.  Hence
\[
\Delta(x+v)=0
\qquad
\text{whenever }\det\rho_1(x)\ne0.
\]

It remains to treat non-zero nilpotent $x$.  We first consider $x=e$.  Write
\[
v=a u_0+b u_1.
\]
The zero anchor $z_f$ gives
\[
\Delta(e+v)\in\C u_0.
\]
If $b\ne0$, use the pair $(h,e+v)$.  Since $D(h)=0$ and $\rho_1(h)$ is invertible, the radical part of the representing derivation is zero.  The $\slc$-component at $e+v$ then forces the $\slc$-part to vanish as well.  Therefore
\[
\Delta(e+v)=\lambda v.
\]
Because $v$ has a non-zero $u_1$-component while $\Delta(e+v)\in\C u_0$, we get $\lambda=0$.

Suppose now that $b=0$, so $v=a u_0$.  The case $a=0$ is already contained in \eqref{eq:m1-normalized}; assume $a\ne0$.  Put
\[
r=-a u_1,
\qquad
z=h+r.
\]
Since $\rho_1(h)$ is invertible, the preceding invertible-$x$ argument gives $\Delta(z)=0$.  Use the pair $(z,e+a u_0)$.  The vanishing of the $\slc$-components forces the $\slc$-part of the representing derivation to be zero.  From $D(z)=0$ we obtain
\[
-\rho_1(h)u+\lambda r=0.
\]
As $\rho_1(h)u_1=-u_1$, this gives
\[
u=\lambda a u_1.
\]
Consequently
\[
D(e+a u_0)
=-\rho_1(e)(\lambda a u_1)+\lambda a u_0=0.
\]
Hence $\Delta(e+a u_0)=0$.  We have proved
\[
\Delta(e+v)=0
\qquad(v\in U_1).
\]

Finally, every non-zero nilpotent element of $\slc$ is a non-zero scalar multiple of an inner automorphic image of $e$.  Let $x=c\Phi(e)$ with $c\ne0$ and $\Phi$ induced by an inner automorphism of $\slc$.  The conjugate map
\[
\Delta^\Phi=\Phi^{-1}\Delta\Phi
\]
is again a normalized 2-local derivation.  By the result just proved,
\[
\Delta^\Phi(e+v')=0
\qquad(v'\in U_1).
\]
Using homogeneity and taking $v'=c^{-1}\Phi^{-1}(v)$ gives
\[
\Delta(x+v)=0.
\]
Together with the invertible and zero cases, this proves that the normalized map vanishes identically.  Therefore the original 2-local derivation is a derivation.
\end{proof}

\begin{corollary}\label{cor:one-dimensional-all}
For every positive half-integer $\ell$,
\[
\TwoLocDer\g_\ell(1)=\Der\g_\ell(1).
\]
\end{corollary}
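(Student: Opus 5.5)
The corollary follows by splitting according to the value of $m=2\ell\in\N$; the three cases $m\ge3$, $m=2$, $m=1$ exhaust all positive half-integers $\ell$. In each case one inclusion is trivial. Every derivation $D$ is a 2-local derivation, since one may take $D_{x,y}=D$ for every pair, so $\Der\g_\ell(1)\subseteq\TwoLocDer\g_\ell(1)$ always holds. The content is the reverse inclusion.

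For $m=2\ell\ge3$, the reverse inclusion is exactly Theorem~\ref{thm:one-string-full}. For $m=2$, i.e.\ $\ell=1$, it is Theorem~\ref{thm:m2-full}. For $m=1$, i.e.\ $\ell=\tfrac12$, it is Theorem~\ref{thm:m1-full}. In each theorem the 2-local map is first normalized by subtracting a global derivation, using Proposition~\ref{prop:normalize-ef} together with Lemma~\ref{lem:sl2-zero} and the relevant scalar proposition. The theorem then shows that the normalized map vanishes identically, so the original map equals the subtracted derivation.

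The only point to check is that no value of $\ell$ escapes the three cases. This holds because $\ell\in\tfrac12\N$ gives $m\in\N=\{1,2,3,\dots\}$. There is no real obstacle, since all the substantive work is contained in the three cited theorems. One might also note that no linearity was assumed, so the equality shows that every 2-local derivation of $\g_\ell(1)$ is automatically linear.
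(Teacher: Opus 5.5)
Your proposal is correct and matches the paper's proof: the corollary follows by splitting $m=2\ell$ into the cases $m=1$, $m=2$ and $m\ge3$, and invoking Theorems~\ref{thm:m1-full}, \ref{thm:m2-full} and \ref{thm:one-string-full} respectively. Your remark that $\Der\g_\ell(1)\subseteq\TwoLocDer\g_\ell(1)$ holds trivially (take $D_{x,y}=D$) makes explicit an inclusion the paper leaves implicit.
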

\begin{proof}
The case $2\ell=1$ is Theorem~\ref{thm:m1-full}, the case $2\ell=2$ is Theorem~\ref{thm:m2-full}, and all $2\ell\ge3$ are covered by Theorem~\ref{thm:one-string-full}.
\end{proof}

\section{The higher-dimensional non-central algebra}\label{sec:noncentralhigher}
The one-dimensional argument shows that the two-point condition is strong enough to eliminate the extra local maps coming from a single irreducible $\mathfrak{sl}_2$-string.  We now pass to spatial dimension $d\ge2$.  The new feature is the rotation algebra $\mathfrak{so}_d$, which couples the $d$ copies of the same $\mathfrak{sl}_2$-module.  We first prove full rigidity for $m=2\ell\ge3$.  We then treat the low weights separately: the adjoint module $m=2$ is shown to be fully rigid by an invariant-form argument, while for the standard module $m=1$ we determine the complete action on the abelian radical, construct rank-two mixed zero anchors, and close the remaining rotation case by a semisimple-perturbation argument.

Put
\[
S=\mathfrak{sl}_2\oplus\mathfrak{so}_d,
\qquad
V=U_m\otimes W_d,
\qquad W_d=\mathbb C^d,
\]
where $W_d$ is the natural $\mathfrak{so}_d$-module.  Let
\[
\varepsilon_1,\ldots,\varepsilon_d
\]
be its standard orthonormal basis.  We write
\[
V_i=U_m\otimes\mathbb C\varepsilon_i,
\qquad
\pi_i:V\longrightarrow U_m
\]
for the $i$th spatial string and its coordinate projection.  With this notation
\[
\mathfrak g_\ell(d)=S\ltimes V.
\]
For
\[
a\in\mathfrak{sl}_2,
\qquad B\in\mathfrak{so}_d,
\qquad r\in V,
\qquad \lambda\in\mathbb C,
\]
a derivation
\[
D=\operatorname{ad}(a+B+r)+\lambda\delta_0
\]
acts by
\begin{align}
D(x)&=[a+B,x]-\rho(x)r, &&x\in S,\label{eq:D-higher-S}\\
D(v)&=\Bigl((\rho_m(a)+\lambda I_{U_m})\otimes I_{W_d}
      +I_{U_m}\otimes B\Bigr)v, &&v\in V.\label{eq:D-higher-V}
\end{align}
Here $\rho$ denotes the tensor-product action of $S$ on $V$.

\subsection{A reflexivity tool for the radical representation}
The low-weight arguments below require one linear-algebraic fact about the operator space induced on the abelian radical.  We record it here in a form that is valid for every $m\ge1$.

Put
\[
\mathcal A_m=\rho_m(\mathfrak{sl}_2)+\mathbb C I_{U_m},
\qquad
\mathcal B_d=\mathfrak{so}_d+\mathbb C I_{W_d},
\]
and
\[
\mathcal E_{m,d}
=\rho(\mathfrak{sl}_2\oplus\mathfrak{so}_d)+\mathbb C I_V
=\rho_m(\mathfrak{sl}_2)\otimes I+I\otimes\mathfrak{so}_d+\mathbb C I.
\]
For an operator space $\mathcal E\subseteq\operatorname{End}(X)$, write
\[
\operatorname{Ref}(\mathcal E)
=\{T\in\operatorname{End}(X):Tx\in\mathcal E x\text{ for every }x\in X\}.
\]

\begin{lemma}\label{lem:sl2-module-reflexive-2local}
For every $m\ge1$,
\[
\operatorname{Ref}(\mathcal A_m)=\mathcal A_m.
\]
\end{lemma}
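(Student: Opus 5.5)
The approach is to normalize $T$ by subtracting an element of $\mathcal A_m$ so that it kills the two extreme weight vectors, and then to show that the normalized operator vanishes, using only the linearity of $T$ and pointwise membership tested on well-chosen vectors. The inclusion $\mathcal A_m\subseteq\operatorname{Ref}(\mathcal A_m)$ is trivial. The case $m=1$ is immediate, since $\rho_1(\mathfrak{sl}_2)+\C I=\operatorname{End}(U_1)$ by dimension count ($3+1=4$). So the plan below concerns $m\ge2$.

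\emph{Normalization.} Let $T\in\operatorname{Ref}(\mathcal A_m)$ and write a general element as $A=\alpha\rho_m(e)+\beta\rho_m(f)+\gamma\rho_m(h)+\lambda I$. From \eqref{eq:Um-action},
\[
Au_0=(m\gamma+\lambda)u_0+m\beta u_1,\qquad Au_m=m\alpha u_{m-1}+(\lambda-m\gamma)u_m.
\]
Hence $\mathcal A_mu_0=\operatorname{span}\{u_0,u_1\}$ and $\mathcal A_mu_m=\operatorname{span}\{u_{m-1},u_m\}$. The four coefficients $(m\gamma+\lambda,\ m\beta,\ m\alpha,\ \lambda-m\gamma)$ depend on the four parameters through an invertible linear map. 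Therefore one $A_0\in\mathcal A_m$ satisfies both $A_0u_0=Tu_0$ and $A_0u_m=Tu_m$. Since $\operatorname{Ref}(\mathcal A_m)$ is a linear space containing $\mathcal A_m$, it suffices to prove that $T':=T-A_0$ vanishes whenever $T'\in\operatorname{Ref}(\mathcal A_m)$ and $T'u_0=T'u_m=0$.

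\emph{Killing the interior vectors.} Fix $0<k<m$ and a parameter $t$.
\begin{itemize}[leftmargin=*]
\item Linearity gives $tT'u_k=T'(u_0+tu_k)$, and this vector lies in $\mathcal A_m(u_0+tu_k)$. For $t\ne0$ the latter space is spanned by
\[
u_0+tu_k,\quad tku_{k-1},\quad mu_1+t(m-k)u_{k+1},\quad mu_0+t(m-2k)u_k.
\]
\item The vectors $u_m+tu_k$ give an analogous spanning set concentrated near $u_{m-1},u_m,u_{k\pm1}$.
\item The mixed vectors $u_0+u_m+tu_k$ give a third constraint; here $T'(u_0+u_m+tu_k)=tT'u_k$ again.
\item Comparing supports, $T'u_k$ must lie in a subspace $S_t$ depending on $t$, and its $t$-independence forces $T'u_k\in\bigcap_tS_t$.
\end{itemize}
For $m\ge3$ I expect this intersection to be $\operatorname{span}\{u_{k-1},u_k,u_{k+1}\}$ at first. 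A second round, using $u_0+u_k+u_j$ with $j\ne k$ together with linearity $T'(u_0+u_k+u_j)=T'u_k+T'u_j$, should then cut it down to zero. The separating-vector computation of Lemma~\ref{lem:separating-vector} is the model for these coefficient comparisons. For $m=2$ only $k=1$ occurs, and $U_2$ is 3-dimensional, so the constraints from $u_0+tu_1$, $u_2+tu_1$ and $u_0+u_1+u_2$ can be checked directly.

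\emph{Main obstacle.} The delicate step is the coupling between different interior vectors. The constraint at $u_0+u_k+u_j$ involves an operator $A$ depending on the test vector, so the coefficients cannot be separated cleanly. I would handle this by taking the test vectors $u_0+tu_k+su_j$, viewing the resulting membership as a polynomial identity in $(t,s)$ for generic values, and comparing the extreme coefficients (those of $u_0$, $u_1$, $u_{m-1}$, $u_m$). The small cases $m=2,3$, where the supports overlap, will likely need separate explicit verification, as in the proof of Lemma~\ref{lem:separating-vector}.
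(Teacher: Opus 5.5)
Your normalization step is fine: the map $(\alpha,\beta,\gamma,\lambda)\mapsto(m\gamma+\lambda,\,m\beta,\,m\alpha,\,\lambda-m\gamma)$ is invertible even for $m=2$. The elimination step, however, has a genuine gap at low weight, and it comes from a dimension count. Since $\dim\mathcal A_m=4$, the test $T'v\in\mathcal A_mv$ carries no information whenever $\mathcal A_mv=U_m$. For $m\le3$ one has $m+1\le4$, and this happens for \emph{generic} $v$.

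Concretely, for $m=2$ the vectors you list, $u_0+tu_1$ ($t\ne0$), $u_2+tu_1$ and $u_0+u_1+u_2$, are all non-isotropic for the invariant symmetric form on $U_2$. Hence $\mathcal A_2v=U_2$ for each of them. The operator with $T'u_0=T'u_2=0$ and $T'u_1=u_1$ therefore passes every test you propose. Yet by your own normalization the only element of $\mathcal A_2$ killing $u_0$ and $u_2$ is $0$, so this operator is not in $\mathcal A_2$. Even the whole family $u_0+u_2+tu_1$ only constrains at $t=\pm2$, and it leaves $T'u_1\in\C(u_0-u_2)$.

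For $m=3$ your ``polynomial identity for generic $(t,s)$'' is empty for the same reason. In the binary-form model $u_k=X^{m-k}Y^k$ one has $\mathcal A_mv=\mathfrak{gl}_2\cdot v$, and this is all of $U_3$ whenever the cubic $v$ has three distinct roots. Your families only give constraints at discriminant values, such as $t^3=-27/4$ for $u_0+u_3+tu_1$, and a support comparison does not detect these. Similarly, $m=4$ needs the special values $t=\pm2$ in $u_0+u_4+tu_2$.

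The missing idea is to test on the most degenerate vectors. The space $\mathcal A_mv$ is smallest on the rational normal curve $v(t)=(X+tY)^m=\sum_k\binom mk t^ku_k$, where $\mathcal A_mv(t)=\operatorname{span}\{v(t),v'(t)\}$ is only two-dimensional. This is exactly what the paper uses. Membership in $\operatorname{Ref}(\mathcal A_m)$ becomes the identity $Tv(t)=\alpha(t)v(t)+\beta(t)v'(t)$, and a degree analysis of the coordinates forces $T$ to be tridiagonal and equal to an explicit element of $\mathcal A_m$. This works uniformly for all $m\ge2$, with no case split.

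Inside your normalization the same idea repairs $m=2$ at once. $T'v(t)=2tT'u_1$ must be orthogonal to the isotropic vector $v(t)$ for every $t$, and the vectors $v(t)$ span $U_2$, so $T'u_1=0$.

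By contrast, the coupling between interior vectors that you flag as the main obstacle never arises for $m\ge5$. There your three first-round families already give $T'u_k=0$ by coefficient comparison at generic $t$. For $3\le k\le m-3$, the families $u_0+tu_k$ and $u_m+tu_k$ give $T'u_k\in\C u_k$, and $u_0+u_m+tu_k$ kills it. The boundary values of $k$ go similarly. So the real difficulty of your route lies entirely in $m\le4$, which is precisely the part your proposal leaves unverified or gets wrong.
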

\begin{proof}
For $m=1$ one has $\mathcal A_1=\operatorname{End}(U_1)$, so the assertion is immediate.  Assume $m\ge2$ and let $T\in\operatorname{Ref}(\mathcal A_m)$.  Consider the rational normal curve
\[
v(t)=\sum_{k=0}^{m}\binom{m}{k}t^ku_k.
\]
Since $\mathcal A_m v(t)$ is spanned by $v(t)$ and $v'(t)$, there exist scalars depending on $t$ such that
\[
Tv(t)=\alpha(t)v(t)+\beta(t)v'(t).
\]
Write $w_k(t)$ for the $u_k$-coordinate of $Tv(t)$.  Eliminating $\alpha$ and $\beta$ from the first two coordinates gives, for $1\le k\le m$,
\begin{equation}\label{eq:rnc-lowweight-reflexive}
\frac{w_k(t)}{\binom{m}{k}}
=\frac{k}{m}t^{k-1}w_1(t)-(k-1)t^kw_0(t).
\end{equation}
We justify the degree restriction explicitly.  Taking $k=m$ and $k=m-1$ in \eqref{eq:rnc-lowweight-reflexive} and eliminating $w_1$ gives
\[
t^m w_0(t)=t w_{m-1}(t)-(m-1)w_m(t).
\]
The right-hand side has degree at most $m+1$, while the left-hand side is divisible by $t^m$; hence $w_0$ has degree at most one.  The equation for $k=m$ then shows that $w_1$ has degree at most two.  Writing
\[
w_0(t)=a_0+a_1t,
\qquad
w_1(t)=b_0+b_1t+b_2t^2,
\]
the coefficient of $t^{m+1}$ in the $k=m$ equation is $b_2-(m-1)a_1$, so
\[
b_2=(m-1)a_1.
\]
Thus
\[
w_0(t)=a_0+a_1t,
\qquad
w_1(t)=b_0+b_1t+(m-1)a_1t^2.
\]
Substituting these expressions into \eqref{eq:rnc-lowweight-reflexive} gives, for every $0\le k\le m$,
\[
\frac{w_k(t)}{\binom{m}{k}}
=\frac{k}{m}b_0t^{k-1}
 +\left(\frac{k}{m}b_1-(k-1)a_0\right)t^k
 +\frac{m-k}{m}a_1t^{k+1},
\]
where the first or last term is omitted at an endpoint.  If
$Tu_j=\sum_i t_{ij}u_i$, comparison with
$w_k(t)=\sum_j\binom{m}{j}t_{kj}t^j$ shows that the only non-zero entries in the $k$th row are
\[
t_{k,k-1}=\frac{m-k+1}{m}b_0,
\qquad
t_{k,k}=\frac{k}{m}b_1-(k-1)a_0,
\qquad
t_{k,k+1}=\frac{k+1}{m}a_1.
\]
Set
\[
\beta=\frac{a_1}{m},
\qquad
\gamma=\frac{b_0}{m},
\qquad
\delta=\frac12\left(a_0-\frac{b_1}{m}\right),
\qquad
\alpha=a_0-m\delta.
\]
Using
\[
\rho_m(e)u_j=ju_{j-1},\qquad
\rho_m(f)u_j=(m-j)u_{j+1},\qquad
\rho_m(h)u_j=(m-2j)u_j,
\]
the preceding three diagonals are exactly those of
\[
\alpha I+\beta\rho_m(e)+\gamma\rho_m(f)+\delta\rho_m(h).
\]
Denote this operator by $T_0$. The preceding formulas give $(T-T_0)v(t)=0$ for every $t$. Since the vectors $v(t)$ span $U_m$ by the Vandermonde determinant, $T=T_0\in\mathcal A_m$.
\end{proof}

\begin{lemma}\label{lem:so-module-reflexive-2local}
For every $d\ge1$,
\[
\operatorname{Ref}(\mathcal B_d)=\mathcal B_d.
\]
\end{lemma}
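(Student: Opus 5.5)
We have to show that $\mathcal B_d=\mathfrak{so}_d+\C I_{W_d}$ is reflexive on $W_d=\C^d$. The approach is to reduce reflexivity to a pointwise orthogonality condition using the standard symmetric bilinear form $(x,y)=\sum_i x_iy_i$, for which $\mathfrak{so}_d$ consists exactly of the skew operators.

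\textbf{Small dimensions.} For $d=1$ we have $\mathfrak{so}_1=0$, so $\mathcal B_1=\C I=\operatorname{End}(\C)$ and there is nothing to prove. For $d=2$, $\mathcal B_2=\{aI+bJ\}$ with $J$ skew, and this is a commutative two-dimensional algebra. We use the isotropic basis $p=\varepsilon_1+\mathrm i\varepsilon_2$, $q=\varepsilon_1-\mathrm i\varepsilon_2$. Both are eigenvectors, so $\mathcal B_2$ is exactly the set of operators diagonal in the basis $p,q$. Let $T\in\operatorname{Ref}(\mathcal B_2)$. Then $Tp\in\C p$ and $Tq\in\C q$, so $T$ is diagonal in this basis and hence $T\in\mathcal B_2$.

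\textbf{General case $d\ge3$.} First note that $\mathfrak{so}_d x=\{y:(x,y)=0\}=x^\perp$ for $x\neq0$ and $d\ge2$. To see this, take $B=y\,x^{T}-x\,y^{T}$; a short computation with this rank-two skew operator gives $Bx=(x,x)y-(x,y)x$. From this one obtains all of $x^\perp$ when $x$ is non-isotropic. When $x$ is isotropic, one gets it by using $B=y z^T-zy^T$ with a suitable $z$ satisfying $(z,x)=1$. Consequently $\mathcal B_d x=x^\perp+\C x$. This equals $W_d$ if $(x,x)\ne0$, and equals $x^\perp$ if $x$ is isotropic.

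So the reflexivity condition says exactly that $(Tx,x)=0$ for every isotropic $x$. Write $T=S+K$ with $S$ symmetric and $K$ skew with respect to $(\cdot,\cdot)$. Then the quadratic form $Q(x)=(Sx,x)$ vanishes on the isotropic cone $\{(x,x)=0\}$. For $d\ge3$ this cone is an irreducible quadric hypersurface, and $(x,x)$ is irreducible. Hence $Q=c\,(x,x)$ for some $c\in\C$, which forces $S=cI$. Therefore $T=cI+K\in\mathcal B_d$. This completes the argument.

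The main obstacle is the step that recovers $Q$ from its vanishing on the cone. I will prove it directly rather than by citing algebraic geometry. Take $x=\varepsilon_j\pm\mathrm i\varepsilon_k$ and $x=\varepsilon_j+\mathrm i\varepsilon_k$ combined with a third index, all of which are isotropic. Expanding $Q$ on these vectors gives two things: the off-diagonal entries satisfy $s_{jk}=0$, and the diagonal entries satisfy $s_{jj}=s_{kk}$. Together these give $S=cI$. The isotropic spanning fact in the first step of the general case also needs a careful (short) check.
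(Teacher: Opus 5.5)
Your proposal is correct and follows the paper's proof essentially step for step. The case $d=1$ is trivial. For $d=2$ you diagonalize in the isotropic eigenbasis of $J$. For $d\ge3$ you observe that the symmetric part of $T$ gives a quadratic form vanishing on the isotropic cone, hence a scalar multiple of $(x,x)$.

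Your direct check $Q(\varepsilon_j\pm\mathrm i\varepsilon_k)=s_{jj}-s_{kk}\pm2\mathrm i\,s_{jk}=0$ is a clean elementary substitute for the paper's appeal to irreducibility of the quadric, and no third index is needed. The spanning fact $\mathfrak{so}_d x=x^\perp$ is also unnecessary: you only use the trivial inclusion $\mathcal B_d x\subseteq x^\perp$ for isotropic $x$, so the ``careful check'' you flag can be skipped.
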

\begin{proof}
The case $d=1$ is immediate.  For $d=2$, let $J$ generate $\mathfrak{so}_2$.  Over $\mathbb C$ the two isotropic lines are precisely the two eigenspaces of $J$.  If $w$ lies on either isotropic line, then $(\mathfrak{so}_2+\mathbb CI)w=\mathbb Cw$, so the pointwise condition forces $C$ to preserve both eigenspaces.  Hence $C$ is diagonal in an eigenbasis of $J$, and therefore $C\in\mathbb CI+\mathbb CJ=\mathcal B_2$.

Let $d\ge3$ and suppose $Cw\in(\mathfrak{so}_d+\mathbb CI)w$ for all $w\in W_d$.  If $w$ is isotropic for the standard quadratic form $q(w)=w^{\mathsf T}w$, then
\[
w^{\mathsf T}Cw=0.
\]
Thus the quadratic form determined by the symmetric part of $C$ vanishes on the irreducible quadric $q=0$.  It is therefore a scalar multiple of $q$.  Hence the symmetric part of $C$ is scalar, and $C$ differs from a scalar operator by a skew-symmetric operator.
\end{proof}

\begin{proposition}\label{prop:tensor-module-reflexive-2local}
For $V=U_m\otimes W_d$,
\[
\operatorname{Ref}(\mathcal E_{m,d})=\mathcal E_{m,d}.
\]
\end{proposition}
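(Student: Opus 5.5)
We prove that every $T\in\operatorname{Ref}(\mathcal E_{m,d})$ lies in $\mathcal E_{m,d}$ by testing $T$ on pure tensors and then on sums of two pure tensors. Lemmas~\ref{lem:sl2-module-reflexive-2local} and~\ref{lem:so-module-reflexive-2local} control the two factors separately; the work is to make them fit together. Throughout, $T\in\operatorname{End}(V)$ satisfies $Tx\in\mathcal E_{m,d}x$ for every $x\in V$, and $\mathcal E_{m,d}\subseteq\mathcal A_m\otimes I+I\otimes\mathcal B_d$.

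\textbf{Step 1 (pure tensors).} For $u\in U_m$ and $w\in W_d$ we have
\[
\mathcal E_{m,d}(u\otimes w)=(\mathcal A_mu)\otimes w+u\otimes(\mathcal B_dw)\subseteq U_m\otimes w+u\otimes W_d .
\]
So $T(u\otimes w)\in U_m\otimes w+u\otimes W_d$.

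Fix $w$ and let $u$ vary. The map $u\mapsto T(u\otimes w)$ is linear. Take a linear functional $\phi$ with $\phi(w)=1$ that kills a complement of $w$ in $W_d$, and project onto $U_m\otimes w$ along $U_m\otimes\ker\phi$. This produces a linear map $P_w:U_m\to U_m$.

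Fixing $u$ instead gives a linear map $Q_u:W_d\to W_d$ in the same way. Because the target decomposition is "rank one in each factor", a standard argument on maps preserving such sets shows that
\[
T=P\otimes I+I\otimes Q+N,
\]
where $N$ vanishes on all pure tensors. Since pure tensors span $V$, we get $N=0$. (When $\dim U_m=1$ or $\dim W_d=1$ this step is trivial.) The linear decomposition itself should come from the fact that $T(u\otimes w)-$ (its $U_m\otimes w$ part) is bilinear in $(u,w)$ and lands in $u\otimes W_d$. A bilinear map of the form $(u,w)\mapsto u\otimes\psi(u,w)$ forces $\psi(u,w)=Q(w)$, independent of $u$, by a comparison on two independent $u$'s.

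\textbf{Step 2 (each factor is admissible).} Restricting the reflexivity condition to pure tensors now gives
\[
(Pu)\otimes w+u\otimes(Qw)\in(\mathcal A_mu)\otimes w+u\otimes(\mathcal B_dw).
\]
Modulo $u\otimes w$, the components in $(U_m/\C u)\otimes w$ and $u\otimes(W_d/\C w)$ are independent. Hence $Pu\in\mathcal A_mu+\C u=\mathcal A_mu$ for every $u$, and likewise $Qw\in\mathcal B_dw$ for every $w$.

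By Lemmas~\ref{lem:sl2-module-reflexive-2local} and~\ref{lem:so-module-reflexive-2local}, $P\in\mathcal A_m$ and $Q\in\mathcal B_d$. Since $I\otimes I$ is shared by both summands, it follows that
\[
T\in\rho_m(\mathfrak{sl}_2)\otimes I+I\otimes\mathfrak{so}_d+\C I=\mathcal E_{m,d}.
\]

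\textbf{Main obstacle.} The delicate point is Step 1, extracting the global tensor-product form of $T$ from the pointwise condition. The specific difficulty is ensuring that the splitting of $T(u\otimes w)$ into its two components is consistent, that is, linear in both variables. The splitting is ambiguous along the line $\C\,u\otimes w$.

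To handle this, I will fix bases, write
\[
T(u_j\otimes\varepsilon_i)=\sum t_{kl,ji}\,u_k\otimes\varepsilon_l ,
\]
and read off from the pure-tensor condition that $t_{kl,ji}=0$ unless $l=i$ or $k=j$. Then I test $T$ on $(u_j+u_{j'})\otimes(\varepsilon_i+\varepsilon_{i'})$. This forces the coefficients to depend only on the appropriate indices, giving $t_{kl,ji}=p_{kj}\delta_{li}+\delta_{kj}q_{li}$, up to redistributing the diagonal entries $k=j,\ l=i$ by a scalar.

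If this is cleaner, I could instead argue via generic tensors $u\otimes w+u'\otimes w'$. However, the coordinate version above seems the most direct way to do it.
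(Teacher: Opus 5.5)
Your proof is correct, but it reaches the tensor splitting differently from the paper.

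\textbf{The paper's route.} The paper never passes through the coarse condition $T(u\otimes w)\in U_m\otimes w+u\otimes W_d$. It writes $T$ as a block matrix $(T_{ji})$ over the spatial basis only. The test vectors $u\otimes\varepsilon_i$, with $u$ arbitrary, give $T_{ii}u\in\mathcal A_mu$ and $T_{ji}u\in\C u$ for $j\ne i$. So Lemma~\ref{lem:sl2-module-reflexive-2local} puts each $T_{ii}$ in $\mathcal A_m$ immediately, and the off-diagonal blocks are scalar because every vector is an eigenvector. The test vectors $u\otimes(\varepsilon_i+\varepsilon_j)$ then show that $T_{ii}-T_{jj}$ is scalar. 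This yields $T=B\otimes I+I\otimes C$ with $B\in\mathcal A_m$ already, and only the $W_d$-factor needs the rank-one argument of your Step~2.

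\textbf{Your route.} You first prove a factor-independent fact: an operator mapping each $u\otimes w$ into $U_m\otimes w+u\otimes W_d$ lies in $\mathfrak{gl}(U_m)\otimes I+I\otimes\mathfrak{gl}(W_d)$. You then apply both lemmas symmetrically. This isolates a clean structural statement, at the cost of index bookkeeping in both factors. The paper avoids that bookkeeping by using the finer pointwise information early.

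\textbf{Writing out Step~1.} Drop the ``bilinear'' heuristic. The splitting of $T(u\otimes w)$ is only defined modulo $\C\,u\otimes w$, and your projection depends on $w$. Use the coordinate version instead, with the following care:
\begin{enumerate}
\item The index-independence of the off-diagonal coefficients $t_{ki,ji}$ $(k\ne j)$ and $t_{jl,ji}$ $(l\ne i)$ must come from the one-sided tests $u_j\otimes(\varepsilon_i+\varepsilon_{i'})$ and $(u_j+u_{j'})\otimes\varepsilon_i$. The fully mixed test only constrains sums of these coefficients.
\item The mixed test $(u_j+u_{j'})\otimes(\varepsilon_i+\varepsilon_{i'})$ is what handles the diagonal. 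Apply it after removing the off-diagonal part. It gives $\lambda_{ji}+\lambda_{j'i'}=\lambda_{ji'}+\lambda_{j'i}$ for the remaining diagonal coefficients, i.e.\ $\lambda_{ji}=p_j+q_i$.
\end{enumerate}
With these details, Step~2 goes through exactly as you wrote it.
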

\begin{proof}
Fix the standard basis $\varepsilon_1,\ldots,\varepsilon_d$ of $W_d$ and write a linear operator $T$ on $V$ as a $d\times d$ block matrix $(T_{ji})$, where each block acts on $U_m$.  Applying the pointwise condition to $u\otimes\varepsilon_i$ gives
\[
T_{ii}u\in\mathcal A_m u,
\qquad
T_{ji}u\in\mathbb Cu\quad(j\ne i).
\]
Lemma~\ref{lem:sl2-module-reflexive-2local} implies $T_{ii}\in\mathcal A_m$, while $T_{ji}=c_{ji}I$ for $j\ne i$.  Applying the condition to $u\otimes(\varepsilon_i+\varepsilon_j)$ shows that $T_{ii}-T_{jj}$ is scalar.  Consequently
\[
T=B\otimes I+I\otimes C
\]
for some $B\in\mathcal A_m$ and $C\in\operatorname{End}(W_d)$.  After subtracting $B\otimes I$, fix non-zero $u\in U_m$ and $w\in W_d$.  The pointwise condition gives
\[
u\otimes Cw=A_u u\otimes w+u\otimes B_w w
\]
for some $A_u\in\mathcal A_m$ and $B_w\in\mathfrak{so}_d$.  Hence
\[
u\otimes(Cw-B_w w)=A_u u\otimes w.
\]
By the elementary rank-one tensor criterion, either both sides are zero or $A_u u$ is proportional to $u$ and $Cw-B_w w$ is proportional to $w$.  In either case
\[
Cw\in(\mathfrak{so}_d+\mathbb CI)w
\qquad(w\in W_d).
\]
Lemma~\ref{lem:so-module-reflexive-2local} gives $C\in\mathcal B_d$, and hence $T\in\mathcal E_{m,d}$.
\end{proof}

\subsection{Normalization on the reductive factor}
We first show that all values on $S$ can be removed by one global derivation.  The proof uses the same two nilpotent anchors $e$ and $f$ as in the one-dimensional case, but the final step also uses them to eliminate the rotation generators.

\begin{proposition}\label{prop:higher-S-normalization}
Let $d\ge2$ and let $\Delta$ be a 2-local derivation on $\mathfrak g_\ell(d)$.  There exists a derivation $D\in\operatorname{Der}\mathfrak g_\ell(d)$ such that, after replacing $\Delta$ by $\Delta-D$,
\[
\Delta(\mathfrak g_\ell(d))\subseteq V,
\qquad
\Delta(x)=0\quad\text{for every }x\in S.
\]
\end{proposition}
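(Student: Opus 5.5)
The plan is to mimic the one-dimensional normalization (Proposition~\ref{prop:normalize-ef} and Lemma~\ref{lem:sl2-zero}), keeping track of the extra $\mathfrak{so}_d$-component. First, Corollary~\ref{cor:levi-normalization} (together with Lemma~\ref{lem:d2-reductive-quotient} when $d=2$) lets me subtract a derivation so that $\Delta(\mathfrak g_\ell(d))\subseteq V$. Next I take a representing derivation $D=\operatorname{ad}(a+B+r)+\lambda\delta_0$ for the pair $(e,f)$. Since $\Delta(e),\Delta(f)\in V$, the $S$-components give $[a+B,e]=[a+B,f]=0$, hence $a=0$. The $\mathfrak{so}_d$-part $B$ commutes with $\mathfrak{sl}_2$ and is invisible on $e,f$. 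By \eqref{eq:D-higher-S}, this yields $\Delta(e)=-\rho(e)r$ and $\Delta(f)=-\rho(f)r$ for one common $r\in V$. Subtracting $\operatorname{ad} r$ normalizes $\Delta(e)=\Delta(f)=0$.

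The second step is a \emph{kernel-line argument} for an arbitrary $x=a_x+B_x\in S$. Take a pair $(y,x)$ with $y\in\{e,f,h\}$ and $\Delta(y)=0$, and write $D=\operatorname{ad}(a+B+r)+\lambda\delta_0$. Then $D(y)=0$ forces $a\in\C y$ and $r\in\ker\rho_m(y)\otimes W_d$. If $a_x\notin\C y$, the vanishing of the $\mathfrak{sl}_2$-component of $D(x)$ forces $a=0$. Consequently
\[
\Delta(x)=-\rho(x)r\in\rho(x)\bigl(\ker\rho_m(y)\otimes W_d\bigr).
\]
For $y=e$ this space is $\rho(x)(u_0\otimes W_d)$, spanned by vectors supported in the weight levels $u_0,u_1$. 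For $y=f$ it is $\rho(x)(u_m\otimes W_d)$, supported in levels $u_{m-1},u_m$.

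I would first apply this to $x=h$: the $e$- and $f$-restrictions give $\Delta(h)\in(u_0\otimes W_d)\cap(u_m\otimes W_d)=0$, so $h$ also becomes a zero anchor. For $h$ the kernel is $0$ if $m$ is odd and $u_{m/2}\otimes W_d$ if $m$ is even.

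For a pure rotation $x=B_x$, the $\mathfrak{sl}_2$-part $a$ is harmless, since $[a,B_x]=0$ for every $a$. The $\mathfrak{so}_d$-component of $D(x)$ is $[B,B_x]$, which must vanish but plays no further role. So the $e$- and $f$-anchors give $\Delta(B_x)\in(u_0\otimes W_d)\cap(u_m\otimes W_d)=0$, valid for all $m\ge1$.

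For $a_x\ne0$, at most one of the three lines $\C e,\C f,\C h$ contains $a_x$, so at least two anchors apply. For $m\ge3$ I would compare weight-level supports:
\begin{itemize}[nosep]
\item If $a_x\notin\C e\cup\C f$, the supports $\{0,1\}$ and $\{m-1,m\}$ are disjoint, so $\Delta(x)=0$.
\item If $a_x\in\C e$, the $f$-anchor gives support $\{m-1,m\}$. The $h$-anchor gives $0$ when $m$ is odd, and support in levels $m/2-1,m/2$ when $m$ is even, which are disjoint from $\{m-1,m\}$ for $m\ge4$.
\item The case $a_x\in\C f$ is symmetric.
\end{itemize}

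The main obstacle is the low weights $m=1,2$, where these supports overlap. There I would intersect the explicit subspaces rather than their supports. For instance, take $m=2$ and $a_x=e$. The $f$-anchor yields vectors $u_1\otimes w+u_2\otimes B_xw$, while the $h$-anchor yields $u_0\otimes w'+u_1\otimes B_xw'$. Matching the $u_0$- and $u_2$-coordinates forces $w'=0$, so the intersection is $0$.

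For $m=1$, $\rho_1(h)$ is invertible, so the $h$-anchor gives $r=0$ whenever $a_x\notin\C h$. Hence $\Delta(x)=0$ at once in that case. For $a_x\in\C h$, I would combine the $e$- and $f$-restrictions $\rho(x)(u_0\otimes W_d)$ and $\rho(x)(u_1\otimes W_d)$. When $a_x=h$ these are $u_0\otimes(I+B_x)W_d$ and $u_1\otimes(-I+B_x)W_d$, which lie in complementary level subspaces and therefore meet only in $0$. If some borderline configuration resists this, I would fall back on conjugating by inner automorphisms $\exp(t\operatorname{ad}f)$ as in Lemma~\ref{lem:conjugate-anchor} to produce additional zero anchors in $\mathfrak{sl}_2$.
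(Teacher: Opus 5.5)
Your proposal is correct. It uses the same ingredients as the paper: quotient normalization, the pair $(e,f)$ to remove the radical parameter, and zero anchors $y\in\{e,f,h\}$ that force $r\in\ker\rho_m(y)\otimes W_d$. The organization differs, however, in a way that matters.

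The paper proves $\Delta|_{\mathfrak{sl}_2}=0$ by running Lemma~\ref{lem:sl2-zero} componentwise. It proves $\Delta|_{\mathfrak{so}_d}=0$ via $(u_0\otimes R(W_d))\cap(u_m\otimes R(W_d))=0$. It then asserts that $\Delta$ vanishes on all of $S$. Since $\Delta$ is not additive, mixed elements $x_0+R$ with $x_0\ne0\ne R$ are never actually treated there. Your kernel-line argument is applied to an arbitrary $x=a_x+B_x\in S$, so it proves the statement as written.

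It can be streamlined. The hypothesis $a_x\notin\C y$ is never needed: once $\Delta(x)\in V$, the $V$-component of $D(x)$ is $-\rho(x)r$ whatever the $\mathfrak{sl}_2$-part of $D$ is. Hence for odd $m$ the anchor $h$ alone gives $r=0$, and so $\Delta(x)=0$ for every $x\in S$ at once. For even $m\ge4$ the $e$- and $f$-spaces already have disjoint supports.

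For $m=2$, your single worked case is not the only delicate one. For $a_x=e+f$, the $e$- and $f$-spaces meet in $u_1\otimes\ker B_x$, which is non-zero for instance when $d$ is odd. So the $h$-anchor is genuinely needed there. Write $x=\alpha e+\beta f+\gamma h+R$ and $\Delta(x)=\sum_k u_k\otimes q_k$, and let $w,w',w''$ be the parameters of the $e$-, $f$- and $h$-spaces. Intersecting all three gives $q_0=q_2=0$ and $q_1=2\beta w=2\alpha w'=Rw''$, with $\alpha w''=\beta w''=0$. Hence $q_1=0$ in every case.

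One small slip: $\rho_2(e)u_2=2u_1$, so your $f$-anchor vectors should read $u_1\otimes 2w+u_2\otimes B_xw$. This does not affect the conclusion.
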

\begin{proof}
The ideal $V$ is characteristic.  If $d\ge3$, then
\[
\mathfrak g_\ell(d)/V\cong\mathfrak{sl}_2\oplus\mathfrak{so}_d
\]
is semisimple, so Lemma~\ref{lem:quotient} and the theorem of Ayupov--Kudaybergenov--Rakhimov \cite{AKR} allow us to subtract an inner lift and assume
\[
\Delta(\mathfrak g_\ell(d))\subseteq V.
\]
If $d=2$, the quotient is $\mathfrak{sl}_2\oplus\mathbb Cs$.  Lemma~\ref{lem:d2-reductive-quotient} applies to the induced map and shows that one inner derivation on the $\mathfrak{sl}_2$-factor represents the quotient map at every point $x+\alpha s$.  Subtracting its lift therefore gives
\[
\Delta(\mathfrak g_\ell(2))\subseteq V
\]
with no residual quotient value on mixed $\mathfrak{sl}_2\oplus\mathbb Cs$ elements.

Choose a representing derivation for the pair $(e,f)$ and write it as in \eqref{eq:D-higher-S}--\eqref{eq:D-higher-V}.  Since $\Delta(e),\Delta(f)\in V$, the $S$-components give
\[
[a,e]=[a,f]=0.
\]
Here $a\in\mathfrak{sl}_2$ is the $\mathfrak{sl}_2$-coefficient in \eqref{eq:D-higher-S}; the rotation coefficient $B\in\mathfrak{so}_d$ commutes with both $e$ and $f$.  The intersection of the centralizers of $e$ and $f$ in $\mathfrak{sl}_2$ is zero, so $a=0$.  Hence, for one and the same $r\in V$,
\[
\Delta(e)=-\rho(e)r,
\qquad
\Delta(f)=-\rho(f)r.
\]
Subtracting the inner derivation $\operatorname{ad}r$ preserves the quotient normalization and gives
\[
\Delta(e)=\Delta(f)=0.
\]

We now prove that $\Delta$ vanishes on $\mathfrak{sl}_2$.  The proof of Lemma~\ref{lem:sl2-zero} applies componentwise.  Indeed, for a representing derivation with $D(e)=0$, the $\mathfrak{sl}_2$-part satisfies $a\in\mathbb Ce$ and the radical part lies in
\[
\ker\rho(e)=u_0\otimes W_d,
\]
whereas the rotation part $B$ commutes with $e$ and therefore does not enter $D(x)$ for $x\in\mathfrak{sl}_2$.  Similarly, $D(f)=0$ forces the radical part into $u_m\otimes W_d$.  The support comparisons used in Lemma~\ref{lem:sl2-zero} are unchanged after tensoring with $W_d$.  Consequently
\[
\Delta(x)=0
\qquad(x\in\mathfrak{sl}_2).
\]

It remains to consider $R\in\mathfrak{so}_d$.  Use the pair $(e,R)$.  Since $D(e)=0$, the radical part of the representing derivation belongs to $u_0\otimes W_d$.  Since $\Delta(R)\in V$, the $S$-component of $D(R)$ vanishes, so $[B,R]=0$, and therefore
\[
\Delta(R)=-\rho(R)r\in u_0\otimes R(W_d).
\]
Using the pair $(f,R)$ gives at the same time
\[
\Delta(R)\in u_m\otimes R(W_d).
\]
Because $u_0$ and $u_m$ are linearly independent, these two subspaces have zero intersection.  Hence $\Delta(R)=0$.  Thus $\Delta$ vanishes on all of $S$.
\end{proof}

\subsection{Synchronization of the spatial strings}
We next determine the values on vectors contained in a single spatial string.  For $m\ge3$ the separating vector from Lemma~\ref{lem:separating-vector} synchronizes the $\mathfrak{sl}_2$ and scalar coefficients, while the two-point condition synchronizes the rotation coefficients between different strings.

Set
\[
w_*=u_0+u_1+u_m\in U_m.
\]

\begin{proposition}\label{prop:higher-pure-strings}
Assume $d\ge2$ and $m\ge3$, and let $\Delta$ be a 2-local derivation satisfying
\[
\Delta|_S=0,
\qquad
\Delta(\mathfrak g_\ell(d))\subseteq V.
\]
Then there exist a scalar $c\in\mathbb C$ and a fixed matrix $B_0\in\mathfrak{so}_d$ such that
\begin{equation}\label{eq:pure-string-form}
\Delta(u\otimes\varepsilon_i)
=c\,u\otimes\varepsilon_i+u\otimes B_0\varepsilon_i
\qquad
(u\in U_m,\ 1\le i\le d).
\end{equation}
Moreover, if $d\ge3$, then $B_0=0$.
\end{proposition}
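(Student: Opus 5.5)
We will first pin down $\Delta$ on the special vectors $w_*\otimes\varepsilon_i$, then propagate to each full string by a separating-vector argument, and finally synchronize the rotation parts across strings.

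\textbf{Step 1: values on $w_*\otimes\varepsilon_i$.} Use the pairs $(e,w_*\otimes\varepsilon_i)$ and $(f,w_*\otimes\varepsilon_i)$. Since $\Delta(e)=0$, a representing derivation $D=\operatorname{ad}(a+B+r)+\lambda\delta_0$ has $a\in\mathbb Ce$, $[B,e]=0$ (automatic), and $r\in u_0\otimes W_d$. By \eqref{eq:D-higher-V} we get
\[
\Delta(w_*\otimes\varepsilon_i)\in \operatorname{span}\{w_*,\rho_m(e)w_*\}\otimes\varepsilon_i+w_*\otimes W_d .
\]
The pair with $f$ gives the same statement with $\rho_m(f)w_*$ in place of $\rho_m(e)w_*$. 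As in Proposition~\ref{prop:radical-scalar}, the vectors $w_*,\rho_m(e)w_*,\rho_m(f)w_*$ are independent for $m\ge3$. Hence
\[
\Delta(w_*\otimes\varepsilon_i)=w_*\otimes c_i(\varepsilon_i),
\]
for some vector $c_i(\varepsilon_i)\in W_d$.

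\textbf{Step 2: the full string $V_i$.} Fix $u\in U_m$ and use the pair $(w_*\otimes\varepsilon_i,u\otimes\varepsilon_i)$. The operator $T=(\rho_m(a)+\lambda I)\otimes I+I\otimes B$ satisfies
\[
T(w_*\otimes\varepsilon_i)=(\rho_m(a)+\lambda I)w_*\otimes\varepsilon_i+w_*\otimes B\varepsilon_i=w_*\otimes c_i(\varepsilon_i).
\]
Comparing components along $\varepsilon_i$ and its complement gives $B\varepsilon_i-\mu\varepsilon_i=c_i(\varepsilon_i)-\mu\varepsilon_i$ and $(\rho_m(a)+\lambda I-\mu I)w_*=0$ for a suitable scalar $\mu$. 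By Lemma~\ref{lem:separating-vector}, $\rho_m(a)+(\lambda-\mu)I=0$, so $a=0$ and $\lambda=\mu$. Then
\[
\Delta(u\otimes\varepsilon_i)=u\otimes(\lambda\varepsilon_i+B\varepsilon_i)=u\otimes c_i(\varepsilon_i).
\]
Writing $c_i(\varepsilon_i)=c_{(i)}\varepsilon_i+\sum_{j\ne i}b_{ji}\varepsilon_j$, the diagonal entry of the skew matrix $B$ is $0$, so $c_{(i)}$ is exactly the scalar $\lambda$.

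\textbf{Step 3: synchronizing across strings.} For $i\ne j$, apply the pair $(w_*\otimes\varepsilon_i,w_*\otimes\varepsilon_j)$. One derivation with operator $(\rho_m(a)+\lambda I)\otimes I+I\otimes B$ must reproduce both values. Lemma~\ref{lem:separating-vector} again forces $a=0$, and then $\lambda I+B$ sends $\varepsilon_i\mapsto c_i(\varepsilon_i)$ and $\varepsilon_j\mapsto c_j(\varepsilon_j)$. This gives
\[
c_{(i)}=\lambda=c_{(j)},\qquad b_{ji}=B_{ji}=-B_{ij}=-b_{ij}.
\]
So all diagonal scalars agree, say $c$, and the off-diagonal data form one skew matrix $B_0$. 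This yields \eqref{eq:pure-string-form}. Note that only pairs of strings are used, so no single derivation needs to match all $d$ strings at once.

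\textbf{Step 4: $B_0=0$ for $d\ge3$.} This is the step I expect to be the main obstacle, since for $d=2$ a nonzero $B_0$ is genuinely allowed by 2-locality on pure strings alone. For $d\ge3$ I would use the pair $(R,u\otimes\varepsilon_i)$ with $R\in\mathfrak{so}_d$, exploiting $\Delta(R)=0$. Choose $R=E_{jk}-E_{kj}$ with $j,k\ne i$, so that $R\varepsilon_i=0$. The condition $D(R)=0$ gives $[B,R]=0$, $a$ arbitrary in $\mathfrak{sl}_2$, and $\rho(R)r=0$. The centralizer of $E_{jk}-E_{kj}$ in $\mathfrak{so}_d$ is $\mathbb C R\oplus\mathfrak{so}(\{j,k\}^\perp)$, which maps $\varepsilon_i$ into $\operatorname{span}\{\varepsilon_l: l\ne j,k\}$. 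Hence $\Delta(u\otimes\varepsilon_i)$ has no $\varepsilon_j$- or $\varepsilon_k$-components, so $(B_0)_{ji}=(B_0)_{ki}=0$. Since $d\ge3$, every pair $j\ne i$ admits such a $k$, and therefore $B_0=0$.
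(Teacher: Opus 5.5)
Your proposal is correct and follows essentially the same route as the paper. The shared steps are the pairs with $e$ and $f$ at $w_*\otimes\varepsilon_i$, Lemma~\ref{lem:separating-vector} to propagate along the string, and the pair $(w_*\otimes\varepsilon_i,w_*\otimes\varepsilon_j)$ to synchronize the scalars and the skew entries. For $d\ge3$ you also use a rotation centralizer; the one difference is that you take $R_{jk}$ with $j,k\ne i$ and kill two entries of $B_0\varepsilon_i$ at once, whereas the paper uses $R_{ij}$ and $R_{ik}$ to place $B_0\varepsilon_i$ in $\mathbb C\varepsilon_j\cap\mathbb C\varepsilon_k$.

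There is one minor slip in Step~2: the comparison should read $c_i(\varepsilon_i)=B\varepsilon_i+\mu\varepsilon_i$, not $B\varepsilon_i-\mu\varepsilon_i=c_i(\varepsilon_i)-\mu\varepsilon_i$. Your subsequent conclusion $c_i(\varepsilon_i)=(\lambda I+B)\varepsilon_i$ is nevertheless the correct one.
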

\begin{proof}
Fix $i$.  Applying the pairs $(e,w_*\otimes\varepsilon_i)$ and $(f,w_*\otimes\varepsilon_i)$ and then taking the $i$th spatial coordinate, the skew-symmetry of the rotation matrix removes its diagonal contribution.  Exactly as in the proof of Proposition~\ref{prop:radical-scalar}, we obtain
\[
\pi_i\Delta(w_*\otimes\varepsilon_i)\in
\operatorname{span}\{w_*,\rho_m(e)w_*\}
\cap
\operatorname{span}\{w_*,\rho_m(f)w_*\}
=\mathbb Cw_*.
\]
Write
\[
\pi_i\Delta(w_*\otimes\varepsilon_i)=c_iw_*.
\]
Now let $u\in U_m$ and use the pair
\[
(w_*\otimes\varepsilon_i,\ u\otimes\varepsilon_i).
\]
On $V$ its representing derivation has the operator
\[
T\otimes I+I\otimes B,
\qquad
T=\rho_m(a)+\lambda I_{U_m}.
\]
Taking the $i$th coordinate at the first point gives $Tw_*=c_iw_*$.  Therefore
\[
(T-c_iI)w_*=0.
\]
Lemma~\ref{lem:separating-vector} yields $T=c_iI$, and hence
\begin{equation}\label{eq:diagonal-string}
\pi_i\Delta(u\otimes\varepsilon_i)=c_i u.
\end{equation}

Fix $j\ne i$.  Use the same pair, now taking the $j$th coordinate.  The only contribution is the $(j,i)$ entry of the same skew matrix $B$.  Taking first $u=w_*$ and then an arbitrary $u$ shows that there is a fixed scalar $b_{ji}$ such that
\begin{equation}\label{eq:cross-string}
\pi_j\Delta(u\otimes\varepsilon_i)=b_{ji}u
\qquad(u\in U_m).
\end{equation}
Thus the action of $\Delta$ on the whole string $V_i$ is linear.

Now take the pair
\[
w_*\otimes\varepsilon_i,
\qquad
w_*\otimes\varepsilon_j.
\]
The same operator $T$ occurs in both diagonal coordinates, so \eqref{eq:diagonal-string} gives
\[
c_iw_*=Tw_*=c_jw_*.
\]
Hence all $c_i$ are one common scalar $c$.  The same representing rotation matrix gives
\[
b_{ij}=-b_{ji}.
\]
Therefore the numbers $b_{ji}$ are the matrix entries of one fixed $B_0\in\mathfrak{so}_d$, and \eqref{eq:pure-string-form} follows.

Assume now that $d\ge3$.  Fix $i$ and choose two distinct indices $j,k\ne i$.  Let $R_{ij}\in\mathfrak{so}_d$ be the standard rotation in the $(i,j)$-plane.  Use the pair
\[
(R_{ij},\ u\otimes\varepsilon_i).
\]
Since $\Delta(R_{ij})=0$, the $\mathfrak{so}_d$-part $B$ of the representing derivation satisfies
\[
[B,R_{ij}]=0.
\]
The centralizer of $R_{ij}$ preserves the plane $\operatorname{span}\{\varepsilon_i,\varepsilon_j\}$ and its orthogonal complement; because $B$ is skew-symmetric,
\[
B\varepsilon_i\in\mathbb C\varepsilon_j.
\]
Comparing the spatial components in \eqref{eq:pure-string-form} gives
\[
B_0\varepsilon_i\in\mathbb C\varepsilon_j.
\]
Repeating the argument with $R_{ik}$ gives
\[
B_0\varepsilon_i\in\mathbb C\varepsilon_k.
\]
Thus $B_0\varepsilon_i=0$.  Since $i$ was arbitrary, $B_0=0$.
\end{proof}

\begin{corollary}\label{cor:higher-coordinate-normalization}
Assume $d\ge2$ and $m\ge3$.  After subtracting a global derivation, a 2-local derivation may be normalized so that
\begin{equation}\label{eq:higher-coordinate-zero}
\Delta|_S=0,
\qquad
\Delta(u\otimes\varepsilon_i)=0
\quad(u\in U_m,\ 1\le i\le d),
\qquad
\Delta(\mathfrak g_\ell(d))\subseteq V.
\end{equation}
\end{corollary}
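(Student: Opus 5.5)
The plan is to combine Proposition~\ref{prop:higher-S-normalization} with Proposition~\ref{prop:higher-pure-strings}, subtracting one global derivation at each stage. Lemma~\ref{lem:subtract} guarantees that every intermediate map is still a 2-local derivation, so nothing else needs checking.

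\emph{Step one.} Apply Proposition~\ref{prop:higher-S-normalization}. After subtracting a derivation, this gives $\Delta|_S=0$ and $\Delta(\mathfrak g_\ell(d))\subseteq V$.

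\emph{Step two.} Proposition~\ref{prop:higher-pure-strings} now applies. It yields a scalar $c$ and a matrix $B_0\in\mathfrak{so}_d$, with $B_0=0$ when $d\ge3$, such that
\[
\Delta(u\otimes\varepsilon_i)=c\,u\otimes\varepsilon_i+u\otimes B_0\varepsilon_i .
\]
We subtract the global derivation $D_1=c\,\delta_0+\operatorname{ad}(B_0)$, where $B_0$ is regarded as an element of $\mathfrak{so}_d\subseteq S$. By \eqref{eq:D-higher-V}, $D_1$ acts on $u\otimes\varepsilon_i$ exactly by the displayed formula, so $(\Delta-D_1)(u\otimes\varepsilon_i)=0$ for all $u$ and $i$.

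\emph{Step three.} We must check that subtracting $D_1$ does not spoil the normalization on $S$. This is the only point that needs care. The derivation $\delta_0$ vanishes on $S$, so it causes no problem. The derivation $\operatorname{ad}(B_0)$, however, acts on $S$ by $x\mapsto[B_0,x]$, and this is nonzero on $\mathfrak{so}_d$ unless $\mathfrak{so}_d$ is abelian.

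For $d\ge3$ the issue disappears, since $B_0=0$ and we subtract only $c\,\delta_0$. For $d=2$, $\mathfrak{so}_2=\mathbb C s$ is abelian and commutes with $\mathfrak{sl}_2$, so $\operatorname{ad}(B_0)$ vanishes on $S$. Therefore $\Delta-D_1$ still satisfies $\Delta|_S=0$.

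The inclusion $\Delta(\mathfrak g_\ell(d))\subseteq V$ also survives. Both $\operatorname{ad}(B_0)$ and $\delta_0$ preserve $V$, and on $S$ they vanish. Hence for $x+v$ with $x\in S$ and $v\in V$, we have $D_1(x+v)=D_1(v)\in V$.

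This completes all three conditions in \eqref{eq:higher-coordinate-zero}.
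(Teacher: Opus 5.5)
Your proposal is correct and follows essentially the same route as the paper. The paper likewise applies Proposition~\ref{prop:higher-S-normalization} and then Proposition~\ref{prop:higher-pure-strings}, subtracts $c\,\delta_0$, and for $d=2$ also subtracts $\operatorname{ad}B_0$, which vanishes on $S$ because $\mathfrak{so}_2$ is abelian and commutes with $\mathfrak{sl}_2$. You additionally verify explicitly that the inclusion $\Delta(\mathfrak g_\ell(d))\subseteq V$ survives, which the paper leaves implicit.
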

\begin{proof}
Apply Proposition~\ref{prop:higher-S-normalization} and then Proposition~\ref{prop:higher-pure-strings}.  Subtract $c\delta_0$.  If $d\ge3$, the rotation term is already zero.  If $d=2$, then $B_0\in\mathfrak{so}_2$ and the inner derivation $\operatorname{ad}B_0$ vanishes on $S=\mathfrak{sl}_2\oplus\mathfrak{so}_2$ because $\mathfrak{so}_2$ is abelian and commutes with $\mathfrak{sl}_2$.  Subtracting it preserves $\Delta|_S=0$ and yields \eqref{eq:higher-coordinate-zero}.
\end{proof}

\subsection{Rigidity on the whole abelian radical}
The previous corollary only gives zero values on the coordinate strings.  The next theorem removes the nonlinearity on arbitrary sums of spatial strings.  The key point is that one zero anchor $w_*\otimes\varepsilon_i$ forces the full $\mathfrak{sl}_2$-scalar operator to vanish and simultaneously kills the $i$th row and column of the representing rotation.

\begin{theorem}\label{thm:higher-radical-zero}
Assume $d\ge2$ and $m=2\ell\ge3$.  Let $\Delta$ satisfy the normalization \eqref{eq:higher-coordinate-zero}.  Then
\[
\Delta(v)=0
\qquad\text{for every }v\in V.
\]
Consequently, every 2-local derivation on $\mathfrak g_\ell(d)$ differs from a global derivation by a 2-local derivation that vanishes on
\[
S\cup V.
\]
\end{theorem}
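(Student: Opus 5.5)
The idea is to use the coordinate anchors $w_*\otimes\varepsilon_i$, on which $\Delta$ already vanishes by \eqref{eq:higher-coordinate-zero}, to force the representing derivation to be almost trivial on $V$. Then a second coordinate anchor $w_*\otimes\varepsilon_j$ should remove the small remaining freedom.

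Fix $v=\sum_k v_k\otimes\varepsilon_k\in V$ with $v_k\in U_m$. Take the pair $(w_*\otimes\varepsilon_i,\,v)$. By \eqref{eq:D-higher-V}, the representing derivation acts on $V$ by $T\otimes I+I\otimes B$, where $T=\rho_m(a)+\lambda I$ and $B\in\mathfrak{so}_d$. The condition $\Delta(w_*\otimes\varepsilon_i)=0$ gives
\[
Tw_*\otimes\varepsilon_i+w_*\otimes B\varepsilon_i=0 .
\]
Taking the $i$th coordinate and using $B_{ii}=0$ gives $Tw_*=0$, so $T=0$ by Lemma~\ref{lem:separating-vector}. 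The remaining coordinates then give $B_{ji}=0$ for all $j$, and skew-symmetry also kills the $i$th row of $B$. Hence
\[
\Delta(v)=\sum_k v_k\otimes B\varepsilon_k ,
\]
where $B$ is skew with vanishing $i$th row and column. This is the key consequence: for every $i$ there is a skew $B^{(i)}$, depending on $v$ and $i$, with $B^{(i)}\varepsilon_i=0$ and $\Delta(v)=(I\otimes B^{(i)})v$.

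Next I compare the formulas obtained from two different indices $i\neq j$, as follows.

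\textbf{Case $d=2$.} Here every skew matrix with a vanishing row is zero, so $\Delta(v)=0$ immediately.

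\textbf{Case $d\ge3$.} Write $X=\Delta(v)$ and view $v$ as a $m{+}1$ by $d$ matrix $(v_1|\cdots|v_d)$. Then $X=vB^{(i)\mathsf T}$ up to sign conventions. The $i$th column of $X$ is $\sum_k v_kB^{(i)}_{ik}=0$, using the vanishing $i$th row of $B^{(i)}$. Since this holds for every $i$, all columns of $X$ vanish, so $X=0$.

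I should double-check this index bookkeeping. The $i$th spatial coordinate of $(I\otimes B)v$ is $\sum_k B_{ik}v_k$, and $B_{ik}=0$ for all $k$ because the $i$th row vanishes. So $\pi_i\Delta(v)=0$ for each $i$, and therefore $\Delta(v)=0$. This argument works uniformly for all $d\ge2$, so the case split above is not really needed.

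The consequence stated in the theorem then follows by combining this with Corollary~\ref{cor:higher-coordinate-normalization}, which already gives $\Delta|_S=0$.

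The only delicate point is making sure that the single pair really determines both $T$ and the $i$th row and column of $B$ simultaneously. This relies on Lemma~\ref{lem:separating-vector} being applied to the $i$th coordinate alone, which is legitimate because $B_{ii}=0$. I do not expect a genuine obstacle.
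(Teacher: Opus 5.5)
Your proposal is correct and follows the paper's proof. You use the same zero anchor $w_*\otimes\varepsilon_i$ in the pair $(w_*\otimes\varepsilon_i,v)$, apply Lemma~\ref{lem:separating-vector} to get $T=0$, and use skew-symmetry to kill the $i$th row of $B$, which gives $\pi_i\Delta(v)=0$ for each $i$. The paper instead invokes a rank-one tensor fact to get $Tw_*=\alpha w_*$, $B\varepsilon_i=-\alpha\varepsilon_i$, and then uses skew-symmetry to force $\alpha=0$. Reading off $Tw_*=0$ directly from the $i$th coordinate via $B_{ii}=0$, as you do, is a slightly more direct route to the same point. As you noticed yourself, the case split and the second anchor $w_*\otimes\varepsilon_j$ are unnecessary.
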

\begin{proof}
Fix $v\in V$ and an index $i$.  Put
\[
p_i=w_*\otimes\varepsilon_i.
\]
By \eqref{eq:higher-coordinate-zero}, $\Delta(p_i)=0$.  Use a representing derivation for the pair $(p_i,v)$.  On $V$ it has the form
\[
A=T\otimes I+I\otimes B,
\qquad
T=\rho_m(a)+\lambda I,
\qquad
B\in\mathfrak{so}_d.
\]
The equality $A p_i=0$ reads
\begin{equation}\label{eq:rank-one-anchor}
Tw_*\otimes\varepsilon_i+w_*\otimes B\varepsilon_i=0.
\end{equation}
For non-zero vectors $u,w$, an identity
\[
u'\otimes w+u\otimes w'=0
\]
forces $u'=\alpha u$ and $w'=-\alpha w$ for some scalar $\alpha$.  Applying this elementary rank-one tensor fact to \eqref{eq:rank-one-anchor}, we get
\[
Tw_*=\alpha w_*,
\qquad
B\varepsilon_i=-\alpha\varepsilon_i.
\]
Since $(T-\alpha I)w_*=0$, Lemma~\ref{lem:separating-vector} gives
\[
T=\alpha I.
\]
But $B$ is skew-symmetric and $\langle\varepsilon_i,\varepsilon_i\rangle=1$, so
\[
0=\langle B\varepsilon_i,\varepsilon_i\rangle=-\alpha.
\]
Thus
\[
T=0,
\qquad
B\varepsilon_i=0.
\]
Skew-symmetry then also gives
\[
\langle B\varepsilon_j,\varepsilon_i\rangle=0
\qquad(1\le j\le d),
\]
so the $i$th spatial coordinate of $(I\otimes B)v$ is zero.  Therefore
\[
\pi_i\Delta(v)=0.
\]
Since this holds for every $i$, we conclude $\Delta(v)=0$.
\end{proof}

\begin{corollary}\label{cor:higher-basis-rigidity}
Let $d\ge2$ and $2\ell\ge3$.  For every 2-local derivation $\Delta$ on $\mathfrak g_\ell(d)$ there exists $D\in\operatorname{Der}\mathfrak g_\ell(d)$ such that
\[
(\Delta-D)(x)=0
\qquad\text{for every }x\in S\cup V.
\]
\end{corollary}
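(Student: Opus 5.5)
This is a direct assembly of results already proved in this section, so the plan is a short chain of subtractions; no new computation is needed. Let $\Delta$ be a 2-local derivation on $\g_\ell(d)$ with $d\ge2$ and $m=2\ell\ge3$. First I invoke Corollary~\ref{cor:higher-coordinate-normalization}. Its proof runs through Proposition~\ref{prop:higher-S-normalization} and Proposition~\ref{prop:higher-pure-strings}, and it produces a derivation $D_1\in\Der\g_\ell(d)$ such that $\Delta_1=\Delta-D_1$ satisfies the normalization \eqref{eq:higher-coordinate-zero}. Concretely, $\Delta_1$ vanishes on $S$ and on every pure string vector $u\otimes\varepsilon_i$, and it takes values in $V$. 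By Lemma~\ref{lem:subtract}, $\Delta_1$ is again a 2-local derivation, so all the earlier tools still apply to it.

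Second, I apply Theorem~\ref{thm:higher-radical-zero} to $\Delta_1$. This gives $\Delta_1(v)=0$ for every $v\in V$, not merely for pure string vectors. I then set $D=D_1$. Since $\Delta_1|_S=0$ by the normalization and $\Delta_1|_V=0$ by the theorem, we get $(\Delta-D)(x)=0$ for all $x\in S\cup V$, which is exactly the statement.

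I should check that none of the intermediate normalizations moves the derivation out of $\Der\g_\ell(d)$. The derivations subtracted along the way are the inner lift from the quotient, $\ad r$ for some $r\in V$, the outer derivation $c\delta_0$, and, when $d=2$, $\ad B_0$ with $B_0\in\mathfrak{so}_2$. Each of these lies in $\Der\g_\ell(d)$, and a sum of derivations is a derivation, so $D_1$ is a single global derivation as claimed.

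The only place I would pause is the case $d=2$. There I need subtracting $\ad B_0$ not to destroy the vanishing on $S$. This was already handled in the proof of Corollary~\ref{cor:higher-coordinate-normalization}: $\mathfrak{so}_2$ is abelian and commutes with $\mathfrak{sl}_2$, so $\ad B_0$ is zero on $S$. I expect no real obstacle here. The genuinely hard step, elimination of nonlinearity on mixed sums of strings, is Theorem~\ref{thm:higher-radical-zero}, which is already proved.
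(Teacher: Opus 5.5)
Your proposal is correct and is exactly the paper's argument: the corollary is the immediate combination of Corollary~\ref{cor:higher-coordinate-normalization} and Theorem~\ref{thm:higher-radical-zero}, as recorded in the last sentence of that theorem, with $D$ the single global derivation subtracted during the normalization. The one ingredient you inherit rather than re-check is that $\Delta_1|_S=0$ holds also on mixed elements $x_0+R$ of $S$, which matters because a 2-local map need not be additive. This comes from Proposition~\ref{prop:higher-S-normalization}, which treats $\mathfrak{sl}_2$ and $\mathfrak{so}_d$ separately, but its weight-support comparisons with the pairs $(e,\cdot)$, $(f,\cdot)$, $(h,\cdot)$ cover mixed elements verbatim when $m\ge3$.
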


\subsection{Mixed elements and full higher-dimensional rigidity}
The preceding result determines a normalized 2-local derivation on each of the two summands separately. Since a 2-local derivation is not assumed to be additive, one must still control values at $x+v$ with both components non-zero. We now construct a family of zero mixed anchors that eliminates the remaining freedom.

Set, as before,
\[
w_*=u_0+u_1+u_m,
\qquad
p_i=w_*\otimes\varepsilon_i.
\]
The next lemma is the higher-dimensional analogue of the two zero anchors used in the one-string proof.

\begin{lemma}\label{lem:higher-zero-anchors}
Assume $d\ge2$ and $m\ge3$, and let $\Delta$ satisfy
\[
\Delta|_S=0,
\qquad
\Delta|_V=0,
\qquad
\Delta(\mathfrak g_\ell(d))\subseteq V.
\]
Then, for every $1\le i\le d$,
\[
\Delta(e+p_i)=0,
\qquad
\Delta(f+p_i)=0.
\]
\end{lemma}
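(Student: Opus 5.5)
We imitate the proof of Lemma~\ref{lem:zero-basic-mixed-anchors}, now keeping track of the rotation part $B\in\mathfrak{so}_d$ of each representing derivation. Put $z_e=e+p_i$; the case $f+p_i$ is symmetric, with the roles of $u_0$ and $u_m$ exchanged.

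\emph{Step 1: the pair $(p_i,z_e)$.} Since $\Delta(p_i)=0$, the rank-one tensor argument from the proof of Theorem~\ref{thm:higher-radical-zero} applies verbatim. It gives $T=\rho_m(a)+\lambda I=0$ and $B\varepsilon_i=0$, hence $a=0$ and $\lambda=0$. Then
\[
D(z_e)=[B,e]-\rho(e)r+(I\otimes B)p_i=-\rho(e)r,
\]
because $B$ commutes with $e$ and kills $\varepsilon_i$. Therefore $\Delta(z_e)\in\operatorname{Im}\rho(e)=\operatorname{span}\{u_0,\dots,u_{m-1}\}\otimes W_d$. This already uses $\Delta(\mathfrak g_\ell(d))\subseteq V$.

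\emph{Step 2: the pair $(h,z_e)$.} From $D(h)=0$ we get $a\in\mathbb C h$ and $r\in\ker\rho(h)=\ker\rho_m(h)\otimes W_d$. The rotation $B$ is unconstrained except that it commutes with $h$, which is automatic. Because the $S$-component of $D(z_e)$ must vanish, $[a,e]=0$, so $a=0$. Hence
\[
\Delta(z_e)\in-\rho(e)\bigl(\ker\rho_m(h)\otimes W_d\bigr)+\lambda p_i+w_*\otimes B\varepsilon_i .
\]
Now project onto the $u_m$-row, i.e.\ the coefficients of $u_m\otimes W_d$. The $u_m$-row of $\Delta(z_e)$ is zero by Step~1. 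The first term has no $u_m$-part, so we obtain $\lambda\varepsilon_i+B\varepsilon_i=0$. Taking the inner product with $\varepsilon_i$ and using skew-symmetry gives $\lambda=0$ and $B\varepsilon_i=0$. Hence $\Delta(z_e)\in\rho(e)(\ker\rho_m(h)\otimes W_d)$.

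For $m$ odd this space is $0$, and we are done. For $m$ even it equals $u_{m/2-1}\otimes W_d$.

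\emph{Step 3: the pair $(f,z_e)$, needed only for $m$ even.} Here $D(f)=0$ gives $a\in\mathbb C f$ and $r\in u_m\otimes W_d$. The rotation $B$ is again free, but the $S$-component forces $a=0$. Projecting onto the $u_m$-row kills $\lambda$ and $B\varepsilon_i$ exactly as in Step~2: the term $\rho(e)r$ lies in the $u_{m-1}$-row and has no $u_m$-part. This leaves $\Delta(z_e)\in u_{m-1}\otimes W_d$. Since $m\ge4$, the rows $u_{m/2-1}$ and $u_{m-1}$ are different, so the two restrictions intersect in $0$, and $\Delta(z_e)=0$.

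\emph{Case $z_f=f+p_i$.} Step~1 gives $\Delta(z_f)\in\operatorname{Im}\rho(f)$, which has zero $u_0$-row. Projecting onto the $u_0$-row in the analogous Steps~2 and~3 removes $\lambda$ and $B\varepsilon_i$, since $w_*$ has a nonzero $u_0$-coefficient. This leaves the rows $u_{m/2+1}$ and $u_1$, which are again different for $m\ge4$.

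\emph{Main obstacle.} The main point to check is the presence of the free rotation $B$ in Steps~2 and~3. The device that handles it is to project onto a row ($u_m$ or $u_0$) where $\operatorname{Im}\rho(e)$ (respectively $\operatorname{Im}\rho(f)$) vanishes, and then use skew-symmetry to conclude $\lambda=0$. Once $\lambda=0$ and $B\varepsilon_i=0$, every remaining term lies in a single $U_m$-row tensored with $W_d$.
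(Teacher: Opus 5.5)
Your proof is correct and follows essentially the paper's argument. It uses the pair $(p_i,e+p_i)$ with the rank-one tensor argument and Lemma~\ref{lem:separating-vector}, then the $h$- and $f$-pairs with $u_m$-row comparison and skew-symmetry to kill $\lambda$ and $B\varepsilon_i$; the only difference is that you take the $h$-pair before the $f$-pair, so the latter is needed only for even $m$, whereas the paper uses them in the opposite order.
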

\begin{proof}
Put $z_i=e+p_i$. Use first the pair $(p_i,z_i)$. Since $\Delta(p_i)=0$, the restriction to $V$ of a representing derivation has the form
\[
A=T\otimes I+I\otimes B,
\qquad
T=\rho_m(a)+\lambda I,
\qquad B\in\mathfrak{so}_d,
\]
and satisfies $Ap_i=0$. Thus
\[
Tw_*\otimes\varepsilon_i+w_*\otimes B\varepsilon_i=0.
\]
By the rank-one tensor argument used in Theorem~\ref{thm:higher-radical-zero}, there is $\alpha\in\mathbb C$ such that
\[
Tw_*=\alpha w_*,
\qquad
B\varepsilon_i=-\alpha\varepsilon_i.
\]
Lemma~\ref{lem:separating-vector} gives $T=\alpha I$. Since $B$ is skew-symmetric and $\varepsilon_i$ is non-isotropic, $\alpha=0$. Hence
\[
T=0,
\qquad B\varepsilon_i=0.
\]
In particular $a=0$ and $\lambda=0$, and therefore
\[
\Delta(z_i)=-\rho(e)r\in\operatorname{Im}\rho(e).
\]

Now use the pair $(f,z_i)$. The equality $D(f)=0$ gives
\[
a\in\mathbb C f,
\qquad
r\in\ker\rho(f)=u_m\otimes W_d.
\]
Because $\Delta(z_i)$ has no $S$-component, the $S$-component of $D(z_i)$ forces $a=0$. Write $r=u_m\otimes q$. The $u_m$-coefficient of the $V$-part of $D(z_i)$ is
\[
w_*\otimes(\lambda\varepsilon_i+B\varepsilon_i).
\]
But $\Delta(z_i)\in\operatorname{Im}\rho(e)$ and this image has no $u_m$-component. Hence
\[
\lambda\varepsilon_i+B\varepsilon_i=0.
\]
Taking the scalar product with $\varepsilon_i$ gives $\lambda=0$, and then $B\varepsilon_i=0$. Consequently
\[
\Delta(z_i)\in u_{m-1}\otimes W_d.
\]

Finally use the pair $(h,z_i)$. We obtain
\[
a\in\mathbb C h,
\qquad
r\in\ker\rho(h).
\]
Again the $S$-component of $D(z_i)$ forces $a=0$, and the $u_m$-coefficient gives
\[
\lambda=0,
\qquad B\varepsilon_i=0.
\]
If $m$ is odd, then $\ker\rho(h)=0$, so $\Delta(z_i)=0$. If $m$ is even, then $m\ge4$ and
\[
\ker\rho(h)=u_{m/2}\otimes W_d,
\]
whence
\[
\Delta(z_i)\in u_{m/2-1}\otimes W_d.
\]
Its intersection with $u_{m-1}\otimes W_d$ is zero. Thus $\Delta(e+p_i)=0$ in all cases. The proof of $\Delta(f+p_i)=0$ is symmetric.
\end{proof}

\begin{lemma}\label{lem:higher-conjugate-anchor}
Let $G\in\operatorname{Aut}(\mathfrak{sl}_2)$ be inner and let the same symbol denote its natural extension to an automorphism of $\mathfrak g_\ell(d)$ acting trivially on $\mathfrak{so}_d$ and as the corresponding $U_m$-operator on the first tensor factor of $V$. Under the assumptions of Lemma~\ref{lem:higher-zero-anchors},
\[
\Delta\bigl(G(e+p_i)\bigr)=0,
\qquad
\Delta\bigl(G(f+p_i)\bigr)=0
\]
for every $i$.
\end{lemma}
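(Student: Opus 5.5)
The plan is to copy the proof of Lemma~\ref{lem:conjugate-anchor} in the tensor setting: conjugate $\Delta$ by $G$ and apply Lemma~\ref{lem:higher-zero-anchors} to the conjugate map. First we check that the extension of $G$ is an automorphism of $\mathfrak g_\ell(d)$ preserving $S$ and $V$. Write $G=\exp(\operatorname{ad}x)$ with $x\in\mathfrak{sl}_2$ nilpotent, or a product of such exponentials, since inner automorphisms of $\mathfrak{sl}_2$ are generated by them. The natural extension is then $\exp(\operatorname{ad}_{\mathfrak g_\ell(d)}x)$. It acts as $G$ on $\mathfrak{sl}_2$ and trivially on $\mathfrak{so}_d$, because $[x,\mathfrak{so}_d]=0$. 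On $V$ it acts as $\exp(\rho_m(x))\otimes I_{W_d}$. So it is an inner automorphism of $\mathfrak g_\ell(d)$ preserving $\mathfrak{sl}_2$, $\mathfrak{so}_d$ and $V$.

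Next we set $\Delta^G=G^{-1}\Delta G$. For $x,y$ we take $D_{Gx,Gy}$ representing $\Delta$ at $Gx,Gy$. Then $G^{-1}D_{Gx,Gy}G$ is a derivation representing $\Delta^G$ at $x,y$, so $\Delta^G$ is a 2-local derivation. Since $G$ preserves $S$ and $V$ and $\Delta$ vanishes on both, $\Delta^G|_S=0$ and $\Delta^G|_V=0$. Since $G^{-1}(V)=V$, we also get $\Delta^G(\mathfrak g_\ell(d))=G^{-1}\Delta(G\,\mathfrak g_\ell(d))\subseteq V$.

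Thus $\Delta^G$ satisfies all the hypotheses of Lemma~\ref{lem:higher-zero-anchors}, with the same $d$ and $m$. That lemma gives $\Delta^G(e+p_i)=\Delta^G(f+p_i)=0$. Applying $G$ gives $\Delta(G(e+p_i))=G\Delta^G(e+p_i)=0$, and the same for $f+p_i$.

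The only point needing care is that the natural extension is genuinely an automorphism of $\mathfrak g_\ell(d)$. Checking this through the exponential description of inner automorphisms avoids verifying brackets by hand.
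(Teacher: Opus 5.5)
Your proposal is correct and follows the paper's proof. Like the paper, you show that $G^{-1}\Delta G$ is again a 2-local derivation with the same normalization, apply Lemma~\ref{lem:higher-zero-anchors} to it, and conjugate back; your check via $\exp(\operatorname{ad}x)$, $x\in\mathfrak{sl}_2$ nilpotent, that the extended $G$ is an automorphism preserving $S$ and $V$ is a detail the paper leaves implicit.
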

\begin{proof}
The conjugate map $G^{-1}\Delta G$ is again a 2-local derivation and vanishes on $S$ and $V$ separately. Apply Lemma~\ref{lem:higher-zero-anchors} to this conjugate map and then conjugate back.
\end{proof}

We shall also need one elementary fact about the $\mathfrak{sl}_2$-module $U_m$.

\begin{lemma}\label{lem:nilpotent-images-intersection}
Let
\[
s_t=\operatorname{Ad}(\exp(tf))e,
\qquad t\in\mathbb C.
\]
Then
\[
\bigcap_{t\in\mathbb C}\operatorname{Im}\rho_m(s_t)=\{0\}.
\]
Consequently,
\[
\bigcap_{t\in\mathbb C}\operatorname{Im}\rho(s_t)=\{0\}
\qquad\text{on }V=U_m\otimes W_d.
\]
\end{lemma}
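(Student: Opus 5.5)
Write $G_t=\exp(t\rho_m(f))$, the restriction of the inner automorphism $\exp(t\,\ad f)$ to $U_m$. Since this is an automorphism, $\rho_m(s_t)=G_t\rho_m(e)G_t^{-1}$, and hence
\[
\operatorname{Im}\rho_m(s_t)=G_t\bigl(\operatorname{Im}\rho_m(e)\bigr)=G_t\bigl(\operatorname{span}\{u_0,\ldots,u_{m-1}\}\bigr).
\]
This image is a hyperplane, so it is cleanest to describe it by a linear functional. The hyperplane $\operatorname{Im}\rho_m(e)$ is the kernel of the coordinate functional $u_m^*$, the dual basis vector picking out the $u_m$-coefficient. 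Therefore $\operatorname{Im}\rho_m(s_t)=\ker\bigl(u_m^*\circ G_t^{-1}\bigr)$, and the family intersection is
\[
\bigcap_{t\in\C}\ker\xi_t,\qquad \xi_t=u_m^*\circ G_{-t}.
\]
The plan is to show that the functionals $\xi_t$, $t\in\C$, span the whole dual space $U_m^*$. Their common kernel is then zero.

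Next compute $\xi_t$ on the basis. From the formula already used in the paper, $G_t(u_k)=\sum_{j\ge k}\binom{m-k}{j-k}t^{j-k}u_j$, up to the harmless normalization of the coefficients, which are non-zero products of the factors $(m-k)(m-k-1)\cdots$ divided by factorials. The $u_m$-coefficient of $G_{-t}(u_k)$ is therefore $c_k(-t)^{m-k}$ with $c_k=\binom{m-k}{m-k}\cdot(\text{nonzero})\ne0$. Directly, this coefficient is $\frac{(-t)^{m-k}}{(m-k)!}(m-k)!=(-t)^{m-k}$. So $\xi_t(u_k)=(-t)^{m-k}$, and in the dual basis
\[
\xi_t=\sum_{k=0}^m(-t)^{m-k}u_k^*.
\]
Choosing $m+1$ distinct values of $t$ gives a Vandermonde matrix, so these functionals span $U_m^*$. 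Hence any vector in every $\operatorname{Im}\rho_m(s_t)$ has all coordinates zero, which is the first assertion. Equivalently, a vector $x=\sum x_ku_k$ in the intersection satisfies $\sum_k x_k(-t)^{m-k}=0$ for all $t$, a polynomial identity forcing every $x_k=0$.

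For $V=U_m\otimes W_d$, observe that $\rho(s_t)=\rho_m(s_t)\otimes I_{W_d}$, since $s_t\in\slc$ acts trivially on the spatial factor. Thus $\operatorname{Im}\rho(s_t)=\operatorname{Im}\rho_m(s_t)\otimes W_d$. Expand any $v\in V$ as $v=\sum_i v_i\otimes\varepsilon_i$. Then $v\in\operatorname{Im}\rho(s_t)$ if and only if each $v_i\in\operatorname{Im}\rho_m(s_t)$, so the intersection over $t$ is $\bigl(\bigcap_t\operatorname{Im}\rho_m(s_t)\bigr)\otimes W_d=0$.

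The only real care needed is the bookkeeping of the $u_m$-coefficient of $G_{-t}(u_k)$, i.e.\ verifying that it is a non-zero monomial of exact degree $m-k$. This follows from $\rho_m(f)^{j}u_k=\frac{(m-k)!}{(m-k-j)!}u_{k+j}$, so nothing deeper should be required.
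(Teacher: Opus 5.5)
Your proposal is correct and follows the paper's proof essentially verbatim: you conjugate $\operatorname{Im}\rho_m(e)$ by $G_t=\exp(t\rho_m(f))$, describe the resulting hyperplane as the kernel of the functional $u_k\mapsto(-t)^{m-k}$, conclude from the polynomial (Vandermonde) identity in $t$, and pass to $V$ one spatial coordinate at a time. Your hedging about a ``harmless normalization'' is unnecessary, because $G_t(u_k)=\sum_{j\ge k}\binom{m-k}{j-k}t^{j-k}u_j$ holds exactly, so $\xi_t(u_k)=(-t)^{m-k}$ with no extra constants.
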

\begin{proof}
Let $G_t=\exp(t\rho_m(f))$. Then
\[
\operatorname{Im}\rho_m(s_t)=G_t\operatorname{Im}\rho_m(e).
\]
A linear functional annihilating this hyperplane is
\[
\varphi_t(u_j)=(-t)^{m-j},
\qquad 0\le j\le m,
\]
up to a non-zero scalar factor. If $u=\sum_{j=0}^m c_j u_j$ belongs to every $\operatorname{Im}\rho_m(s_t)$, then
\[
0=\varphi_t(u)=\sum_{j=0}^m c_j(-t)^{m-j}
\qquad\text{for every }t\in\mathbb C.
\]
Hence all $c_j$ vanish. The tensor-product statement follows by applying the same argument to each spatial coordinate.
\end{proof}

\begin{theorem}\label{thm:higher-full-mge3}
Let $d\ge2$ and $m=2\ell\ge3$. Then every 2-local derivation on
\[
\mathfrak g_\ell(d)=
(\mathfrak{sl}_2\oplus\mathfrak{so}_d)\ltimes(U_m\otimes\mathbb C^d)
\]
is a derivation. Equivalently,
\[
\TwoLocDer\mathfrak g_\ell(d)=\Der\mathfrak g_\ell(d).
\]
\end{theorem}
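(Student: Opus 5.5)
By Corollary~\ref{cor:higher-basis-rigidity} and Theorem~\ref{thm:higher-radical-zero}, after subtracting a global derivation I may assume $\Delta|_S=0$, $\Delta|_V=0$ and $\Delta(\mathfrak g_\ell(d))\subseteq V$. It then suffices to show that $\Delta(x+v)=0$ for every $x\in S$ and $v\in V$ with both components non-zero. Write $x=x_1+R$ with $x_1\in\mathfrak{sl}_2$ and $R\in\mathfrak{so}_d$. The pair $(x,y)$ with $y=x+v$ already gives $\Delta(y)\in V$, as in the one-dimensional case.

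\emph{Case $x_1\ne0$ and $x_1\notin\C e$.} Use the pair $(e+p_i,y)$; the anchor $e+p_i$ is a zero anchor by Lemma~\ref{lem:higher-zero-anchors}. Let $D=\ad(a+B+r)+\lambda\delta_0$ represent the pair. From $D(e+p_i)=0$:
\begin{itemize}[label=--]
\item the $S$-component gives $[a,e]=0$, so $a\in\C e$, and $B$ is unconstrained at this stage;
\item the $V$-component gives $-\rho(e)r+(\rho_m(a)+\lambda)w_*\otimes\varepsilon_i+w_*\otimes B\varepsilon_i=0$.
\end{itemize}
Comparing $u_m$-coefficients (as in Theorem~\ref{thm:one-string-full}, using skew-symmetry of $B$) gives $\lambda=0$ and $B\varepsilon_i=0$. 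The $S$-component of $D(y)$ is $[a,x_1]+[B,R]$, which must vanish. Since $[a,x_1]\in\mathfrak{sl}_2$ and $[B,R]\in\mathfrak{so}_d$, both vanish, so $a=0$. Then $r\in\ker\rho(e)+$ (a correction proportional to $p_i$), and the $V$-part of $D(y)$ is
\[
-\rho(x)r+(I\otimes B)v.
\]
The trouble is the term $(I\otimes B)v$ coming from a rotation that commutes with $R$ and kills $\varepsilon_i$. To remove it, use all indices $i$ simultaneously by comparing coordinates. Running the argument for each $i$ kills the $i$-th coordinate contributions of $B$ in that coordinate, exactly as in Theorem~\ref{thm:higher-radical-zero}. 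Cleaner: the $\mathfrak{sl}_2$-level information alone gives $\pi_i$-control. I take $\Delta(y)\in\rho(x)(u_0\otimes W_d)+(I\otimes B)v$ from the $e$-anchor and the analogue with $u_m$ from the $f$-anchor. I then use support in the $\mathfrak{sl}_2$-weight direction: $\rho(x)$ applied to $u_0\otimes W_d$ lands in $\operatorname{span}\{u_0,u_1\}\otimes W_d$, while the $f$-version lands in $\operatorname{span}\{u_{m-1},u_m\}\otimes W_d$. These are disjoint for $m\ge3$, provided the rotational term $(I\otimes B)v$ can be controlled.

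\emph{Controlling the rotation term.} I would first try to kill the rotation by choosing anchors adapted to $v$. Apply Lemma~\ref{lem:higher-conjugate-anchor} with an $\mathfrak{so}_d$-automorphism as well; the conjugation argument works equally for orthogonal transformations of $W_d$. This gives zero anchors $e+w_*\otimes\omega$ for every non-isotropic unit vector $\omega$. The same $u_m$-coefficient argument then yields $B\omega=0$, so the rotation part of any representing derivation fixes $\omega$. Using two anchors at once is impossible, since each pair involves only one anchor. Instead, the $\mathfrak{so}_d$-component condition $[B,R]=0$, combined with the requirement that $B$ has $\omega$ in its kernel, restricts $(I\otimes B)v$ to $U_m\otimes B(W_d)$. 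I would pair this with the $\mathfrak{sl}_2$-weight support comparison, observing that in $D(y)$ the rotation term contributes in every $\mathfrak{sl}_2$-level where $v$ does, whereas the inner term sits only in levels $0,1$ (respectively $m-1,m$). Comparing the results from the $e$-anchor and the $f$-anchor, the middle levels force $(I\otimes B)v$ to agree between the two pairs. I then vary $\omega$ to show that this common value is zero.

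\emph{Case $x_1\in\C e\cup\C f$ or $x_1=0$.} For $x_1\in\C e$, I use the conjugated anchors $\Phi_t(e+p_i)$ with $\Phi_t=\exp(t\ad f)$. These give lines $\Delta(y)\in\rho(x)G_t(u_0)\otimes W_d+(\text{rotation term})$. Lemma~\ref{lem:nilpotent-images-intersection} is the tool that makes the intersection over $t$ trivial. The case $x_1\in\C f$ is symmetric. For $x_1=0$, i.e. $x=R\in\mathfrak{so}_d$, I use the anchors $e+p_i$ and $f+p_i$. These force the $\mathfrak{sl}_2$-part of $D$ to vanish, with radical parts $r$ in $u_0\otimes W_d$ and $u_m\otimes W_d$ respectively, so
\[
\Delta(R+v)\in u_0\otimes R(W_d)+(\text{rot})\quad\text{and}\quad \Delta(R+v)\in u_m\otimes R(W_d)+(\text{rot}).
\]
The main obstacle throughout is the residual rotation term $(I\otimes B)v$ with $B$ commuting with $R$. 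I expect to handle it by the $\omega$-varying anchors together with the level-support comparison. If that fails, I would fall back on the reflexivity result, Proposition~\ref{prop:tensor-module-reflexive-2local}.
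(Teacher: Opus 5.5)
Your normalization and the coefficient analysis at the anchor $e+p_i$ are correct. You even state the decisive idea in passing (``running the argument for each $i$ kills the $i$-th coordinate contributions of $B$''), but you then set it aside, and the mechanism you actually propose for the rotation term does not work. The rotations $B$ from the pairs $(e+p_i,y)$ and $(f+p_i,y)$ belong to different representing derivations, so there is no ``common value'' of $(I\otimes B)v$ to kill by varying $\omega$. Moreover, for $m=3$ the supports $\operatorname{span}\{u_0,u_1\}\otimes W_d$ and $\operatorname{span}\{u_2,u_3\}\otimes W_d$ leave no middle levels to compare.

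What is missing is a one-line observation: $B\varepsilon_i=0$ together with skew-symmetry kills the whole $i$th row of $B$, so $\pi_i\bigl((I\otimes B)v\bigr)=0$ for every $v$. Since $\Delta(y)$ does not depend on the pair, you can work one coordinate at a time. The anchor $e+p_i$ gives $\pi_i\Delta(y)\in\operatorname{span}\{u_0,\rho_m(x_1)u_0\}$, and the anchor $f+p_i$ gives $\pi_i\Delta(y)\in\operatorname{span}\{u_m,\rho_m(x_1)u_m\}$. These meet only in $0$ for $m\ge3$, and letting $i$ vary gives $\Delta(y)=0$. This is exactly the paper's argument; it needs neither rotated anchors nor reflexivity.

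Your nilpotent case lacks the same projection. With the ``$+(\text{rotation term})$'' left in, which changes with $t$, no intersection over $t$ can be taken. Instead, conjugate the representing derivation for $(\Phi_t(e+p_i),y)$, $t\ne0$, back to the basic anchor; inner $\mathfrak{sl}_2$-automorphisms fix $\mathfrak{so}_d$. This gives $\lambda=0$, $B\varepsilon_i=0$ and $r\in G_t(u_0)\otimes W_d$, hence $\pi_i\Delta(y)\in\operatorname{span}\{G_tu_0,\rho_m(e)G_tu_0\}\subseteq\operatorname{Im}\rho_m(s_t)$. Your intersection over $t\ne0$ then works by the argument of Lemma~\ref{lem:nilpotent-images-intersection}. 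The paper instead compares a single $t\ne0$ with the $f$-anchor.

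The case $x_1=0$ contains a false step. For $y=R+v$ the $S$-component condition at $y$ reads $[a,0]+[B,R]=0$. So it does not force the $\mathfrak{sl}_2$-part $a=\alpha e$ of a derivation annihilating $e+p_i$ to vanish, and $D(y)$ then contains $\alpha\rho(e)v$. Even after projecting you only get $\pi_i\Delta(y)\in\operatorname{span}\{u_0,\rho_m(e)v_i\}\cap\operatorname{span}\{u_m,\rho_m(f)v_i\}$, where $v_i=\pi_iv$, and this intersection need not be zero. For odd $m$, $\rho_m(e-f)$ is invertible because $e-f$ is conjugate to $\mathrm i h$. Taking $v_i=\rho_m(e-f)^{-1}u_0$, the non-zero vector $\rho_m(f)v_i=\rho_m(e)v_i-u_0$ lies in both spans.

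The paper closes this case differently. The element $y_t=s_t+R+v$ has non-zero $\mathfrak{sl}_2$-part, so $\Delta(y_t)=0$ by the cases already settled. A derivation $D_t$ representing $(y,y_t)$ then satisfies $D_t(s_t)=-\Delta(y)\in V$, which forces $\Delta(y)\in\operatorname{Im}\rho(s_t)$ for every $t$. Lemma~\ref{lem:nilpotent-images-intersection} then yields $\Delta(y)=0$.
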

\begin{proof}
By Corollary~\ref{cor:higher-basis-rigidity}, after subtracting a global derivation we may assume
\begin{equation}\label{eq:higher-full-normalized}
\Delta|_S=0,
\qquad
\Delta|_V=0,
\qquad
\Delta(\mathfrak g_\ell(d))\subseteq V.
\end{equation}
We prove that this normalized map is zero.

Let
\[
y=x+v,
\qquad
x=x_0+R,
\quad x_0\in\mathfrak{sl}_2,
\quad R\in\mathfrak{so}_d,
\quad v\in V.
\]
The pair $(x,y)$ and $\Delta(x)=0$ show that $\Delta(y)\in V$.

Fix $i$. Suppose first that $x_0\notin\mathbb C e$. Use the zero anchor
\[
z_i^e=e+p_i.
\]
For a representing derivation of $(z_i^e,y)$, the equality $D(z_i^e)=0$ gives
\[
a=\alpha e.
\]
Looking at the $u_m$-coefficient in the radical equation yields
\[
\lambda\varepsilon_i+B\varepsilon_i=0.
\]
Skew-symmetry gives $\lambda=0$ and $B\varepsilon_i=0$. The remaining radical equation then gives
\[
r=\alpha p_i+u_0\otimes q
\]
for some $q\in W_d$. Since $D(y)=\Delta(y)$ has no $S$-component,
\[
[a,x_0]=0,
\qquad [B,R]=0.
\]
Because $x_0\notin\mathbb C e$, we get $\alpha=0$. Hence
\[
r=u_0\otimes q.
\]
Moreover, $B\varepsilon_i=0$ and skew-symmetry imply that the $i$th row of $B$ is zero. Taking the $i$th spatial coordinate of $D(y)$ therefore gives
\begin{equation}\label{eq:higher-e-support}
\pi_i\Delta(y)
\in
\operatorname{span}\{u_0,\rho_m(x_0)u_0\}
\subseteq\operatorname{span}\{u_0,u_1\}.
\end{equation}
Similarly, if $x_0\notin\mathbb C f$, the zero anchor $f+p_i$ gives
\begin{equation}\label{eq:higher-f-support}
\pi_i\Delta(y)
\in
\operatorname{span}\{u_m,\rho_m(x_0)u_m\}
\subseteq\operatorname{span}\{u_{m-1},u_m\}.
\end{equation}
If
\[
x_0\notin\mathbb C e\cup\mathbb C f,
\]
then the two support spaces in \eqref{eq:higher-e-support} and \eqref{eq:higher-f-support} have zero intersection because $m\ge3$. Thus
\[
\pi_i\Delta(y)=0.
\]
Since $i$ is arbitrary, $\Delta(y)=0$.

We next treat $x_0=Ae$ with $A\ne0$. The anchor $f+p_i$ gives
\begin{equation}\label{eq:higher-e-case-top}
\pi_i\Delta(y)\in\operatorname{span}\{u_{m-1},u_m\}.
\end{equation}
Choose $t\ne0$ and put
\[
G_t=\exp(t\rho_m(f)),
\qquad
s_t=\operatorname{Ad}(\exp(tf))e,
\qquad
r_t=G_t(w_*),
\]
\[
z_{t,i}=s_t+r_t\otimes\varepsilon_i.
\]
By Lemma~\ref{lem:higher-conjugate-anchor}, $\Delta(z_{t,i})=0$. Apply the two-local condition to $(z_{t,i},y)$. Since $s_t\notin\mathbb C e$ for $t\ne0$, while $x_0=Ae$ with $A\ne0$, the $\mathfrak{sl}_2$-part of the representing derivation is forced to be zero. To read the radical equation without changing weight coordinates, conjugate this representing derivation by the inverse of the same inner automorphism. It then annihilates the basic anchor $e+p_i$. The calculation used above for an arbitrary derivation annihilating $e+p_i$ gives
\[
\lambda=0,
\qquad B\varepsilon_i=0,
\qquad
r\in G_t(u_0)\otimes W_d.
\]
Consequently
\begin{equation}\label{eq:higher-e-case-conj}
\pi_i\Delta(y)
\in
\operatorname{span}\{G_t(u_0),A\rho_m(e)G_t(u_0)\}.
\end{equation}
Now
\[
G_t(u_0)=\sum_{k=0}^m\binom{m}{k}t^k u_k,
\]
and the coefficients of $u_0,u_1$ in a vector
\[
P G_t(u_0)+Q A\rho_m(e)G_t(u_0)
\]
are
\[
P+QAm t,
\qquad
Pmt+QA m(m-1)t^2.
\]
If both coefficients vanish and $A t\ne0$, then $P=Q=0$. Hence the space in \eqref{eq:higher-e-case-conj} has zero intersection with $\operatorname{span}\{u_{m-1},u_m\}$ for $m\ge3$. Together with \eqref{eq:higher-e-case-top} this gives
\[
\pi_i\Delta(y)=0.
\]
Again $i$ is arbitrary, so $\Delta(y)=0$. The case
\[
x_0\in\mathbb C f\setminus\{0\}
\]
is symmetric, using $e+p_i$ and the conjugates of $f+p_i$ under $\exp(t\operatorname{ad}e)$.

It remains to consider $x_0=0$, so
\[
y=R+v.
\]
For every $t\in\mathbb C$, set
\[
y_t=s_t+R+v.
\]
The $\mathfrak{sl}_2$-component of $y_t$ is non-zero, so the cases already proved give
\[
\Delta(y_t)=0.
\]
Apply the 2-local property to the pair $(y,y_t)$. If $D_t$ is the representing derivation, then
\[
D_t(s_t)=D_t(y_t)-D_t(y)=-\Delta(y).
\]
The right-hand side lies in $V$. Therefore the $S$-component of $D_t(s_t)$ vanishes and its radical component has the form
\[
-\rho(s_t)r_t'.
\]
Thus
\[
\Delta(y)\in\operatorname{Im}\rho(s_t)
\qquad\text{for every }t\in\mathbb C.
\]
Lemma~\ref{lem:nilpotent-images-intersection} yields $\Delta(y)=0$.

Hence the normalized 2-local derivation vanishes on every element of $\mathfrak g_\ell(d)$. Undoing the initial subtraction proves the theorem.
\end{proof}

\begin{corollary}\label{cor:noncentral-mge3-all-d}
For every $d\ge1$ and every positive half-integer $\ell$ with $2\ell\ge3$,
\[
\TwoLocDer\mathfrak g_\ell(d)=\Der\mathfrak g_\ell(d).
\]
In addition, the equality holds for $d=1$ and $2\ell=1,2$ by Corollary~\ref{cor:one-dimensional-all}.
\end{corollary}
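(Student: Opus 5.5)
This corollary is a bookkeeping statement that assembles results already proved, so the plan is to split on the spatial dimension $d$ and cite the appropriate theorem in each case.

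\emph{Case $d\ge2$, $2\ell\ge3$.} Here the equality $\TwoLocDer\mathfrak g_\ell(d)=\Der\mathfrak g_\ell(d)$ is exactly Theorem~\ref{thm:higher-full-mge3}. For completeness I would recall the chain of reductions behind it:
\begin{itemize}
\item Proposition~\ref{prop:higher-S-normalization} and Corollary~\ref{cor:higher-coordinate-normalization} normalize $\Delta$ on $S$ and on the coordinate strings.
\item Theorem~\ref{thm:higher-radical-zero} then kills $\Delta$ on all of $V$.
\item The zero mixed anchors of Lemmas~\ref{lem:higher-zero-anchors} and \ref{lem:higher-conjugate-anchor} handle mixed elements.
\item Lemma~\ref{lem:nilpotent-images-intersection} treats the remaining case of a pure rotation plus a radical vector.
\end{itemize}

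\emph{Case $d=1$.} Here $\mathfrak{so}_1=0$ and $\mathfrak g_\ell(1)=\slc\ltimes U_m$. For $2\ell\ge3$ the claim is Theorem~\ref{thm:one-string-full}. The supplementary assertion for $2\ell=1,2$ in spatial dimension one is precisely Corollary~\ref{cor:one-dimensional-all}, which in turn rests on Theorems~\ref{thm:m1-full} and \ref{thm:m2-full}.

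The only point needing care is the identification: one must check that the algebra with basis \eqref{eq:cg-basic} coincides with $\slc\ltimes U_m$ under the dictionary $h=D$, $e=-H$, $f=C$, with the $P^{(n)}$ rescaled to match \eqref{eq:Um-action}. Because the statement is invariant under Lie algebra isomorphism, this identification is harmless, and the derivation algebra description \eqref{eq:der-noncentral} transfers along with it.

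No genuine obstacle arises. The low weights $2\ell=1,2$ in dimensions $d\ge2$ are deliberately excluded from this corollary and left to the subsequent standard-module and adjoint-module analysis.
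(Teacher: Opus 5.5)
Your proposal is correct and takes the same route as the paper, which obtains this corollary by citing Theorem~\ref{thm:higher-full-mge3} for $d\ge2$, Theorem~\ref{thm:one-string-full} for $d=1$, and Corollary~\ref{cor:one-dimensional-all} for the supplementary cases $2\ell=1,2$. One small correction: no rescaling is needed in the identification, because with $h=D$, $e=-H$, $f=C$ the vectors $u_k=P^{(k)}$ already satisfy \eqref{eq:Um-action}.
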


\subsection{The adjoint module \texorpdfstring{$m=2$}{m=2} in higher spatial dimension}
We now remove the restriction $m\ge3$ in the adjoint-module case.  The separating-vector argument is replaced by the invariant symmetric form on $U_2$ introduced in \eqref{eq:B-U2}.  Its tensor product with the standard form on $W_d$ controls the nonlinearity of the restriction of a 2-local derivation to the radical.

Define the nondegenerate symmetric bilinear form
\begin{equation}\label{eq:Q-m2-higher}
Q(u\otimes w,u'\otimes w')=B(u,u')\,w^{\mathsf T}w'
\qquad(u,u'\in U_2,\ w,w'\in W_d).
\end{equation}
By \eqref{eq:B-invariance-U2}, every operator in $\rho(\mathfrak{sl}_2\oplus\mathfrak{so}_d)$ is $Q$-skew-symmetric.

\begin{proposition}\label{prop:higher-m2-radical-normalization}
Let $d\ge2$, $m=2$, and let $\Delta$ be a 2-local derivation satisfying
\[
\Delta|_S=0,
\qquad
\Delta(\mathfrak g_1(d))\subseteq V.
\]
Then there exists a derivation $D$ which vanishes on $S$ such that, after replacing $\Delta$ by $\Delta-D$,
\[
\Delta|_V=0.
\]
\end{proposition}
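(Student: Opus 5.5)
The plan is to copy the structure of the $m\ge3$ argument (Propositions~\ref{prop:higher-pure-strings} and Theorem~\ref{thm:higher-radical-zero}), replacing the separating vector of Lemma~\ref{lem:separating-vector} by the invariant form $Q$ of \eqref{eq:Q-m2-higher}, in the way Proposition~\ref{prop:m2-radical-scalar} replaced it in one spatial dimension.

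\textbf{Step 1: a common scalar coefficient.} For any pair $(v,v')\subseteq V$, a representing derivation acts on $V$ by
\[
A=T\otimes I+I\otimes B,\qquad T=\rho_2(a)+\lambda I,\qquad B\in\mathfrak{so}_d .
\]
Since $\rho(\mathfrak{sl}_2\oplus\mathfrak{so}_d)$ is $Q$-skew, we get $Q(\Delta(v),v)=\lambda\,Q(v,v)$. So on $Q$-non-isotropic vectors, $\varphi(v)=Q(\Delta(v),v)/Q(v,v)$ equals the $\lambda$ of every representing derivation through $v$. Using the pair $(v,v')$ shows that $\varphi$ is a single constant $c$. After subtracting $c\delta_0$, which vanishes on $S$, every representing derivation of a pair containing a non-isotropic vector has $\lambda=0$.

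\textbf{Step 2: the coordinate strings.} Fix $i$ and a non-isotropic $u\in U_2$. Use the pair $(x_u,u\otimes\varepsilon_i)$, where $x_u=\theta^{-1}(u)$ as in Proposition~\ref{prop:m2-radical-scalar}.
\begin{itemize}
\item $D(x_u)=0$ forces $a\in\C x_u$, and $\lambda=0$ by Step 1. Hence $\Delta(u\otimes\varepsilon_i)=u\otimes B\varepsilon_i$, so the $i$th coordinate vanishes and $\pi_j\Delta(u\otimes\varepsilon_i)=b_{ji}(u)\,u$.
\item Pairing $u\otimes\varepsilon_i$ with $u'\otimes\varepsilon_i$, and with $u\otimes\varepsilon_j$, shows that $b_{ji}$ is independent of $u$ and that $b_{ij}=-b_{ji}$. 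This gives one fixed $B_0\in\mathfrak{so}_d$.
\item For $d\ge3$, the rotation anchors $R_{ij},R_{ik}$ kill $B_0$ exactly as in Proposition~\ref{prop:higher-pure-strings}. For $d=2$ we subtract $\ad B_0$, which vanishes on $S$.
\end{itemize}
Now $\Delta(u\otimes\varepsilon_i)=0$ for all non-isotropic $u$.

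\textbf{Step 3: all of $V$.} Fix $v\in V$, an index $i$, and a non-isotropic $w\in U_2$, and put $p=w\otimes\varepsilon_i$, so that $\Delta(p)=0$. For the pair $(p,v)$, the rank-one tensor argument from Theorem~\ref{thm:higher-radical-zero} gives $Tw=\alpha w$ and $B\varepsilon_i=-\alpha\varepsilon_i$. Skew-symmetry of $B$ forces $\alpha=0$, so $B$ has zero $i$th row, and Step 1 gives $\lambda=0$. Then $\rho_2(a)w=0$, hence $a\in\C x_w$. Consequently
\[
\pi_i\Delta(v)\in\C\,\rho_2(x_w)\pi_i v\cong\C\,[x_w,\theta^{-1}\pi_iv].
\]
For $y=\theta^{-1}\pi_iv\ne0$, choose two non-isotropic $w,w'$ such that $x_w,x_{w'},y$ are linearly independent. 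The non-isotropic vectors are Zariski dense, so this is possible. The brackets $[x_w,y]$ and $[x_{w'},y]$ span distinct lines, since each bracket is $B$-orthogonal to $y$ and to its first argument. Hence $\pi_i\Delta(v)=0$ for every $i$, so $\Delta|_V=0$. The derivation subtracted in total is $c\delta_0$, plus $\ad B_0$ when $d=2$, and it vanishes on $S$ as required.

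The main obstacle is Step 1 together with the isotropic vectors. If the pair representing $\Delta$ involves only isotropic vectors, $\lambda$ is not read off directly. The plan above avoids needing $\Delta$ on isotropic inputs until Step 3, and there only non-isotropic anchors enter the pairs. I also expect care in Step 3 when $\pi_iv$ is isotropic in $U_2$; the bracket argument above does not use non-isotropy of $y$.
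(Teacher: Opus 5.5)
Your proof is correct, but it takes a different route from the paper's.

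\emph{The paper's route.} The paper first proves that $\Delta|_V$ is linear. Setting $F=\Delta-cI$ on $V$, the $Q$-skewness of the representing operators gives $Q(F(x),y)+Q(x,F(y))=0$ for every non-isotropic $y$, and this forces additivity. It then invokes the reflexivity result $\operatorname{Ref}(\mathcal E_{2,d})=\mathcal E_{2,d}$ (Proposition~\ref{prop:tensor-module-reflexive-2local}). That result rests on the rational-normal-curve computation of Lemma~\ref{lem:sl2-module-reflexive-2local} and the quadric argument of Lemma~\ref{lem:so-module-reflexive-2local}. It yields $\Delta|_V=\rho(b)+c_0I$. Finally, pairs $(x,v)$ with $x\in S$ show that $b$ is central: $b_1$ lies in every Borel subalgebra, and $b_2$ preserves every coordinate plane.

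\emph{Your route.} You never linearize and never use reflexivity. Instead you transplant the zero-anchor scheme of the case $m\ge3$ (Proposition~\ref{prop:higher-pure-strings} and Theorem~\ref{thm:higher-radical-zero}). The separating vector is replaced by two facts. First, the invariant form pins the scalar coefficient at every pair containing a $Q$-non-isotropic vector. Second, through $\theta$, the condition $\rho_2(a)w=0$ confines the $\mathfrak{sl}_2$-part to the line $\C x_w$. A non-isotropic pure tensor $w\otimes\varepsilon_i$ then serves as a zero anchor. It kills the $i$th row of the rotation and leaves only $\pi_i\Delta(v)\in\C\,\theta([x_w,\theta^{-1}\pi_iv])$. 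Two choices of $w$ with $x_w,x_{w'},\theta^{-1}\pi_iv$ independent give lines meeting only in $0$, since $[x_w-\mu x_{w'},y]=0$ forces $x_w-\mu x_{w'}\in\C y$.

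I checked the remaining steps: the independence of $b_{ji}$ from $u$, the relation $b_{ij}=-b_{ji}$, the $R_{ij},R_{ik}$ elimination for $d\ge3$, and the subtraction of $\operatorname{ad}B_0$ for $d=2$. I also checked that $\lambda=0$ is only ever needed at pairs containing a non-isotropic point.

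\emph{What each route buys.} Your argument is more self-contained and matches the $m\ge3$ and $m=1$ treatments. However, it depends on special features of the adjoint module: the isomorphism $\theta$ and one-dimensional centralizers in $\mathfrak{sl}_2$. The paper's argument obtains linearity of $\Delta|_V$ as a structural intermediate result. It also reuses a reflexivity tool that holds for all $m$ and $d$.
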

\begin{proof}
For $v,w\in V$, a derivation representing the pair $(v,w)$ restricts to $V$ as
\[
A=K+\lambda I,
\qquad
K\in\rho(S),
\]
where $K$ is $Q$-skew-symmetric.  If $Q(v,v)\ne0$, then
\[
\frac{Q(\Delta(v),v)}{Q(v,v)}=\lambda.
\]
Using one representing derivation for two non-isotropic vectors shows that the displayed quotient has one common value $c$ on the non-isotropic cone complement.

Put
\[
F(v)=\Delta(v)-cv.
\]
If $y$ is non-isotropic, then for every $x\in V$ the representing derivation of $(x,y)$ has scalar coefficient $c$, and therefore
\begin{equation}\label{eq:Q-skew-two-local}
Q(F(x),y)+Q(x,F(y))=0.
\end{equation}
For arbitrary $x_1,x_2\in V$ and every non-isotropic $y$, relation \eqref{eq:Q-skew-two-local} gives
\[
Q(F(x_1+x_2)-F(x_1)-F(x_2),y)=0.
\]
The non-isotropic vectors span $V$ and $Q$ is nondegenerate, hence $F$ is additive.  Lemma~\ref{lem:homogeneous} gives complex homogeneity, so $F$, and therefore $\Delta|_V$, is linear.

For each $v\in V$, the two-local condition gives
\[
\Delta(v)\in\mathcal E_{2,d}v.
\]
Proposition~\ref{prop:tensor-module-reflexive-2local} now yields
\begin{equation}\label{eq:m2-linear-radical-operator}
\Delta|_V=\rho(b)+c_0I
\end{equation}
for some $b=b_1+b_2\in\mathfrak{sl}_2\oplus\mathfrak{so}_d$ and $c_0\in\mathbb C$.

We show that $b$ is central in $S$.  Fix $x\in S$ and $v\in V$, and use a representing derivation of the pair $(x,v)$.  Since $\Delta(x)=0$, its reductive part belongs to $C_S(x)$ and its radical part is annihilated by $\rho(x)$.  Consequently \eqref{eq:m2-linear-radical-operator} implies
\begin{equation}\label{eq:m2-centralizer-pointwise}
(\rho(b)+c_0I)v
\in
(\rho(C_S(x))+\mathbb CI)v
\qquad(x\in S,\ v\in V).
\end{equation}
Take first a non-zero nilpotent $x\in\mathfrak{sl}_2$.  Applying a spatial functional to \eqref{eq:m2-centralizer-pointwise} on pure tensors shows
\[
\rho_2(b_1)u\in\mathbb Cu+\mathbb C\rho_2(x)u
\qquad(u\in U_2).
\]
For $x=e$ this excludes the $f$-coefficient of $b_1$.  Repeating the same argument for all inner conjugates of $e$ places $b_1$ in every Borel subalgebra of $\mathfrak{sl}_2$.  Their intersection is zero, hence $b_1=0$.

Now take $x=R_{ij}$, the standard rotation in the $(i,j)$-plane.  If $w\in\operatorname{span}\{\varepsilon_i,\varepsilon_j\}$, relation \eqref{eq:m2-centralizer-pointwise}, followed by a functional on the $U_2$-factor, shows that
\[
b_2w\in
C_{\mathfrak{so}_d}(R_{ij})w+\mathbb Cw
\subseteq\operatorname{span}\{\varepsilon_i,\varepsilon_j\}.
\]
Thus $b_2$ preserves every coordinate two-plane.  For $d\ge3$ this forces $b_2=0$ by skew-symmetry. Indeed, $b_2\varepsilon_i$ belongs to every plane $\operatorname{span}\{\varepsilon_i,\varepsilon_j\}$ with $j\ne i$, so it is a multiple of $\varepsilon_i$, and skew-symmetry makes that multiple zero.  For $d=2$, $b_2\in\mathfrak{so}_2=Z(S)$.  Hence in every case $b\in Z(S)$.

The derivation
\[
D=\operatorname{ad}b+c_0\delta_0
\]
vanishes on $S$ and agrees with $\Delta$ on $V$.  Subtracting it proves the proposition.
\end{proof}

\begin{lemma}\label{lem:higher-m2-zero-anchors}
Assume $d\ge2$, $m=2$, and
\[
\Delta|_S=0,
\qquad
\Delta|_V=0,
\qquad
\Delta(\mathfrak g_1(d))\subseteq V.
\]
Put
\[
w=u_0+u_1+u_2,
\qquad
p_i=w\otimes\varepsilon_i.
\]
Then, for every $i$,
\[
\Delta(e+p_i)=\Delta(f+p_i)=\Delta(h+p_i)=0.
\]
\end{lemma}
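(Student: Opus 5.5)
The plan is to run the componentwise analogue of Lemma~\ref{lem:m2-zero-anchors}. The new ingredient is the rotation part $B\in\mathfrak{so}_d$ of each representing derivation, which I will control as in Lemma~\ref{lem:higher-zero-anchors}: a rank-one tensor identity combined with the skew-symmetry $\langle B\varepsilon_i,\varepsilon_i\rangle=0$. Throughout, write a representing derivation as $D=\operatorname{ad}(a+B+r)+\lambda\delta_0$ with $a\in\mathfrak{sl}_2$, $B\in\mathfrak{so}_d$, $r\in V$, and use \eqref{eq:D-higher-S}--\eqref{eq:D-higher-V}. Since $\Delta(\mathfrak g_1(d))\subseteq V$ and the anchors have no $\mathfrak{so}_d$-component, the $S$-component of $D(z)$ at an anchor $z=x_0+p_i$ is simply $[a,x_0]$, and it must vanish.

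\emph{Anchor $z_e=e+p_i$.} First I use the pair $(e,z_e)$. From $D(e)=0$ we get $a\in\mathbb Ce$ and $r\in\ker\rho(e)=u_0\otimes W_d$, so $\rho(e)r=0$. Hence $\Delta(z_e)=(\rho_2(a)+\lambda)w\otimes\varepsilon_i+w\otimes B\varepsilon_i$. Because $B\varepsilon_i\perp\varepsilon_i$, this gives $\pi_i\Delta(z_e)\in\operatorname{span}\{w,\rho_2(e)w\}$ and $\pi_j\Delta(z_e)\in\mathbb Cw$ for $j\ne i$. Next I use the pair $(f,z_e)$. Here $a\in\mathbb Cf$, and $[a,e]=0$ forces $a=0$; also $r\in u_2\otimes W_d$, so $\pi_i\Delta(z_e)\in\operatorname{span}\{w,u_1\}$. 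Intersecting the two constraints gives $\Delta(z_e)\in w\otimes W_d$.

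Then I use the pair $(u_0\otimes\varepsilon_i,z_e)$. The condition $D(u_0\otimes\varepsilon_i)=0$ reads $(\rho_2(a)+\lambda)u_0\otimes\varepsilon_i+u_0\otimes B\varepsilon_i=0$. By the rank-one tensor fact, $(\rho_2(a)+\lambda)u_0=\alpha u_0$ and $B\varepsilon_i=-\alpha\varepsilon_i$, and skew-symmetry gives $\alpha=0$. As in the one-dimensional computation this yields $\beta=0$ and $\lambda=-2\gamma$, and $[a,e]=0$ then gives $\gamma=\lambda=0$. So $a\in\mathbb Ce$ and $Bp_i=0$, which leaves $\Delta(z_e)=-\rho(e)r+\rho(a)p_i\in\operatorname{Im}\rho(e)=\operatorname{span}\{u_0,u_1\}\otimes W_d$. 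This space meets $w\otimes W_d$ only in zero, because $w$ has a non-zero $u_2$-coefficient; hence $\Delta(z_e)=0$. The anchor $f+p_i$ is handled symmetrically, using $u_2\otimes\varepsilon_i$ in the last step.

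\emph{Anchor $z_h=h+p_i$.} The pair $(e,z_h)$ gives $a\in\mathbb Ce$, and $[a,h]=0$ forces $a=0$; also $r\in u_0\otimes W_d$. Hence $\Delta(z_h)\in u_0\otimes W_d+w\otimes W_d$. Symmetrically, the pair $(f,z_h)$ gives $\Delta(z_h)\in u_2\otimes W_d+w\otimes W_d$. Since $u_0,u_2,w$ are linearly independent, the intersection is $w\otimes W_d$. Finally I use the pair $(u_1\otimes\varepsilon_i,z_h)$. The rank-one argument again gives $Bp_i=0$ together with $(\rho_2(a)+\lambda)u_1=0$. Since $\rho_2(e)u_1=u_0$, $\rho_2(f)u_1=u_2$ and $\rho_2(h)u_1=0$, this forces $a\in\mathbb Ch$ and $\lambda=0$. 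Therefore $\Delta(z_h)=-\rho(h)r+\rho(a)p_i\in\operatorname{span}\{u_0,u_2\}\otimes W_d$, whose intersection with $w\otimes W_d$ is zero.

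The main obstacle I anticipate is bookkeeping the rotation term $w\otimes B\varepsilon_i$ in the off-diagonal coordinates $j\ne i$. The first two pairs only confine these coordinates to $\mathbb Cw$, not to zero. The decisive point is that the radical-anchor pair kills $B\varepsilon_i$ completely via the rank-one argument, and that the final image space $\operatorname{Im}\rho(x_0)$ contains no multiple of $w$ in any spatial coordinate.
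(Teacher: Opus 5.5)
Your proof is correct, but it uses a different sequence of pairs from the paper's.

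\textbf{Your route.} You lift the one-dimensional Lemma~\ref{lem:m2-zero-anchors} directly to tensors.
\begin{itemize}
\item The partners $e$ and $f$ confine $\Delta(e+p_i)$ to $w\otimes W_d$. Here you only need $\langle B\varepsilon_i,\varepsilon_i\rangle=0$ and the independence of $w$, $u_0+2u_1$, $u_1$; the off-diagonal coordinates are automatically multiples of $w$.
\item A pure-tensor zero point then does the rest: $u_0\otimes\varepsilon_i$ for $e+p_i$, $u_2\otimes\varepsilon_i$ for $f+p_i$, $u_1\otimes\varepsilon_i$ for $h+p_i$. Through the rank-one argument and skew-symmetry it kills $\lambda$, $B\varepsilon_i$ and the unwanted $\mathfrak{sl}_2$-coefficients.
\item This places the value in $\operatorname{Im}\rho(x_0)$, which meets $w\otimes W_d$ only in zero.
\end{itemize}

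\textbf{The paper's route.} It follows the pattern of Lemma~\ref{lem:higher-zero-anchors}.
\begin{itemize}
\item It first pairs $z=e+p_i$ with its own radical part $p_i$. Since $\rho_2(e)w=u_0+2u_1$ is not proportional to $w$, this immediately gives $a\in\mathbb Ce$, $\lambda=0$ and $B\varepsilon_i=0$, so $\Delta(z)\in\operatorname{span}\{u_0,u_1\}\otimes W_d$.
\item The partners $f$ and $h$ then cut the value down to the single rows $u_1\otimes W_d$ and $u_0\otimes W_d$. Each time this uses the $u_2$-row comparison $\lambda\varepsilon_i+B\varepsilon_i=0$. The two rows intersect in zero.
\item For $h+p_i$ it uses the partners $p_i$, $e$, $f$ and intersects $u_0\otimes W_d$ with $u_2\otimes W_d$.
\end{itemize}

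\textbf{What each buys.} Your version shows that the $d\ge2$ statement is essentially a tensor copy of the $d=1$ lemma. It needs the rank-one argument only once, at a weight vector where solving $(\rho_2(a)+\lambda)u=0$ is immediate. The paper's version front-loads the work into the $p_i$ pair and keeps the structure uniform with the $m\ge3$ anchor lemma. The cost is repeating the skew-symmetry step in each of the subsequent pairs.
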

\begin{proof}
We prove the assertion for $e+p_i$; the $f$-case is symmetric.  Put $z=e+p_i$.  The pair $(e,z)$ first shows that $\Delta(z)\in V$.

Use the pair $(p_i,z)$.  Since $D(p_i)=0$ and the $S$-component of $D(z)$ is zero, the $\mathfrak{sl}_2$-part of the representing derivation lies in $\mathbb Ce$.  Thus on $V$ it has the form
\[
T\otimes I+I\otimes B_0,
\qquad
T=\alpha\rho_2(e)+\lambda I.
\]
The equality at $p_i$ is
\[
Tw\otimes\varepsilon_i+w\otimes B_0\varepsilon_i=0.
\]
The rank-one tensor argument gives $Tw=\beta w$ and $B_0\varepsilon_i=-\beta\varepsilon_i$.  Since $B_0$ is skew-symmetric, $\beta=0$.  As
\[
\rho_2(e)w=u_0+2u_1,
\]
the equality $Tw=0$ gives $\alpha=\lambda=0$, and hence $B_0\varepsilon_i=0$.  Therefore
\[
\Delta(z)\in\operatorname{Im}\rho_2(e)\otimes W_d
=\operatorname{span}\{u_0,u_1\}\otimes W_d.
\]

Next use the pair $(f,z)$.  Write the representing derivation as
\[
D=\operatorname{ad}(a+r)+\lambda\delta_0,
\qquad
a=a_1+B,\quad a_1\in\mathfrak{sl}_2,\quad B\in\mathfrak{so}_d.
\]
The equality $D(f)=0$ gives
\[
a_1=\beta f,
\qquad
r\in\ker\rho_2(f)\otimes W_d=u_2\otimes W_d.
\]
Since $\Delta(z)$ has no $S$-component, the $S$-component
$[\beta f,e]=-\beta h$ of $D(z)$ forces $\beta=0$.  Put
$r=u_2\otimes q$.  The radical part is then
\[
D(z)
=-2u_1\otimes q
+(u_0+u_1+u_2)\otimes(\lambda\varepsilon_i+B\varepsilon_i).
\]
The preceding inclusion
\[
\Delta(z)\in\operatorname{span}\{u_0,u_1\}\otimes W_d
\]
has no $u_2$-component, so
\[
\lambda\varepsilon_i+B\varepsilon_i=0.
\]
Taking the scalar product with $\varepsilon_i$ and using skew-symmetry gives
$\lambda=0$ and then $B\varepsilon_i=0$.  Consequently
\[
\Delta(z)\in u_1\otimes W_d.
\]

Finally use the pair $(h,z)$.  The equality $D(h)=0$ yields
\[
a_1=\gamma h,
\qquad
r\in\ker\rho_2(h)\otimes W_d=u_1\otimes W_d.
\]
The $S$-component of $D(z)$ is $[\gamma h,e]=2\gamma e$, hence
$\gamma=0$.  Writing $r=u_1\otimes q$, we obtain
\[
D(z)
=-u_0\otimes q
+(u_0+u_1+u_2)\otimes(\lambda\varepsilon_i+B\varepsilon_i).
\]
Since the same vector is already known to lie in $u_1\otimes W_d$, its
$u_2$-component gives
\[
\lambda\varepsilon_i+B\varepsilon_i=0,
\]
and therefore $\lambda=0$, $B\varepsilon_i=0$.  Thus
\[
\Delta(z)\in u_0\otimes W_d.
\]
The two one-row restrictions have zero intersection, proving
$\Delta(e+p_i)=0$.

For $z=h+p_i$, use first the pair $(p_i,z)$.  Since the value has no
$S$-component, the $\mathfrak{sl}_2$-part centralizes $h$, so it is
$\gamma h$.  The equality at $p_i$ is the rank-one identity with
\[
T=\gamma\rho_2(h)+\lambda I.
\]
It forces $Tw=0$ and $B\varepsilon_i=0$; because
\[
\rho_2(h)w=2u_0-2u_2
\]
is not proportional to $w$, we get $\gamma=\lambda=0$.  Hence
\[
\Delta(z)\in\operatorname{Im}\rho_2(h)\otimes W_d
=\operatorname{span}\{u_0,u_2\}\otimes W_d.
\]
For the pair $(e,z)$, the equations $D(e)=0$ and the vanishing of the
$S$-component of $D(z)$ force the $\mathfrak{sl}_2$-coefficient to be
zero and the radical parameter to lie in $u_0\otimes W_d$.  Comparing
the $u_1$-component with the preceding support again gives
$\lambda\varepsilon_i+B\varepsilon_i=0$, hence $\lambda=0$ and
$B\varepsilon_i=0$.  Therefore
\[
\Delta(z)\in u_0\otimes W_d.
\]
The pair $(f,z)$ symmetrically gives
\[
\Delta(z)\in u_2\otimes W_d.
\]
Their intersection is zero, so $\Delta(h+p_i)=0$.
\end{proof}

\begin{theorem}\label{thm:higher-m2-full}
Let $d\ge2$.  Every 2-local derivation on
\[
\mathfrak g_1(d)
=(\mathfrak{sl}_2\oplus\mathfrak{so}_d)\ltimes(U_2\otimes\mathbb C^d)
\]
is a derivation.
\end{theorem}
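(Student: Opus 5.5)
The plan is to follow the template of Theorem~\ref{thm:higher-full-mge3}, using the $m=2$ tools already prepared. First I normalize the Levi factor. By Proposition~\ref{prop:higher-S-normalization}, after subtracting a global derivation, $\Delta|_S=0$ and $\Delta(\mathfrak g_1(d))\subseteq V$. Proposition~\ref{prop:higher-m2-radical-normalization} then lets us subtract a further derivation vanishing on $S$, so that $\Delta|_S=0$ and $\Delta|_V=0$. At this point Lemma~\ref{lem:higher-m2-zero-anchors} supplies the zero anchors $e+p_i$, $f+p_i$ and $h+p_i$ with $p_i=w\otimes\varepsilon_i$. Conjugating by inner automorphisms of $\mathfrak{sl}_2$, exactly as in Lemma~\ref{lem:higher-conjugate-anchor}, gives the conjugated anchors $G(e+p_i)$ and $G(f+p_i)$.

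Next I treat mixed elements $y=x_0+R+v$. The pair $(x_0+R,y)$ gives $\Delta(y)\in V$. Fix $i$ and work spatial coordinate by spatial coordinate.

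\begin{enumerate}
\item If $x_0\notin\mathbb Ce$, take a derivation representing the pair $(e+p_i,y)$. The $S$-component forces $a=\alpha e$. Comparing the $u_2$-coefficient of the radical equation gives $\lambda=0$ and $B\varepsilon_i=0$, so the $i$th row of $B$ vanishes. Since $x_0\notin\mathbb Ce$, the vanishing $S$-component of $D(y)$ gives $\alpha=0$, hence $r\in u_0\otimes W_d$. This yields $\pi_i\Delta(y)\in\operatorname{span}\{\rho_2(x_0)u_0\}$.
\item Symmetrically, if $x_0\notin\mathbb Cf$, the anchor $f+p_i$ gives $\pi_i\Delta(y)\in\mathbb C\rho_2(x_0)u_2$.
\item Writing $x_0=Ae+Bf+Ch$, these are the lines $\mathbb C(Cu_0+Bu_1)$ and $\mathbb C(Au_1-Cu_2)$. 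Their intersection is zero when $C\ne0$.
\item When $C=0$ and $AB\ne0$, I use the anchor $h+p_i$. It gives $\pi_i\Delta(y)\in\mathbb C(Au_0+Bu_2)$, which is disjoint from $\mathbb Cu_1$.
\end{enumerate}

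This mirrors Theorem~\ref{thm:m2-full}, and the rank-one and skew-symmetry bookkeeping is identical to Lemma~\ref{lem:higher-m2-zero-anchors}.

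For $x_0=Ae$ with $A\ne0$, the anchor $f+p_i$ puts $\pi_i\Delta(y)$ in $\mathbb Cu_1$. The conjugated anchor $\exp(t\,\mathrm{ad} f)(e+p_i)$ puts it in $\mathbb C\rho_2(e)G_t(u_0)=\mathbb C(2tu_0+2t^2u_1)$, computed as in the endpoint argument of Theorem~\ref{thm:higher-full-mge3} after conjugating back. For $t\ne0$ these lines meet only in $0$. The case $x_0\in\mathbb Cf\setminus\{0\}$ is symmetric, using conjugates under $\exp(t\,\mathrm{ad} e)$.

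Finally, for $x_0=0$, set $y_t=s_t+R+v$. By the cases above, $\Delta(y_t)=0$. The pair $(y,y_t)$ gives $\Delta(y)=-D_t(s_t)\in\operatorname{Im}\rho(s_t)$ for all $t$. Lemma~\ref{lem:nilpotent-images-intersection}, valid for every $m$, then forces $\Delta(y)=0$.

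The step I expect to be the main obstacle is the first mixed case for $m=2$. Because $U_2$ is small, the radical equation from the anchor $e+p_i$ must still pin down $\lambda$ and $B\varepsilon_i$ through the $u_2$-coefficient alone. Because $w=u_0+u_1+u_2$ has a $u_2$-entry and $\rho_2(e)$ has no $u_2$ in its image, I expect this to work, but I will check that the term $-\rho(R)r$ does not spoil it. It should not, since $r$ will lie in $u_0\otimes W_d$ once $\alpha=0$, contributing only to the $u_0$-row.
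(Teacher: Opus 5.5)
There is a genuine gap in the mixed-element step: you have dropped the rotation component $R$ of $y$. Take a derivation $D$ annihilating $e+p_i$ with $\alpha=\lambda=0$. Its radical parameter is $r=u_0\otimes q$ with $q\in W_d$ completely unconstrained; indeed $\operatorname{ad}(u_0\otimes q)$ itself kills $e+p_i$. For such $D$ one gets $\pi_i D(y)=-q_i\,\rho_2(x_0)u_0-(Rq)_i\,u_0$. If the $i$th row of $R$ is non-zero, $q_i$ and $(Rq)_i$ vary independently. So this anchor only gives $\pi_i\Delta(y)\in\operatorname{span}\{u_0,\rho_2(x_0)u_0\}$, a plane (compare \eqref{eq:higher-e-support}), not the line $\mathbb C\rho_2(x_0)u_0$. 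The $u_0$-row term $-u_0\otimes Rq$ that you expected to be harmless is exactly what enlarges the line. In the same way, $f+p_i$ only gives $\operatorname{span}\{u_2,\rho_2(x_0)u_2\}$, and $h+p_i$ only gives $\operatorname{span}\{u_1,\rho_2(x_0)u_1\}$, which always contains $u_1$.

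\textbf{Consequences.} Your items 3 and 4 fail for $R\neq0$. Whenever $AB\neq0$, whatever $C$ is, the $e$- and $f$-planes are $\operatorname{span}\{u_0,u_1\}$ and $\operatorname{span}\{u_1,u_2\}$. These meet in $\mathbb Cu_1$, and the $h$-anchor cannot remove this line. Your endpoint argument for $x_0=Ae$ fails too. The $f$-plane $\operatorname{span}\{u_1,u_2\}$ and the conjugated plane $\operatorname{span}\{G_tu_0,\rho_2(e)G_tu_0\}$ intersect in $\mathbb C(u_1+tu_2)$. What you have written is really the $d=1$ argument of Theorem~\ref{thm:m2-full}, which is valid only when $R=0$.

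\textbf{The fix (as in the paper).}
\begin{enumerate}
\item Intersect all three basic planes. In every exceptional case ($AB\neq0$, or $x_0$ a non-zero multiple of $e$ or $f$) this leaves $\pi_i\Delta(y)\in\mathbb Cu_1$.
\item Apply the conjugated anchor $\operatorname{Ad}(\exp(tf))(e+p_i)=s_t+G_t(w)\otimes\varepsilon_i$ to a general $x_0$, not only $x_0\in\mathbb Ce$. It confines $\pi_i\Delta(y)$ to $\operatorname{span}\{G_tu_0,\rho_2(x_0)G_tu_0\}$.
\item Since $\det(G_tu_0,\rho_2(x_0)G_tu_0,u_1)=2t(At^2+2Ct-B)$, choose $t$ making this non-zero and with $x_0\notin\mathbb Cs_t$. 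This plane then excludes $u_1$.
\end{enumerate}
Your normalization steps and your treatment of $x_0=0$ via Lemma~\ref{lem:nilpotent-images-intersection} are fine and match the paper.
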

\begin{proof}
By Proposition~\ref{prop:higher-S-normalization} and Proposition~\ref{prop:higher-m2-radical-normalization}, after subtracting a global derivation we may assume
\begin{equation}\label{eq:higher-m2-normalized}
\Delta|_S=0,
\qquad
\Delta|_V=0,
\qquad
\Delta(\mathfrak g_1(d))\subseteq V.
\end{equation}
We prove that this normalized map is zero.

Let
\[
y=x_0+R+v,
\qquad
x_0=Ae+Bf+Ch\in\mathfrak{sl}_2,
\quad R\in\mathfrak{so}_d,
\quad v\in V.
\]
The pair $(x_0+R,y)$ shows that $\Delta(y)\in V$.  Fix an index $i$.

If $x_0\notin\mathbb Ce$, the zero anchor $e+p_i$ gives, by the same coefficient comparison as in \eqref{eq:higher-e-support},
\begin{equation}\label{eq:m2-e-support-higher}
\pi_i\Delta(y)
\in\operatorname{span}\{u_0,\rho_2(x_0)u_0\}.
\end{equation}
If $x_0\notin\mathbb Cf$, the anchor $f+p_i$ gives
\begin{equation}\label{eq:m2-f-support-higher}
\pi_i\Delta(y)
\in\operatorname{span}\{u_2,\rho_2(x_0)u_2\}.
\end{equation}
If $x_0\notin\mathbb Ch$, the anchor $h+p_i$ gives
\begin{equation}\label{eq:m2-h-support-higher}
\pi_i\Delta(y)
\in\operatorname{span}\{u_1,\rho_2(x_0)u_1\}.
\end{equation}
These inclusions already imply $\pi_i\Delta(y)=0$ unless either $AB\ne0$ or $x_0$ is a non-zero multiple of $e$ or $f$.  In each exceptional situation the three basic inclusions reduce the possible value to
\[
\pi_i\Delta(y)\in\mathbb Cu_1.
\]

It remains to eliminate this line.  For $t\in\mathbb C$ put
\[
G_t=\exp(t\rho_2(f)),
\qquad
s_t=\operatorname{Ad}(\exp(tf))e,
\qquad
r_t=G_t(w).
\]
Conjugating the normalized map by the corresponding inner automorphism and applying Lemma~\ref{lem:higher-m2-zero-anchors} shows that
\[
\Delta(s_t+r_t\otimes\varepsilon_i)=0.
\]
Choose $t\ne0$ such that $x_0\notin\mathbb Cs_t$.  Repeating the endpoint calculation with this conjugate zero anchor gives
\begin{equation}\label{eq:m2-conjugate-support-higher}
\pi_i\Delta(y)
\in
\operatorname{span}\{G_tu_0,\rho_2(x_0)G_tu_0\}.
\end{equation}
Since
\[
G_tu_0=u_0+2tu_1+t^2u_2,
\]
a direct determinant computation gives
\begin{equation}\label{eq:m2-determinant-higher}
\det\bigl(G_tu_0,\rho_2(x_0)G_tu_0,u_1\bigr)
=2t\bigl(At^2+2Ct-B\bigr).
\end{equation}
In each of the exceptional cases above the polynomial on the right is not identically zero.  We may therefore choose $t$ so that it is non-zero and $x_0\notin\mathbb Cs_t$.  Then the plane in \eqref{eq:m2-conjugate-support-higher} does not contain $u_1$, and hence $\pi_i\Delta(y)=0$.

Thus $\Delta(y)=0$ whenever $x_0\ne0$.  It remains to take $x_0=0$, so $y=R+v$.  For every $t$ set
\[
y_t=s_t+R+v.
\]
The case already proved gives $\Delta(y_t)=0$.  Using one representing derivation for $(y,y_t)$ yields
\[
\Delta(y)\in\operatorname{Im}\rho_2(s_t)\otimes W_d
\qquad\text{for every }t.
\]
Lemma~\ref{lem:nilpotent-images-intersection} gives $\Delta(y)=0$.  Therefore the normalized map is zero and the theorem follows.
\end{proof}

\begin{corollary}\label{cor:noncentral-all-mge2}
For every $d\ge1$ and every positive half-integer $\ell$ with $2\ell\ge2$,
\[
\TwoLocDer\mathfrak g_\ell(d)=\Der\mathfrak g_\ell(d).
\]
\end{corollary}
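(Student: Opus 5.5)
This corollary is a bookkeeping consequence of results already proved, so the plan is to split the parameter range and cite the appropriate theorem in each case.

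Write $m=2\ell\ge2$ and organize the argument by the spatial dimension $d$ and by $m$.

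\textbf{Case $d=1$.} Corollary~\ref{cor:one-dimensional-all} already gives $\TwoLocDer\mathfrak g_\ell(1)=\Der\mathfrak g_\ell(1)$ for every positive half-integer $\ell$. In particular it covers $m=2$ (Theorem~\ref{thm:m2-full}) and all $m\ge3$ (Theorem~\ref{thm:one-string-full}).

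\textbf{Case $d\ge2$.} Here we split according to $m$.
\begin{itemize}
\item If $m\ge3$, we invoke Theorem~\ref{thm:higher-full-mge3}. It states exactly that every 2-local derivation on $\mathfrak g_\ell(d)$ is a derivation.
\item If $m=2$, that is $\ell=1$, we invoke Theorem~\ref{thm:higher-m2-full}, which gives the same conclusion for $\mathfrak g_1(d)$.
\end{itemize}
In both subcases we obtain the inclusion $\TwoLocDer\mathfrak g_\ell(d)\subseteq\Der\mathfrak g_\ell(d)$.

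\textbf{Reverse inclusion.} This is trivial in every case: a derivation $D$ is a 2-local derivation, since for each pair $x,y$ we may take $D_{x,y}=D$.

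Combining the two cases with the reverse inclusion yields the equality for all $d\ge1$ and $2\ell\ge2$.

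\textbf{Expected difficulty.} There is essentially no real obstacle; all the mathematical content lies in the cited theorems. The only point that needs checking is that the case split is exhaustive, and it is: the pair $(d,m)$ with $d\ge1$, $m\ge2$ falls into exactly one of the three cases $\{d=1\}$, $\{d\ge2,\,m\ge3\}$, $\{d\ge2,\,m=2\}$.

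It is also worth confirming that the normalization hypotheses of Theorems~\ref{thm:higher-full-mge3} and~\ref{thm:higher-m2-full} are not extra assumptions. They are produced inside those proofs from an arbitrary 2-local derivation, via Proposition~\ref{prop:higher-S-normalization} and Corollary~\ref{cor:higher-basis-rigidity} (respectively Proposition~\ref{prop:higher-m2-radical-normalization}). Hence the theorems apply unconditionally.
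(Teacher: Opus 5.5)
Your proposal is correct and matches the paper's own proof: it uses the same case split into $d=1$ (Corollary~\ref{cor:one-dimensional-all}), $d\ge2$ with $2\ell\ge3$ (Theorem~\ref{thm:higher-full-mge3}), and $d\ge2$ with $2\ell=2$ (Theorem~\ref{thm:higher-m2-full}). Your remarks on the trivial reverse inclusion, and on the normalizations being produced inside the cited theorems, are accurate additions that the paper leaves implicit.
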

\begin{proof}
For $d=1$ this is Corollary~\ref{cor:one-dimensional-all}.  For $d\ge2$ and $2\ell\ge3$ use Theorem~\ref{thm:higher-full-mge3}, while $2\ell=2$ is Theorem~\ref{thm:higher-m2-full}.
\end{proof}

\subsection{The standard module \texorpdfstring{$m=1$}{m=1}: rigidity on the radical}
The standard module behaves differently because $\mathcal A_1=\operatorname{End}(U_1)$; hence no single vector can separate the $\mathfrak{sl}_2$ and scalar coefficients.  Nevertheless the two-point condition still determines the restriction to the entire abelian radical.

\begin{lemma}\label{lem:2local-conformal-orthogonal}
Let $F:W_d\to W_d$ be a 2-local map with respect to
\[
\mathfrak{so}_d+\mathbb CI,
\]
that is, for every $w_1,w_2$ there exists $B+\lambda I$ with $B\in\mathfrak{so}_d$ agreeing with $F$ at both points.  Then $F$ is linear and
\[
F\in\mathfrak{so}_d+\mathbb CI.
\]
\end{lemma}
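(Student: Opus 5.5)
The plan is to mimic the invariant-form argument of Proposition~\ref{prop:higher-m2-radical-normalization}, using the standard symmetric form $\langle w,w'\rangle=w^{\mathsf T}w'$ on $W_d$. Every $B\in\mathfrak{so}_d$ is skew for this form. We then finish with Lemma~\ref{lem:so-module-reflexive-2local}. The case $d=1$ is trivial: there $\mathfrak{so}_1=0$, so $F(w)=\lambda_w w$, and using one representing scalar for the pair $(\varepsilon_1,w)$ shows that $F$ is a single scalar.

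The first step synchronizes the scalar coefficient. Suppose $w$ is non-isotropic and $F(w)=(B+\lambda I)w$. Then
\[
\lambda=\frac{\langle F(w),w\rangle}{\langle w,w\rangle},
\]
so $\lambda$ is determined by $F(w)$ alone. For two non-isotropic vectors $w_1,w_2$, one representing operator serves both points, so this quotient takes one common value $c$ on all non-isotropic vectors. Now put $G(w)=F(w)-cw$. Let $y$ be non-isotropic and $x$ arbitrary, and take a representing operator $B+\lambda I$ for the pair $(x,y)$. Its scalar is forced to be $\lambda=c$, so $G$ agrees with the skew operator $B$ at both $x$ and $y$. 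Consequently
\[
\langle G(x),y\rangle+\langle x,G(y)\rangle=0 .
\]

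The second step proves linearity. Fix $x_1,x_2$ and apply the identity to $x_1$, $x_2$ and $x_1+x_2$ against the same non-isotropic $y$. This gives
\[
\langle G(x_1+x_2)-G(x_1)-G(x_2),\,y\rangle=0
\]
for every non-isotropic $y$. Non-isotropic vectors form a nonempty Zariski-open set, so they span $W_d$, and nondegeneracy gives additivity. Homogeneity of $G$ follows by applying the 2-local condition to the pair $(w,\mu w)$, as in Lemma~\ref{lem:homogeneous}. Hence $G$, and therefore $F$, is linear.

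The third step identifies $F$. Linearity and the pointwise condition give $Fw\in(\mathfrak{so}_d+\mathbb CI)w$ for every $w$, and Lemma~\ref{lem:so-module-reflexive-2local} then yields $F\in\mathfrak{so}_d+\mathbb CI$.

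The main obstacle is the first step, and specifically making sure every vector $x$, including isotropic ones, is covered. This works because only $y$ needs to be non-isotropic: that alone pins the scalar coefficient of the pair's representing operator to $c$. A second point to check is the spanning claim in small dimension. For $d=2$ the isotropic vectors are just the two null lines, so non-isotropic vectors certainly span $W_2$; for $d\ge3$ the isotropic vectors form a proper quadric, and its complement spans.
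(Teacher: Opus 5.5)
Your proof is correct and follows the paper's argument: you fix the scalar as $\langle F(w),w\rangle/\langle w,w\rangle$ on non-isotropic vectors, then test the identity $\langle G(x),y\rangle+\langle x,G(y)\rangle=0$ against all non-isotropic $y$ to get additivity. The only difference is the last step, where the paper does not use Lemma~\ref{lem:so-module-reflexive-2local}: once $G$ is linear, that same identity is bilinear in $(x,y)$ and holds for a spanning set of $y$, so it directly says $G$ is skew-symmetric. Your route through the reflexivity lemma is valid but longer, and your separate $d=1$ case is not needed either.
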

\begin{proof}
Homogeneity follows exactly as in Lemma~\ref{lem:homogeneous}.  Let $\langle\cdot,\cdot\rangle$ be the standard symmetric bilinear form.  For a non-isotropic $w$ put
\[
c(w)=\frac{\langle F(w),w\rangle}{\langle w,w\rangle}.
\]
A single representing operator for two non-isotropic vectors shows that $c(w)$ is one common scalar $c$.  Put $G=F-cI$.  If $y$ is non-isotropic, then the representing operator for $(x,y)$ has scalar coefficient $c$, and hence
\[
\langle G(x),y\rangle+\langle x,G(y)\rangle=0
\qquad(x\in W_d).
\]
Testing $G(x_1+x_2)-G(x_1)-G(x_2)$ against all non-isotropic $y$ proves additivity.  Together with homogeneity this gives linearity.  The displayed identity then shows that $G$ is skew-symmetric, proving the assertion.
\end{proof}

\begin{proposition}\label{prop:higher-m1-pure-tensors}
Let $d\ge2$, $m=1$, and let $\Delta$ be a 2-local derivation satisfying
\[
\Delta|_S=0,
\qquad
\Delta(\mathfrak g_{1/2}(d))\subseteq V.
\]
Then there exist $B_0\in\mathfrak{so}_d$ and a scalar function $c$ on the projective line $\mathbb P(U_1)$ such that
\begin{equation}\label{eq:m1-pure-tensor-form-higher}
\Delta(u\otimes w)
=u\otimes\bigl(B_0w+c([u])w\bigr)
\qquad(0\ne u\in U_1,\ w\in W_d).
\end{equation}
Moreover, $B_0=0$ for $d\ge3$, while for $d=2$ the term $I\otimes B_0$ is the restriction of an inner derivation which vanishes on $S$.
\end{proposition}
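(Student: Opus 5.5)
The plan is to analyse $\Delta$ one pure tensor at a time, using the fact that a single nilpotent anchor in $\mathfrak{sl}_2$ kills the $\mathfrak{sl}_2$-freedom along a line of $U_1$. Then Lemma~\ref{lem:2local-conformal-orthogonal} handles the spatial factor, and the rotation parts obtained for different lines are synchronized afterwards.

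\emph{Step 1: the value on a pure tensor.} Fix $0\ne u\in U_1$ and choose a non-zero nilpotent $x_u\in\mathfrak{sl}_2$ with $\ker\rho_1(x_u)=\mathbb Cu$, as in Proposition~\ref{prop:m1-radical-scalar}. Take the pair $(x_u,u\otimes w)$. Since $\Delta(x_u)=0$, the representing derivation has $\mathfrak{sl}_2$-part in $\mathbb Cx_u$, and $\rho_1(x_u)u=0$. By \eqref{eq:D-higher-V} this gives $\Delta(u\otimes w)=u\otimes(B+\lambda I)w$ for some $B\in\mathfrak{so}_d$ and $\lambda\in\mathbb C$. Hence there is a map $F_u:W_d\to W_d$ with $\Delta(u\otimes w)=u\otimes F_u(w)$. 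Homogeneity (Lemma~\ref{lem:homogeneous}) shows that $F_u$ depends only on the line $[u]$.

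\emph{Step 2: each $F_u$ lies in $\mathfrak{so}_d+\mathbb CI$.} Take the pair $(u\otimes w_1,u\otimes w_2)$ with $w_1,w_2\ne0$. The representing operator is $T\otimes I+I\otimes B$ with $T\in\mathcal A_1=\operatorname{End}(U_1)$, and it satisfies $Tu\otimes w_k+u\otimes Bw_k=u\otimes F_u(w_k)$. The rank-one tensor criterion used in Theorem~\ref{thm:higher-radical-zero} forces $Tu=\mu u$ with one scalar $\mu$ serving both points. Therefore $F_u(w_k)=(B+\mu I)w_k$ for $k=1,2$, so $F_u$ is 2-local with respect to $\mathfrak{so}_d+\mathbb CI$. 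Lemma~\ref{lem:2local-conformal-orthogonal} then gives $F_u=B_u+c([u])I$ with $B_u\in\mathfrak{so}_d$.

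\emph{Step 3: synchronizing the rotations.} Let $u,u'$ be independent and let $w$ be non-isotropic. Use the pair $(u\otimes w,u'\otimes w)$, with representing operator $T\otimes I+I\otimes B$. The rank-one argument applied at each point gives
\[
(B_u-B)w=(\mu-c([u]))w,\qquad (B_{u'}-B)w=(\mu'-c([u']))w.
\]
Pairing with $w$ and using skew-symmetry forces both scalars to vanish, so $B_uw=Bw=B_{u'}w$. The non-isotropic vectors span $W_d$, hence $B_u=B_{u'}=:B_0$. This proves \eqref{eq:m1-pure-tensor-form-higher}.

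\emph{Step 4: the rotation term.} For $d\ge3$, fix $i$ and distinct $j,k\ne i$, and use the pair $(R_{ij},u\otimes\varepsilon_i)$ exactly as in Proposition~\ref{prop:higher-pure-strings}. Since $\Delta(R_{ij})=0$, the rotation part $B$ commutes with $R_{ij}$ and therefore satisfies $B\varepsilon_i\in\mathbb C\varepsilon_j$. The rank-one comparison with \eqref{eq:m1-pure-tensor-form-higher} gives $B_0\varepsilon_i\in\operatorname{span}\{\varepsilon_i,\varepsilon_j\}$, and skew-symmetry reduces this to $B_0\varepsilon_i\in\mathbb C\varepsilon_j$. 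Repeating with $R_{ik}$ gives $B_0\varepsilon_i\in\mathbb C\varepsilon_k$, so $B_0\varepsilon_i=0$ for every $i$ and $B_0=0$. For $d=2$, $I\otimes B_0$ is the restriction to $V$ of $\operatorname{ad}B_0$, and this inner derivation vanishes on $S$ because $\mathfrak{so}_2$ is central in $S$.

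I expect the main obstacle to be Step 2. It is where one must make sure that a single representing derivation yields the same scalar $\mu$ at both points, with $T$ unrestricted in $\operatorname{End}(U_1)$. The rank-one criterion resolves this because both values lie in $u\otimes W_d$. The argument deliberately says nothing about how $c([u])$ depends on $[u]$; that question is left for the later steps.
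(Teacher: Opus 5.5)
Your proposal is correct and follows the paper's proof essentially step for step. You use the nilpotent anchor $x_u$ to confine $\Delta(u\otimes w)$ to $u\otimes W_d$, then rank-one comparison together with Lemma~\ref{lem:2local-conformal-orthogonal} on each line, then the pair $(u\otimes w,u'\otimes w)$ to synchronize the $B_u$, and finally the $R_{ij},R_{ik}$ argument for $d\ge3$ (with $\operatorname{ad}B_0$ for $d=2$); the only cosmetic difference is that in Step~3 you finish by pairing with non-isotropic $w$ and using skew-symmetry, whereas the paper notes that $F_u-F_{u'}$ has every vector as an eigenvector and is therefore scalar.
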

\begin{proof}
Fix $0\ne u\in U_1$ and choose a non-zero nilpotent $x_u\in\mathfrak{sl}_2$ with
\[
\ker\rho_1(x_u)=\mathbb Cu.
\]
The pair $(x_u,u\otimes w)$ shows that
\[
\Delta(u\otimes w)\in u\otimes W_d.
\]
Thus
\[
\Delta(u\otimes w)=u\otimes F_u(w)
\]
for a homogeneous map $F_u:W_d\to W_d$.  Using one representing derivation for the pair $u\otimes w_1,u\otimes w_2$, the components transverse to the line $\mathbb Cu$ force the $\mathfrak{sl}_2$-part to preserve that line.  Hence the same operator $B+\lambda I$ on $W_d$ represents $F_u$ at $w_1$ and $w_2$.  Lemma~\ref{lem:2local-conformal-orthogonal} gives
\[
F_u=B_u+c_uI,
\qquad B_u\in\mathfrak{so}_d.
\]

Let $u,v$ be linearly independent.  Apply the two-local condition to $u\otimes w$ and $v\otimes w$.  Since both values preserve their first-factor lines, the representing $\mathfrak{sl}_2$-operator is diagonal with respect to the basis $u,v$.  Therefore
\[
(F_u-F_v)(w)\in\mathbb Cw
\qquad(w\in W_d).
\]
A linear operator for which every vector is an eigenvector is scalar.  Comparing skew and scalar parts gives
\[
B_u=B_v.
\]
Homogeneity handles proportional $u,v$, so all $B_u$ are one matrix $B_0$.  This proves \eqref{eq:m1-pure-tensor-form-higher}.

For $d\ge3$, use the pair $(R_{ij},u\otimes\varepsilon_i)$.  Since $\Delta(R_{ij})=0$, the rotation part of a representing derivation centralizes $R_{ij}$.  Comparing the components orthogonal to $\varepsilon_i$ in its value at $u\otimes\varepsilon_i$ (the first-factor operator contributes only in the $\varepsilon_i$-coordinate) shows from \eqref{eq:m1-pure-tensor-form-higher} that
\[
B_0\varepsilon_i\in\operatorname{span}\{\varepsilon_i,\varepsilon_j\}.
\]
Repeating with $R_{ik}$ for two distinct $j,k\ne i$ gives $B_0\varepsilon_i\in\mathbb C\varepsilon_i$, and skew-symmetry yields $B_0\varepsilon_i=0$.  Thus $B_0=0$.  If $d=2$, then $B_0\in\mathfrak{so}_2=Z(S)$ and $\operatorname{ad}B_0$ vanishes on $S$.
\end{proof}

\begin{proposition}\label{prop:higher-m1-radical-zero}
Let $d\ge2$ and $m=1$.  After subtracting a global derivation, every 2-local derivation may be normalized so that
\[
\Delta|_S=0,
\qquad
\Delta|_V=0,
\qquad
\Delta(\mathfrak g_{1/2}(d))\subseteq V.
\]
\end{proposition}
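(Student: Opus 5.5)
The plan is to start from the normal form of Proposition~\ref{prop:higher-m1-pure-tensors}, first make it vanish on pure tensors, and then handle non-pure radical vectors. By Proposition~\ref{prop:higher-S-normalization} we may already assume $\Delta|_S=0$ and $\Delta(\mathfrak g_{1/2}(d))\subseteq V$. Proposition~\ref{prop:higher-m1-pure-tensors} then gives \eqref{eq:m1-pure-tensor-form-higher}. For $d=2$ we subtract the inner derivation $\operatorname{ad}B_0$, which vanishes on $S$, so that $\Delta(u\otimes w)=c([u])\,u\otimes w$ on all pure tensors.

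\textbf{Step 1: the function $c$ on $\mathbb P(U_1)$ is constant.} Here I would imitate the mixed-element trick of Proposition~\ref{prop:m1-radical-scalar}. Take independent $u,v\in U_1$ and a nilpotent $x\in\mathfrak{sl}_2$ with $\rho_1(x)v=0$ and $\rho_1(x)u=v$. Fix a non-isotropic $w$ and put
\[
y=x+u\otimes w+v\otimes w .
\]
The pair $(x,y)$ gives $\Delta(y)\in V$.
\begin{itemize}
\item For the pair $(v\otimes w,y)$: the vanishing $S$-component forces the $\mathfrak{sl}_2$-part to be a multiple of $x$. The rank-one tensor identity at $v\otimes w$, together with skew-symmetry of $B$ and $\langle w,w\rangle\ne0$, gives $Bw=0$ and forces the outer scalar to be $c([v])$. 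Reading off the $u\otimes w$-coefficient of $\Delta(y)$ then gives $c([v])$.
\item For the pair $(u\otimes w,y)$: the same argument, now using $\rho_1(x)u=v$, kills the multiple of $x$. The same coefficient of $\Delta(y)$ is then $c([u])$.
\end{itemize}
Hence $c([u])=c([v])$, so $c$ is a constant $c$. Subtracting $c\delta_0$, which is zero on $S$, makes $\Delta$ vanish on all pure tensors.

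\textbf{Step 2: non-pure vectors.} Every $v\in V$ has the form $u_0\otimes a+u_1\otimes b$; it remains to treat $a,b$ linearly independent. Think of $V$ as $2\times d$ matrices $X$, on which a derivation acts by $X\mapsto TX+XB^{\mathsf T}$ with $T\in\operatorname{End}(U_1)$ and $B\in\mathfrak{so}_d$. For every zero anchor $u\otimes w$ with $w$ non-isotropic, the pair $(u\otimes w,v)$ and the rank-one argument of Theorem~\ref{thm:higher-radical-zero} force $Bw=0$ and $Tu=0$. Thus $\Delta(v)$ lies in
\[
\Sigma_{u,w}=\{TX+XB^{\mathsf T}:Tu=0,\ Bw=0\}\quad\text{evaluated at }X=v .
\]
I would intersect these subspaces over well-chosen anchors. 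The intended choices are $u\in\{u_0,u_1,u_0+u_1\}$ and $w$ ranging over non-isotropic vectors in, respectively orthogonal to, $\operatorname{span}\{a,b\}$. Each anchor kills one row/column pattern (its $i$-th spatial coordinate when $w=\varepsilon_i$, as in Theorem~\ref{thm:higher-radical-zero}) and confines the $U_1$-part to $\operatorname{Im}T$. The goal is to show the intersection is zero, or at least a line that a further anchor then eliminates.

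\textbf{Main obstacle.} I expect Step 2 to be the main difficulty, because $\mathcal A_1=\operatorname{End}(U_1)$ and no single vector separates the parameters. If the direct intersection does not close, I would fall back on the semisimple-rotation idea announced in the introduction. Namely, use an anchor $h+r$ with $\rho_1(h)$ invertible, or a mixed anchor built from Step 1's $y$, to pin down the $B$-part, and then let a perturbation by $\exp(t\,\ad f)$-conjugates, as in Lemma~\ref{lem:nilpotent-images-intersection}, kill the leftover component.
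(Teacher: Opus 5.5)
\textbf{Verdict: genuine gap in Step~2.} Your reduction to the pure-tensor form and your Step~1 are correct and agree with the paper. The paper uses the same mixed element $x+(u+v)\otimes\varepsilon_i$, with the roles of $u,v$ interchanged; you allow any non-isotropic $w$ in place of $\varepsilon_i$. The problem is that you leave Step~2 open.

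With your fixed anchors $u\in\{u_0,u_1,u_0+u_1\}$ and $w=\varepsilon_i$, the claim that the anchor kills the $i$-th spatial coordinate is false for $m=1$. The anchor only gives $Tu=0$ and $B\varepsilon_i=0$, so $\pi_i\Delta(v)=T\pi_i(v)$, and this is unconstrained unless $\pi_i(v)\in\C u$. For $m\ge3$ the separating vector forced $T=0$, which is why Theorem~\ref{thm:higher-radical-zero} needed no adaptation.

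The missing idea is to let the anchor depend on $v$. Use the zero pure tensor $\pi_i(v)\otimes\varepsilon_i$, or any non-zero $u\otimes\varepsilon_i$ if $\pi_i(v)=0$. The rank-one argument gives $T\pi_i(v)=0$ and $B\varepsilon_i=0$. Skew-symmetry then gives $\langle\varepsilon_i,B\varepsilon_j\rangle=0$ for all $j$, hence
\[
\pi_i\Delta(v)=T\pi_i(v)+\sum_j\langle\varepsilon_i,B\varepsilon_j\rangle\,\pi_j(v)=0,
\]
and varying $i$ gives $\Delta(v)=0$.

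Your intersection idea can also be closed, but with different choices than the ones you list. For the anchor $u_0\otimes w$ one gets $\Delta(v)=Tu_1\otimes b+u_0\otimes Ba+u_1\otimes Bb$. If $w$ is non-isotropic and $w\perp b$, both rows of $\Delta(v)$ are orthogonal to $w$, since $\langle Ba,w\rangle=-\langle a,Bw\rangle=0$. For $d\ge3$ such $w$ span $b^\perp$, so $\Delta(v)\in U_1\otimes\C b$. For $d=2$ the condition $Bw=0$ already forces $B=0$, giving the same conclusion. Symmetrically, the anchors $u_1\otimes w$ with $w\perp a$ give $\Delta(v)\in U_1\otimes\C a$. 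Since $a,b$ are independent, these two spaces meet only in zero.

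Your fallback would not help at this stage. No element with a non-zero $S$-component, such as $h+r$ or your $y$, is yet known to be a zero of $\Delta$. In the paper, the mixed zero anchors (Lemmas~\ref{lem:m1-rank-two-zero-anchors} and~\ref{lem:m1-h-rank-two-zero-anchors}) and the semisimple-perturbation step are all proved under the hypothesis $\Delta|_V=0$, that is, after this very proposition. Invoking them here would therefore be circular. Moreover, the perturbation lemma controls the $\mathfrak{so}_d$-component of mixed elements, not vectors of the radical.
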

\begin{proof}
Start with Proposition~\ref{prop:higher-S-normalization} and then apply Proposition~\ref{prop:higher-m1-pure-tensors}.  Remove the common rotation term; for $d\ge3$ it is already zero, while for $d=2$ it is removed by $\operatorname{ad}B_0$ without changing $\Delta|_S$.

Thus for pure tensors
\[
\Delta(u\otimes w)=c([u])u\otimes w.
\]
We show that $c$ is constant.  Let $u,v$ be linearly independent and choose a nilpotent $x\in\mathfrak{sl}_2$, scaled so that
\[
\rho_1(x)u=0,
\qquad
\rho_1(x)v=u.
\]
Fix an index $i$ and put
\[
y=x+(u+v)\otimes\varepsilon_i.
\]
The pair $(x,y)$ shows that $\Delta(y)\in V$.  Use next the pair $(u\otimes\varepsilon_i,y)$.  Since the value at the first point is $c([u])u\otimes\varepsilon_i$, the rank-one tensor relation implies that the representing rotation sends $\varepsilon_i$ to a scalar multiple of $\varepsilon_i$; skew-symmetry at the non-isotropic vector $\varepsilon_i$ makes this scalar zero.  Thus the rotation part kills $\varepsilon_i$.  The vanishing of the $S$-component at $y$ forces the $\mathfrak{sl}_2$-part to commute with $x$, hence it is a scalar multiple of $x$.  As $xu=0$, the scalar outer coefficient is $c([u])$.  The coefficient of $v\otimes\varepsilon_i$ in $\Delta(y)$ is therefore $c([u])$.

Using instead the pair $(v\otimes\varepsilon_i,y)$, the equality at $v\otimes\varepsilon_i$ forces the multiple of $x$ to vanish because $xv=u$ is not proportional to $v$.  Hence the scalar outer coefficient is $c([v])$, and the coefficient of $v\otimes\varepsilon_i$ in the same vector $\Delta(y)$ is $c([v])$.  Therefore $c([u])=c([v])$.  Homogeneity covers proportional vectors, so $c$ is one scalar.  Subtract $c\delta_0$.

We now have $\Delta(u\otimes w)=0$ for every pure tensor.  Let
\[
v=\sum_{i=1}^d v_i\otimes\varepsilon_i\in V.
\]
Fix $i$.  If $v_i\ne0$, use the zero pure tensor $p_i=v_i\otimes\varepsilon_i$ as one point of the two-local pair $(p_i,v)$.  On $V$ the representing derivation has the form
\[
T\otimes I+I\otimes B.
\]
The equality at $p_i$ and the rank-one tensor argument give
\[
Tv_i=0,
\qquad
B\varepsilon_i=0.
\]
Skew-symmetry also kills the $i$th row of $B$, and hence the $i$th spatial coordinate of $\Delta(v)$ is zero.  If $v_i=0$, use any non-zero $u\otimes\varepsilon_i$ in place of $p_i$; the same conclusion follows.  Since $i$ is arbitrary, $\Delta(v)=0$ for all $v\in V$.
\end{proof}

\subsection{Rank-two mixed anchors in the standard module}
The obstruction in the standard module is that a single pure tensor does not separate the $\mathfrak{sl}_2$ and scalar parts.  A rank-two tensor does.  This produces mixed zero anchors that settle the entire case $d=2$ and provide the semisimple anchor family used later in the higher-dimensional completion.

For distinct indices $i,j$ put
\[
p_{ij}=u_0\otimes\varepsilon_i+u_1\otimes\varepsilon_j.
\]

\begin{lemma}\label{lem:m1-rank-two-zero-anchors}
Let $d\ge2$, $m=1$, and let $\Delta$ be a 2-local derivation satisfying
\[
\Delta|_S=0,\qquad \Delta|_V=0,\qquad \Delta(\mathfrak g_{1/2}(d))\subseteq V.
\]
Then, for every $i\ne j$,
\[
\Delta(e+p_{ij})=0,
\qquad
\Delta(f+p_{ij})=0.
\]
More generally, the same conclusion holds after simultaneously replacing $(\varepsilon_i,\varepsilon_j)$ by any orthonormal pair in $W_d$ and conjugating the $U_1$-factor by an inner automorphism of $\mathfrak{sl}_2$.
\end{lemma}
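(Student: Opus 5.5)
The plan is to show directly that $\Delta(z)=0$ for $z=e+p_{ij}$. Each test pair $(x,z)$ with $\Delta(x)=0$ known will cut $\Delta(z)$ down to a smaller subspace of $V$, and we then intersect these subspaces. Throughout, a representing derivation is written $D=\operatorname{ad}(a_1+B+r)+\lambda\delta_0$ with $a_1\in\mathfrak{sl}_2$, $B\in\mathfrak{so}_d$, $r\in V$. We use $\rho_1(e)u_1=u_0$, $\rho_1(f)u_0=u_1$, $\rho_1(h)u_0=u_0$ and $\rho_1(h)u_1=-u_1$. By hypothesis $\Delta(z)\in V$, so in every pair the $S$-component of $D(z)$ must vanish. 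This gives $[a_1,e]=0$ in all pairs below; the rotation part sees no $\mathfrak{so}_d$-component of $z$, so it is unconstrained at this stage.

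\emph{Step 1: the pair $(h,z)$.} From $D(h)=0$ we get $a_1\in\mathbb Ch$ and $r\in\ker\rho(h)=0$, since $\rho_1(h)$ is invertible. The condition $[a_1,e]=0$ then forces $a_1=0$, so
\[
\Delta(z)=(\lambda I+I\otimes B)p_{ij}=u_0\otimes(\lambda\varepsilon_i+B\varepsilon_i)+u_1\otimes(\lambda\varepsilon_j+B\varepsilon_j).
\]

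\emph{Step 2: the pair $(p_{ij},z)$.} We have $a_1=\alpha e$, hence $T=\alpha\rho_1(e)+\lambda I$. The $u_1$- and $u_0$-coefficients of $D(p_{ij})=0$ read
\[
\lambda\varepsilon_j+B\varepsilon_j=0,\qquad \lambda\varepsilon_i+\alpha\varepsilon_j+B\varepsilon_i=0.
\]
Pairing the first with $\varepsilon_j$ and using skew-symmetry gives $\lambda=0$ and $B\varepsilon_j=0$. Pairing the second with $\varepsilon_j$ gives $\alpha=-\langle B\varepsilon_i,\varepsilon_j\rangle=\langle\varepsilon_i,B\varepsilon_j\rangle=0$. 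Hence $D(z)=-\rho(e)r\in u_0\otimes W_d$.

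\emph{Combining Steps 1 and 2.} The $u_1$-component in Step 1 must vanish, which gives $\lambda=0$ and $B\varepsilon_j=0$ for the Step 1 derivation. Therefore
\[
\Delta(z)=u_0\otimes y,\qquad y=B\varepsilon_i,\qquad y\perp\varepsilon_i,\ \ y\perp\varepsilon_j,
\]
using skew-symmetry of $B$. For $d=2$ this already forces $y=0$, which settles the case $d=2$.

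\emph{Step 3: $d\ge3$, the pair $(R_{jk},z)$.} This is the main obstacle: a coordinate-by-coordinate elimination of the remaining components $y_k$, for each $k\ne i,j$. From $D(R_{jk})=0$, the rotation part $B'$ commutes with $R_{jk}$, so it preserves both $\operatorname{span}\{\varepsilon_j,\varepsilon_k\}$ and its orthogonal complement, which contains $\varepsilon_i$. Also $r\in U_1\otimes\ker R_{jk}$, which is $U_1\otimes\operatorname{span}\{\varepsilon_j,\varepsilon_k\}^{\perp}$. Now read off the $u_0\otimes\varepsilon_k$-coefficient of
\[
D(z)=-\rho(e)r+(\alpha\rho_1(e)\otimes I+\lambda I+I\otimes B')p_{ij}.
\]
The term $-\rho(e)r$ lies in $u_0\otimes\operatorname{span}\{\varepsilon_j,\varepsilon_k\}^\perp$ and contributes nothing. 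The term $\alpha\rho_1(e)$ contributes only along $u_0\otimes\varepsilon_j$. The term $B'\varepsilon_i$ lies in the complement and so is orthogonal to $\varepsilon_k$. The term $u_1\otimes B'\varepsilon_j$ lies in the $u_1$-row. Thus $y_k=0$. Letting $k$ range over all indices different from $i,j$ gives $y=0$, i.e.\ $\Delta(e+p_{ij})=0$.

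\emph{The case $f+p_{ij}$.} This is symmetric. The same three kinds of pairs work with the roles of $u_0$ and $u_1$ exchanged, and $\operatorname{Im}\rho_1(f)=u_1\otimes W_d$ replaces $u_0\otimes W_d$. Alternatively, one can conjugate by the Weyl-element automorphism exchanging $e\leftrightarrow f$ and $u_0\leftrightarrow u_1$ up to sign, together with the swap $\varepsilon_i\leftrightarrow\varepsilon_j$.

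\emph{The general statement.} Every complex orthogonal transformation of $W_d$, extended trivially to $U_1$, is an automorphism of $\mathfrak g_{1/2}(d)$; so is every inner automorphism of $\mathfrak{sl}_2$ acting on the $U_1$-factor. Both preserve $S$ and $V$. Hence, exactly as in Lemma~\ref{lem:conjugate-anchor} and Lemma~\ref{lem:higher-conjugate-anchor}, the conjugate map $\Phi^{-1}\Delta\Phi$ is again a 2-local derivation with the same normalization. Applying the basic case to it yields the conclusion for any orthonormal pair and any inner $\mathfrak{sl}_2$-conjugate.
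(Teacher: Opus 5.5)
Your proof is correct and follows the paper's argument: you use the same three test pairs $(p_{ij},z)$, $(h,z)$ and $(R_{jk},z)$ (with $R_{ik}$ in the $f$-case), intersect the resulting restrictions in the same way, and then pass to the general statement by conjugating with automorphisms. The differences are cosmetic. The paper obtains $\Delta(z)=D(e)\in u_0\otimes W_d$ in one line from $D(p_{ij})=0$, without solving for $\lambda,\alpha,B$. In the rotation step it first extracts $B\varepsilon_j=0$ and then $B\varepsilon_k=0$ from the $u_1$-row, whereas you use directly that $B'$ preserves $\ker R_{jk}\ni\varepsilon_i$.
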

\begin{proof}
Set $z=e+p_{ij}$.  First use the pair $(p_{ij},z)$.  Since $\Delta(p_{ij})=0$, a representing derivation $D$ satisfies $D(p_{ij})=0$, and the $S$-component of $D(z)=\Delta(z)$ gives $[a,e]=0$ for the reductive part $a$ of $D$.  Hence
\[
\Delta(z)=D(e)=-\rho(e)r\in u_0\otimes W_d.
\]
Thus write
\[
\Delta(z)=u_0\otimes q.
\]

Next use the pair $(h,z)$.  The equality $D(h)=0$ forces the $\mathfrak{sl}_2$-part of the reductive coefficient to lie in $\mathbb Ch$ and, because $\rho_1(h)$ is invertible, forces the radical coefficient to be zero.  Since $D(z)$ has no $S$-component, the $h$-coefficient is also zero.  Consequently
\[
\Delta(z)
=u_0\otimes(B\varepsilon_i+\lambda\varepsilon_i)
+u_1\otimes(B\varepsilon_j+\lambda\varepsilon_j)
\]
for some $B\in\mathfrak{so}_d$ and $\lambda\in\mathbb C$.  The $u_1$-coefficient must vanish.  Taking the standard symmetric product with $\varepsilon_j$ gives $\lambda=0$, hence
\[
B\varepsilon_j=0,
\qquad
q=B\varepsilon_i.
\]
Therefore
\[
q\perp\varepsilon_i,\varepsilon_j.
\]
If $d=2$, this already gives $q=0$.

Assume $d\ge3$ and fix $k\notin\{i,j\}$.  Apply the two-local condition to the pair $(R_{jk},z)$, where $R_{jk}$ is the elementary rotation in the $(j,k)$-plane.  From $D(R_{jk})=0$ we obtain
\[
[B,R_{jk}]=0,
\qquad
R_{jk}r=0.
\]
As above, the vanishing of the $u_1$-coefficient of $D(z)$ gives $\lambda=0$ and $B\varepsilon_j=0$.  Commuting with $R_{jk}$ then yields $B\varepsilon_k=0$.  The $\varepsilon_k$-coordinate of the radical term $\rho(e)r$ is zero because $r$ is supported in $\ker R_{jk}$, while skew-symmetry gives
\[
\langle B\varepsilon_i,\varepsilon_k\rangle
=-\langle\varepsilon_i,B\varepsilon_k\rangle=0.
\]
Hence the $\varepsilon_k$-coordinate of $q$ is zero.  Since $k$ is arbitrary and the $i$- and $j$-coordinates were already zero, $q=0$.  Thus $\Delta(e+p_{ij})=0$.

The proof for $f+p_{ij}$ is symmetric: the pair with $p_{ij}$ first puts the value in $u_1\otimes W_d$, the pair with $h$ gives $B\varepsilon_i=0$, and the rotations $R_{ik}$ eliminate the remaining coordinates.

Finally, orthogonal changes of the spatial basis and inner automorphisms of $\mathfrak{sl}_2$ extend to automorphisms of $\mathfrak g_{1/2}(d)$.  Conjugating the normalized 2-local map preserves the normalization and therefore carries the zero anchors above to the asserted family.
\end{proof}

\begin{theorem}\label{thm:m1-d2-full}
Every 2-local derivation on
\[
\mathfrak g_{1/2}(2)
=(\mathfrak{sl}_2\oplus\mathfrak{so}_2)\ltimes(U_1\otimes\mathbb C^2)
\]
is a derivation.  Equivalently,
\[
\TwoLocDer\mathfrak g_{1/2}(2)=\Der\mathfrak g_{1/2}(2).
\]
\end{theorem}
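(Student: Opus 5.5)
The plan is to follow the scheme of Theorems~\ref{thm:higher-full-mge3} and~\ref{thm:higher-m2-full}, with the rank-two anchors of Lemma~\ref{lem:m1-rank-two-zero-anchors} playing the role of $e+p_i$. By Proposition~\ref{prop:higher-m1-radical-zero} we first subtract a global derivation, so that $\Delta|_S=0$, $\Delta|_V=0$ and $\Delta(\mathfrak g_{1/2}(2))\subseteq V$. Lemma~\ref{lem:m1-rank-two-zero-anchors} then gives the zero anchors $e+p_{12}$, $e+p_{21}$, $f+p_{12}$, $f+p_{21}$. It also gives all their conjugates under inner automorphisms of $\mathfrak{sl}_2$ and under orthonormal changes of basis of $W_2$. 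Write a general element as $y=x_0+R+v$ with $x_0\in\mathfrak{sl}_2$, $R\in\mathfrak{so}_2$ and $v\in V$; the pair $(x_0+R,y)$ shows that $\Delta(y)\in V$.

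\emph{Key computation.} Take a derivation $D=\operatorname{ad}(a+B+r)+\lambda\delta_0$ with $D(e+p_{12})=0$. Its $S$-component gives $a=\alpha e$. In the radical equation, the $u_1$-coefficient is $(\lambda+B)\varepsilon_2=0$. Pairing with $\varepsilon_2$ gives $\lambda=0$, hence $B\varepsilon_2=0$, and in $\mathfrak{so}_2$ this forces $B=0$. This is the point where $d=2$ helps: the central rotation cannot survive. It follows that $r\in\alpha p_{12}+u_0\otimes W_2$. If moreover $D(y)=\Delta(y)$ has no $S$-component, then $\alpha[e,x_0]=0$, so $\alpha=0$ as soon as $x_0\notin\mathbb Ce$. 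Since $\Delta(y)=D(y)$, we then have
\[
\Delta(y)\in\rho(x_0+R)\bigl(u_0\otimes W_2\bigr).
\]
Symmetrically, the $f$-anchor gives $\Delta(y)\in\rho(x_0+R)(u_1\otimes W_2)$ when $x_0\notin\mathbb Cf$. Conjugating by $g=\exp(t\operatorname{ad}f)$ or $\exp(t\operatorname{ad}e)$ gives, more generally, $\Delta(y)\in\rho(x_0+R)(gu_0\otimes W_2)$ whenever $x_0\notin\mathbb C\,g(e)$.

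\emph{Case analysis.} Suppose first that $\rho(x_0+R)=\rho_1(x_0)\otimes I+I\otimes R$ is invertible. Two of the above subspaces, coming from complementary lines $gu_0$ and $g'u_0$ of $U_1$, then have images meeting only in zero, so $\Delta(y)=0$. The singular case occurs when an eigenvalue of $\rho_1(x_0)$ cancels an eigenvalue $\pm\mathrm i\theta$ of $R$ on an isotropic line. Then the kernel $K$ of $\rho(x_0+R)$ is a fixed subspace, and I would choose three lines $g_ku_0$ in general position relative to $K$. The images of the planes $g_ku_0\otimes W_2$ should then have trivial common intersection, which I would verify by diagonalizing $x_0$ and $R$ simultaneously.

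For nilpotent $x_0=Ae$ with $A\ne0$, use the $f$-anchor together with the anchors conjugated by $\exp(t\operatorname{ad}f)$, $t\ne0$, exactly as in \eqref{eq:higher-e-case-conj}. The case $x_0\in\mathbb Cf$ is symmetric. When $x_0=0$, so that $y=R+v$, set $y_t=s_t+y$. The cases already treated give $\Delta(y_t)=0$, and the pair $(y,y_t)$ places $\Delta(y)$ in $\operatorname{Im}\rho(s_t)$ for every $t$. Lemma~\ref{lem:nilpotent-images-intersection} then gives $\Delta(y)=0$.

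\emph{Main obstacle.} I expect the hardest step to be the resonant singular case, where $x_0$ is semisimple and $\rho_1(x_0)\otimes I+I\otimes R$ has a kernel. There the two basic anchors only cut $\Delta(y)$ down to a line, and one must check explicitly that a conjugate anchor, for generic $t$ in the spirit of the determinant \eqref{eq:m2-determinant-higher}, excludes that line. If this fails, my fallback is to treat the resonant $y$ like the case $x_0=0$: perturb $y$ by a nilpotent $s_t$ that breaks the resonance, and intersect the images $\operatorname{Im}\rho(s_t)$ over $t$.
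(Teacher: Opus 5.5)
You handle several parts correctly: the normalization, the key computation for a derivation annihilating $e+p_{12}$ (where $d=2$ forces $\lambda=0$ and $B=0$), the invertible case via two complementary anchor lines, and the case $x_0=0$.

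\textbf{The gap: the nilpotent case with zero rotation.} This is $y=Ae+v$ with $A\ne0$ and $R=0$, and after inner conjugation every non-zero nilpotent $x_0$ with $R=0$. It is the only nilpotent subcase not already covered by invertibility, and the method you prescribe yields nothing here. For any anchor whose nilpotent part $s$ is not proportional to $e$, your own computation gives
\[
\Delta(y)\in A\rho(e)\bigl(\ker\rho_1(s)\otimes W_2\bigr)=u_0\otimes W_2 ,
\]
since $\rho_1(e)$ maps every line other than $\mathbb Cu_0$ onto $\mathbb Cu_0$. Equivalently, for $m=1$ the spaces in \eqref{eq:higher-e-case-top} and \eqref{eq:higher-e-case-conj} are $\operatorname{span}\{u_0,u_1\}$ and $\operatorname{span}\{u_0+tu_1,\,Atu_0\}$, both equal to $U_1$. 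So the $f$-anchor and all its $\exp(t\operatorname{ad}f)$-conjugates leave $\Delta(y)$ anywhere in the plane $u_0\otimes W_2$.

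The missing idea, which is how the paper closes this case, is to use the anchors $e+u_0\otimes w_0+u_1\otimes w_1$, whose nilpotent part is proportional to $x_0$. Then the $\mathfrak{sl}_2$-coefficient $\alpha$ is not forced to vanish, and the $u_1$-row of the radical coefficient is $\alpha w_1$. Writing $v=u_0\otimes v_0+u_1\otimes v_1$, one gets $\Delta(e+v)\in u_0\otimes\mathbb C(v_1-w_1)$. Letting $w_1$ run over the unit conic, which lies in no affine line, gives $\Delta(e+v)=0$.

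Your own perturbation trick would also close this case, but you did not apply it here. Since $\det\rho_1(Ae+s_t)=At^2\ne0$ for $t\ne0$, the element $y+s_t$ lies in the invertible case. The pair $(y,y+s_t)$ then gives $\Delta(y)\in\mathbb C(u_0+tu_1)\otimes W_2$ for every $t\ne0$, hence $\Delta(y)=0$.

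\textbf{The resonant semisimple case.} Your primary plan here is wrong as stated, although your fallback rescues it. Let $\ell_\pm$ be the eigenlines of $\rho_1(x_0)$ and $w_\pm$ the isotropic eigenvectors of $R$, labelled so that $\ker\rho(x_0+R)=\operatorname{span}\{\ell_+\otimes w_-,\ell_-\otimes w_+\}$. For every line $L\ne\ell_\pm$, the image $\rho(x_0+R)(L\otimes W_2)$ is the whole two-dimensional image $\operatorname{span}\{\ell_+\otimes w_+,\ell_-\otimes w_-\}$. So three lines in general position cut nothing, and their images do not have trivial intersection.

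The useful lines are the two non-generic ones, $L=\ell_\pm$, whose images lie in $\ell_+\otimes W_2$ and $\ell_-\otimes W_2$ respectively. This is precisely the paper's argument: conjugate $x_0$ to $ch$ and use the anchors $e+p_{12}$ and $f+p_{12}$. Since $\rho(ch+R)$ preserves both $u_0\otimes W_2$ and $u_1\otimes W_2$, this disposes of every semisimple $x_0$ at once, with no invertible/resonant split needed.
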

\begin{proof}
By Proposition~\ref{prop:higher-m1-radical-zero}, after subtracting one global derivation we may assume
\begin{equation}\label{eq:m1-d2-normalized}
\Delta|_S=0,
\qquad
\Delta|_V=0,
\qquad
\Delta(\mathfrak g_{1/2}(2))\subseteq V.
\end{equation}
We prove that this normalized map is zero.

Let
\[
y=X+R+v,
\qquad
X\in\mathfrak{sl}_2,
\quad R\in\mathfrak{so}_2,
\quad v\in V,
\]
and put $Q=\Delta(y)$.  Inner conjugation on the $\mathfrak{sl}_2$-factor preserves the normalization, so a non-zero $X$ may be reduced either to a non-zero multiple of $h$ or to $e$.

Suppose first that $X=ch$ with $c\ne0$.  Use the zero anchors $e+p_{12}$ and $f+p_{12}$.  Since $\mathfrak{so}_2$ is one-dimensional, a skew-symmetric matrix that kills either $\varepsilon_1$ or $\varepsilon_2$ is zero.  For a derivation annihilating $e+p_{12}$, the condition that its $S$-component vanish at $y$ forces the $e$-coefficient to be zero because $[e,h]\ne0$.  Hence
\[
Q\in\rho(X+R)(u_0\otimes W_2).
\]
Similarly, the anchor $f+p_{12}$ gives
\[
Q\in\rho(X+R)(u_1\otimes W_2).
\]
Because $h$ preserves the two weight lines and $R$ acts only on the spatial factor, the first space lies in $u_0\otimes W_2$ and the second in $u_1\otimes W_2$.  Their intersection is zero, so $Q=0$.

Now let $X=e$.  Assume first that $R\ne0$.  For an inner automorphism $G$ of $\mathfrak{sl}_2$, Lemma~\ref{lem:m1-rank-two-zero-anchors} gives a zero anchor whose nilpotent $S$-part is
\[
s=GeG^{-1}
\]
and whose radical part has first factor $G$.  Whenever $[s,e]\ne0$, a derivation annihilating this anchor and representing $Q$ at $y$ has zero $\mathfrak{sl}_2$ coefficient and zero scalar coefficient; since $d=2$, its rotation coefficient also vanishes.  Therefore
\begin{equation}\label{eq:m1-d2-nilpotent-image}
Q\in\rho(e+R)(\ker\rho_1(s)\otimes W_2).
\end{equation}
As $s$ ranges over non-zero nilpotent elements not commuting with $e$, the lines $\ker\rho_1(s)$ range over all but at most one point of $\mathbb P(U_1)$.

Diagonalize the non-zero matrix $R$ on $W_2$.  Its two eigenvalues are $\nu$ and $-\nu$ with $\nu\ne0$.  On the corresponding eigenspaces, $\rho(e+R)$ acts on the first factor as
\[
\rho_1(e)+\nu I
\qquad\text{and}\qquad
\rho_1(e)-\nu I,
\]
respectively.  Both operators are invertible.  Hence, as the line $L=\ker\rho_1(s)$ varies, the lines $(\rho_1(e)\pm\nu I)L$ also vary through infinitely many projective lines, whose total intersection is zero.  Equation~\eqref{eq:m1-d2-nilpotent-image} therefore yields $Q=0$.

It remains in the nilpotent case to take $R=0$.  Write
\[
v=u_0\otimes v_0+u_1\otimes v_1.
\]
Let $w_0,w_1$ be any orthonormal basis of $W_2$ and set
\[
p(w_0,w_1)=u_0\otimes w_0+u_1\otimes w_1.
\]
By Lemma~\ref{lem:m1-rank-two-zero-anchors},
\[
\Delta(e+p(w_0,w_1))=0.
\]
A derivation annihilating this anchor has, in dimension two, no rotation or scalar coefficient.  Its $\mathfrak{sl}_2$-part is $\alpha e$, and the $u_1$-row of its radical coefficient is $\alpha w_1$.  Evaluating the same derivation at $e+v$ gives
\[
Q\in u_0\otimes\mathbb C(v_1-w_1).
\]
This holds for every unit vector $w_1$ occurring in an orthonormal basis.  The affine conic
\[
\{w\in W_2:\langle w,w\rangle=1\}
\]
is not contained in an affine line.  Hence we can choose two such vectors $w_1,w_1'$ for which $v_1-w_1$ and $v_1-w_1'$ are linearly independent.  The two one-dimensional restrictions on $Q$ then have zero intersection, so $Q=0$.

Finally suppose $X=0$.  If $R=0$, then $y=v\in V$ and $Q=0$ by normalization.  If $R\ne0$, the two elements
\[
e+R+v,
\qquad
f+R+v
\]
have already been treated and therefore have zero $\Delta$-value.  Apply the 2-local condition first to $(R+v,e+R+v)$.  The difference of the two values is $D(e)=-Q$, so the fact that $Q\in V$ implies
\[
Q\in\operatorname{Im}\rho_1(e)\otimes W_2=u_0\otimes W_2.
\]
Using $(R+v,f+R+v)$ similarly gives
\[
Q\in u_1\otimes W_2.
\]
Thus $Q=0$.

The normalized map vanishes on every element, and undoing the initial subtraction proves the theorem.
\end{proof}

\begin{corollary}\label{cor:noncentral-dle2-all}
For every positive half-integer $\ell$,
\[
\TwoLocDer\mathfrak g_\ell(d)=\Der\mathfrak g_\ell(d)
\qquad(d=1,2).
\]
Together with Corollary~\ref{cor:noncentral-all-mge2}, this leaves only the standard-module case $2\ell=1$ in spatial dimension $d\ge3$.
\end{corollary}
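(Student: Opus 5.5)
This corollary is pure bookkeeping; no new two-point argument is needed, since the content was all proved earlier. For $d=1$ the equality $\TwoLocDer\g_\ell(1)=\Der\g_\ell(1)$ for every positive half-integer $\ell$ is exactly Corollary~\ref{cor:one-dimensional-all}. That corollary itself assembles Theorem~\ref{thm:m1-full} for $2\ell=1$, Theorem~\ref{thm:m2-full} for $2\ell=2$, and Theorem~\ref{thm:one-string-full} for $2\ell\ge3$.

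For $d=2$ we split by the weight $m=2\ell$.
\begin{itemize}
\item If $m\ge3$, we invoke Theorem~\ref{thm:higher-full-mge3}, which is stated for all $d\ge2$.
\item If $m=2$, we invoke Theorem~\ref{thm:higher-m2-full}, again valid for $d\ge2$. These two cases together are just Corollary~\ref{cor:noncentral-all-mge2} specialized to $d=2$.
\item If $m=1$, the statement is precisely Theorem~\ref{thm:m1-d2-full}.
\end{itemize}
In each case the cited theorem shows that every 2-local derivation is a derivation. The reverse inclusion $\Der\subseteq\TwoLocDer$ is trivial: take $D_{x,y}=D$ for every pair. This gives the equality for $d=1,2$ and all admissible $\ell$.

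For the second sentence, we compare the parameter region covered so far with the full range $(d,\ell)$, $d\ge1$, $2\ell\ge1$. Corollary~\ref{cor:noncentral-all-mge2} covers every $d$ when $2\ell\ge2$, and the present statement covers every $\ell$ when $d\le2$. The only pairs left are therefore $2\ell=1$ with $d\ge3$, that is, the standard module in spatial dimension at least three.

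There is no real obstacle here. The one point to check is that the hypotheses of the cited theorems (for instance $d\ge2$ in Theorems~\ref{thm:higher-full-mge3} and~\ref{thm:higher-m2-full}) genuinely include $d=2$, and they do.
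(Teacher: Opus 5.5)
Your proposal is correct and follows the paper's own reasoning. The paper states this corollary without a separate proof, as an immediate assembly of Corollary~\ref{cor:one-dimensional-all} for $d=1$ with Theorems~\ref{thm:higher-full-mge3}, \ref{thm:higher-m2-full} and~\ref{thm:m1-d2-full} for $d=2$, and your comparison of parameter regions correctly identifies $2\ell=1$, $d\ge3$ as the only remaining case.
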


\begin{lemma}\label{lem:m1-higher-nilpotent-row}
Assume $d\ge2$ and let $\Delta$ satisfy the normalization of Proposition~\ref{prop:higher-m1-radical-zero}.  Write
\[
v=u_0\otimes v_0+u_1\otimes v_1.
\]
Then
\[
\Delta(e+v)\in u_0\otimes W_d,
\qquad
\Delta(f+v)\in u_1\otimes W_d.
\]
\end{lemma}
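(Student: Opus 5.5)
The plan is to use the rank-two zero anchors of Lemma~\ref{lem:m1-rank-two-zero-anchors} in the same way as in the proof of Theorem~\ref{thm:m1-d2-full}. That is, pair the element $e+v$ with the zero anchor $f+p_{ij}$, and pair $f+v$ with $e+p_{ij}$, for some fixed $i\ne j$. First, the pair $(e,e+v)$ together with $\Delta(e)=0$ shows that $\Delta(e+v)\in V$. It is therefore enough to read off the radical component of $D(e+v)$ for a derivation $D=\operatorname{ad}(a+B+r)+\lambda\delta_0$ that annihilates $f+p_{ij}$ and has zero $S$-component at $e+v$.

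\textbf{Step 1: the reductive part.} The vanishing of the $S$-component at $e+v$ gives $[a,e]=0$, so $a=\alpha e$. The $S$-component of $D(f+p_{ij})=0$ then reads $[\alpha e,f]=\alpha h=0$, so $\alpha=0$.

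\textbf{Step 2: the radical part at the anchor.} The radical equation at the anchor now reads
\[
-\rho(f)r+u_0\otimes(B+\lambda)\varepsilon_i+u_1\otimes(B+\lambda)\varepsilon_j=0.
\]
Since $\rho(f)r\in u_1\otimes W_d$, the $u_0$-row gives $(B+\lambda I)\varepsilon_i=0$. Pairing with $\varepsilon_i$ and using skew-symmetry yields $\lambda=0$ and $B\varepsilon_i=0$. Writing $r=u_0\otimes r_0+u_1\otimes r_1$, so that $\rho(f)r=u_1\otimes r_0$, the $u_1$-row gives $r_0=B\varepsilon_j$.

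\textbf{Step 3: evaluation at $e+v$.} The value there is
\[
D(e+v)=-\rho(e)r+u_0\otimes Bv_0+u_1\otimes Bv_1,
\]
and $\rho(e)r\in u_0\otimes W_d$. So the $u_1$-component of $\Delta(e+v)$ is $Bv_1$, and the task is to force $Bv_1=0$. This is the main obstacle: $B$ is only known to kill $\varepsilon_i$, while $v_1$ is arbitrary.

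To handle it, I would vary the anchor over all orthonormal pairs $(w_0,w_1)$, allowed by the ``more generally'' clause of Lemma~\ref{lem:m1-rank-two-zero-anchors}. For each such pair, the argument above gives a $B$ with $Bw_0=0$ and $u_1$-component $Bv_1$. This alone does not pin $Bv_1$ down, so I would add a second pair. Using $(e,e+v)$ with $\Delta(e)=0$ forces $r\in\ker\rho(e)=u_0\otimes W_d$ and $a=\beta e$. The $u_1$-component is then $(B+\lambda)v_1$, with $[B,R]$ unconstrained. This is weaker, so I would rather combine it with the vanishing coordinates.

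For each unit vector $w_0$ we know that the $u_1$-part $\pi^{(1)}\Delta(e+v)=:Q_1$ lies in $\{Bv_1: B\in\mathfrak{so}_d,\ Bw_0=0\}$. Such $B$ satisfy $\langle Bv_1,w_0\rangle=-\langle v_1,Bw_0\rangle=0$, so $Q_1\perp w_0$. Since this holds for every non-isotropic unit vector $w_0$, and these span $W_d$, we get $Q_1=0$. Hence $\Delta(e+v)\in u_0\otimes W_d$.

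The claim for $f+v$ follows by the symmetric argument: use the anchors $e+p(w_0,w_1)$ and conclude that the $u_0$-component is orthogonal to every unit vector $w_1$.
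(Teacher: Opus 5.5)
Your argument is correct and is essentially the paper's: both use the rank-two zero anchors over all orthonormal pairs, skew-symmetry of the rotation coefficient to get an orthogonality relation for the fixed vector $Q_1=Bv_1$, and the fact that unit vectors span $W_d$. The only difference is the choice of anchor. The paper pairs $e+v$ with the same-type anchor $e+u_0\otimes w_0+u_1\otimes w_1$, which keeps a harmless $\alpha e$ and gives $Q_1\perp w_1$. Your opposite anchor $f+u_0\otimes w_0+u_1\otimes w_1$ kills the $\mathfrak{sl}_2$-part outright and gives $Q_1\perp w_0$. The exploratory detour through the pair $(e,e+v)$ is unnecessary and can be deleted.
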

\begin{proof}
Write
\[
\Delta(e+v)=u_0\otimes q_0+u_1\otimes q_1.
\]
Choose any orthonormal pair $w_0,w_1$ in $W_d$ and use the zero anchor
\[
e+u_0\otimes w_0+u_1\otimes w_1.
\]
For a representing derivation annihilating this anchor and evaluated at $e+v$, the $\mathfrak{sl}_2$-part is $\alpha e$, the scalar coefficient is zero, the rotation coefficient $B$ satisfies $Bw_1=0$, and the $u_1$-row of the radical coefficient is $Bw_0+\alpha w_1$.  The $u_1$-row of the value at $e+v$ is therefore simply
\[
q_1=Bv_1.
\]
Skew-symmetry and $Bw_1=0$ imply
\[
\langle q_1,w_1\rangle=0.
\]
The vector $q_1$ is independent of the chosen anchor, while $w_1$ may be chosen arbitrarily on the quadric $\{w\in W_d:\langle w,w\rangle=1\}$.  This quadric is not contained in any proper linear hyperplane of $W_d$.  Hence $\langle q_1,w_1\rangle=0$ for every such $w_1$ implies $q_1=0$.  The assertion for $f+v$ is symmetric.
\end{proof}

\begin{lemma}\label{lem:m1-h-rank-two-zero-anchors}
Let $d\ge2$, $m=1$, and let $\Delta$ satisfy the normalization of Proposition~\ref{prop:higher-m1-radical-zero}.  For every orthonormal pair $w_0,w_1\in W_d$, put
\[
p(w_0,w_1)=u_0\otimes w_0+u_1\otimes w_1.
\]
Then
\[
\Delta\bigl(h+p(w_0,w_1)\bigr)=0.
\]
\end{lemma}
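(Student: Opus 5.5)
We will reduce to a coordinate anchor and then intersect the constraints coming from four pairs: with $h$, with the two mixed zero anchors of Lemma~\ref{lem:m1-rank-two-zero-anchors}, and with elementary rotations.

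\textbf{Reduction.} Orthogonal changes of the spatial basis extend to automorphisms of $\mathfrak g_{1/2}(d)$ that fix $\mathfrak{sl}_2$ pointwise. They preserve the normalization of Proposition~\ref{prop:higher-m1-radical-zero}, exactly as in the last paragraph of the proof of Lemma~\ref{lem:m1-rank-two-zero-anchors}. So we may assume $w_0=\varepsilon_1$ and $w_1=\varepsilon_2$. Put $z=h+p_{12}$ and write
\[
\Delta(z)=u_0\otimes q_0+u_1\otimes q_1 .
\]
The pair $(h,z)$ shows that $\Delta(z)\in V$. Since $\rho_1(h)$ is invertible, a representing derivation has $\mathfrak{sl}_2$-part $\gamma h$ and zero radical coefficient. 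Its value at $z$ is therefore
\[
q_0=(\gamma+\lambda)\varepsilon_1+B\varepsilon_1,
\qquad
q_1=(\lambda-\gamma)\varepsilon_2+B\varepsilon_2,
\]
for one skew matrix $B$. In particular $\langle q_0,\varepsilon_2\rangle=-\langle q_1,\varepsilon_1\rangle$.

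\textbf{Pairs with the mixed zero anchors.} Next we use the anchor $e+p_{12}$, which has zero value by Lemma~\ref{lem:m1-rank-two-zero-anchors}. A derivation representing the pair $(e+p_{12},z)$ has $\mathfrak{sl}_2$-part centralizing $e$, because it kills the anchor, and also centralizing $h$, because the $S$-component of $\Delta(z)$ vanishes. Hence this part is zero.

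Write the radical coefficient as $r=u_0\otimes r_0+u_1\otimes r_1$. Annihilation of the anchor reads $-\rho(e)r+(\lambda+B')p_{12}=0$. The $u_1$-row gives $\lambda\varepsilon_2+B'\varepsilon_2=0$; pairing with $\varepsilon_2$ gives $\lambda=0$, and then $B'\varepsilon_2=0$. The $u_0$-row gives $r_1=B'\varepsilon_1$. Evaluating at $z$ yields
\[
q_1=r_1+B'\varepsilon_2=B'\varepsilon_1 .
\]
Since $B'$ is skew and kills $\varepsilon_2$, we get $\langle q_1,\varepsilon_1\rangle=0$ and $\langle q_1,\varepsilon_2\rangle=0$. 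Comparing with the $(h,z)$ formula gives $\lambda-\gamma=0$ there, and $\langle q_0,\varepsilon_2\rangle=0$. The symmetric computation with the anchor $f+p_{12}$ gives $q_0=B''\varepsilon_2$ with $B''\varepsilon_1=0$, hence $\langle q_0,\varepsilon_1\rangle=0$. Thus all coordinates of $q_0,q_1$ in the plane $\operatorname{span}\{\varepsilon_1,\varepsilon_2\}$ vanish. For $d=2$ this already gives $\Delta(z)=0$.

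\textbf{Transverse coordinates ($d\ge3$), the expected obstacle.} It remains to kill $\langle q_0,\varepsilon_k\rangle=B_{k1}$ and $\langle q_1,\varepsilon_k\rangle=B_{k2}$ for $k\ge3$, where $B$ comes from the pair $(h,z)$. First we try the pairs $(R_{1k},z)$ and $(R_{2k},z)$, imitating the end of the proof of Lemma~\ref{lem:m1-rank-two-zero-anchors}. From $D(R_{2k})=0$ we get $[B,R_{2k}]=0$ and $R_{2k}r=0$. The vanishing $S$-component at $z$ forces the $\mathfrak{sl}_2$-part to be $\gamma h$. The $\varepsilon_k$-coordinate of the radical term $-\rho(h)r$ vanishes, since $r$ is supported in $U_1\otimes\ker R_{2k}$. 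Commutation with $R_{2k}$ makes $B$ preserve $\operatorname{span}\{\varepsilon_2,\varepsilon_k\}$ and its orthogonal complement. So the $\varepsilon_k$-coordinate of $q_1$ is $\langle B\varepsilon_2,\varepsilon_k\rangle$, with $B$ acting as a multiple of the rotation on that plane, and the $\varepsilon_k$-coordinate of $q_0$ is $\langle B\varepsilon_1,\varepsilon_k\rangle=0$.

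If this single rotation does not close the argument, we combine it with the $(e+p_{12},z)$ information. That pair gives $q_1=B'\varepsilon_1$ with $B'\varepsilon_2=0$, while the $(h,z)$ formula gives $q_1=B\varepsilon_2$ for the same value. Symmetrically, the pair $(R_{1k},z)$ controls $q_0$. If the rotations alone are insufficient, a fallback is to vary the anchor over conjugates $e+p(\varepsilon_1,w)$ with $w\perp\varepsilon_1$, as in the quadric argument of Lemma~\ref{lem:m1-higher-nilpotent-row}. This bookkeeping for $k\ge3$ is the step I expect to require the most care.
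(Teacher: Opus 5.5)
Your reduction, the $(h,z)$ bookkeeping, and the use of the anchors $e+p_{12}$ and $f+p_{12}$ to kill the four in-plane coordinates are correct and match the paper. The gap is the transverse step for $d\ge3$: you never prove $\langle q_1,\varepsilon_k\rangle=0$. Your $(R_{2k},z)$ computation correctly yields only $\langle q_0,\varepsilon_k\rangle=0$. You then say that $(R_{1k},z)$ ``controls $q_0$'', which is the wrong row, and leave the rest to fallbacks you do not carry out. As written, the lemma is therefore not proved for $d\ge3$.

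The missing step is your own computation with the indices swapped. For a derivation representing $(R_{1k},z)$, the relation $D(R_{1k})=0$ gives $[B,R_{1k}]=0$ and $r_0,r_1\in\ker R_{1k}=\operatorname{span}\{\varepsilon_1,\varepsilon_k\}^{\perp}$. Hence $B$ preserves $\ker R_{1k}$, which contains $\varepsilon_2$. The $\mathfrak{sl}_2$-part is again $\gamma h$, and the $u_1$-row of $D(z)$ is
\[
q_1=r_1+(\lambda-\gamma)\varepsilon_2+B\varepsilon_2 ,
\]
with all three terms orthogonal to $\varepsilon_k$. This is exactly the paper's finish: it uses $(R_{ik},z)$ for $q_1$ and $(R_{jk},z)$ for $q_0$.

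Your fallback would also close the gap, and with less care than you expected. The anchor $e+p_{1k}$ is already a zero anchor by Lemma~\ref{lem:m1-rank-two-zero-anchors}. Paired with $z$, it gives zero $\mathfrak{sl}_2$-part, $\lambda=0$, $B'\varepsilon_k=0$ and $r_1=B'\varepsilon_1$. Hence $q_1=B'(\varepsilon_1+\varepsilon_2)$ and $\langle q_1,\varepsilon_k\rangle=-\langle\varepsilon_1+\varepsilon_2,B'\varepsilon_k\rangle=0$. Symmetrically, $f+p_{k2}$ gives $q_0=B''(\varepsilon_1-\varepsilon_2)$ with $B''\varepsilon_k=0$, so $\langle q_0,\varepsilon_k\rangle=0$. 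This anchor-variation route avoids rotations entirely; it is essentially the argument the paper uses later in Theorem~\ref{thm:m1-zero-rotation-full}.
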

\begin{proof}
It is enough to work first with $w_0=\varepsilon_i$ and $w_1=\varepsilon_j$, $i\ne j$, because orthogonal changes of the spatial basis preserve the normalization.  Put
\[
z=h+p_{ij},\qquad Q=\Delta(z)=u_0\otimes q_0+u_1\otimes q_1.
\]
Use the zero anchor $e+p_{ij}$ from Lemma~\ref{lem:m1-rank-two-zero-anchors}.  Let
\[
D=\operatorname{ad}(a+r)+\lambda\delta_0
\]
represent the pair $(e+p_{ij},z)$.  From $D(e+p_{ij})=0$ one obtains, exactly as in the proof of Lemma~\ref{lem:m1-rank-two-zero-anchors},
\[
a_{\mathfrak{sl}_2}=\alpha e,\qquad \lambda=0,\qquad B\varepsilon_j=0,
\]
and, writing $r=u_0\otimes r_0+u_1\otimes r_1$,
\[
r_1=B\varepsilon_i+\alpha\varepsilon_j.
\]
Since $Q$ has no reductive component, the reductive component of $D(z)$ is
\[
[\alpha e,h]=-2\alpha e,
\]
so $\alpha=0$.  Therefore
\[
q_1=B\varepsilon_i.
\]
Skew-symmetry and $B\varepsilon_j=0$ give
\[
q_1\perp \varepsilon_i,\varepsilon_j.
\]
Using instead the zero anchor $f+p_{ij}$ gives symmetrically
\[
q_0\perp \varepsilon_i,\varepsilon_j.
\]
Thus $Q=0$ when $d=2$.

Assume $d\ge3$ and fix $k\notin\{i,j\}$.  Apply the two-local condition to $(R_{ik},z)$.  If $D$ is a representing derivation, then $D(R_{ik})=0$ implies
\[
[B,R_{ik}]=0,\qquad R_{ik}r=0.
\]
Hence both rows of $r$ have zero $\varepsilon_i$- and $\varepsilon_k$-coordinates.  Moreover, a skew-symmetric matrix commuting with $R_{ik}$ preserves the decomposition
\[
\operatorname{span}\{\varepsilon_i,\varepsilon_k\}\oplus
\ker R_{ik}.
\]
Since $\varepsilon_j\in\ker R_{ik}$, the $\varepsilon_k$-coordinate of $B\varepsilon_j$ is zero.  Inspecting the $u_1\otimes\varepsilon_k$-coefficient of $D(z)=Q$ therefore yields
\[
\langle q_1,\varepsilon_k\rangle=0.
\]
Similarly, using the pair $(R_{jk},z)$ gives
\[
\langle q_0,\varepsilon_k\rangle=0.
\]
Together with the already known vanishing of the $i$- and $j$-coordinates, this gives $q_0=q_1=0$.  Hence $\Delta(z)=0$.
\end{proof}

\begin{theorem}\label{thm:m1-zero-rotation-full}
Let $d\ge3$ and let $\Delta$ satisfy the normalization of Proposition~\ref{prop:higher-m1-radical-zero}.  Then
\[
\Delta(X+v)=0
\qquad
(X\in\mathfrak{sl}_2,\ v\in U_1\otimes W_d).
\]
In other words, every normalized 2-local derivation vanishes on all elements whose $\mathfrak{so}_d$-component is zero.
\end{theorem}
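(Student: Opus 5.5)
The plan is to split by the conjugacy type of $X\in\mathfrak{sl}_2$, mirroring the proof of Theorem~\ref{thm:m1-d2-full}, but with the rank-two anchors of Lemmas~\ref{lem:m1-rank-two-zero-anchors} and \ref{lem:m1-h-rank-two-zero-anchors} used for many orthonormal pairs. This compensates for the fact that a skew matrix killing one or two vectors need not vanish when $d\ge3$. Throughout, $y=X+v$, $Q=\Delta(y)$, and the pair $(X,y)$ gives $Q\in V$. The case $X=0$ is the normalization. Inner automorphisms of $\mathfrak{sl}_2$, extended to $\mathfrak g_{1/2}(d)$, preserve the normalization. By homogeneity we may therefore assume $X=ch$ with $c\ne0$, or $X=e$.

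\emph{Semisimple case $X=ch$.} Take an orthonormal pair $(w_0,w_1)$ and the zero anchor $e+p(w_0,w_1)$. Write $D$ for a representing derivation of the pair, with $\mathfrak{sl}_2$-part $\alpha e$ and scalar coefficient $0$. Its rotation coefficient $B$ satisfies $Bw_1=0$, and its radical coefficient has $u_1$-row $Bw_0+\alpha w_1$. Since $[e,h]\ne0$ and $R=0$, the absence of an $S$-component at $y$ forces $\alpha=0$. Then $Q=D(y)=-c\rho(h)r+(I\otimes B)v$. The $u_1$-row of $Q$ is $c\,Bw_0+Bv_1$, where $v=u_0\otimes v_0+u_1\otimes v_1$. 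Pairing with $w_1$ and using skew-symmetry gives $\langle q_1,w_1\rangle=0$. As $w_1$ varies over the unit quadric, exactly as in Lemma~\ref{lem:m1-higher-nilpotent-row}, this yields $q_1=0$. The anchor $f+p(w_0,w_1)$ gives $q_0=0$ symmetrically. I expect this step to work cleanly. One must only check that $\langle Bw_0,w_1\rangle=-\langle w_0,Bw_1\rangle=0$ and $\langle Bv_1,w_1\rangle=-\langle v_1,Bw_1\rangle=0$, so every term of $q_1$ is orthogonal to $w_1$. The freedom in $w_1$ is then the whole quadric, since $w_0$ can be any unit vector orthogonal to $w_1$ and $d\ge2$.

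\emph{Nilpotent case $X=e$.} Lemma~\ref{lem:m1-higher-nilpotent-row} already gives $Q=u_0\otimes q_0$. To kill $q_0$, use the semisimple anchors $h'+p'$, where $h'=GhG^{-1}$ and $p'$ is the image of $p(w_0,w_1)$ under $G$. These are zero anchors by Lemma~\ref{lem:m1-h-rank-two-zero-anchors} and conjugation. Choose $G$ so that $h'$ does not commute with $e$, for instance $h'=\operatorname{Ad}(\exp(tf))h$ with $t\ne0$. A representing derivation for $(h'+p',e+v)$ has $\mathfrak{sl}_2$-part in $\mathbb Ch'$, and this part is forced to vanish by the $S$-component at $e+v$. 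Evaluating $D(h'+p')=0$ then shows that the scalar coefficient $\lambda$, the rotation $B$ and the radical coefficient $r$ are tied to $p'$ through the invertible operator $\rho_1(h')$, so $r$ is determined linearly by $(\lambda,B)$. Taking the $G(u_1)$-row (the weight line of $h'$ transverse to $u_0$) then gives linear constraints of the form $\langle q_0,\cdot\rangle=0$ against vectors built from $w_0,w_1,v_0,v_1$. Varying the orthonormal pair and $t$, I would show these constraints span $W_d$.

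An alternative for this step is to deduce $X=e$ from the semisimple case. Put $y_t=e+th+v$, which is semisimple for $t\ne0$, so $\Delta(y_t)=0$. The pair $(e+v,y_t)$ then gives $Q=-tD_t(h)\in\rho(h)V$, with the $S$-part constraint $[a_t,th]=0$. This alone is weak, since $\rho(h)$ is invertible. So I would instead pair $e+v$ with semisimple elements $s+v'$ for which $D(s)$ has a controlled image, for example $s$ with $[s,e]$ small. This reconstruction of the kernel from the nilpotent side is the main obstacle, and I would try the conjugated $h$-anchor computation first.
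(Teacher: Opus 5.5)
Your semisimple case $X=ch$ is correct and is the paper's argument: the anchor $e+p(w_0,w_1)$ forces $\alpha=0$, the $u_1$-row is $B(v_1+cw_0)\perp w_1$, and the $f$-anchor handles the $u_0$-row.

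\textbf{The gap: the nilpotent case $X=e$ is not proved.} Your sketch does not yet contain the argument. Consider a zero anchor $h'+w$ with $h'$ semisimple. A representing derivation of $(h'+w,\,e+v)$ has $\mathfrak{sl}_2$-part in $\mathbb C h'$, and this part must vanish. That is automatic, since no non-zero semisimple element commutes with $e$, so your choice $t\neq0$ is unnecessary. The derivation then acts on $V$ by $A=I\otimes B+\lambda I$, and $r=\rho(h')^{-1}Aw$. For your conjugated rank-two anchors ($Gu_0=u_0+tu_1$, $Gu_1=u_1$) this gives
\[
D(e+v)=u_0\otimes A(v_0+w_1-tw_0)+u_1\otimes Av_1 .
\]
So one anchor tells you only that $q_0=Ax$ for some unknown $A\in\mathfrak{so}_d+\mathbb C I$ with $Av_1=0$, where $x=v_0+w_1-tw_0$. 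Skew-symmetry gives $\langle q_0,x\rangle=\lambda\langle x,x\rangle$. This is a linear constraint on $q_0$ only when $\lambda$ is forced to vanish (i.e.\ $v_1$ non-isotropic) or when $x$ happens to be isotropic. For $v_1=0$ and non-isotropic $x$ the anchor yields nothing at all, because $(\mathfrak{so}_d+\mathbb C I)x=W_d$. For isotropic $v_1\neq0$ the scalar $\lambda$ need not vanish. Hence ``varying the pair and $t$ until the constraints span $W_d$'' does not follow from generic anchors. It is exactly the unproved step, and your alternative route is, as you note, too weak.

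\textbf{The missing idea: exact cancellation instead of accumulating constraints.} Your own semisimple case shows $\Delta(h+w)=0$ for \emph{every} $w\in V$, not only for orthonormal rank-two $w$. Take $h'=h$ and $w=u_0\otimes w_0+u_1\otimes w_1$ with $w_1=-v_0$. Then $r_0=Aw_0$, $r_1=-Aw_1$, and $D(e+v)=u_0\otimes A(v_0+w_1)+u_1\otimes Av_1$. Its $u_0$-row vanishes identically, whatever $A$ is. Its $u_1$-row vanishes because $\Delta(e+v)\in u_0\otimes W_d$ by Lemma~\ref{lem:m1-higher-nilpotent-row}. Hence $\Delta(e+v)=0$; this is the paper's proof.

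Within your orthonormal conjugated family you could also cancel exactly, by solving $tw_0-w_1=v_0$. This is solvable for $v_0\neq0$, since $\langle tw_0-w_1,tw_0-w_1\rangle=1+t^2$ takes every value. The case $v_0=0$ would then have to be moved to $v_0\neq0$ by conjugating with $\exp(s\operatorname{ad}e)$. That route works but is only a detour.
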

\begin{proof}
The cases $X=0$ and $v=0$ follow from the normalization.  By homogeneity and inner conjugation on the $\mathfrak{sl}_2$-factor, a non-zero $X$ is reduced either to $h$ or to $e$.

First let $y=h+v$, where
\[
v=u_0\otimes v_0+u_1\otimes v_1,
\qquad
\Delta(y)=u_0\otimes q_0+u_1\otimes q_1.
\]
Fix an arbitrary orthonormal pair $w_0,w_1$.  Use the zero anchor
\[
e+p(w_0,w_1).
\]
The same coefficient computation as in Lemma~\ref{lem:m1-h-rank-two-zero-anchors}, now evaluated at $h+v$, gives
\[
q_1=B(v_1+w_0),\qquad Bw_1=0
\]
for some $B\in\mathfrak{so}_d$.  Hence
\[
\langle q_1,w_1\rangle=0.
\]
The vector $q_1$ is independent of the anchor, while $w_1$ is arbitrary; therefore $q_1=0$.  Using the zero anchor $f+p(w_0,w_1)$ similarly gives
\[
q_0=B'(v_0-w_1),\qquad B'w_0=0,
\]
and hence $\langle q_0,w_0\rangle=0$ for every unit $w_0$.  Thus $q_0=0$ and
\[
\Delta(h+v)=0.
\]

Now let $y=e+v$.  By Lemma~\ref{lem:m1-higher-nilpotent-row},
\[
\Delta(y)=u_0\otimes q
\]
for some $q\in W_d$.  By the first part of the proof, every element $h+w$ with $w\in V$ is a zero point.  Choose
\[
w=u_0\otimes w_0+u_1\otimes w_1
\]
with $w_1=-v_0$ and arbitrary $w_0$, and use the pair $(h+w,e+v)$.  Since the $\mathfrak{sl}_2$-parts $h$ and $e$ have zero common centralizer, the $\mathfrak{sl}_2$-coefficient of the representing derivation is zero.  Writing its remaining action on $V$ as $I\otimes B+\lambda I$, put $A=B+\lambda I$.  The equality $D(h+w)=0$ gives
\[
r=h^{-1}Aw=hAw,
\]
that is,
\[
r_0=Aw_0,\qquad r_1=-Aw_1.
\]
Consequently
\[
D(e+v)
=u_0\otimes A(v_0+w_1)+u_1\otimes Av_1.
\]
Its $u_1$-row must vanish by Lemma~\ref{lem:m1-higher-nilpotent-row}; with $w_1=-v_0$ the $u_0$-row vanishes as well.  Hence $\Delta(e+v)=0$.  The case of a non-zero nilpotent element conjugate to $e$ follows by inner conjugation.  This completes the proof.
\end{proof}

\begin{proposition}\label{prop:m1-universal-image-intersection}
Let $d\ge3$ and let $\Delta$ satisfy the normalization of Proposition~\ref{prop:higher-m1-radical-zero}.  For
\[
y=X+R+v,
\qquad X\in\mathfrak{sl}_2,\ R\in\mathfrak{so}_d,\ v\in V,
\]
put $Q=\Delta(y)$.  Then
\begin{equation}\label{eq:m1-universal-image-intersection}
Q\in
\bigcap_{T\in\mathfrak{sl}_2}
\operatorname{Im}\bigl(\rho_1(T)\otimes I-I\otimes R\bigr).
\end{equation}
\end{proposition}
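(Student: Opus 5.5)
The plan is to apply the 2-local condition to pairs $(y,y_T)$, where $y_T=y-T$ is obtained from $y$ by shifting only the $\mathfrak{sl}_2$-component. The other point of each pair should be one whose $\Delta$-value is already known to be zero. The natural choice is $y_T=(X-T)+R+v$ with $T$ arranged so that $X-T$ makes $y_T$ a zero point. However, Theorem~\ref{thm:m1-zero-rotation-full} only covers points with zero $\mathfrak{so}_d$-component, so this does not apply directly when $R\ne0$. We therefore pair $y$ instead with zero points of the form $-T'+v'$, with $T'\in\mathfrak{sl}_2$ and $v'\in V$, and extract the image condition from the radical equation.

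For fixed $T\in\mathfrak{sl}_2$, take the pair $(y,\,T+w)$ for a suitable $w\in V$; by Theorem~\ref{thm:m1-zero-rotation-full}, $\Delta(T+w)=0$. Let the representing derivation be $D=\operatorname{ad}(a+B+r)+\lambda\delta_0$. Then $D(T+w)=0$ gives two conditions: $[a,T]=0$ from the $S$-component, and
\[
(\rho_1(a)+\lambda)\otimes I\,w+(I\otimes B)w=\rho(T)r
\]
from the radical component. At $y$, the $S$-component must vanish because $Q\in V$, so $[a,X]=0$ and $[B,R]=0$. The radical component of $D(y)$ is
\[
Q=-\rho(X+R)r+\bigl((\rho_1(a)+\lambda)\otimes I+I\otimes B\bigr)v .
\]
This expression does not immediately have the form $(\rho_1(T)\otimes I-I\otimes R)(\cdot)$. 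The cleaner route is therefore to choose the second point so that the difference of the two points is exactly $T\otimes$-type.

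Accordingly, I will use $y'=y+T'$ with $T'\in\mathfrak{sl}_2$ chosen so that $\Delta(y')=0$. Since $D(y')-D(y)=D(T')$, this gives
\[
-Q=D(T')=[a+B,T']-\rho(T')r=[a,T']-(\rho_1(T')\otimes I)r,
\]
so $Q\in\operatorname{Im}(\rho_1(T')\otimes I)$. This is the mechanism used at the end of Theorem~\ref{thm:higher-full-mge3}. To produce the operator $\rho_1(T)\otimes I-I\otimes R$, the plan is to combine this with the identity $\rho(X+R)=\rho_1(X)\otimes I+I\otimes R$. Concretely, I will first treat $X=0$, where $y=R+v$, and use $y'=-T+R+v$. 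After that I will exploit the fact that $\rho(R)$ enters $D(y)$ with the opposite sign, which should yield the combination $\rho_1(T)\otimes I-I\otimes R$ directly.

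The main obstacle is that the auxiliary points $X-T+R+v$ still have nonzero rotation component, so their $\Delta$-values are not yet known to be zero. The step I would try first is to show that the key identity holds for every $T$ even without knowing $\Delta(y')=0$. To do this, I would take the pair $(y,\;h_T+w)$, where $h_T+w$ is a zero point from Theorem~\ref{thm:m1-zero-rotation-full} or Lemma~\ref{lem:m1-h-rank-two-zero-anchors}, and solve $D(h_T+w)=0$ for $r$ as in that theorem: $r=\rho_1(h_T)^{-1}Aw$. I would then substitute into $Q$ and let $w$ vary freely, so that the residual lies in the required image. If this fails, I would fall back on proving only the semisimple-$T$ inclusions and extending to all $T$ by Zariski density, since the set of $T$ with $Q\in\operatorname{Im}(\rho_1(T)\otimes I-I\otimes R)$ is closed where the rank is maximal.
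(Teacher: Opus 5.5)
There is a genuine gap. None of the routes you finally commit to proves the inclusion, even though your second paragraph already contains the whole proof. With the pair $(y,T+w)$ you correctly obtain
\[
\bigl((\rho_1(a)+\lambda)\otimes I+I\otimes B\bigr)w=\rho(T)r,
\qquad
Q=-\rho(X+R)r+\bigl((\rho_1(a)+\lambda)\otimes I+I\otimes B\bigr)v .
\]
The missing idea is to take $w=v$. That is, pair $y$ with the point $T+v$, which has the \emph{same} radical component as $y$ and zero rotation component, so $\Delta(T+v)=0$ by Theorem~\ref{thm:m1-zero-rotation-full}. The uncontrolled term $(\rho(a)+\lambda I)v$ is then replaced by $\rho(T)r$, giving
\[
Q=\rho(T-X-R)\,r=\bigl(\rho_1(T-X)\otimes I-I\otimes R\bigr)r,
\]
and $T-X$ runs over all of $\mathfrak{sl}_2$. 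This is exactly the paper's argument. In your ``difference'' language: you insisted that $y$ and the auxiliary point differ only in $\mathfrak{sl}_2$, which forced the auxiliary point to carry $R$. Instead, let them differ by $(X-T)+R$. Then the auxiliary point drops $R$ and is a known zero, while the radical part $-(I\otimes R)r$ of $D(R)$ supplies precisely the $-I\otimes R$ term.

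The alternatives you propose do not work.
\begin{itemize}
\item The pairs $(y,y+T')$ and $(y,-T+R+v)$ presuppose that $\Delta$ vanishes at points with non-zero rotation component $R$. That is circular, since it is what Theorem~\ref{thm:m1-higher-full} ultimately establishes. Even if granted, these pairs would yield $Q\in\operatorname{Im}(\rho_1(T')\otimes I)$ rather than the stated inclusion.
\item The route through $(y,h_T+w)$ with $w$ free leaves $(\rho(a)+\lambda I)v$ unconstrained, so nothing forces the residual into the required image.
\item The Zariski-density fallback fails for a structural reason. For generic $T$ the operator $M(T)=\rho_1(T)\otimes I-I\otimes R$ is invertible, so the inclusion is vacuous there. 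Moreover, image membership is not a closed condition across rank drops: multiplication by $t$ on $\mathbb C$ has $1$ in its image for $t\ne0$ but not for $t=0$. The whole content of the proposition lies at the singular $T$, such as the values $T=\pm\mu h$ used in Theorem~\ref{thm:m1-semisimple-rotation}, and density gives no information there.
\end{itemize}
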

\begin{proof}
Fix $s\in\mathfrak{sl}_2$.  By Theorem~\ref{thm:m1-zero-rotation-full},
\[
\Delta(s+v)=0.
\]
Let $D=\operatorname{ad}(a+r)+\lambda\delta_0$ represent the pair $(s+v,y)$.  From $D(s+v)=0$ we have
\[
(\rho(a)+\lambda I)v=\rho(s)r.
\]
The reductive component of $D(y)=Q\in V$ vanishes, so the reductive coefficients commute with $X$ and $R$.  Therefore
\[
Q=(\rho(a)+\lambda I)v-\rho(X+R)r
=\rho(s-X-R)r.
\]
As $s$ ranges through $\mathfrak{sl}_2$, so does $T=s-X$, and \eqref{eq:m1-universal-image-intersection} follows.
\end{proof}

\begin{theorem}\label{thm:m1-semisimple-rotation}
Let $d\ge3$ and let $R\in\mathfrak{so}_d$ be semisimple as an endomorphism of $W_d$.  Under the normalization of Proposition~\ref{prop:higher-m1-radical-zero},
\[
\Delta(X+R+v)=0
\]
for every $X\in\mathfrak{sl}_2$ and $v\in V$.
\end{theorem}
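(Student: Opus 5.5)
Since $R=0$ is covered by Theorem~\ref{thm:m1-zero-rotation-full}, we assume $R\ne0$ semisimple. The plan is to use Proposition~\ref{prop:m1-universal-image-intersection} and show that the intersection in \eqref{eq:m1-universal-image-intersection} is zero. We diagonalize $R$ on $W_d$. Since $R$ is skew-symmetric and semisimple, $W_d$ splits into eigenspaces $W^{(\nu)}$, and the eigenvalues come in pairs $\pm\nu$. The operators $\rho_1(T)\otimes I-I\otimes R$ preserve each block $U_1\otimes W^{(\nu)}$ and act there as $(\rho_1(T)-\nu I)\otimes I$. So the image intersection decomposes blockwise as
\[
\bigoplus_\nu \Bigl(\bigcap_{T\in\mathfrak{sl}_2}\operatorname{Im}(\rho_1(T)-\nu I)\Bigr)\otimes W^{(\nu)}.
\]

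For $\nu\ne0$, we choose $T=0$ and $T=h$, or any $T$ whose eigenvalues avoid $\nu$; then $\rho_1(T)-\nu I$ is invertible and gives no constraint. The useful choice is a $T$ with eigenvalue $\nu$, for instance $T=\nu h$, whose $\rho_1$-eigenvalues are $\pm\nu$. Then $\operatorname{Im}(\rho_1(\nu h)-\nu I)=\C u_1$. Conjugating by inner automorphisms, $\operatorname{Im}(\rho_1(\nu\,gh g^{-1})-\nu I)=g\,\C u_1$, and these lines range over almost all of $\mathbb P(U_1)$. Two distinct lines already intersect in zero, so the $\nu$-block contributes nothing. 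For $\nu=0$ (the kernel of $R$, present when $d$ is odd), we use $T=e$ and $T=f$, whose images $\C u_0$ and $\C u_1$ meet in zero. So every block of the intersection vanishes, and hence $Q=0$.

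The step that needs care is the reduction to blocks. The intersection of images of block-diagonal operators is the direct sum of the blockwise intersections. This is clear once a single eigenbasis of $W_d$ is fixed, because every operator $\rho_1(T)\otimes I-I\otimes R$ with $R$ fixed preserves the same decomposition. Here semisimplicity of $R$ is essential. A nilpotent part would couple the blocks and break this, which is presumably why the non-semisimple case is handled separately later in the paper by the perturbation lemma.

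We also check that Proposition~\ref{prop:m1-universal-image-intersection} is applicable for arbitrary $X$ and $v$, including $X=0$. It only uses that $\Delta(s+v)=0$ for all $s\in\mathfrak{sl}_2$, which is Theorem~\ref{thm:m1-zero-rotation-full}. So no case split on $X$ is needed, and the theorem follows for all $X\in\mathfrak{sl}_2$ and $v\in V$.
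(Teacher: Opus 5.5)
Your proof is correct and follows the paper's argument. Both reduce, via Proposition~\ref{prop:m1-universal-image-intersection}, to the eigenspace decomposition of the semisimple $R$, and both kill each eigenspace block by taking $T$ to be a multiple of $h$ matched to the eigenvalue. The paper simply uses $T=\pm\mu h$ directly, which also covers $\mu=0$ through $T=0$, so your inner conjugates of $\nu h$ and your use of $e,f$ on the kernel of $R$ are harmless variants.
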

\begin{proof}
Let $Q$ be the value at $X+R+v$.  By Proposition~\ref{prop:m1-universal-image-intersection}, it is enough to intersect the images corresponding to $T=t h$.  Since $R$ is semisimple,
\[
W_d=\bigoplus_{\mu}W_\mu,
\qquad
R|_{W_\mu}=\mu I.
\]
Write
\[
Q=u_0\otimes q_0+u_1\otimes q_1,
\qquad
q_i=\sum_\mu q_{i,\mu},\quad q_{i,\mu}\in W_\mu.
\]
On $u_0\otimes W_d$ the operator $\rho_1(th)\otimes I-I\otimes R$ is $tI-R$, whereas on $u_1\otimes W_d$ it is $-tI-R$.  Taking $t=\mu$, the image of $tI-R$ has zero $W_\mu$-component, so $q_{0,\mu}=0$.  Taking $t=-\mu$ gives $q_{1,\mu}=0$.  This also covers $\mu=0$.  Hence $Q=0$.
\end{proof}

\begin{lemma}\label{lem:so-semisimple-perturbation}
Let $d\ge3$, let $W_d=\mathbb C^d$ carry its standard non-degenerate symmetric bilinear form $\langle\cdot,\cdot\rangle$, and let $R\in\mathfrak{so}_d$.  Then
\begin{equation}\label{eq:so-semisimple-perturbation}
\bigcap_{\substack{A\in\mathfrak{so}_d\\ A\text{ semisimple}}}
\operatorname{Im}(A-R)=\{0\}.
\end{equation}
\end{lemma}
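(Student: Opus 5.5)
Dualize: since every $A-R$ with $A\in\mathfrak{so}_d$ is skew-symmetric for $\langle\cdot,\cdot\rangle$, one has $\operatorname{Im}(A-R)=\ker(A-R)^{\perp}$. Consequently
\[
\bigcap_{A}\operatorname{Im}(A-R)=\Bigl(\sum_{A}\ker(A-R)\Bigr)^{\perp},
\]
and \eqref{eq:so-semisimple-perturbation} is equivalent to the claim that the subspaces $\ker(A-R)$, with $A$ ranging over semisimple elements of $\mathfrak{so}_d$, span $W_d$. We will prove this spanning statement. By nondegeneracy of $\langle\cdot,\cdot\rangle$, it suffices to show that for every nonzero $w\in W_d$ there is a semisimple $A$ with $(A-R)w'=0$ for some $w'$ satisfying $\langle w',w\rangle\neq0$. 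Alternatively, it suffices to show directly that $\sum_A\ker(A-R)$ contains a spanning set.

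\emph{Construction.} Fix a nonzero $x\in W_d$. We want a semisimple $A\in\mathfrak{so}_d$ with $Ax=Rx$. Put $y=Rx$; skew-symmetry gives $\langle x,y\rangle=0$. The candidate is a rank-two skew operator
\[
A_{x,y}(w)=\frac{\langle x,w\rangle y-\langle y,w\rangle x}{\langle x,x\rangle}.
\]
This is skew, and when $\langle x,x\rangle\neq0$ it satisfies $A_{x,y}x=y$ because $\langle y,x\rangle=0$.

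\emph{Semisimplicity.} $A_{x,y}$ acts on $\operatorname{span}\{x,y\}$ and is zero on its orthogonal complement. When $\langle x,x\rangle\neq0$ and $\langle y,y\rangle\neq0$, the plane $\operatorname{span}\{x,y\}$ is nondegenerate and the restriction has eigenvalues $\pm\sqrt{-\langle y,y\rangle/\langle x,x\rangle}\neq0$. Hence $A_{x,y}$ is semisimple. When $y=0$ we simply take $A=0$.

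\emph{Density.} So for every $x$ in the Zariski-open set $U=\{x:\langle x,x\rangle\neq0,\ \langle Rx,Rx\rangle\neq0\}$, together with every $x\in\ker R$, we get $x\in\ker(A-R)$ for a semisimple $A$. If $U$ is nonempty it is Zariski dense and therefore spans $W_d$, and we are done. The remaining case is $\langle Rx,Rx\rangle\equiv0$, i.e.\ $R^{\mathsf T}R=-R^2=0$. This is the main obstacle.

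\emph{The case $R^2=0$.} Here $y=Rx$ is isotropic and orthogonal to $x$. We instead seek a semisimple $A$ with $Ax=y$ using a third vector. Since $d\ge3$, choose $x$ non-isotropic and $z\perp x$ non-isotropic with $\langle z,y\rangle\neq0$; such $z$ exists when $y\neq0$ and $y\notin\C x$. Then find $A$ supported on $\operatorname{span}\{x,y,z\}$, skew, with $Ax=y$, chosen as a generic element of the affine family $\{A: Ax=y\}$. That family is an affine subspace of $\mathfrak{so}_d$ which contains elements of the form $A_{x,y}+\tau A_{x',z'}$ for suitable $x',z'$ orthogonal to $x$. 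Its characteristic polynomial is non-constant in $\tau$, so for generic $\tau$ the eigenvalues are distinct and $A$ is semisimple. A small computation in dimension three, embedded into $W_d$, confirms that the discriminant is not identically zero. With this in hand, every non-isotropic $x$ again lies in some $\ker(A-R)$ with $A$ semisimple, those $x$ span $W_d$, and duality gives \eqref{eq:so-semisimple-perturbation}.
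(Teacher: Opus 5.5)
Your argument is essentially the paper's. You use the same duality $\operatorname{Im}(A-R)=\ker(A-R)^{\perp}$ and the same rank-two operator $A_{x,y}$ when $y=Rx$ is non-isotropic. When $y\neq0$ is isotropic, you use the same one-parameter family $\{A\in\mathfrak{so}(E):Ax=y\}$ on a three-dimensional $E$ containing $x,y$; the paper takes an isotropic third vector $v\perp w$ with $\langle u,v\rangle=1$ where you take $z$. Your global split according to whether $R^2=0$ is harmless but unnecessary. The three-dimensional construction works for any non-isotropic $x$ with $Rx$ non-zero isotropic, whatever $R$ is, which is how the paper organizes the cases pointwise.

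The step you leave unproved is, however, the only non-routine point of the lemma. That the characteristic polynomial is non-constant in $\tau$ does not give distinct eigenvalues. Moreover, on $W_d$ with $d\ge4$ the eigenvalue $0$ is always repeated. What must actually be shown is that $E=\operatorname{span}\{x,y,z\}$ is non-degenerate, so that $W_d=E\oplus E^{\perp}$ with $A_\tau|_{E^\perp}=0$, and that $A_\tau|_E$ is diagonalizable.

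Both facts are quick. Put $a=\langle x,x\rangle$, $\beta=\langle y,z\rangle\neq0$ and $c=\langle z,z\rangle\neq0$. Then the Gram determinant of $(x,y,z)$ is $-a\beta^2\neq0$. For $A_\tau=A_{x,y}+\tau A_{z,y}$ one computes $A_\tau y=\frac{\tau\beta}{c}\,y$, $A_\tau x=y$, and $A_\tau z=\tau y-\frac{\beta}{a}\,x-\frac{\tau\beta}{c}\,z$. So in the ordered basis $(y,x,z)$ the matrix of $A_\tau|_E$ is upper triangular with diagonal entries $\frac{\tau\beta}{c},\,0,\,-\frac{\tau\beta}{c}$, which are distinct for every $\tau\neq0$. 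This is exactly the paper's computation (diagonal $t,0,-t$ in the basis $(u,w,v)$), and with it inserted your proof is complete.
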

\begin{proof}
For a skew-symmetric endomorphism $B$ one has
\[
\operatorname{Im}B=(\ker B)^\perp.
\]
Fix $0\ne q\in W_d$.  It is enough to construct a semisimple $A\in\mathfrak{so}_d$ and a vector $w\in\ker(A-R)$ with $\langle q,w\rangle\ne0$.

Choose a non-isotropic vector $w$ such that $\langle q,w\rangle\ne0$; such vectors form a non-empty Zariski open subset of $W_d$.  Put
\[
u=Rw.
\]
Then $\langle u,w\rangle=0$.  If $u=0$, take $A=0$.  If $\langle u,u\rangle\ne0$, put $a=\langle w,w\rangle$ and define $A$ on $\operatorname{span}\{w,u\}$ by
\[
Aw=u,
\qquad
Au=-\frac{\langle u,u\rangle}{a}\,w,
\]
and let $A$ vanish on the orthogonal complement.  This $A$ is skew-symmetric.  On $\operatorname{span}\{w,u\}$ one has
\[
A^2=-\frac{\langle u,u\rangle}{a}I,
\]
with non-zero scalar on the right, so the restriction is diagonalizable over $\mathbb C$; together with the zero action on the orthogonal complement, $A$ is semisimple.

It remains to consider $0\ne u$ isotropic.  Since $w^\perp$ is non-degenerate and has dimension at least two, first choose $v_0\in w^\perp$ with $\langle u,v_0\rangle=1$ and then replace it by $v=v_0-\tfrac12\langle v_0,v_0\rangle u$.  Thus $v$ is isotropic and $\langle u,v\rangle=1$.  For $x,y\in W_d$ write
\[
(x\wedge y)(z)=\langle y,z\rangle x-\langle x,z\rangle y.
\]
For any $t\ne0$ set
\[
A=\frac1a(u\wedge w)+t(u\wedge v)
\]
on $E=\operatorname{span}\{w,u,v\}$ and let it vanish on $E^\perp$.  Then $A\in\mathfrak{so}_d$ and $Aw=u=Rw$.  In the ordered basis $(u,w,v)$ its restriction to $E$ is upper triangular with diagonal entries $t,0,-t$; these are distinct because $t\ne0$.  Hence $A|_E$ is diagonalizable, and the direct sum with the zero action on the non-degenerate complement $E^\perp$ is semisimple.  In every case $w\in\ker(A-R)$ and $\langle q,w\rangle\ne0$, so $q\notin\operatorname{Im}(A-R)$.  Since $q$ was arbitrary, \eqref{eq:so-semisimple-perturbation} follows.
\end{proof}

\begin{theorem}\label{thm:m1-higher-full}
Let $d\ge3$.  Every 2-local derivation on $\mathfrak g_{1/2}(d)$ is a derivation.
\end{theorem}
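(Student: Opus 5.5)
By Proposition~\ref{prop:higher-m1-radical-zero}, after subtracting a global derivation we may assume
\[
\Delta|_S=0,\qquad \Delta|_V=0,\qquad \Delta(\mathfrak g_{1/2}(d))\subseteq V .
\]
We then show that this normalized map vanishes on every element $y=X+R+v$. Two cases are already settled. If $R=0$, Theorem~\ref{thm:m1-zero-rotation-full} gives $\Delta(y)=0$. If $R$ is semisimple, Theorem~\ref{thm:m1-semisimple-rotation} gives $\Delta(y)=0$. The remaining case is an arbitrary, possibly non-semisimple, rotation component $R$. Here the semisimple-perturbation Lemma~\ref{lem:so-semisimple-perturbation} is designed to close the argument.

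Fix $y=X+R+v$ and put $Q=\Delta(y)\in V$. For a semisimple $A\in\mathfrak{so}_d$, consider the auxiliary point
\[
y_A=X+A+v .
\]
By Theorem~\ref{thm:m1-semisimple-rotation}, $\Delta(y_A)=0$. Apply the two-local condition to the pair $(y,y_A)$ and let $D=\operatorname{ad}(a+B+r)+\lambda\delta_0$ be a representing derivation. The difference of the two points is $y-y_A=R-A\in\mathfrak{so}_d$, so
\[
Q=Q-0=D(y)-D(y_A)=D(R-A)=[B,R-A]-\rho(R-A)r .
\]
Since $Q\in V$, the reductive component $[B,R-A]$ must vanish. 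Hence
\[
Q=-\rho(R-A)r\in \operatorname{Im}\bigl(I_{U_1}\otimes(A-R)\bigr)=U_1\otimes\operatorname{Im}(A-R).
\]

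Writing $Q=u_0\otimes q_0+u_1\otimes q_1$, this says $q_0,q_1\in\operatorname{Im}(A-R)$ for every semisimple $A\in\mathfrak{so}_d$. Lemma~\ref{lem:so-semisimple-perturbation} states that the intersection of these images is zero. Hence $q_0=q_1=0$, so $Q=0$. This covers all $X$, including $X=0$, and all $v$. The normalized map is therefore identically zero, and undoing the initial subtraction shows that the original 2-local derivation is a derivation.

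The main point requiring care is that the pair $(y,y_A)$ yields exactly the linear relation $Q=D(R-A)$. This follows because both values lie in $V$ and $\Delta(y_A)=0$. It also uses that $\rho(R-A)$ acts on $V$ as $I\otimes(R-A)$, so its image splits rowwise.

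A second point is that Theorem~\ref{thm:m1-semisimple-rotation} must apply to every semisimple $A$ with the same $X$ and $v$. As stated, it does, since it holds for all $X\in\mathfrak{sl}_2$ and all $v\in V$. So the real content lies entirely in the perturbation lemma, which is already established.
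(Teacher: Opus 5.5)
Your proposal is correct and follows the paper's own proof essentially verbatim. After the normalization of Proposition~\ref{prop:higher-m1-radical-zero}, you pair $y=X+R+v$ with the zero point $X+A+v$ for semisimple $A$, which is a zero point by Theorem~\ref{thm:m1-semisimple-rotation}. You then deduce $Q=D(R-A)\in U_1\otimes\operatorname{Im}(A-R)$ and conclude with Lemma~\ref{lem:so-semisimple-perturbation}; your explicit formula $D(R-A)=[B,R-A]-\rho(R-A)r$ simply writes out the step the paper states in words.
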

\begin{proof}
Normalize $\Delta$ as in Proposition~\ref{prop:higher-m1-radical-zero}, so that $\Delta$ vanishes on $S=\mathfrak{sl}_2\oplus\mathfrak{so}_d$ and on $V=U_1\otimes W_d$.  Let
\[
y=X+R+v,
\qquad X\in\mathfrak{sl}_2,
\quad R\in\mathfrak{so}_d,
\quad v\in V,
\]
and put $Q=\Delta(y)$.  The preceding reductions imply $Q\in V$.

For every semisimple $A\in\mathfrak{so}_d$, Theorem~\ref{thm:m1-semisimple-rotation} gives
\[
\Delta(X+A+v)=0.
\]
Let $D$ be a derivation representing the pair $(X+A+v,y)$.  By linearity of $D$,
\[
Q=D(y)-D(X+A+v)=D(R-A).
\]
The scalar outer derivation vanishes on $\mathfrak{so}_d$.  Since $Q$ has no reductive component, only the radical part of the inner element can contribute to the $V$-component of $D(R-A)$; consequently
\[
Q\in U_1\otimes\operatorname{Im}(A-R).
\]
This holds for every semisimple $A\in\mathfrak{so}_d$.  Lemma~\ref{lem:so-semisimple-perturbation} therefore yields $Q=0$.  Thus the normalized map is identically zero, and the original 2-local derivation is the global derivation that was subtracted in the normalization.
\end{proof}

\begin{corollary}\label{cor:noncentral-all}
For every $d\ge1$ and every positive half-integer $\ell$,
\begin{equation}\label{eq:noncentral-all}
\TwoLocDer\mathfrak g_\ell(d)=\Der\mathfrak g_\ell(d).
\end{equation}
\end{corollary}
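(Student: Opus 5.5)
The corollary is an assembly of results already proved, so I will organize the argument as a case split on the pair $(d,2\ell)$. Every case should be covered by one of the theorems above, with no new computation needed.

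First, the case $d=1$: for every positive half-integer $\ell$ this is Corollary~\ref{cor:one-dimensional-all}, which combines Theorem~\ref{thm:m1-full} ($2\ell=1$), Theorem~\ref{thm:m2-full} ($2\ell=2$) and Theorem~\ref{thm:one-string-full} ($2\ell\ge3$). Second, for $d\ge2$ and $2\ell\ge2$, Corollary~\ref{cor:noncentral-all-mge2} applies; it rests on Theorem~\ref{thm:higher-full-mge3} for $2\ell\ge3$ and Theorem~\ref{thm:higher-m2-full} for $2\ell=2$. Third, the standard module $2\ell=1$ with $d=2$ is Theorem~\ref{thm:m1-d2-full}, recorded as part of Corollary~\ref{cor:noncentral-dle2-all}. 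Finally, the only remaining case, $2\ell=1$ with $d\ge3$, is Theorem~\ref{thm:m1-higher-full}.

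It remains to note the trivial inclusion $\Der\subseteq\TwoLocDer$: any derivation $D$ is 2-local, witnessed by $D_{x,y}=D$ for every pair. This gives equality \eqref{eq:noncentral-all} in every case.

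The real content lies in the cited theorems. The delicate one was Theorem~\ref{thm:m1-higher-full}, which combines Theorem~\ref{thm:m1-semisimple-rotation} with the perturbation Lemma~\ref{lem:so-semisimple-perturbation}. The corollary itself needs only one check: that the case list, $d\in\{1,2\}$ or $d\ge3$ crossed with $2\ell\in\{1,2\}$ or $2\ell\ge3$, is exhaustive and that no normalization hypothesis is silently assumed. Each cited theorem is stated for an arbitrary 2-local derivation; the normalizations happen inside their proofs. The check is therefore immediate.
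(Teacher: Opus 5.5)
Your proposal is correct and matches the paper's proof: the same exhaustive case split over $d$ and $2\ell$, citing Corollary~\ref{cor:one-dimensional-all} for $d=1$, Theorems~\ref{thm:higher-full-mge3} and~\ref{thm:higher-m2-full} for $d\ge2$, $2\ell\ge2$, and Theorems~\ref{thm:m1-d2-full} and~\ref{thm:m1-higher-full} for $2\ell=1$ with $d=2$ and $d\ge3$ respectively. Your explicit remark that $\Der\subseteq\TwoLocDer$ (take $D_{x,y}=D$ for every pair) is the trivial inclusion that the paper leaves implicit.
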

\begin{proof}
The case $d=1$ is Corollary~\ref{cor:one-dimensional-all}.  For $d\ge2$ and $2\ell\ge3$ use Theorem~\ref{thm:higher-full-mge3}; for $2\ell=2$ use Theorem~\ref{thm:higher-m2-full}; for $2\ell=1$, use Theorem~\ref{thm:m1-d2-full} when $d=2$ and Theorem~\ref{thm:m1-higher-full} when $d\ge3$.
\end{proof}

\section{Common reductions for the central extensions}\label{sec:centralreductions}
We now develop the normalization that is common to the mass and exotic central extensions.  The key advantage is that the scalar outer derivation acts nontrivially on the center, so a single central value fixes its coefficient in every pair containing the center.

\subsection{Normalization at the center}
We first show that the center determines the scalar outer component of a 2-local derivation globally.

\begin{proposition}\label{prop:center-normalization}
Let $L$ be either $\widehat{\g}^{\,M}_\ell(d)$ or $\widehat{\g}^{\,\Theta}_\ell(2)$, and let $z$ denote the corresponding central generator. If $\Delta$ is a 2-local derivation on $L$, then
\[
\Delta(z)=\kappa z
\]
for some $\kappa\in\C$.  After subtracting $\frac{\kappa}{2}\delta_M$ in the mass case, or $\frac{\kappa}{2}\delta_\Theta$ in the exotic case, we may assume
\[
\Delta(z)=0.
\]
Under this normalization, for every $x\in L$ there is a representing derivation for the pair $(z,x)$ whose scalar outer coefficient is zero.
\end{proposition}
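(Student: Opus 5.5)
The plan is to use the explicit derivation decompositions of Proposition~\ref{prop:derivation-decompositions}. The first step is to check how each basic derivation acts on the central generator $z$ (here $z=M$ in the mass case, and $z=\Theta$ or equivalently its multiple $-2\mathrm i\Theta$ in the exotic case).

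Inner derivations annihilate $z$, since $z$ is central. The central shears $\eta_M$ and $\eta_\Theta$ are nonzero only on $s$, so they also kill $z$. Finally, $\delta_M(M)=2M$ and $\delta_\Theta(z)=2z$. Consequently every derivation $D$ of $L$, written as inner part plus $\mu\delta+\nu\eta$, satisfies
\[
D(z)=2\mu z,
\]
where $\mu$ is the scalar outer coefficient.

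Next I apply the two-local condition to the pair $(z,0)$, or equivalently to $(z,z)$. This gives $\Delta(z)=D_{z,0}(z)\in\C z$, so $\Delta(z)=\kappa z$ for some $\kappa$. By Lemma~\ref{lem:subtract}, the map $\Delta-\frac{\kappa}{2}\delta$ is again a 2-local derivation, and its value at $z$ is $\kappa z-\kappa z=0$. This is the asserted normalization.

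For the final claim, fix $x\in L$ and take any representing derivation $D$ for the pair $(z,x)$. Write its scalar outer coefficient as $\mu$. Then
\[
0=\Delta(z)=D(z)=2\mu z,
\]
which forces $\mu=0$. So every representing derivation of such a pair, not just some choice of one, has zero scalar outer coefficient.

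There is no real obstacle in this argument. The one point needing care is the bookkeeping: one must confirm from Proposition~\ref{prop:derivation-decompositions} that no other basis derivation, in particular neither $\eta$ nor any inner derivation, has a component along $z$ when evaluated at $z$. One must also note that the decomposition is a direct sum, so that $\mu$ is well-defined.
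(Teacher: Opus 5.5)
Your proposal is correct and follows essentially the same route as the paper: both read off $D(z)=2\mu z$ from the derivation decomposition (inner derivations and the shear kill $z$, while the scalar derivation doubles it), fix $\kappa$ from the value at $z$, subtract $\frac{\kappa}{2}\delta$, and then use $D(z)=0$ to force the scalar coefficient of any representing derivation of $(z,x)$ to vanish. Your use of the pair $(z,0)$ is just a slightly cleaner phrasing of the paper's remark that $\Delta(z)$ does not depend on the partner $x$.
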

\begin{proof}
In both central extensions, inner derivations annihilate the center, the central shear (when present) also annihilates the center, and the scalar outer derivation sends $z$ to $2z$. Thus for a pair $(z,x)$, writing the representing derivation in the form \eqref{eq:der-mass} or \eqref{eq:der-exotic}, we obtain
\[
\Delta(z)=2\lambda_{z,x}z.
\]
The left-hand side is independent of $x$, so $\lambda_{z,x}$ is fixed.  Subtracting the corresponding scalar derivation gives $\Delta(z)=0$.  For a representing derivation of a pair $(z,x)$ after this normalization, the equality $D(z)=0$ forces its scalar outer coefficient to vanish.
\end{proof}

\subsection{Reduction to the radical}
We next combine the central normalization with the quotient lemma to isolate the genuinely new part of the higher-dimensional problem.

\begin{proposition}\label{prop:central-radical-reduction}
Let $L$ be a mass or exotic central extension and let $\mathfrak h$ be its Heisenberg nilradical.  After subtracting a global derivation, any 2-local derivation $\Delta$ may be assumed to satisfy
\[
\Delta(z)=0,
\qquad
\Delta(L)\subseteq\mathfrak h.
\]
Thus the entire reductive quotient map is zero, including the central $\mathfrak{so}_2$-direction when $d=2$.
\end{proposition}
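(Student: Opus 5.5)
The plan is to perform the two normalizations in sequence and to check that the second does not undo the first. First I would apply Proposition~\ref{prop:center-normalization}. By Lemma~\ref{lem:subtract}, subtracting $\frac{\kappa}{2}\delta_M$ (respectively $\frac{\kappa}{2}\delta_\Theta$) leaves a 2-local derivation, and it now satisfies $\Delta(z)=0$. Next I would apply Corollary~\ref{cor:levi-normalization}. The Heisenberg nilradical $\mathfrak h$ is the nilradical of $L$, hence characteristic, so Lemma~\ref{lem:quotient} shows that $\Delta$ induces a 2-local derivation on $L/\mathfrak h$. That quotient is $\mathfrak{sl}_2\oplus\mathfrak{so}_d$ for $d\neq2$ and $\mathfrak{sl}_2\oplus\C s$ for $d=2$.

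For $d\neq2$ the quotient is semisimple, and \cite{AKR} supplies an inner derivation $\ad(\bar a)$ of the quotient that agrees with $\overline\Delta$. For $d=2$ I would use Lemma~\ref{lem:d2-reductive-quotient}. Its hypothesis is exactly that every two-point value is given by an induced derivation of the fixed algebra $L$. By \eqref{eq:der-mass} and \eqref{eq:der-exotic}, every derivation of $L$ induces on $Q$ a map that is inner on $\mathfrak{sl}_2$ and kills $s$. This holds because $\delta_M,\delta_\Theta$ vanish on the Levi part, $\eta_M,\eta_\Theta$ take values in the center, and $\ad s$ induces zero on $Q$. The lemma therefore yields $a\in\mathfrak{sl}_2$ with $\overline\Delta(x+\alpha s)=[a,x]$.

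In both cases I would lift to the inner derivation $\ad a$ of $L$, with $a$ taken in the Levi factor $\mathfrak{sl}_2\oplus\mathfrak{so}_d$. Subtracting it gives a 2-local map whose induced quotient map is identically zero, that is, $\Delta(L)\subseteq\mathfrak h$. The subtraction keeps the center normalization, since inner derivations annihilate the center: $(\Delta-\ad a)(z)=\Delta(z)-[a,z]=0$.

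The main point requiring care is this compatibility between the two subtractions, together with the $d=2$ verification that no derivation of the central extension induces a nonzero component in the $s$-direction of the quotient. For the latter I would note that the image of any derivation of $\mathfrak{sl}_2\oplus\C s$ coming from $L$ lies in $[L,L]+\mathfrak h$ modulo the shear. Since $s\notin[L,L]$ modulo $\mathfrak h$, and the shears $\eta_M,\eta_\Theta$ land in $\C z\subseteq\mathfrak h$, such a component cannot arise. This is already encoded in Lemma~\ref{lem:d2-reductive-quotient} through Proposition~\ref{prop:derivation-decompositions}, so the statement follows directly. The last sentence of the proposition, that the central $\mathfrak{so}_2$-direction is also killed, is precisely the conclusion $\Phi(\alpha s)=0$ of that lemma.
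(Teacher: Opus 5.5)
Your proposal is correct and follows essentially the same route as the paper. You normalize at the center via Proposition~\ref{prop:center-normalization}, then use quotient rigidity (\cite{AKR} for $d\ne2$, Lemma~\ref{lem:d2-reductive-quotient} for $d=2$) and subtract an inner lift. Your explicit check that this inner lift annihilates $z$, so that the two normalizations are compatible, is a step the paper leaves implicit.
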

\begin{proof}
The nilradical $\mathfrak h$ is characteristic, so Lemma~\ref{lem:quotient} applies.  Proposition~\ref{prop:center-normalization} first removes the scalar outer component detected by the center.  For $d=1$ and $d\ge3$, the induced map on the semisimple quotient is a derivation by \cite{AKR}, and subtracting its inner lift kills the quotient map.  In spatial dimension two, Lemma~\ref{lem:d2-reductive-quotient} shows directly that the full quotient map on $\mathfrak{sl}_2\oplus\mathbb Cs$ is one inner derivation on $\mathfrak{sl}_2$ and zero on $s$; subtracting its lift again kills the entire quotient map.  The central shear takes values in $\mathfrak h$ and hence does not affect this conclusion.
\end{proof}

\subsection{Quotient-to-center reduction}
The preceding quotient normalization can be sharpened whenever the corresponding non-central quotient has already been proved rigid.  The point is that center normalization also rules out a residual scalar derivation on the quotient.

\begin{proposition}\label{prop:central-center-valued-reduction}
Let $L$ be one of the following:
\begin{enumerate}[label=\textup{(\roman*)}]
\item the mass extension $\widehat{\mathfrak g}^{\,M}_\ell(d)$ with $\ell\in\N-\tfrac12$;
\item the exotic extension $\widehat{\mathfrak g}^{\,\Theta}_\ell(2)$ with $\ell\in\mathbb N$.
\end{enumerate}
Then, after subtracting a global derivation, every 2-local derivation $\Delta$ may be normalized so that
\[
\Delta(z)=0,
\qquad
\Delta(L)\subseteq\mathbb Cz,
\]
where $z=M$ in the mass case and $z$ is the exotic central generator in the second case.
\end{proposition}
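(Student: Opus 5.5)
The plan is to pass to the quotient $\bar L=L/\C z$ and use the rigidity already proved for the non-central algebra.

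\emph{Step 1: the quotient.} The centre $\C z$ is a characteristic ideal. By Lemma~\ref{lem:quotient}, $\Delta$ induces a 2-local derivation $\overline\Delta$ on $\bar L$. In the mass case $\bar L\cong\g_\ell(d)$. In the exotic case $\bar L\cong\g_\ell(2)$, where the $p,q$ basis is simply a complex change of basis of $V_{\ell,2}$. A small caveat: Corollary~\ref{cor:noncentral-all} is stated for 2-local derivations of $\bar L$ with respect to all of $\Der\bar L$. Our $\overline\Delta$ is 2-local with respect to the induced derivations, which form a subset of $\Der\bar L$, so it is in particular a 2-local derivation of $\bar L$ and the corollary applies. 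Hence
\[
\overline\Delta=\ad(\bar a+\bar r)+\mu\,\delta_0
\]
for some $\bar a\in S$, $\bar r\in V$ and $\mu\in\C$.

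\emph{Step 2: lifting and subtracting.} First normalise $\Delta(z)=0$ by Proposition~\ref{prop:center-normalization}. The inner part lifts directly: take $a\in S$ and $r\in V\subset\mathfrak h$ with these images and subtract $\ad(a+r)$. Inner derivations kill $z$, so $\Delta(z)=0$ is preserved. The quotient map then becomes $\mu\delta_0$, and we must show $\mu=0$. The natural lift of $\mu\delta_0$ is $\mu\delta_M$ (respectively $\mu\delta_\Theta$), but these do not kill $z$. So instead of subtracting, I would use the centre.

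\emph{Step 3: showing $\mu=0$.} Pick a radical element $v\ne0$ and consider the pair $(z,v)$. By Proposition~\ref{prop:center-normalization} it has a representing derivation $D$ with zero scalar outer coefficient. By Proposition~\ref{prop:derivation-decompositions}, $D$ is an inner derivation plus possibly a multiple of the shear $\eta$, and $\eta$ vanishes on the radical. Hence
\[
\Delta(v)=D(v)\equiv\rho(b)v\pmod{\C z}
\]
for some $b\in S$. On the other hand, $\Delta(v)\equiv\mu v$ modulo $\C z$. So $\mu v\in\rho(b)v+\C z$ modulo $z$.

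To conclude $\mu=0$, choose $v$ so that $\mu v\in\rho(S)v$ forces $\mu=0$. A convenient choice is a lowest- or highest-weight vector such as $u_0\otimes\varepsilon_1$ (or $p_0$). The difficulty is that $v$ could be an eigenvector of some $\rho(b)$: for instance $\rho(h)u_0=mu_0$. So I need to refine this using the full quotient description. For every $x$ we have $\Delta(x)\equiv\mu\delta_0(x)$, so applying the same argument to the pair $(z,x)$ gives, for every $x\in L$,
\[
\mu\,\delta_0(\bar x)\in\overline{\Inn}(\bar x).
\]
Now take $x=h+v$ with $v=u_0\otimes\varepsilon_1$. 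An inner derivation fixing the $S$-component $h$ must have its $\mathfrak{sl}_2$-part centralising $h$. Its radical contribution is $\rho(b)v-\rho(h)r$. Comparing components, $\mu v$ must equal $\gamma m v-\rho(h)r$ with $\gamma h$ the $\mathfrak{sl}_2$-part. Since $-\rho(h)r$ can absorb any weight component of $v$ with nonzero weight, I would instead take $x=h+w$ with $w$ of zero or mixed weights, or use the pair approach with two radical vectors. It is cleanest to apply the separating-vector idea: use $x=e+w_*\otimes\varepsilon_1$. This should force $\mu=0$ exactly as in Lemma~\ref{lem:zero-basic-mixed-anchors}, where any such representation had $\lambda=0$ because of the $u_m$-coefficient.

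Given $\mu=0$, the normalised $\Delta$ induces zero on $L/\C z$, which means $\Delta(L)\subseteq\C z$, with $\Delta(z)=0$ still in force.

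\emph{Main obstacle.} The hard step is the final elimination of $\mu$ in the low cases $m=1$ (Schrödinger, mass) and the exotic case, where a single separating vector is unavailable. There I would adapt the rank-two anchors of Lemma~\ref{lem:m1-rank-two-zero-anchors}, which force the scalar coefficient to vanish.
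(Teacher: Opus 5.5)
Your outline is the paper's proof: quotient rigidity via Corollary~\ref{cor:noncentral-all} (Corollary~\ref{cor:noncentral-dle2-all} in the exotic case), subtraction of an inner lift, and then elimination of the leftover coefficient $\mu$ of $\delta_0$ by pairing $z$ with $x=e+w\otimes\varepsilon_1$. The paper takes $w=u_m$; your $w_*$ works equally well. However, you leave the decisive step as ``should force'' and then misdiagnose its scope. The computation uses no separating vector and is uniform in $m\ge1$, in $d$, and in the type of extension. So the obstacle you describe for $m=1$ and for the exotic case does not exist, and the rank-two anchors are unnecessary.

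Here is the computation. A derivation representing $(z,x)$ has zero scalar coefficient. Modulo $\C z$ it is therefore $\operatorname{ad}(b_S+b_V)$, since the shear is central-valued. As $\overline{\Delta(x)}=\mu\,w\otimes\varepsilon_1$ has no $S$-component, $[b_S,e]=0$, so $b_S=\beta e+B$ with $B\in\mathfrak{so}_d$. The radical component is
\[
\beta\,\rho_m(e)w\otimes\varepsilon_1+w\otimes B\varepsilon_1-\rho(e)b_V .
\]
Neither $\rho_m(e)w$ nor $\rho(e)b_V$ has a $u_m$-row. Since $w$ has non-zero $u_m$-coefficient, comparing $u_m$-rows gives $B\varepsilon_1=\mu\varepsilon_1$. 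Skew-symmetry at the non-isotropic vector $\varepsilon_1$ then yields $\mu=0$; for $d=1$ one has $B=0$ and this is immediate.

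The only care needed is to use a non-isotropic spatial direction. In the exotic case, $e+p_m$ would fail: $p_m$ is an eigenvector of $s$, so $B=\mu s$ is allowed. Use $P_1^{(m)}=\tfrac12(p_m+q_m)$ instead. Your preliminary attempts with the pairs $(z,v)$ and $(z,h+v)$ are dead ends and can simply be dropped.
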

\begin{proof}
Start with the center normalization of Proposition~\ref{prop:center-normalization}.  The induced map on $L/\mathbb Cz$ is a 2-local derivation.  In the mass case, the quotient is $\mathfrak g_\ell(d)$ and is rigid for every admissible $\ell$ by Corollary~\ref{cor:noncentral-all}.  In the exotic case the quotient is $\mathfrak g_\ell(2)$ with $2\ell\ge2$, and rigidity follows from Corollary~\ref{cor:noncentral-dle2-all}.  Hence
\[
\overline\Delta=\operatorname{ad}(\overline b)+\alpha\delta_0
\]
for some $\overline b$ and $\alpha\in\mathbb C$.

Subtract an inner lift of $\operatorname{ad}(\overline b)$; the value on the center remains zero. We prove uniformly that $\alpha=0$. Put $m=2\ell$ and choose
\[
x=e+u_m\otimes\varepsilon_1.
\]
Let $D$ represent the pair $(z,x)$. Since $D(z)=0$, its scalar outer coefficient is zero. Thus $D=\ad(a_S+a_W)$, with an additional central shear allowed when $d=2$; this shear annihilates $x$. Modulo the center, $D(x)=\alpha u_m\otimes\varepsilon_1$. The $S$-component gives $[a_S,e]=0$, so the $\mathfrak{sl}_2$-part of $a_S$ is $\beta e$. If $B$ is its spatial rotation part, the radical component is
\[
-\rho(e)a_W+\beta m u_{m-1}\otimes\varepsilon_1+u_m\otimes B\varepsilon_1.
\]
The image of $\rho(e)$ has no $u_m$-row. Comparing that row therefore gives $B\varepsilon_1=\alpha\varepsilon_1$. Skew-symmetry and $\langle\varepsilon_1,\varepsilon_1\rangle=1$ yield $\alpha=0$. Hence the quotient map is zero, and all values of the residual map lie in $\C z$.
\end{proof}

\begin{proposition}\label{prop:mass-center-free-hyperplane}
Let $m=2\ell$ be any positive odd integer and let $\Delta$ be a 2-local derivation on the mass extension, normalized as in Proposition~\ref{prop:central-center-valued-reduction}.  If $d=2$, subtract in addition a scalar multiple of $\eta_M$ so that $\Delta(s)=0$.  Then
\[
\Delta(x)=0
\qquad
\text{for every }x\text{ whose }M\text{-coordinate is zero}.
\]
\end{proposition}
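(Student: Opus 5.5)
The plan is to write the residual map as $\Delta(x)=\varphi(x)M$ and to read off $\varphi(x)$ from the central coordinate of a representing derivation. For any pair containing $x=X+v$, with $X\in S$ and $v\in V$, write the representing derivation as $D=\operatorname{ad}(a+r)+\lambda\delta_M+\mu\eta_M$, where $a\in S$, $r\in V$, and $\mu=0$ unless $d=2$. Since $\Delta(x)\in\mathbb CM$, the non-central components of $D(x)$ vanish:
\[
[a,X]=0,\qquad \rho(X)r=(\rho(a)+\lambda I)v .
\]
The central coordinate then gives
\[
\varphi(x)=\omega(r,v)+\mu\,\eta_M(X),
\]
where $\omega$ is the non-degenerate alternating form on $V$ defined by \eqref{eq:mass2local}, for which every $\rho(b)$ with $b\in S$ is $\omega$-skew. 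A first remark is that on $S$ itself there is no central term other than the shear. Hence $\Delta|_S=0$ for $d\ne2$. For $d=2$ the pair $(s,X)$ forces $\mu=0$, because $\Delta(s)=0$, and again $\Delta|_S=0$.

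The key algebraic identity is the following. Suppose $\lambda=0$, $\mu=0$ and $\rho(X)$ is invertible on $V$. This holds for instance when $X$ has a non-zero semisimple $\mathfrak{sl}_2$-part, since $m$ is odd and so all $h$-weights on $U_m$ are non-zero; more generally $X$ may be any element of $S$ acting invertibly. Then $r=\rho(X)^{-1}\rho(a)v$, and $\rho(X)^{-1}$ commutes with $\rho(a)$. A product of two commuting $\omega$-skew operators is $\omega$-self-adjoint, and for an alternating form an $\omega$-self-adjoint $T$ satisfies $\omega(Tv,v)=0$. Hence $\omega(r,v)=0$.

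For $d\ne2$ this settles every $x$ with $\rho(X)$ invertible: the pair $(M,x)$ gives $\lambda=0$ by Proposition~\ref{prop:center-normalization}, and there is no shear. For $d=2$ the pair $(M,x)$ leaves $\mu$ free, while the pair $(s,x)$ kills $\mu$ but leaves $\lambda$ free. I will therefore first produce zero anchors. These are elements $y=h'+w$, with $h'$ regular semisimple in $S$ and $\rho(h')$ invertible, for which $\Delta(y)=0$ is already known. In $d\ne2$ they come directly from the case above. In $d=2$ I take $h'=h+\tau s$: the pair $(s,y)$ gives $\mu=0$, and $\lambda$ is then removed by comparing with the $\mathfrak{so}_2$-weight computation that $w$ imposes (choose $w$ with $\omega$-non-trivial components in both $p$- and $q$-directions).

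For a general $x=X+v$, including singular $X$ (nilpotent, zero, or singular rotation part), I will use the pair $(y,x)$ with such an anchor $y=h'+w$. From $D(y)=0$ we get $a\in C_S(h')$, which is a Cartan subalgebra $\mathfrak t$. We also get $(\rho(a)+\lambda I)w=\rho(h')r$, so $r=\rho(h')^{-1}(\rho(a)+\lambda I)w$. On the other hand, $[a,X]=0$ together with the $V$-equation at $x$ is a linear system in the parameters $(a,\lambda)\in\mathfrak t\oplus\mathbb C$, and $\varphi(x)$ becomes a linear functional of the surviving parameters, with coefficients depending on $w$. The goal is a dimension count in the centralizer $\mathfrak t$: for generic $w$ the constraints cut the parameter space down to a subspace on which that functional vanishes identically. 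Alternatively, since $w$ enters linearly, I will choose $w$ inside a suitable complement so that every inner central contribution $\omega(r,v)$ cancels.

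The remaining cases are $X=0$, already handled by the normalization $\Delta|_V$, and the case of nilpotent $\mathfrak{sl}_2$-part. For the latter I plan to perturb, exactly as in the proof of Theorem~\ref{thm:higher-full-mge3}: pair $x$ with $x+s_t$, whose value is already known to vanish, so that $\Delta(x)\in\operatorname{Im}\rho(s_t)+\mathbb C\omega(\cdot,\cdot)$. Then reread the central coordinate and use Lemma~\ref{lem:nilpotent-images-intersection}. I expect the main obstacle to be the singular case in $d=2$, where the shear parameter $\mu$ and the scalar $\lambda$ cannot both be eliminated by one pair. There the choice of the auxiliary vector $w$ must simultaneously kill $\mu$, $\lambda$ and the inner term $\omega(r,v)$, and getting this choice right is the delicate point of the argument.
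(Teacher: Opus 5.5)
Your setup is sound: the central coordinate of $D(x)$ is $\omega(r,v)+\mu\,\eta_M(X)$. Your arguments that $\Delta|_S=0$, and that $\Delta(x)=0$ when $d\ne2$ and $\rho(X)$ is invertible, are correct. But the proposal stops short of a proof for general $x$. The singular-$X$ case, and every $d=2$ element outside $S$, are left as intentions: a dimension count ``for generic $w$'', a choice of $w$ ``inside a suitable complement'', and a perturbation whose target $\operatorname{Im}\rho(s_t)+\mathbb C\omega(\cdot,\cdot)$ is not a well-defined subspace of the center. You yourself flag the decisive $d=2$ step as unresolved. The case $X=0$ is also not covered by the normalization. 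Proposition~\ref{prop:central-center-valued-reduction} gives only $\Delta(V)\subseteq\mathbb CM$, not $\Delta|_V=0$. When $X=0$ your $V$-equation constrains $a$ and $\lambda$ but not $r$, so $\omega(r,v)$ is a priori arbitrary.

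The obstacle you identify is not actually there. Since $\delta_M(X+v)=v$, the scalar $\lambda$ never enters $\varphi(x)=\omega(r,v)+\mu\,\eta_M(X)$ when the $M$-coordinate is zero. Only $r$ and $\mu$ must be killed, and an anchor lying in $S$ itself does that; this is your $y=h'+w$ with $w=0$.

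For $d\ne2$, pair $x$ with $h$. The $V$-component of $D(h)=0$ is $-\rho(h)r=0$, and $\rho(h)$ is invertible on $V$ because $m$ is odd. Hence $r=0$ and $\varphi(x)=0$.

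For $d=2$, pair $x$ with $s$. You note that this pair kills $\mu$, but it also kills $r$: indeed $D(s)=-\rho(s)r+\mu M=0$, and $\rho(s)$ acts by $\pm1$ on the $p_n,q_n$. So $r=0$ and $\mu=0$ at once, and $\varphi(x)=0$.

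This is the paper's proof. It treats nilpotent, zero and rotation-singular $X$ uniformly, and needs no regular semisimple anchors, dimension count or perturbation. The machinery you sketch is what the paper uses later, in Theorem~\ref{thm:mass-mge3-full}. There the target has non-zero $M$-coordinate, so the term $2c\lambda$ genuinely appears and must be controlled.
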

\begin{proof}
Put $m=2\ell$.  Since $\ell$ is half-integral in the mass case, $m$ is odd.  Suppose first that $d\ne2$.  Because $\Delta(h)\in\mathbb CM$ and no derivation can send $h$ to a non-zero central vector, we have $\Delta(h)=0$.  For an element $x$ with zero $M$-coordinate, represent the pair $(h,x)$ by
\[
D=\operatorname{ad}(a)+\lambda\delta_M.
\]
The equality $D(h)=0$ separates into reductive and radical components.  In particular, the non-central radical part $a_V$ satisfies
\[
[h,a_V]=0.
\]
The $h$-weights on $V$ are $m,m-2,\ldots,-m$, all of which are non-zero because $m$ is odd.  Hence $a_V=0$.  The remaining terms $\operatorname{ad}(a_S)$ and $\lambda\delta_M$ cannot create a central component from an element with zero $M$-coordinate.  Since $D(x)=\Delta(x)$ is central, it follows that $\Delta(x)=0$.

Now let $d=2$.  After subtracting $\Delta(s)\eta_M$, we have $\Delta(s)=0$.  Represent $(s,x)$ by
\[
D=\operatorname{ad}(a)+\lambda\delta_M+\mu\eta_M.
\]
The equality $D(s)=0$ gives
\[
[a_V,s]+\mu M=0.
\]
The two summands lie in the non-central radical and in the center, respectively.  Moreover, $s$ acts without zero eigenvalues on every spatial copy of the radical.  Hence $a_V=0$ and $\mu=0$.  Again the remaining terms have no central component on an element with zero $M$-coordinate, so $\Delta(x)=0$.
\end{proof}

\subsection{Symplectic completion of the mass-central case}
We use one flexible symplectic zero anchor to eliminate all central translates at once.  Put $m=2\ell$, $W=V_{\ell,d}$, and let $\omega_M$ be the non-degenerate alternating form determined by
\begin{equation}\label{eq:mass-symplectic-form}
[u,v]=\omega_M(u,v)M.
\end{equation}
For $t\in S:=\mathfrak{sl}_2\oplus\mathfrak{so}_d$, with $T=\rho(t)$, the Jacobi identity gives
\begin{equation}\label{eq:mass-symplectic-invariance}
\omega_M(Tu,v)+\omega_M(u,Tv)=0
\qquad(u,v\in W).
\end{equation}

\begin{lemma}\label{lem:mass-flexible-anchor}
Let $t_0\in S$ be regular semisimple and suppose that $T_0=\rho(t_0)$ is invertible on $W$.  In the case $d=2$, write $\tau$ for the $s$-coordinate of $t_0$; for $d\ne2$ put $\tau=0$.  Suppose
\[
D=\operatorname{ad}(a_S+a_W)+\lambda\delta_M
\]
for $d\ne2$, and
\[
D=\operatorname{ad}(a_S+a_W)+\lambda\delta_M+\mu\eta_M
\]
for $d=2$.  If $D(t_0+u)=0$, then, with $A=\rho(a_S)$,
\begin{equation}\label{eq:mass-flexible-anchor-relations}
[a_S,t_0]=0,
\qquad
a_W=T_0^{-1}(A+\lambda I)u,
\end{equation}
and
\begin{equation}\label{eq:mass-flexible-central-relation}
\lambda B_{T_0}(u,u)+\tau\mu=0,
\qquad
B_{T_0}(x,y)=\omega_M(T_0^{-1}x,y).
\end{equation}
For $d\ne2$ the term $\tau\mu$ is absent.
\end{lemma}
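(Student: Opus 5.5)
We expand $D(t_0+u)$ componentwise in the decomposition $L=S\oplus W\oplus\mathbb CM$ and read off the three equations. The relevant brackets are these. The reductive part contributes $[a_S,t_0]\in S$ and $[a_S,u]=Au\in W$. The radical part contributes $[a_W,t_0]=-T_0a_W\in W$ and $[a_W,u]=\omega_M(a_W,u)M$. The scalar derivation contributes $\lambda\delta_M(t_0+u)=\lambda u$, since $\delta_M$ kills $S$ and is the identity on $W$. When $d=2$, the shear contributes $\mu\eta_M(t_0+u)=\tau\mu M$, because $\eta_M$ picks out the $s$-coordinate of $t_0$ and kills $W$ and the $\mathfrak{sl}_2$-part.

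The $S$-component of $D(t_0+u)=0$ is $[a_S,t_0]=0$. The $W$-component is $Au-T_0a_W+\lambda u=0$. Since $T_0$ is invertible, this gives $a_W=T_0^{-1}(A+\lambda I)u$, which is \eqref{eq:mass-flexible-anchor-relations}.

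The $M$-component is $\omega_M(a_W,u)+\tau\mu=0$. Substituting the formula for $a_W$ gives
\[
\omega_M(T_0^{-1}Au,u)+\lambda\,\omega_M(T_0^{-1}u,u)+\tau\mu=0.
\]
The main step is to show that the first term vanishes. Because $[a_S,t_0]=0$ and $\rho$ is a representation, $A$ commutes with $T_0$, hence with $T_0^{-1}$. By \eqref{eq:mass-symplectic-invariance}, $T_0$ is $\omega_M$-skew. Therefore $T_0^{-1}$ is also $\omega_M$-skew: put $x=T_0^{-1}x'$ and $y=T_0^{-1}y'$ in the invariance identity. The composite $T_0^{-1}A$ of two commuting skew operators is then $\omega_M$-symmetric. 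For a symmetric operator $P$ and an alternating form, $\omega_M(Pu,u)=\omega_M(u,Pu)=-\omega_M(Pu,u)$, so $\omega_M(Pu,u)=0$. This kills the first term. The remaining terms are $\lambda B_{T_0}(u,u)+\tau\mu=0$, which is \eqref{eq:mass-flexible-central-relation}; for $d\neq2$ the $\tau\mu$ term does not appear.

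One sign convention needs care: whether $[a_W,t_0]$ equals $-T_0a_W$, and the orientation of $\omega_M(a_W,u)$ versus $\omega_M(u,a_W)$. I expect only an overall sign to be at stake, and $B_{T_0}(u,u)$ is insensitive to it up to a global factor. The hypothesis that $t_0$ is regular semisimple is not needed in this derivation beyond the invertibility of $T_0$; it will matter only in the later dimension argument for the centralizer.
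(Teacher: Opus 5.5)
Your proposal is correct and follows the paper's proof essentially step for step. You read off the $S$-, $W$- and $M$-components of $D(t_0+u)=0$, and you kill $\omega_M(T_0^{-1}Au,u)$ because $T_0^{-1}A$, as a product of commuting $\omega_M$-skew operators, is $\omega_M$-symmetric.

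Your closing sign worry is unnecessary. Since $\rho(t)=[t,\cdot]$ and $[u,v]=\omega_M(u,v)M$ by definition, your expansion is exactly right. Also, a flipped orientation of $\omega_M$ would change the relative sign between $\lambda B_{T_0}(u,u)$ and $\tau\mu$, not just a global factor. So it matters that the conventions are fixed.
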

\begin{proof}
The $S$-component gives $[a_S,t_0]=0$, hence $A$ commutes with $T_0$.  The $W$-component gives the second relation in \eqref{eq:mass-flexible-anchor-relations}.  Since both $A$ and $T_0$ are skew-adjoint with respect to $\omega_M$ and commute, $T_0^{-1}A$ is self-adjoint with respect to $\omega_M$; therefore
\[
\omega_M(T_0^{-1}Au,u)=0.
\]
The central component of $D(t_0+u)=0$ is consequently
\[
0=\omega_M(a_W,u)+\tau\mu
 =\lambda\omega_M(T_0^{-1}u,u)+\tau\mu,
\]
which is \eqref{eq:mass-flexible-central-relation}.
\end{proof}

\begin{theorem}\label{thm:mass-mge3-full}
Let $\ell\in\mathbb N-\tfrac12$ with $2\ell\ge3$, and let $d\ge1$.  Every 2-local derivation on $\widehat{\mathfrak g}^{\,M}_\ell(d)$ is a derivation.
\end{theorem}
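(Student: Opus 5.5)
We first invoke Proposition~\ref{prop:central-center-valued-reduction} and then Proposition~\ref{prop:mass-center-free-hyperplane}. After these, and after subtracting a multiple of $\eta_M$ when $d=2$, we may assume
\[
\Delta(M)=0,\qquad \Delta(L)\subseteq\mathbb CM,\qquad \Delta(x)=0\ \text{whenever the $M$-coordinate of $x$ vanishes}.
\]
It remains to treat elements $y=x+\gamma M$ with $\gamma\ne0$. By homogeneity we may take $\gamma=1$. The aim is to show $\Delta(x+M)=0$ for every $x$ with zero $M$-coordinate. Write $\Delta(x+M)=\kappa_x M$.

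The main device is a zero anchor $t_0+u$ with $u\in W$, where $t_0\in S$ is regular semisimple and $T_0=\rho(t_0)$ is invertible on $W$. Since $m$ is odd, $\rho(h)$ already has no zero weights, so $t_0=h+R$ for a generic semisimple $R\in\mathfrak{so}_d$ (or $h+\tau s$ when $d=2$) works. Such an anchor has zero $M$-coordinate, so $\Delta(t_0+u)=0$ by the reduction above. Let $D$ represent the pair $(t_0+u,\,x+M)$. Lemma~\ref{lem:mass-flexible-anchor} then gives three facts: $a_S$ centralizes $t_0$; $a_W=T_0^{-1}(A+\lambda I)u$; and
\[
\lambda B_{T_0}(u,u)+\tau\mu=0.
\]
Next we read off the $S$-component of $D(x+M)\in\mathbb CM$. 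It forces $[a_S,x_S]=0$, and the $W$-component imposes linear conditions relating $a_W$, $A$ and $\lambda$ to $x$. The central value is
\[
\kappa_x = 2\lambda+\omega_M(\text{stuff})+\mu\,(\text{$s$-coord of }x).
\]
Our plan is to choose $u$ so that every term is forced to vanish. We take $u$ with $B_{T_0}(u,u)\ne0$, which is possible because $B_{T_0}$ is a nondegenerate symmetric form: $T_0^{-1}$ is $\omega_M$-skew, so $\omega_M(T_0^{-1}\cdot,\cdot)$ is symmetric. For $d\ne2$ this forces $\lambda=0$. For $d=2$ we take $\tau=0$ if possible, i.e.\ $t_0=h$, which is still invertible on $W$ since $m$ is odd. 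That kills the $\mu$-term in the anchor relation and again forces $\lambda=0$. The extra shear contribution $\mu\cdot(\text{$s$-coordinate of }x)$ must then be handled separately, which we address in the last paragraph.

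With $\lambda=0$ we have $a_W=T_0^{-1}Au$, and the residual central value becomes a contribution of the form $\omega_M(a_W,x_W)$, coming from $[a_W,x_W]$, together with terms $\omega_M(Ax_W,\cdot)$. Here $A=\rho(a_S)$, and $a_S$ ranges over $C_S(t_0)\cap C_S(x_S)$, which is constrained further by the $W$-component equations. The key step, and the one I expect to be the main obstacle, is a dimension count inside the centralizer $C_S(t_0)$, a Cartan subalgebra. We must show that for generic $u$ (and generic regular $t_0$ conjugated so that it suits $x$) the admissible $a_S$ satisfy $\omega_M(T_0^{-1}Au,x_W)=0$. Equivalently, $u$ must be chosen $B_{T_0}$-orthogonal to the finite-dimensional space $\{A\,T_0^{-1}x_W : a_S\in C_S(t_0)\}$. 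That space has dimension at most $\operatorname{rank}S$, which is strictly less than $\dim W$. A generic $u$ therefore satisfies both this orthogonality and $B_{T_0}(u,u)\ne0$, since both conditions are Zariski open or are closed conditions on a proper subspace we can avoid. Once this holds, $\kappa_x=0$.

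When $x_S$ is not regular, its centralizer is larger, but $a_S$ is still confined to the Cartan subalgebra $C_S(t_0)$, so the count is unchanged. This is how the singular reductive actions are absorbed. In $d=2$ there remains the term $\mu\,\sigma$, where $\sigma$ is the $s$-coordinate of $x$. To cancel it I would first try keeping $\tau\ne0$, so that $\mu=-\lambda B_{T_0}(u,u)/\tau$, and then choosing $u$ with $B_{T_0}(u,u)=0$ but $u\ne0$. That choice gives $\mu=0$, and a separate argument, using the pair $(M,x+M)$ together with Proposition~\ref{prop:center-normalization}, shows $\lambda=0$. The nonzero isotropic cone of $B_{T_0}$ is a hypersurface of dimension $\dim W-1\ge 2d-1$, still larger than the rank of $S$, so the orthogonality condition can be imposed on it as well.
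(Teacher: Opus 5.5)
\textbf{Main gap: the case $d=2$, $\sigma\ne0$.} For $d\ne2$, and for $d=2$ when the $s$-coordinate $\sigma$ of $x$ vanishes, you follow the paper's route. The case $d=2$, $\sigma\neq0$, however, has a genuine gap.

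With $t_0=h$ the shear coefficient $\mu$ is unconstrained by the anchor, so $\sigma\mu$ survives, as you note. Your fix is to take $\tau\ne0$ and $B_{T_0}(u,u)=0$. This does give $\mu=0$, but then the relation $\lambda B_{T_0}(u,u)+\tau\mu=0$ places no condition on $\lambda$. The central value is then
$\omega_M(T_0^{-1}Au,x_W)+\lambda\bigl(\omega_M(T_0^{-1}u,x_W)+2\bigr)$,
with $\lambda$ free.

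The appeal to the pair $(M,x+M)$ is invalid. Proposition~\ref{prop:center-normalization} says only that the derivation representing \emph{that} pair has zero scalar coefficient. Your $\lambda$ belongs to the derivation representing $(t_0+u,x+M)$, and a 2-local map gives no relation between the representing derivations of different pairs. In fact the pair $(M,x+M)$ says nothing about $\kappa_x$ at all, because its shear coefficient is free.

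The missing idea is to exploit the coupling $\mu=-\lambda B_{T_0}(u,u)/\tau$ rather than kill $\lambda$ and $\mu$ separately:
\begin{enumerate}
\item Take $u$ non-isotropic for $B_{T_0}$ and $\omega_M$-orthogonal to $E'=\operatorname{span}\bigl(\{T_0^{-1}\rho(a)x_W:a\in C_S(t_0)\}\cup\{T_0^{-1}x_W\}\bigr)$. Since $\dim E'\le 3$ and $\dim W=2(m+1)\ge 8$, this orthogonal has dimension greater than $\dim W/2$, so such a $u$ exists.
\item The extra vector $T_0^{-1}x_W$ is needed because $\lambda$ is now allowed to be non-zero, so the term $\lambda\,\omega_M(T_0^{-1}u,x_W)$ must also vanish.
\item Rescale $u$ so that $B_{T_0}(u,u)=2\tau/\sigma$ (with your normalization $\gamma=1$).
\end{enumerate}
Then both inner central terms vanish, and the value becomes $\lambda\bigl(2-\sigma B_{T_0}(u,u)/\tau\bigr)=0$, whatever $\lambda$ is.

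\textbf{Secondary issue: the dimension count.} Orthogonality to the auxiliary space is a closed linear condition, so ``generic $u$'' is not the right argument. What you need is that the open condition $B_{T_0}(u,u)\ne0$ is non-empty on the linear subspace $E^{\perp_{\omega_M}}$, i.e.\ that this subspace is not totally $B_{T_0}$-isotropic.

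Since $B_{T_0}$ is non-degenerate, this requires $\dim W-\dim C_S(t_0)>\dim W/2$, that is $\dim W>2\dim C_S(t_0)$. The weaker condition $\dim C_S(t_0)<\dim W$ is not enough. The strong inequality is exactly where $2\ell\ge3$ enters: $(m+1)d\ge 4d>2+2\lfloor d/2\rfloor$.

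Your count never uses $m\ge3$, so it would apply verbatim to $m=d=1$, where it breaks down. For $t_0=h$ and $x_W=u_0$, the admissible subspace is $\mathbb C u_0$, which is $B_{T_0}$-isotropic. This is why the paper treats the Schr\"odinger endpoint separately, with a $v$-dependent $t_0$.

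Finally, a harmless slip: the condition you need is $\omega_M(u,T_0^{-1}Ax_W)=0$, equivalently $B_{T_0}(u,Ax_W)=0$. It is not $B_{T_0}$-orthogonality to $AT_0^{-1}x_W$.
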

\begin{proof}
Put $m=2\ell$, so $m$ is odd, and normalize $\Delta$ as in Proposition~\ref{prop:mass-center-free-hyperplane}.  Thus $\Delta$ is center-valued and vanishes on the hyperplane with zero $M$-coordinate.  Let
\[
y=t+v+cM,
\qquad t\in S,
\quad v\in W,
\quad c\in\mathbb C,
\]
and write $\Delta(y)=qM$.  There is nothing to prove when $c=0$.

First assume $d\ne2$.  Choose a regular semisimple $t_0\in S$ such that $T_0=\rho(t_0)$ is invertible.  Such an element exists: for $d=1$ one may take a non-zero multiple of $h$, while for $d\ge3$ one may take $t_0=ah+R_0$ with $R_0\in\mathfrak{so}_d$ regular semisimple and choose $a$ outside the finite set for which one of the tensor-product eigenvalues vanishes.  Put
\[
r=\dim C_S(t_0)=1+\Big\lfloor\frac d2\Big\rfloor,
\qquad
n=\dim W=(m+1)d.
\]
Since $m\ge3$, one has $n>2r$.

For $a\in C_S(t_0)$ let $A=\rho(a)$ and define
\[
E=\operatorname{span}\{T_0^{-1}Av: a\in C_S(t_0)\}.
\]
Then $\dim E\le r$.  Hence the symplectic orthogonal complement $E^{\perp_{\omega_M}}$ has dimension at least $n-r>n/2$.  The form $B_{T_0}$ is non-degenerate and symmetric, so a totally isotropic subspace for $B_{T_0}$ has dimension at most $\lfloor n/2\rfloor$.  We may therefore choose
\[
u\in E^{\perp_{\omega_M}},
\qquad
B_{T_0}(u,u)\ne0.
\]
The anchor $t_0+u$ has zero $M$-coordinate, hence its value under $\Delta$ is zero.  Let $D$ represent the pair $(t_0+u,y)$.  Lemma~\ref{lem:mass-flexible-anchor} gives $\lambda=0$ and $a_W=T_0^{-1}Au$ for some $A$ coming from $C_S(t_0)$.  Since $D(y)=qM$, its central coefficient is
\[
q=\omega_M(T_0^{-1}Au,v).
\]
Because $T_0^{-1}A$ is $\omega_M$-self-adjoint,
\[
q=\omega_M(u,T_0^{-1}Av)=0
\]
by the choice of $u$.  Thus $q=0$.

Now let $d=2$ and write the reductive component of the target as
\[
t=t_1+\sigma s,
\qquad t_1\in\mathfrak{sl}_2.
\]
If $\sigma=0$, take $t_0=h$.  Its action on $W$ is invertible because $m$ is odd.  The same dimension argument, now with $\dim C_S(h)=2$ and $n=2(m+1)>4$, provides $u\in E^{\perp_{\omega_M}}$ with $B_{T_0}(u,u)\ne0$.  Lemma~\ref{lem:mass-flexible-anchor} gives $\lambda=0$; the shear coefficient may remain free, but $\eta_M(y)=0$ because $\sigma=0$.  Hence the same self-adjointness computation gives $q=0$.

Finally suppose $\sigma\ne0$.  Since the case $c=0$ is already known, assume $c\ne0$.  Choose $\tau\ne0$ outside the finite set for which
\[
t_0=h+\tau s
\]
fails to act invertibly on $W$.  Then $t_0$ is regular semisimple and $\dim C_S(t_0)=2$.  With $A=\rho(a)$ for $a\in C_S(t_0)$, enlarge the preceding auxiliary space to
\[
E'=\operatorname{span}\Bigl(\{T_0^{-1}Av:a\in C_S(t_0)\}\cup\{T_0^{-1}v\}\Bigr).
\]
Then $\dim E'\le3$.  Since $n=2(m+1)\ge8$, the symplectic orthogonal complement satisfies
\[
\dim (E')^{\perp_{\omega_M}}\ge n-3>\frac n2.
\]
The non-degenerate symmetric form $B_{T_0}$ cannot vanish identically on a subspace of dimension greater than $n/2$.  Hence we can choose a non-isotropic vector in $(E')^{\perp_{\omega_M}}$ and scale it, over $\C$, so that
\begin{equation}\label{eq:mass-d2-prescribed-B}
B_{T_0}(u,u)=\frac{2c\tau}{\sigma}.
\end{equation}
Let $D$ represent $(t_0+u,y)$.  Lemma~\ref{lem:mass-flexible-anchor} gives
\[
\mu=-\frac{\lambda B_{T_0}(u,u)}{\tau},
\qquad
a_W=T_0^{-1}(A+\lambda I)u
\]
for some $A=\rho(a)$ with $a\in C_S(t_0)$.  The two possible inner central contributions both vanish.  Indeed, self-adjointness of $T_0^{-1}A$ gives
\[
\omega_M(T_0^{-1}Au,v)
 =\omega_M(u,T_0^{-1}Av)=0,
\]
while $T_0^{-1}$ is $\omega_M$-skew-adjoint and therefore
\[
\omega_M(T_0^{-1}u,v)
 =-\omega_M(u,T_0^{-1}v)=0.
\]
Both equalities use $u\in(E')^{\perp_{\omega_M}}$.  Consequently
\[
q=2c\lambda+\sigma\mu
 =\lambda\left(2c-\frac{\sigma}{\tau}B_{T_0}(u,u)\right)=0
\]
by \eqref{eq:mass-d2-prescribed-B}.  Thus the normalized map is zero in all cases.
\end{proof}

\begin{theorem}\label{thm:mass-all}
For every $d\ge1$ and every $\ell\in\mathbb N-\tfrac12$,
\begin{equation}\label{eq:mass-all}
\TwoLocDer\widehat{\mathfrak g}^{\,M}_\ell(d)
=\Der\widehat{\mathfrak g}^{\,M}_\ell(d).
\end{equation}
\end{theorem}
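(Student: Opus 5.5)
Theorem~\ref{thm:mass-mge3-full} already covers every $\ell\in\mathbb N-\tfrac12$ with $2\ell\ge3$, in all spatial dimensions. Only the Schr\"odinger endpoint $\ell=\tfrac12$, i.e.\ $m=1$, remains. I would therefore split the proof of \eqref{eq:mass-all} into these two cases and rerun the symplectic-anchor argument for $m=1$, watching the dimension counts that break down there.

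For $m=1$, first normalize $\Delta$ by Proposition~\ref{prop:central-center-valued-reduction}; this applies because the quotient is $\mathfrak g_{1/2}(d)$, which is rigid by Corollary~\ref{cor:noncentral-all}. Then apply Proposition~\ref{prop:mass-center-free-hyperplane}, which is valid for every odd $m$. After these two steps $\Delta$ is center-valued and vanishes on the hyperplane of elements with zero $M$-coordinate. It remains to show $q=0$ in $\Delta(t+v+cM)=qM$ when $c\ne0$. The tool is Lemma~\ref{lem:mass-flexible-anchor} with an anchor $t_0+u$. Here $t_0$ is regular semisimple with $T_0$ invertible; for $m=1$, $\rho(h)$ is already invertible since the weights are $\pm1$. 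The vector $u$ is chosen in $E^{\perp_{\omega_M}}$ with $B_{T_0}(u,u)\ne0$, and in the case $d=2$, $\sigma\ne0$ the value of $B_{T_0}(u,u)$ is prescribed.

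The main obstacle is the dimension count. We have $n=2d$ and $r=\dim C_S(t_0)=1+\lfloor d/2\rfloor$, and we need $n-r>n/2$, i.e.\ $d>r$.
\begin{itemize}
\item For $d\ge3$ this holds, since $d>1+\lfloor d/2\rfloor$. The $d\ne2$ argument then goes through unchanged.
\item For $d=1$ we have $n=2$, $r=1$, and $E=\operatorname{span}\{T_0^{-1}Av\}$ with $A$ ranging over $\C\rho(h)$, so $E=\C v$. Here $E^{\perp_{\omega_M}}$ is $\C v$ or all of $W$, and $B_{T_0}(u,u)$ may vanish on it.
\item For $d=2$ with $\sigma\ne0$, the enlarged space $E'$ can have dimension $3$ while $n=4$, so the counting fails as well.
\end{itemize}

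For the small cases I would argue directly, replacing the generic count by a choice adapted to $v$. First use the freedom in $t_0$: for $d=1$, any regular semisimple conjugate $g\cdot h$ works. Then $E=\C\,T_0^{-1}\rho(t_0)v=\C v$ regardless, and $E^{\perp}$ contains $v$ itself. The form $B_{T_0}(v,v)=\omega_M(T_0^{-1}v,v)$ is a quadratic form in $t_0$ that is nonzero for suitable $t_0$ when $v\ne0$, so the choice $u=v$ should work. If $v=0$, any $u$ with $B_{T_0}(u,u)\ne0$ works, since $E=0$.

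For $d=2$, I would similarly vary $t_0=h'+\tau s$ over conjugates $h'$ of $h$ and over $\tau$, so that the three vectors spanning $E'$ become dependent or $\omega$-orthogonal to a suitable $u$. Alternatively, I would exploit the weight decomposition under $s$: the radical splits as $p$- and $q$-strings, and $B_{T_0}$ pairs $p$ with $q$. From this I would solve the one quadratic condition \eqref{eq:mass-d2-prescribed-B} explicitly inside a two-dimensional $\omega$-orthogonal complement. This explicit low-dimensional case analysis for $d=1,2$ is where I expect most of the work, and where the paper's ``explicit treatment of dimensions one and two'' presumably sits.
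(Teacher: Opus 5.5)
Your reduction to $\ell=\tfrac12$ and your treatment of $d\ge3$ and $d=1$ agree with the paper's proof. One small correction for $d=1$: $B_{T_0}(v,v)$ is not a quadratic form in $t_0$. Since $T_0^{-1}=-T_0/\det T_0$ for a traceless invertible $2\times2$ matrix, it equals $-\omega_M(\rho_1(t_0)v,v)/\det\rho_1(t_0)$. The numerator is a non-zero linear functional of $t_0$ because $\mathfrak{sl}_2v=U_1$, and intersecting its non-vanishing set with $\det\rho_1(t_0)\ne0$ gives the required $t_0$.

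The genuine gap is $d=2$. There you only name candidate strategies, and nothing shows that one of them yields, for \emph{every} target $y$, an anchor $h'+\tau s+u$ with the needed properties. This is not automatic. Your bullet list also misses that the count fails for $d=2$, $\sigma=0$: $n=4$ and $\dim C_S(h)=2$, so $E^{\perp_{\omega_M}}$ can be a $2$-plane on which $B_{T_0}$ vanishes identically. For example, with $t_0=h$ and $v=p_0+p_1$ one gets $E=E^{\perp_{\omega_M}}=\operatorname{span}\{p_0,p_1\}$ with $B_{\rho(h)}\equiv0$ there. The anchor then does not force $\lambda=0$, and $q=2c\lambda$ stays undetermined. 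So the position of $h'$ relative to $v$ really matters and has to be controlled.

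The paper does this by $SL_2(\mathbb C)$-conjugation, which fixes $s$, $M$, $\omega_M$ and the normalization. This brings $v$ to one of the forms $p_0+\kappa q_1$ (with $\kappa=0$ allowed), $p_0+p_1+\kappa(q_0+q_1)$ with $\kappa\ne0$, $q_0$, or $0$. It then writes explicit anchors. For $\sigma\ne0$ it takes $t_0=h+\tau s$ with $\tau\notin\{0,\pm1\}$ and $u=Xp_1+q_0$, $u=Xp_0+q_1$, or $u=x(p_0+\kappa q_1)$, where $x$ is a root of $4\sigma\kappa x^2-4\kappa\tau x+2c\tau(1+\tau)$. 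For $\sigma=0$ it takes $\tau=0$ and an explicit $u$ with $B_{T_0}(u,u)\ne0$, which forces $\lambda=0$ and then $\omega_M(a_W,v)=0$.

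Your $E'$-route can be made to work on these normal forms; for instance, in the dependent-string case $(E')^{\perp_{\omega_M}}$ is a $B_{T_0}$-non-isotropic line. What is missing is exactly such a reduction together with the case-by-case verification, or an equivalent genericity argument over $h'$ and $\tau$. Without it the two-dimensional Schr\"odinger case, and hence the theorem, remains unproved.
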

\begin{proof}
For $2\ell\ge3$ this is Theorem~\ref{thm:mass-mge3-full}. We give a self-contained proof of the remaining endpoint $\ell=\tfrac12$, which is the Schr\"odinger algebra. This endpoint is also treated in \cite{TangWangJLT}; the argument below does not require that external result.

Put $m=1$, $W=U_1\otimes\C^d$, and $S=\mathfrak{sl}_2\oplus\mathfrak{so}_d$. By Propositions~\ref{prop:central-center-valued-reduction} and~\ref{prop:mass-center-free-hyperplane}, subtracting a global derivation gives a center-valued map $\Delta$ that vanishes on every element with zero $M$-coordinate. For $d=2$ we also have $\Delta(s)=0$. Fix
\[
y=t+v+cM,\qquad t\in S,\quad v\in W,\quad c\ne0,
\qquad \Delta(y)=qM.
\]
We show that $q=0$ by choosing zero anchors.

\emph{Spatial dimension $d\ge3$.} The $d\ne2$ argument in Theorem~\ref{thm:mass-mge3-full} uses the inequality $\dim W>2\dim C_S(t_0)$ for a regular semisimple $t_0$ acting invertibly on $W$. Here
\[
\dim W=2d>2+2\lfloor d/2\rfloor=2\dim C_S(t_0).
\]
Such a $t_0=ah+R_0$ exists: take $R_0$ regular semisimple and avoid the finitely many values of $a$ for which $a=0$ or $\rho(t_0)$ is singular. To recall the argument, write $T_0=\rho(t_0)$ and put
\[
E=\operatorname{span}\{T_0^{-1}\rho(b)v:b\in C_S(t_0)\}.
\]
The inequality implies that $E^{\perp_{\omega_M}}$ has dimension greater than half of $\dim W$. Since $B_{T_0}$ is a non-degenerate symmetric form, choose $u$ in this subspace with $B_{T_0}(u,u)\ne0$. A derivation representing $(t_0+u,y)$ satisfies, by Lemma~\ref{lem:mass-flexible-anchor}, $\lambda=0$ and $a_W=T_0^{-1}\rho(a_S)u$, where $a_S\in C_S(t_0)$. Thus
\[
q=\omega_M(a_W,v)
=\omega_M(u,T_0^{-1}\rho(a_S)v)=0.
\]

\emph{Spatial dimension $d=1$.} If $v=0$, choose $t_0=h$ and $u$ with $B_{\rho(h)}(u,u)\ne0$. The same anchor gives $\lambda=0$, and hence $q=0$. If $v\ne0$, choose $t_0\in\mathfrak{sl}_2$ with
\[
\det\rho_1(t_0)\ne0,\qquad
\omega_M(\rho_1(t_0)v,v)\ne0.
\]
This is possible because the second expression is a non-zero linear functional of $t_0$ (the space $\mathfrak{sl}_2v$ equals $U_1$), and the two non-empty Zariski open conditions intersect. A trace-zero invertible $2\times2$ matrix satisfies $T_0^{-1}=-T_0/\det T_0$, so $B_{T_0}(v,v)\ne0$. Use the zero anchor $t_0+v$. Lemma~\ref{lem:mass-flexible-anchor} gives $\lambda=0$ and $a_W=T_0^{-1}\rho(a_S)v$. Since $C_{\mathfrak{sl}_2}(t_0)=\C t_0$, we get $a_W=\beta v$ and $q=\beta\omega_M(v,v)=0$.

\emph{Spatial dimension $d=2$.} Use $p_j=P_1^{(j)}+\mathrm iP_2^{(j)}$ and $q_j=P_1^{(j)}-\mathrm iP_2^{(j)}$, for $j=0,1$. Then
\[
\omega_M(p_0,q_1)=-2,\qquad
\omega_M(p_1,q_0)=2,
\]
and the other independent pairings vanish. In the ordered basis $(p_0,p_1,q_0,q_1)$, the action of $t_0=h+\tau s$ is
\[
T_0=\operatorname{diag}(1+\tau,-1+\tau,1-\tau,-1-\tau).
\]
Conjugation by $SL_2(\C)$ fixes $s,M$, preserves $\omega_M$, and preserves the normalization of $\Delta$. Its diagonal action on the two vectors of $U_1$ puts $v$ in one of the following forms:
\[
\begin{array}{c|l}
\mathrm{(i)}&p_0+\kappa q_1\quad(\kappa\ne0),\ \text{or }p_0\ (\kappa=0),\\
\mathrm{(ii)}&p_0+p_1+\kappa(q_0+q_1)\quad(\kappa\ne0),\\
\mathrm{(iii)}&q_0,\\
\mathrm{(iv)}&0.
\end{array}
\]
Indeed, independent string vectors can be sent to $(u_0,\kappa u_1)$ by a determinant-one transformation; dependent non-zero string vectors can be sent to $(u_0+u_1,\kappa(u_0+u_1))$. The two remaining non-zero cases have just one string and can be sent to $p_0$ or $q_0$.

For an anchor $r=t_0+u$, a representing derivation with $D(r)=0$ has
\[
a_S=\alpha h+\gamma s,\qquad
a_W=T_0^{-1}(A+\lambda I)u,\qquad
\lambda B+\tau\mu=0,
\]
where $A=\rho(a_S)$ and $B=B_{T_0}(u,u)$. Write $\sigma$ for the $s$-coordinate of $t$. The central component at $y$ is
\[
q=\omega_M(a_W,v)+2c\lambda+\sigma\mu.
\]
First suppose $\sigma\ne0$, and choose $\tau\notin\{0,1,-1\}$. Then $\mu=-\lambda B/\tau$. In cases (i) and (iv), take
\[
u=Xp_1+q_0,\qquad X=\frac{c\tau(\tau-1)}{2\sigma}.
\]
Here $B=4X/(\tau-1)$ and $\omega_M(a_W,v)=0$, so $q=0$. In case (iii), take
\[
u=Xp_0+q_1,\qquad X=-\frac{c\tau(1+\tau)}{2\sigma}.
\]
Now $B=-4X/(1+\tau)$ and again $\omega_M(a_W,v)=0$, giving $q=0$. In case (ii), take $u=x(p_0+\kappa q_1)$. Direct substitution gives
\[
B=-\frac{4\kappa x^2}{1+\tau},\qquad
\omega_M(a_W,v)=-\frac{4\kappa x\lambda}{1+\tau}.
\]
Consequently
\[
q=\frac{\lambda}{\tau(1+\tau)}
\bigl(4\sigma\kappa x^2-4\kappa\tau x+2c\tau(1+\tau)\bigr).
\]
Choose a root of the displayed quadratic, which exists over $\C$ since $\sigma\kappa\ne0$. Its constant term is non-zero, so the root is non-zero. This gives $q=0$.

Finally suppose $\sigma=0$. Set $\tau=0$. Use $u=p_1+q_0$ in cases (i) and (iv), $u=p_0+q_1$ in case (iii), and $u=p_0+\kappa q_1$ in case (ii). The $B$-value is $-4$ in cases (i), (iii), and (iv), and $-4\kappa$ in case (ii); all are non-zero. Thus $\lambda=0$. In cases (i), (iii), and (iv), the pairing $\omega_M(a_W,v)$ vanishes by the supports of the vectors. In case (ii), it equals $-4\kappa\lambda=0$. The shear term vanishes because $\sigma=0$, so $q=0$ in every case. Therefore the normalized map vanishes identically, completing the endpoint proof.
\end{proof}

\begin{proposition}\label{prop:exotic-center-free-hyperplane}
Let $\Delta$ be a 2-local derivation on $\widehat{\mathfrak g}^{\,\Theta}_\ell(2)$, normalized as in Proposition~\ref{prop:central-center-valued-reduction}.  Subtract a scalar multiple of $\eta_\Theta$ so that $\Delta(s)=0$.  Then
\[
\Delta(x)=0
\qquad
\text{for every }x\text{ whose }z\text{-coordinate is zero}.
\]
\end{proposition}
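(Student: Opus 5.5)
We follow the $d=2$ part of the proof of Proposition~\ref{prop:mass-center-free-hyperplane}, now in the exotic algebra, where $s$ acts by $+1$ on every $p_n$ and by $-1$ on every $q_n$. By Proposition~\ref{prop:central-center-valued-reduction}, $\Delta$ takes values in $\C z$ and $\Delta(z)=0$. We first check that subtracting $\Delta(s)$ times $\eta_\Theta/1$ is legitimate. Write $\Delta(s)=\kappa z$; since $\eta_\Theta(s)=z$, subtracting $\kappa\eta_\Theta$ keeps the map center-valued and keeps $\Delta(z)=0$, because $\eta_\Theta$ kills $z$. The resulting map satisfies $\Delta(s)=0$.

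Next we fix $x$ with zero $z$-coordinate and choose a representing derivation for the pair $(s,x)$ in the form of \eqref{eq:der-exotic}:
\[
D=\ad(a_S+a_W)+\lambda\delta_\Theta+\mu\eta_\Theta ,
\]
with $a_S\in\mathfrak{sl}_2\oplus\C s$ and $a_W\in V$. The central part of the inner element may be dropped, since it acts trivially. The condition $D(s)=0$ splits into three components.
\begin{enumerate}[label=\textup{(\roman*)}]
\item Reductive component: $[a_S,s]=0$, which holds automatically.
\item Radical component: $[a_W,s]=-\rho(s)a_W=0$.
\item Central component: $\mu z=0$.
\end{enumerate}
The operator $\rho(s)$ is diagonal with eigenvalues $\pm1$ on the basis $\{p_n,q_n\}$, so it is invertible on $V$; hence $a_W=0$. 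Also $\delta_\Theta(s)=0$, which leaves $\mu=0$.

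It remains to evaluate $D(x)=\ad(a_S)(x)+\lambda\delta_\Theta(x)$ and compare with $\Delta(x)\in\C z$. Both $\ad(a_S)$ and $\delta_\Theta$ preserve the decomposition $(\mathfrak{sl}_2\oplus\C s)\oplus V\oplus\C z$ and send $z$ into $\C z$. On the other two summands they take values in $\mathfrak{sl}_2\oplus\C s$ and in $V$ respectively, because $\delta_\Theta$ is diagonal on the basis and $a_S$ is reductive, so it cannot produce a $z$-term out of an element of $V$. Consequently an element with zero $z$-coordinate is sent to an element with zero $z$-coordinate. Since $D(x)=\Delta(x)$ is central, it must vanish.

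The only place that needs care is the bookkeeping that the inner element of $D$ carries no radical part that could create a $z$-term via $[a_W,x_V]$. This is exactly what the invertibility of $\rho(s)$ on $V$ rules out; the rest of the argument is routine.
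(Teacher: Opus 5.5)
Your proof is correct and follows essentially the same argument as the paper: represent the pair $(s,x)$, use the invertibility of $\ad s$ (eigenvalues $\pm1$ on the $p_n,q_n$) to force $a_W=0$ and then $\mu=0$, and note that $\ad(a_S)+\lambda\delta_\Theta$ cannot create a $z$-component from an element with zero $z$-coordinate. Your preliminary check that subtracting $\Delta(s)\,\eta_\Theta$ keeps the map center-valued and keeps $\Delta(z)=0$ is a harmless detail that the paper leaves implicit.
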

\begin{proof}
Let $x$ have zero $z$-coordinate and represent the pair $(s,x)$ by
\[
D=\operatorname{ad}(a)+\lambda\delta_\Theta+\mu\eta_\Theta.
\]
Since $D(s)=\Delta(s)=0$,
\[
[a_V,s]+\mu z=0.
\]
In the $p$--$q$ basis the operator $\operatorname{ad}s$ has eigenvalue $1$ on every $p_k$ and eigenvalue $-1$ on every $q_k$.  Hence the non-central radical part of $a$ is zero, and comparison of the central component gives $\mu=0$.  The remaining derivation $\operatorname{ad}(a_S)+\lambda\delta_\Theta$ has no central component on any vector with zero $z$-coordinate.  Since $D(x)=\Delta(x)$ is central, it must be zero.
\end{proof}

\subsection{Middle-weight and affine zero anchors in the exotic extension}
We use a middle-weight stabilizer, a conjugate positioning argument, and one affine zero anchor to eliminate all central translates.

Put $m=2\ell$ and
\[
W_\Theta=\operatorname{span}\{p_0,\ldots,p_m,q_0,\ldots,q_m\}.
\]
The Heisenberg bracket defines a non-degenerate alternating form $\omega_\Theta$ by
\[
[u,v]=\omega_\Theta(u,v)z,
\qquad u,v\in W_\Theta.
\]

\begin{lemma}\label{lem:exotic-middle-anchor}
Let
\[
u=a p_\ell+bq_\ell,
\qquad ab\ne0,
\qquad r=h+u.
\]
If
\[
D=\operatorname{ad}(a_S+a_W)+\lambda\delta_\Theta+\mu\eta_\Theta
\]
satisfies $D(r)=0$, then $\lambda=0$ and, up to the ineffective central summand in the inner element,
\begin{equation}\label{eq:exotic-middle-anchor-stabilizer}
D=\operatorname{ad}(\alpha h+\beta u)+\mu\eta_\Theta
\end{equation}
for some $\alpha,\beta,\mu\in\mathbb C$.
\end{lemma}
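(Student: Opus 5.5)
Write $a_S=a_1+\gamma s$ with $a_1\in\mathfrak{sl}_2$, and $a_W\in W_\Theta$; the central summand of the inner element acts trivially. The plan is to expand $D(h+u)$ into its reductive, non-central radical and central components and read off constraints from each.

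\textbf{Reductive component.} This gives $[a_1,h]=0$, so $a_1=\alpha h$. The $s$-part $\gamma s$ commutes with $h$ and is not yet constrained.

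\textbf{Radical component.} It reads
\[
-\rho(h)a_W+\alpha\rho(h)u+\gamma\rho(s)u+\lambda u=0 .
\]
Since $p_\ell,q_\ell$ have $h$-weight $2(\ell-\ell)=0$, we have $\rho(h)u=0$. Also $\rho(s)u=ap_\ell-bq_\ell$. Split $a_W$ into $h$-weight spaces. Off the middle weight, the components of $\rho(h)a_W$ must vanish, so $a_W$ lies in the weight-zero space $\operatorname{span}\{p_\ell,q_\ell\}$. On that space $\rho(h)a_W=0$, so the equation reduces to
\[
\gamma(ap_\ell-bq_\ell)+\lambda(ap_\ell+bq_\ell)=0 .
\]
Since $ab\ne0$, this forces $\gamma+\lambda=0$ and $\lambda-\gamma=0$, hence $\gamma=\lambda=0$.

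\textbf{Central component.} It equals $\omega_\Theta(a_W,u)+\mu\,\eta_\Theta(h+u)$. The shear vanishes on $h+u$, so this is $\omega_\Theta(a_W,u)$. Write $a_W=xp_\ell+yq_\ell$. Using $[p_\ell,q_\ell]=(-1)^\ell(\ell!)^2z$, $[p_\ell,p_\ell]=[q_\ell,q_\ell]=0$, and the antisymmetry of $\omega_\Theta$, we get
\[
\omega_\Theta(a_W,u)=(xb-ya)(-1)^\ell(\ell!)^2 .
\]
Its vanishing gives $xb=ya$, i.e.\ $a_W=\beta u$ with $\beta=x/a$.

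\textbf{Conclusion.} Hence $D=\operatorname{ad}(\alpha h+\beta u)+\mu\eta_\Theta$ modulo a central inner summand, with $\lambda=0$. The only step needing care is the weight argument, namely that $\rho(h)$ is invertible off the middle weight, which holds because the weights $2(\ell-n)$ vanish only at $n=\ell$. One must also check the sign conventions in $\omega_\Theta$, but these only affect the proportionality constant and not the conclusion $a_W\in\mathbb Cu$.
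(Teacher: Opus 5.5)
Your proposal is correct and follows essentially the same route as the paper. The $\mathfrak{sl}_2$-component forces the reductive part into $\mathbb C h\oplus\mathbb C s$. The non-zero $h$-weights off the middle level kill $a_W$ outside $\operatorname{span}\{p_\ell,q_\ell\}$. The $p_\ell$- and $q_\ell$-coefficients together with $ab\ne0$ give $\gamma=\lambda=0$. Finally, the central component forces $a_W\in\mathbb C u$. The only cosmetic difference is in that last step: you compute $\omega_\Theta(a_W,u)=(-1)^\ell(\ell!)^2(xb-ya)$ explicitly, whereas the paper simply observes that a line in the non-degenerate middle plane is its own symplectic orthogonal.
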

\begin{proof}
Write
\[
a_S=\alpha_e e+\alpha_f f+\alpha_h h+\alpha_s s.
\]
The $\mathfrak{sl}_2$-component of $D(h+u)=0$ gives
\[
\alpha_e=\alpha_f=0.
\]
Since the $h$-weights on $W_\Theta$ are non-zero away from the middle level, all components of $a_W$ outside
\[
\mathbb Cp_\ell\oplus\mathbb Cq_\ell
\]
vanish.  Write $a_W=A p_\ell+Bq_\ell$.  The $p_\ell$- and $q_\ell$-components of $D(r)=0$ are
\[
a(\alpha_s+\lambda)=0,
\qquad
b(-\alpha_s+\lambda)=0.
\]
Because $ab\ne0$, we get
\[
\alpha_s=\lambda=0.
\]
The central component of $D(r)=0$ is
\[
\omega_\Theta(a_W,u)=0.
\]
The restriction of $\omega_\Theta$ to
$\mathbb Cp_\ell\oplus\mathbb Cq_\ell$ is non-degenerate, and the symplectic orthogonal complement of the line $\mathbb Cu$ inside this two-dimensional plane is the same line.  Hence $a_W=\beta u$.  The shear vanishes on $h+u$, so its coefficient remains free.  This proves \eqref{eq:exotic-middle-anchor-stabilizer}.
\end{proof}

\begin{lemma}\label{lem:exotic-middle-positioning}
Let
\[
v=v_p+v_q\in W_\Theta,
\qquad
v_p\in\operatorname{span}\{p_0,\ldots,p_m\},
\quad
v_q\in\operatorname{span}\{q_0,\ldots,q_m\}.
\]
There exists an inner automorphism generated by the $\mathfrak{sl}_2$-factor, and hence a conjugate triple $(e',h',f')$ with corresponding weight bases $p_0',\ldots,p_m'$ and $q_0',\ldots,q_m'$, such that the coefficients $A,B$ of $v$ along $p_\ell',q_\ell'$ satisfy either
\[
A=B=0
\]
or
\[
AB\ne0.
\]
\end{lemma}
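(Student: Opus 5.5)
Write the $p$-string part and the $q$-string part of $v$ as two vectors of the irreducible module $U_m$: under the identifications $p_k\leftrightarrow u_k$ and $q_k\leftrightarrow u_k$ we get $v_p\leftrightarrow x\in U_m$ and $v_q\leftrightarrow y\in U_m$. An inner automorphism $g\in SL_2(\C)$ of the $\mathfrak{sl}_2$-factor fixes $s$, preserves both strings, and acts on each string through the same representation $\rho_m(g)$. The conjugate weight basis is $p_k'=g\,p_k$, $q_k'=g\,q_k$. Hence the middle coefficients are
\[
A=\varphi(g^{-1}x),\qquad B=\varphi(g^{-1}y),
\]
where $\varphi$ is the coordinate functional of $u_\ell$ in the basis $u_0,\ldots,u_m$. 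The task is to choose $g$ so that these two numbers are both zero or both non-zero.

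We split into cases according to whether $x$ and $y$ vanish. If $x=y=0$, then $A=B=0$ for every $g$. If exactly one of them is non-zero, say $x\ne0$ and $y=0$, then $B=0$ automatically. We then need $A=0$, that is, $g^{-1}x$ must have zero $u_\ell$-coordinate. We can take $g=1$ if $\varphi(x)=0$ already. Otherwise we use the torus or a Weyl-type element: conjugate $x$ into a vector whose middle coordinate vanishes. Concretely, the function $g\mapsto\varphi(g^{-1}x)$ is a non-constant regular function on the irreducible variety $SL_2(\C)$ when $x\ne0$ (because the orbit of $x$ spans $U_m$). A non-constant regular function on $SL_2(\C)$ has a non-empty zero set, so some $g$ gives $A=0$. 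Non-constancy holds since $U_m$ is irreducible: if $\varphi(g^{-1}x)$ were constant, then $\varphi(g^{-1}x-x)=0$ for all $g$, and the span of the $g^{-1}x-x$ is a non-zero submodule (it is stable, and it is non-zero unless $x$ is fixed, which is impossible for $m\ge1$). This would force $\varphi=0$, a contradiction. We should double-check that "non-constant regular function on $SL_2$ attains $0$" is valid. It is: $SL_2(\C)$ is affine, irreducible and of dimension $3$, so the fibre of a dominant morphism to $\mathbb A^1$ over $0$ is non-empty unless the image misses $0$. Polynomials in the matrix entries restricted to $SL_2$ can be nowhere zero only if they are units of the coordinate ring, and the units of $\C[SL_2]$ are constants. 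Alternatively, and more concretely, restrict to the one-parameter subgroup $\exp(tf)$. Then $t\mapsto\varphi(\exp(-t\rho_m(f))x)$ is a polynomial in $t$, and it is non-constant as soon as $x$ has a non-zero coordinate in some $u_k$ with $k<\ell$. A preliminary Weyl reflection can arrange this unless $x$ is a multiple of $u_\ell$, in which case $\exp(tf)$ already moves the middle coordinate. We would present this elementary polynomial version.

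If both $x$ and $y$ are non-zero, the two functions $F_x(g)=\varphi(g^{-1}x)$ and $F_y(g)=\varphi(g^{-1}y)$ are not identically zero on $SL_2(\C)$, by the same irreducibility argument (the $g^{-1}x$ span $U_m$). The set where $F_xF_y\ne0$ is therefore a non-empty Zariski open subset of the irreducible group, and any $g$ in it gives $AB\ne0$. The case $y\ne0$, $x=0$ is symmetric to the one-sided case above.

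The main obstacle is the one-sided case. There one must genuinely kill a middle coordinate rather than make it generic, so the argument needs the existence of a zero of a non-constant polynomial. We would handle it with the polynomial $t\mapsto\varphi(\exp(-t\rho_m(f))w)$, where $w$ is $x$ or its image under a Weyl element $\exp(e)\exp(-f)\exp(e)$ chosen so that the lowest-index non-zero coordinate of $w$ lies below $\ell$. Then the coefficient of $t^{\ell-k}$ is a non-zero binomial multiple of that coordinate, and a complex root exists.
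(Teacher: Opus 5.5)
\paragraph{Verdict.} Your reduction to $A=\varphi(g^{-1}x)$, $B=\varphi(g^{-1}y)$ is correct. Your treatment of the case where both strings are non-zero is exactly the paper's argument: two non-zero matrix coefficients, and an intersection of non-empty Zariski open subsets of the irreducible group. There is, however, a concrete gap in the one-sided case, in the elementary polynomial version you say you would present.

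\paragraph{The gap.} That version fails when $x\in\C^\times u_\ell$, for instance $v=p_\ell$, and this case is not excluded. Since $\rho_m(f)$ raises the index, $\exp(-t\rho_m(f))u_\ell\in u_\ell+\operatorname{span}\{u_{\ell+1},\ldots,u_m\}$. So the middle coordinate is constant and $\exp(tf)$ does \emph{not} move it, contrary to your assertion. The Weyl element $\exp(e)\exp(-f)\exp(e)$ sends the zero-weight vector $u_\ell$ to $\pm u_\ell$, so it cannot produce a non-zero coordinate below index $\ell$ either.

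\paragraph{The repair.} In this case, first apply $\exp(\rho_m(e))$, which gives $\sum_{j=0}^{\ell}\binom{\ell}{j}u_{\ell-j}$. The middle coordinate of $\exp(-t\rho_m(f))$ applied to this vector is $\sum_{j=0}^{\ell}\binom{\ell}{j}\binom{\ell+j}{j}(-t)^j$. This polynomial has degree $\ell\ge1$, so it has a complex root.

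Your abstract route does cover this case. Its real content is that a nowhere-vanishing regular function on $SL_2(\C)$ is a unit of $\C[SL_2]$, and that such units are constant. The latter is Rosenlicht's theorem together with the absence of non-trivial characters of $SL_2$. The preceding sentence about fibres of dominant morphisms to $\mathbb A^1$ is a tautology and proves nothing by itself. Your proof that the matrix coefficient is non-constant, via the submodule spanned by the $g^{-1}x-x$, is fine.

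\paragraph{Comparison with the paper.} The paper handles the one-sided case without case analysis and without this algebraic-group input. Suppose $F(g)=\varphi(g^{-1}w)$ has no zero. Restrict $F$ to the curves $t\mapsto g\exp(te)$ through every point $g$. Each restriction $P_g(t)=\varphi(\exp(-te)g^{-1}w)$ is a zero-free polynomial in $t$, hence constant. Differentiating at $t=0$ gives $\varphi(\rho_m(e)g^{-1}w)=0$ for all $g$. Since the orbit spans $U_m$, this gives $\varphi\circ\rho_m(e)=0$ on $U_m$, contradicting $\rho_m(e)u_{\ell+1}=(\ell+1)u_\ell$; this is where $\ell\ge1$ enters.

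This is the same polynomial mechanism as your concrete idea. Because the base point $g$ varies freely, it never needs a well-chosen starting vector, which is precisely where your explicit version breaks down.
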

\begin{proof}
Each of the two strings is an irreducible $U_m$-module.  Fix the middle-coordinate functional on $U_m$.  For a non-zero vector $w\in U_m$, the corresponding matrix coefficient on $SL_2(\mathbb C)$,
\[
g\longmapsto \operatorname{coeff}_{u_\ell}(g^{-1}w),
\]
is a non-zero regular function: its non-vanishing follows because the orbit of $w$ spans the irreducible module $U_m$.  If both $v_p$ and $v_q$ are non-zero, the complements of their two zero sets are non-empty Zariski open subsets of the irreducible group $SL_2(\mathbb C)$, so their intersection is non-empty; choose $g$ there and obtain $AB\ne0$.

Suppose exactly one string is non-zero, and denote its non-zero vector by $w$.  We claim that its middle-coordinate matrix coefficient has a zero.  Assume the contrary.  For every $g\in SL_2(\C)$ the polynomial
\[
P_g(t)=\varphi\bigl(\exp(-te)g^{-1}w\bigr),
\qquad
\varphi=\operatorname{coeff}_{u_\ell},
\]
has no zero in $\C$. Since a non-zero complex polynomial with no zero is constant, $P_g$ is constant.  Differentiating at $t=0$ gives
\[
\varphi(e g^{-1}w)=0
\qquad(g\in SL_2(\C)).
\]
The linear span of the orbit $\{g^{-1}w:g\in SL_2(\C)\}$ is a non-zero invariant subspace of the irreducible module $U_m$, and hence is all of $U_m$.  Thus $\varphi\circ e=0$ on $U_m$.  This is impossible because
\[
e u_{\ell+1}=(\ell+1)u_\ell
\]
and $\ell\ge1$.  Therefore some $g$ makes the middle coefficient zero.  The absent string has middle coefficient zero automatically, so $A=B=0$.  The case $v=0$ is immediate.  Conjugating the standard triple and the two weight bases by this $g$ gives the assertion.
\end{proof}

\begin{lemma}\label{lem:exotic-affine-middle-anchor}
Fix a conjugate triple and corresponding middle vectors $p_\ell',q_\ell'$.  Let
\[
r=h'+\tau s+u,
\qquad
u=ap_\ell'+bq_\ell',
\qquad
\tau ab\ne0,
\]
and assume that the action of $h'+\tau s$ on $W_\Theta$ is invertible.  If
\[
D=\operatorname{ad}(a_S+a_W)+\lambda\delta_\Theta+\mu\eta_\Theta
\]
satisfies $D(r)=0$, then, writing $c_\ell=(-1)^\ell(\ell!)^2$, its coefficients satisfy
\begin{equation}\label{eq:exotic-affine-aW}
a_S=\alpha h'+\gamma s,
\qquad
a_W=\frac{(\gamma+\lambda)a}{\tau}p_\ell'
     +\frac{(\gamma-\lambda)b}{\tau}q_\ell'
\end{equation}
up to an ineffective central summand, and
\begin{equation}\label{eq:exotic-affine-mu}
\mu=-\frac{2c_\ell ab}{\tau^2}\lambda.
\end{equation}
\end{lemma}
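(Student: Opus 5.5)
The plan is to follow the proof of Lemma~\ref{lem:exotic-middle-anchor} closely, replacing $h$ by the regular element $t_0=h'+\tau s$. The equation $D(r)=0$ splits into its reductive, non-central radical and central components, and we read off one coefficient at a time. All computations take place in the conjugate weight bases $p_k',q_k'$. These satisfy the same bracket relations as in \eqref{eq:sl2}, because the inner automorphism from $SL_2(\C)$ fixes $s$ and $z$ and preserves $\omega_\Theta$. In particular $\omega_\Theta(p_\ell',q_\ell')=(-1)^\ell(\ell!)^2=c_\ell$.

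\emph{Reductive component.} Write $a_S=\alpha_e e'+\alpha_f f'+\alpha h'+\gamma s$. Since $s$ is central in $S$, we get $[a_S,h'+\tau s]=[a_S,h']$. Its vanishing forces $\alpha_e=\alpha_f=0$, which gives $a_S=\alpha h'+\gamma s$.

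\emph{Radical component.} The $W_\Theta$-part of $D(r)$ is
\[
\rho(a_S)u+\lambda u-\rho(h'+\tau s)a_W .
\]
The operator $\rho(h')$ kills the middle vectors, and $\rho(s)$ acts by $\pm1$ on $p_\ell',q_\ell'$. Hence
\[
\rho(a_S)u+\lambda u=(\gamma+\lambda)a\,p_\ell'+(-\gamma+\lambda)b\,q_\ell' .
\]
By hypothesis $T_0=\rho(h'+\tau s)$ is invertible, so $a_W=T_0^{-1}\bigl(\rho(a_S)+\lambda I\bigr)u$. On $p_\ell'$ the operator $T_0$ acts by $\tau$, and on $q_\ell'$ it acts by $-\tau$. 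This gives
\[
a_W=\frac{(\gamma+\lambda)a}{\tau}\,p_\ell'+\frac{(\gamma-\lambda)b}{\tau}\,q_\ell' ,
\]
which is \eqref{eq:exotic-affine-aW}. The central summand of the inner element acts trivially, so it can be dropped.

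\emph{Central component.} The central coefficient of $D(r)$ receives three contributions:
\begin{itemize}
\item $[a_W,u]=\omega_\Theta(a_W,u)z$;
\item the shear, $\eta_\Theta(\tau s)=\mu\tau z$;
\item nothing from $\delta_\Theta$, because $r$ has zero $z$-coordinate.
\end{itemize}
I would evaluate $\omega_\Theta(a_W,u)$ using $\omega_\Theta(p_\ell',p_\ell')=\omega_\Theta(q_\ell',q_\ell')=0$ and $\omega_\Theta(q_\ell',p_\ell')=-c_\ell$. This yields
\[
\omega_\Theta(a_W,u)=\frac{ab\,c_\ell}{\tau}\bigl[(\gamma+\lambda)-(\gamma-\lambda)\bigr]=\frac{2c_\ell ab\lambda}{\tau}.
\]
The $\gamma$-terms cancel, which is the same self-adjointness phenomenon as in Lemma~\ref{lem:mass-flexible-anchor}. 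Setting the central coefficient $\frac{2c_\ell ab\lambda}{\tau}+\tau\mu$ equal to zero gives $\mu=-2c_\ell ab\lambda/\tau^2$, which is \eqref{eq:exotic-affine-mu}.

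The only delicate point is getting the sign conventions right. This means checking $\omega_\Theta(p_\ell,q_\ell)=c_\ell$ from \eqref{eq:exotic2local} after the substitution $z=-2\mathrm i\Theta$. It also means confirming that $\eta_\Theta(s)=z$ contributes $+\tau\mu z$ and not some other multiple.
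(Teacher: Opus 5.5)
Your proposal is correct and follows the paper's proof: you split $D(r)=0$ into its reductive, radical and central components. The centralizer of $h'+\tau s$ gives $a_S=\alpha h'+\gamma s$, inverting $\rho(h'+\tau s)$ (eigenvalues $\pm\tau$ on $p_\ell',q_\ell'$) gives $a_W$, and the central component with $\omega_\Theta(p_\ell',q_\ell')=c_\ell$ gives $\mu$. The sign checks you flag do go through, since $[p_m,q_n]=J_m z$ follows from $z=-2\mathrm i\Theta$ and the inner automorphism fixes $s$ and $z$.
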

\begin{proof}
The reductive component of $D(r)=0$ gives $a_S\in C_{\mathfrak{sl}_2\oplus\mathbb Cs}(h'+\tau s)=\mathbb Ch'\oplus\mathbb Cs$.  Put $a_S=\alpha h'+\gamma s$.  On the middle weight lines, $h'$ acts by zero, while $s$ has eigenvalues $1$ on $p_\ell'$ and $-1$ on $q_\ell'$.  The $W_\Theta$-component of $D(r)=0$ is therefore
\[
(\rho(a_S)+\lambda I)u-\rho(h'+\tau s)a_W=0.
\]
Invertibility of $\rho(h'+\tau s)$ gives \eqref{eq:exotic-affine-aW}.  Since
\[
[p_\ell',q_\ell']=c_\ell z,
\]
the central component becomes
\[
0=\omega_\Theta(a_W,u)+\tau\mu
 =\frac{2c_\ell ab}{\tau}\lambda+\tau\mu,
\]
which is \eqref{eq:exotic-affine-mu}.
\end{proof}

\begin{theorem}\label{thm:exotic-all}
For every $\ell\in\mathbb N$,
\begin{equation}\label{eq:exotic-all}
\TwoLocDer\widehat{\mathfrak g}^{\,\Theta}_\ell(2)
=\Der\widehat{\mathfrak g}^{\,\Theta}_\ell(2).
\end{equation}
\end{theorem}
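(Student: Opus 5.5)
We first reduce to a center-valued map that vanishes off the center. By Proposition~\ref{prop:central-center-valued-reduction} and Proposition~\ref{prop:exotic-center-free-hyperplane}, after subtracting a global derivation (including a multiple of $\eta_\Theta$ so that $\Delta(s)=0$), $\Delta$ takes values in $\C z$ and vanishes on every element whose $z$-coordinate is zero. Fix a target
\[
y=t+v+cz,\qquad t=t_1+\sigma s,\quad t_1\in\mathfrak{sl}_2,\quad v\in W_\Theta,\quad c\ne0,
\]
and write $\Delta(y)=qz$. We must show $q=0$. Conjugation by inner automorphisms from the $\mathfrak{sl}_2$-factor fixes $s$ and $z$, preserves $\omega_\Theta$, and preserves the normalization. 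We may therefore apply Lemma~\ref{lem:exotic-middle-positioning} to put $v$ into a conjugate frame $(e',h',f')$ in which the middle coefficients $A,B$ of $v$ along $p_\ell',q_\ell'$ satisfy either $A=B=0$ or $AB\ne0$.

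For a derivation $D$ representing a pair $(r,y)$ with $\Delta(r)=0$, the value at $y$ is
\[
q=\omega_\Theta(a_W,v)+2c\lambda+\sigma\mu,
\]
because inner reductive parts create no central component on $v$ and $\delta_\Theta(z)=2z$.

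\emph{Case $\sigma=0$.} Use the zero anchor $r=h'+u$ with $u=ap_\ell'+bq_\ell'$ and $ab\ne0$. Lemma~\ref{lem:exotic-middle-anchor} gives $\lambda=0$ and $a_W=\beta u$. The shear term drops because $\sigma=0$, so $q=\beta\,\omega_\Theta(u,v)$. The pairing $\omega_\Theta(u,v)$ only sees the middle coefficients of $v$, namely $\omega_\Theta(u,v)=c_\ell(aB-bA)$. If $A=B=0$ this vanishes. If $AB\ne0$, choose $(a,b)=(A,B)$, which again makes it vanish. Hence $q=0$.

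\emph{Case $\sigma\ne0$.} Use the affine anchor $r=h'+\tau s+u$, with $\tau\ne0$ chosen outside the finite set where $h'+\tau s$ fails to act invertibly on $W_\Theta$; the relevant eigenvalues are $2(\ell-n)\pm\tau$. Lemma~\ref{lem:exotic-affine-middle-anchor} gives $a_W$ and $\mu$ explicitly. Substituting, with $\omega_\Theta(p_\ell',q_\ell')=c_\ell$, yields
\[
\omega_\Theta(a_W,v)=\frac{c_\ell}{\tau}\bigl((\gamma+\lambda)aB-(\gamma-\lambda)bA\bigr),
\]
and hence
\[
q=\frac{c_\ell\gamma}{\tau}(aB-bA)+\lambda\Bigl(\frac{c_\ell}{\tau}(aB+bA)+2c-\frac{2c_\ell\sigma ab}{\tau^2}\Bigr).
\]
Here $\gamma$ and $\lambda$ are unknown, so we must kill both brackets.
\begin{itemize}
\item If $AB\ne0$, take $(a,b)=x(A,B)$. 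The $\gamma$-term then vanishes, and the $\lambda$-bracket becomes the quadratic
\[
-\frac{2c_\ell\sigma AB}{\tau^2}\,x^2+\frac{2c_\ell AB}{\tau}\,x+2c
\]
in $x$. Its leading coefficient and constant term are nonzero, so it has a nonzero root over $\C$; with that choice $q=0$.
\item If $A=B=0$, the $\gamma$-term vanishes automatically. Choose any $ab\ne0$ with $ab=c\tau^2/(c_\ell\sigma)$; then the $\lambda$-bracket vanishes and again $q=0$.
\end{itemize}

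In every case $q=0$, so the normalized map is identically zero and $\Delta$ is a derivation.

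The main obstacle is the bookkeeping in Lemma~\ref{lem:exotic-affine-middle-anchor}: the signs and the contribution of the free $\gamma$ must be right, so that the $\gamma$-dependent term genuinely vanishes under the chosen proportionality $(a,b)\propto(A,B)$. This is exactly why we need the middle-positioning dichotomy, which excludes the one-sided configuration where exactly one of $A,B$ is nonzero. If the signs come out differently, the fallback is $(a,b)\propto(A,-B)$, chosen to cancel whichever combination multiplies $\gamma$.
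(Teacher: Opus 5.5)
Your proposal is correct and follows the paper's proof essentially step for step. It uses the same center-valued normalization and the same middle-positioning dichotomy. For $\sigma=0$ it uses the zero anchor $h'+u$, and for $\sigma\neq0$ the affine anchor $h'+\tau s+u$, taking $(a,b)\propto(A,B)$ with a root of the quadratic, or $ab=c\tau^2/(c_\ell\sigma)$ when $A=B=0$.

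Your sign computation $\omega_\Theta(a_W,v)=\frac{c_\ell}{\tau}\bigl(\gamma(aB-bA)+\lambda(aB+bA)\bigr)$ is right. The hedged fallback $(a,b)\propto(A,-B)$ is therefore unnecessary.
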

\begin{proof}
Normalize $\Delta$ as in Proposition~\ref{prop:exotic-center-free-hyperplane}.  Thus $\Delta$ is center-valued and vanishes on every element with zero $z$-coordinate.  Consider an arbitrary
\[
y=x_0+\sigma s+v+cz,
\qquad
x_0\in\mathfrak{sl}_2,
\quad v\in W_\Theta.
\]
Write $\Delta(y)=qz$.  The case $c=0$ is already settled.

Apply Lemma~\ref{lem:exotic-middle-positioning} to $v$ and use the resulting conjugate triple.  Let $A,B$ denote the middle coefficients of $v$ along $p_\ell',q_\ell'$.  Thus either $A=B=0$ or $AB\ne0$.

First suppose $\sigma=0$.  Choose
\[
u=ap_\ell'+bq_\ell',
\qquad ab\ne0,
\qquad \omega_\Theta(u,v)=0.
\]
If $A=B=0$, any $a,b\ne0$ work; if $AB\ne0$, take $u=Ap_\ell'+Bq_\ell'$.  The conjugate form of Lemma~\ref{lem:exotic-middle-anchor} applies to the zero anchor $h'+u$.  Any derivation representing the pair $(h'+u,y)$ has zero scalar outer coefficient and has radical inner part proportional to $u$.  Since the $s$-coordinate of $y$ is zero, the shear contributes nothing, while the only possible central inner term is proportional to $\omega_\Theta(u,v)=0$.  Hence $q=0$.

Now suppose $\sigma\ne0$.  We may assume $c\ne0$.  Choose $\tau\ne0$ outside the finite set for which $\rho(h'+\tau s)$ is singular on $W_\Theta$.  Let
\[
r=h'+\tau s+ap_\ell'+bq_\ell'.
\]
This anchor has zero $z$-coordinate, so $\Delta(r)=0$.  Let $D$ represent $(r,y)$.  By Lemma~\ref{lem:exotic-affine-middle-anchor}, the central coefficient of $D(y)$ is
\begin{equation}\label{eq:exotic-final-central-q}
q=\omega_\Theta(a_W,v)+2c\lambda+\sigma\mu.
\end{equation}

If $A=B=0$, the first term in \eqref{eq:exotic-final-central-q} is zero.  Choose non-zero $a,b$ with
\[
ab=\frac{c\tau^2}{\sigma c_\ell}.
\]
Then \eqref{eq:exotic-affine-mu} gives
\[
q=2c\lambda-\frac{2\sigma c_\ell ab}{\tau^2}\lambda=0.
\]

If $AB\ne0$, put
\[
a=\kappa A,
\qquad
b=\kappa B,
\qquad
\kappa\ne0.
\]
The $\gamma$-terms cancel in the symplectic pairing, and \eqref{eq:exotic-affine-aW} gives
\[
\omega_\Theta(a_W,v)
 =\frac{2c_\ell\kappa AB}{\tau}\lambda.
\]
Together with \eqref{eq:exotic-affine-mu}, formula \eqref{eq:exotic-final-central-q} becomes
\[
q=2\lambda\left[
 c+c_\ell AB\left(
 \frac{\kappa}{\tau}-\frac{\sigma\kappa^2}{\tau^2}
 \right)
\right].
\]
Choose $\kappa$ to be a root of
\begin{equation}\label{eq:exotic-final-kappa}
c_\ell AB\left(
 \frac{\kappa}{\tau}-\frac{\sigma\kappa^2}{\tau^2}
\right)+c=0.
\end{equation}
The polynomial in \eqref{eq:exotic-final-kappa} is quadratic with non-zero leading and constant terms, so it has a non-zero root over $\mathbb C$.  For this choice $q=0$.

Thus every value of the normalized 2-local derivation is zero.  Restoring the global derivations removed during normalization proves \eqref{eq:exotic-all}.
\end{proof}

\begin{remark}\label{rem:central-translate-reduction}
The final mass and exotic arguments illustrate the genuinely two-local step that is invisible from center normalization alone.  A residual map may vanish both on the center and on the complementary zero-central hyperplane without being additive.  The flexible symplectic anchor in the mass case and the affine middle anchor in the exotic case supply a second zero point whose stabilizer forces the value on every central translate to vanish.
\end{remark}

\section{Main classification and consequences}\label{sec:mainclassification}
We collect the three rigidity theorems in a single statement.

\begin{theorem}\label{thm:main-classification}
For all admissible parameters, every 2-local derivation on the conformal Galilei algebra, on its mass central extension, and on its exotic central extension is a derivation.  Explicitly,
\begin{align}
\TwoLocDer\mathfrak g_\ell(d)
 &=\Der\mathfrak g_\ell(d),
 &&d\ge1,\quad \ell\in\tfrac12\mathbb N,\label{eq:main-noncentral}\\
\TwoLocDer\widehat{\mathfrak g}^{\,M}_\ell(d)
 &=\Der\widehat{\mathfrak g}^{\,M}_\ell(d),
 &&d\ge1,\quad \ell\in\mathbb N-\tfrac12,\label{eq:main-mass}\\
\TwoLocDer\widehat{\mathfrak g}^{\,\Theta}_\ell(2)
 &=\Der\widehat{\mathfrak g}^{\,\Theta}_\ell(2),
 &&\ell\in\mathbb N.\label{eq:main-exotic}
\end{align}
\end{theorem}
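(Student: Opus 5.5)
The theorem is a collation of results already proved, so the plan is to assemble them rather than argue anew. In each equality the inclusion $\Der\subseteq\TwoLocDer$ is immediate: a derivation $D$ is 2-local, since for every pair $x,y$ one may take $D_{x,y}=D$. The work is in the reverse inclusions, and each of these has been established earlier.

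For \eqref{eq:main-noncentral} I will cite Corollary~\ref{cor:noncentral-all}, which covers every $d\ge1$ and every positive half-integer $\ell$. That corollary is itself a case split:
\begin{itemize}
\item $d=1$ is handled by Corollary~\ref{cor:one-dimensional-all}, that is, Theorems~\ref{thm:m1-full}, \ref{thm:m2-full} and~\ref{thm:one-string-full};
\item $d\ge2$ with $2\ell\ge3$ is Theorem~\ref{thm:higher-full-mge3};
\item $d\ge2$ with $2\ell=2$ is Theorem~\ref{thm:higher-m2-full};
\item $2\ell=1$ is Theorem~\ref{thm:m1-d2-full} for $d=2$ and Theorem~\ref{thm:m1-higher-full} for $d\ge3$.
\end{itemize}
I will check that these ranges exhaust $d\ge1$ and $\ell\in\tfrac12\mathbb N$.

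For \eqref{eq:main-mass} I will invoke Theorem~\ref{thm:mass-all}. It combines Theorem~\ref{thm:mass-mge3-full}, which covers $2\ell\ge3$ and all $d$, with the self-contained endpoint argument for $\ell=\tfrac12$, which treats $d\ge3$, $d=1$ and $d=2$ separately. For \eqref{eq:main-exotic} I will invoke Theorem~\ref{thm:exotic-all}, valid for every $\ell\in\mathbb N$.

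The only point needing care is consistency of the normalization chain in the central cases. The reduction to a center-valued map in Proposition~\ref{prop:central-center-valued-reduction} uses rigidity of the quotient $L/\mathbb Cz$. For the mass case this quotient is $\mathfrak g_\ell(d)$, and for the exotic case it is $\mathfrak g_\ell(2)$ with $2\ell\ge2$, so the central results depend only on the non-central theorem and there is no circularity. I therefore expect no real obstacle: the proof reduces to citing the three theorems and confirming that their parameter ranges match those in the statement.
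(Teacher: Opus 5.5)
Your proposal is correct and is essentially the paper's own proof, which consists of citing Corollary~\ref{cor:noncentral-all}, Theorem~\ref{thm:mass-all} and Theorem~\ref{thm:exotic-all} for the three equalities. Your additional remarks are accurate and make the assembly slightly more explicit than the paper does: the trivial inclusion $\Der\subseteq\TwoLocDer$, and the observation that Proposition~\ref{prop:central-center-valued-reduction} relies only on the already-established non-central rigidity, so there is no circularity.
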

\begin{proof}
Equation~\eqref{eq:main-noncentral} is Corollary~\ref{cor:noncentral-all}.  Equation~\eqref{eq:main-mass} is Theorem~\ref{thm:mass-all}.  Equation~\eqref{eq:main-exotic} is Theorem~\ref{thm:exotic-all}.
\end{proof}

\begin{corollary}\label{cor:automatic-linearity}
Every 2-local derivation in the three families above is automatically additive, homogeneous, and linear.
\end{corollary}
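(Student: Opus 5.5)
The corollary is a direct consequence of Theorem~\ref{thm:main-classification}. No further two-local analysis is needed. Let $\Delta$ be a 2-local derivation on any algebra $L$ in the three families. By \eqref{eq:main-noncentral}, \eqref{eq:main-mass} or \eqref{eq:main-exotic}, as appropriate, $\Delta$ lies in $\Der L$. That is, there is a single derivation $D$ with $\Delta(x)=D(x)$ for every $x\in L$.

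Every element of $\Der L$ is by definition a linear endomorphism of $L$. Hence $\Delta(x+y)=D(x)+D(y)=\Delta(x)+\Delta(y)$ and $\Delta(\lambda x)=\lambda\Delta(x)$ for all $x,y\in L$ and $\lambda\in\C$. So $\Delta$ is additive, homogeneous and linear.

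Homogeneity also follows independently and more elementarily from Lemma~\ref{lem:homogeneous}. This is worth noting because that lemma uses only the pair $(x,\lambda x)$.

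I expect no real obstacle here, because the corollary is bookkeeping. I will only make explicit that the definition of a 2-local derivation imposed no linearity, so the conclusion is genuinely new information and not circular. All of the difficulty was absorbed into the rigidity theorems: Corollary~\ref{cor:noncentral-all}, Theorem~\ref{thm:mass-all} and Theorem~\ref{thm:exotic-all}. I will note that their proofs never assumed additivity. Throughout they treat $\Delta$ pointwise, using representing derivations for pairs and homogeneity only.
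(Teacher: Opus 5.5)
Your proposal is correct and follows the paper's own proof. The paper also invokes Theorem~\ref{thm:main-classification} to conclude that $\Delta\in\Der L$, and then uses the fact that derivations are linear; your side remark that homogeneity already follows from Lemma~\ref{lem:homogeneous} is a harmless addition.
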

\begin{proof}
By Theorem~\ref{thm:main-classification}, every 2-local derivation is an ordinary derivation, and derivations are linear.
\end{proof}

Let
\[
r_d=\frac{d(d-1)}2,
\qquad m=2\ell.
\]
Since the sets of 2-local derivations are now identified with derivation algebras, their dimensions and Lie-algebra structures follow from the derivation decompositions in Section~\ref{sec:der2local}.

\begin{corollary}\label{cor:dimensions}
The dimensions of the 2-local derivation sets, viewed through their identification with the corresponding derivation algebras, are
\begin{align*}
\dim\TwoLocDer\mathfrak g_\ell(d)
 &=4+r_d+d(m+1),\\
\dim\TwoLocDer\widehat{\mathfrak g}^{\,M}_\ell(d)
 &=4+r_d+d(m+1), && d\ne2,\\
\dim\TwoLocDer\widehat{\mathfrak g}^{\,M}_\ell(2)
 &=2m+8=4\ell+8,\\
\dim\TwoLocDer\widehat{\mathfrak g}^{\,\Theta}_\ell(2)
 &=4\ell+8.
\end{align*}
In particular, each 2-local derivation set is a Lie algebra under the commutator because it is exactly the corresponding derivation algebra.
\end{corollary}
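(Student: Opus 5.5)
Corollary~\ref{cor:dimensions} follows by combining Theorem~\ref{thm:main-classification}, which identifies each set of 2-local derivations with the corresponding derivation algebra, with the decompositions of Proposition~\ref{prop:derivation-decompositions}. The whole task is then to count dimensions. Each derivation algebra is the inner derivation algebra plus an explicit, independent family of outer derivations, so I would compute $\dim\Inn L=\dim L-\dim Z(L)$ and add the number of outer summands.

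\emph{Non-central algebra.} Here $\dim\mathfrak g_\ell(d)=3+r_d+d(m+1)$. I would show that the center is zero in every case, including $d=2$. An element $x+v$ with $x\in S$, $v\in V$ that commutes with $h$ has $[h,v]=0$ and $x$ in the centralizer of $h$. Commuting with the whole radical then forces $\rho(x)=0$. For $d=2$ this rules out the $s$-direction, because $s$ acts by $\pm1$ on $V$. Commuting with $e$ and $f$ places $v$ in $\ker\rho(e)\cap\ker\rho(f)=0$. Hence $\dim\Inn=3+r_d+d(m+1)$, and adding $\delta_0$ gives the stated $4+r_d+d(m+1)$.

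\emph{Mass extension.} The dimension is $\dim\widehat{\mathfrak g}^{\,M}_\ell(d)=4+r_d+d(m+1)$. Since $\omega_M$ is non-degenerate and the argument above kills all non-central parts, the center is exactly $\C M$. This gives $\dim\Inn=3+r_d+d(m+1)$, and adding $\delta_M$ gives $4+r_d+d(m+1)$ for $d\ne2$. For $d=2$ we have $r_2=1$, and $\eta_M$ contributes one further dimension, so the total is $6+2(m+1)=2m+8=4\ell+8$.

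\emph{Exotic extension.} The same computation applies with $d=2$: the inner part has dimension $3+1+2(m+1)$, and adding $\delta_\Theta$ and $\eta_\Theta$ gives $2m+8$. Here $m=2\ell$, so this equals $4\ell+8$.

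The last sentence of the corollary is immediate: $\Der L$ is closed under commutators, and the 2-local derivation set coincides with it. The only step requiring care is confirming that the outer summands really are independent of $\Inn$, that is, that the sums in Proposition~\ref{prop:derivation-decompositions} are direct. I will take this directly from that proposition. If a check is wanted, $\delta$ acts nontrivially on the center while inner derivations vanish there, and $\eta$ sends the reductive element $s$ into the center, which no inner derivation does modulo the radical image.
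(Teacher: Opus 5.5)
Your proposal is correct and matches the paper's argument: identify $\TwoLocDer L$ with $\Der L$ via Theorem~\ref{thm:main-classification}, compute $\dim\Inn L=\dim L-\dim Z(L)$, and add the outer summands from Proposition~\ref{prop:derivation-decompositions}. The only difference is that you check $Z(\mathfrak g_\ell(d))=0$ and that the centers of the extensions are exactly $\C M$ and $\C z$, which the paper simply asserts.
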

\begin{proof}
For the non-central algebra, $\mathfrak g_\ell(d)$ is centerless and
\[
\Der\mathfrak g_\ell(d)=\Inn\mathfrak g_\ell(d)\oplus\mathbb C\delta_0,
\]
so its derivation dimension is $\dim\mathfrak g_\ell(d)+1=4+r_d+d(m+1)$.  In the mass case the center has dimension one.  Thus $\dim\Inn L=\dim L-1$; adding $\delta_M$ gives $4+r_d+d(m+1)$ when $d\ne2$, while the additional shear $\eta_M$ contributes one more dimension for $d=2$.  In the exotic case $\dim L=4\ell+7$ and $\dim Z(L)=1$, so $\dim\Inn L=4\ell+6$; adding the two independent outer directions gives $\dim\Der L=4\ell+8$.
\end{proof}

\section{Conclusion}
Together with the local classification in \cite{ASYLocalCG}, our theorem distinguishes one-point from two-point interpolation: proper local derivations occur in the exceptional unextended family, whereas 2-local derivations are rigid throughout.

We have obtained a complete 2-local derivation classification for the finite-dimensional conformal Galilei Lie algebras in the non-central, mass-central, and exotic-central settings.  In every admissible case the two-point condition forces a single global derivation, even though no linearity is assumed initially.  The proof also identifies three mechanisms that may be useful for other non-semisimple Lie algebras: semisimple perturbation of a residual rotation, symplectic zero anchors in a Heisenberg radical, and affine weight anchors adapted to a central cocycle.  These methods separate the quotient-rigidity step from the genuinely nonlinear problem of controlling central translates.

\section{Declarations}
\paragraph{Funding.}
The authors declare that no funds, grants, or other financial support were received for the preparation of this manuscript.

\paragraph{Conflict of interest.}
The authors declare that they have no conflict of interest.

\paragraph{Data availability.}
No datasets were generated or analyzed during the current study.

\paragraph{Author contributions.}
All authors contributed to the conception, mathematical analysis, and preparation of the manuscript. All authors reviewed and approved the manuscript.

\end{document}